\documentclass[12pt,reqno]{
amsart}

\usepackage[margin=1in]{geometry}
\usepackage[tt=false]{libertine}

\usepackage{amssymb}

\usepackage[varbb]{newpxmath}
\let\savedbigtimes\bigtimes
\let\bigtimes\relax
\usepackage{mathabx} 
\let\bigtimes\savedbigtimes

\usepackage{graphicx}
\usepackage{enumerate}
\usepackage{bm}

\usepackage{bbm}

\usepackage{verbatim}
\usepackage{hyperref,color}
\usepackage[capitalize,nameinlink]{cleveref}
\usepackage[dvipsnames]{xcolor}
\hypersetup{
	colorlinks=true,
	pdfpagemode=UseNone,
    citecolor=OliveGreen,
    linkcolor=NavyBlue,
    urlcolor=black,
	pdfstartview=FitW
}
\usepackage{appendix}
\crefname{appsec}{Appendix}{Appendices}
\usepackage{tikz}

\theoremstyle{plain}
\newtheorem{theorem}{Theorem}[section]
\newtheorem{proposition}[theorem]{Proposition}
\newtheorem{lemma}[theorem]{Lemma}
\newtheorem{corollary}[theorem]{Corollary}

\newtheorem{fact}[theorem]{Fact}

\theoremstyle{definition}
\newtheorem{definition}[theorem]{Definition}

\newtheorem*{assumption*}{Assumption}

\theoremstyle{remark}
\newtheorem{remark}[theorem]{Remark}

\crefname{lemma}{Lemma}{Lemmas}
\crefname{theorem}{Theorem}{Theorems}
\crefname{definition}{Definition}{Definitions}
\crefname{fact}{Fact}{Facts}
\crefname{claim}{Claim}{Claims}
\crefname{proposition}{Proposition}{Propositions}

\newcommand{\E}{\mathbb{E}}
\newcommand{\Var}{\mathrm{Var}}

\newcommand{\one}{\mathbb{1}}
\newcommand{\allone}{\mathbf{1}}

\makeatletter
\newcommand{\vast}{\bBigg@{4}}
\newcommand{\Vast}{\bBigg@{5}}
\makeatother

\newcommand{\eps}{\varepsilon}
\renewcommand{\epsilon}{\varepsilon}

\newcommand{\N}{\mathbb{N}}
\newcommand{\Q}{\mathbb{Q}}
\newcommand{\R}{\mathbb{R}}

\newcommand{\GG}{\mathcal{G}}
\newcommand{\LL}{\mathcal{L}}
\newcommand{\MM}{\mathcal{M}}

\newcommand{\QQ}{\mathbb{Q}}
\newcommand{\PP}{\mathbb{P}}
\newcommand{\EE}{\mathbb{E}}
\newcommand{\NN}{\mathbb{N}}
\newcommand{\ZZ}{\mathbb{Z}}

\newcommand{\beq}{\begin{equation}}
\newcommand{\eeq}{\end{equation}}

\newcommand{\bH}{\mathbf{H}}

\renewcommand{\emptyset}{\varnothing}

\begin{document}

\title[Stable algorithms Lower Bounds for Estimation]{Stable algorithms Lower Bounds for Estimation\\ from MMSE Discontinuities}

\author
[X. Yu, I. Zadik]{Xifan Yu$^\dagger$ and Ilias Zadik$^{\circ}$}

\thanks{\raggedright$^\circ$Department of Statistics and Data Science, Yale University;
$^\dagger$Department of Computer Science, Yale University.\\
Email: \texttt{\{xifan.yu, ilias.zadik\}@yale.edu}}

\date{\today}

\maketitle

\begin{abstract}

Recent works in average-case complexity have identified stable (noise-stable) algorithms as a central class. Specifically, in average-case optimization, the class is conjectured to capture the power of polynomial-time computation for many problems. This perspective has been supported by establishing variants of the Overlap Gap Property (OGP) phase transitions just below conjectured polynomial-time thresholds. Yet, it was recently challenged by Schramm and Li (2025), who showed that Shortest Path in random graphs exhibits the OGP—and hence all stable algorithms fail—despite being solvable in polynomial time. This counterexample has also been particularly curious as it appeared rather distinct from other classical ``noiseless" counterexamples, such as solving random linear systems.

By contrast, the power of stable methods in statistical estimation has remained unclear. A central difficulty is the absence of a definition of an OGP-type phenomenon that can uniformly exclude all stable methods. Instead, existing lower bounds largely focus on the related class of low-degree polynomials and are confined to restricted models, such as Gaussian additive models, reflecting the high technical difficulty of controlling the minimum mean-squared error (MMSE) of low-degree estimators.

In this work, we show that for all statistical estimation problems, a natural MMSE instability (discontinuity) condition implies the failure of stable algorithms, serving as a version of OGP for estimation tasks. Using this criterion, we establish separations between stable and polynomial-time algorithms for the following MMSE-unstable tasks (i) Planted Shortest Path, where Dijkstra’s algorithm succeeds, (ii) random Parity Codes, where Gaussian elimination succeeds, and (iii) Gaussian Subset Sum, where lattice-based methods succeed. For all three, we further show that all low-degree polynomials are stable, yielding separations against low-degree methods and a new method to bound the low-degree MMSE. In particular, our technique highlights that MMSE instability is a common feature for Shortest Path and the noiseless Parity Codes and Gaussian subset sum.

Last, we highlight that our work places rigorous algorithmic footing on the long-standing physics belief that first-order phase transitions—which in this setting translates to MMSE instability—impose fundamental limits on classes of efficient algorithms.
    
\end{abstract}

\newpage
\tableofcontents

\newpage
\section{Introduction}

\allowdisplaybreaks{

Understanding the computational complexity of average-case problems has been a central task of interest over the last few decades, departing from the classical worst-case complexity theory. A dominant approach in this area is to analyze restricted but powerful families of algorithms for a given average-case problem, and to interpret the point at which these algorithms fail as a proxy for the onset of computational hardness. This methodology has been most influential in two broad settings: average-case optimization problems under a “null” distribution, and statistical inference problems under a “planted” distribution.

For average-case optimization, the most extensively studied algorithmic class is that of (noise) stable algorithms. This class often encompasses low-degree polynomials, low-depth circuits, and local-search procedures run for bounded time (see e.g., \cite{gamarnik2024hardness}). Despite its apparent restrictiveness in the vast class of $\mathcal{P}$, a striking pattern has emerged in recent years: for many canonical random problems, stable algorithms match the performance of the best known polynomial-time methods. Prominent examples include optimization in $p$-spin glass models \cite{gamarnik2024hardness,huang2025tight}, maximum independent set in random graphs \cite{wein2022optimal}, and random $k$-SAT \cite{bresler2022algorithmic}. It is worth pointing out that from a technical standpoint, the key technical tools underpinning these tight stable algorithm lower bounds are variants of a disconnectivity landscape property called Overlap Gap Property (OGP), first suggested in \cite{gamarnik2014limits} (see also \cite{gamarnik2021overlap} for a survey). Now, these successes have motivated the conjecture that the failure of stable algorithms delineates the true computational hardness threshold of a problem.

However, it has long been understood—at least at a folklore level—that such a conjecture cannot hold in full generality. The canonical family of such counterexamples are variants of XORSAT, where the task is to find a solution to system of linear random equations. While Gaussian elimination succeeds whenever a solution exists, it is widely believed that stable algorithms fail in certain satisfiable regimes.\footnote{While this belief is widespread, we are not aware of a formal proof in the literature. Related results establish shattering or clustering phenomena in $k$-XORSAT, which often signal—but do not by themselves imply— failure of stable methods \cite{ayre2020satisfiability, ibrahimi2012set, achlioptas2015solution}.} Yet, this family of counterexamples is often dismissed as an artifact of the problem’s noiseless structure. Remarkably, the recent work \cite{li2024some} showed that even a seemingly noisy problem can violate the stable-algorithm optimality paradigm. They proved that the Shortest Path problem on Erdős–Rényi random graphs, while solvable in polynomial time by Dijkstra’s algorithm, exhibits the OGP, implying provable failure of stable algorithms. This result significantly blurs the boundary of problems for which stable algorithms should be conjectured to be optimal for random tasks, and understanding this question is one of the main motivations of this work.

By contrast, in statistical inference—specifically detection and estimation tasks—the study of stable algorithms as a unified class remains largely undeveloped. A primary reason is the absence of an OGP-type phenomenon capable of yielding general lower bounds against all stable methods\footnote{There is a successful OGP notion for planted problems, but it is only able to produce tight MCMC/local-search lower bounds (see e.g., \cite{gamarnik2022sparse, gamarnik2024landscape}).}. Instead, research has largely focused on alternative restricted classes, most notably low-degree polynomials (LDPs)\footnote{It should be noted that planted models, the question of whether LDPs are in fact a subset of stable methods is rather non-trivial.}, see e.g., \cite{wein2025computational} for a recent survey.

For detection problems, the power of LDPs has been extensively investigated, leading to the well-known low-degree conjecture \cite{hopkins2018statistical}, which posits that low-degree polynomials match the power of all polynomial-time algorithms for suitably “nice” detection tasks. The exact notion of “niceness” is an ongoing debate in the community and typically encodes symmetry assumptions (see \cite{hsieh2026rigorous} for a proof under full symmetry, and \cite{buhai2025quasi} for counterexamples of a stronger variant under weaker symmetry) as well as the presence of sufficient noise. It is perhaps beneficial to point out that two of the main classes of counterexamples for low-degree optimality in the absence of noise are (a) planted random linear equations where Gaussian elimination succeeds, similar to the optimization literature above, but also (b) versions of the Gaussian subset sum problem, where lattice-based methods succeed in polynomial time \cite{frieze1986lagarias,zadik2018high}. Although challenged in recent works, the low-degree conjecture remains widely believed for many canonical detection problems.

In contrast, the power of LDPs in estimation, while expected to be similar to detection, is far less mathematically understood. From a technical standpoint, controlling the minimum mean-squared error (MMSE) of low-degree estimators—the low-degree MMSE—is substantially more difficult than analyzing the low-degree likelihood ratio in detection. While a recent work \cite{schramm2022computational} has developed important techniques for bounding the low-degree MMSE in Gaussian additive models and certain Bernoulli noise settings, applying these methods rely on delicate arguments and their successful applications remain restricted to a few model classes. As a consequence, no formal low-degree conjecture for estimation has been formally posed, despite the fact that researchers in the literature do predict hardness based on low-degree MMSE lower bounds (see e.g., \cite{even2025computational}). For this reason, we believe that clarifying the power of low-degree polynomials compared to other polynomial-time algorithms for estimation tasks is an important direction and one of the main motivations of this work as well.

\subsection{Contributions}\label{sec:contr}

In this work, we focus on \emph{statistical estimation} problems, where an instance $(x,y)\sim\PP_{XY}$ is drawn and the goal is to design an algorithm $\mathcal{A}$ that, given the observation $y$, approximates the signal $x$ in mean-squared error (MSE). Specifically, we aim to minimize
\[
\E_{(x,y)\sim\PP_{XY}}\big[\|\mathcal{A}(y)-x\|_2^2\big],
\]
ideally matching the \emph{minimum mean-squared error} (MMSE),
\[
\mathrm{MMSE}
\;=\;
\inf_{\mathcal{A}:\R^N\to\R^n}
\E_{(x,y)\sim\PP_{XY}}\big[\|\mathcal{A}(y)-x\|_2^2\big].
\]

Let $T_\rho:\R^N\to\R^N$, $\rho\in[0,1]$, denote a noise operator acting on the observation $y$. Of central importance to this work is the following class of algorithms.

\begin{definition}[Stable algorithm]
An algorithm $\mathcal{A}:\R^N\to\R^n$ is $(\rho,\eta)$-stable if
\[
\E_{(x,y)\sim\PP_{XY}}\big[\|\mathcal{A}(y)-\mathcal{A}(T_\rho(y))\|_2^2\big]
\;\le\;
\eta\cdot
\E_{(x,y)\sim\PP_{XY}}\big[\|\mathcal{A}(y)\|_2^2\big].
\]
\end{definition}

\textbf{MMSE instability implies stable-algorithm lower bounds.}
Our first contribution is to uncover a general  principle: \begin{quote}
\centering
\emph{Noise-instability of the MMSE yields lower bounds against all stable algorithms}. 
\end{quote}To describe this further, define the \emph{noisy MMSE},
\begin{align}\label{eq:MMSE_noisy}
\mathrm{MMSE}_\rho
\;:=\;
\inf_{\mathcal{A}:\R^N\to\R^n}
\E_{(x,y)\sim\PP_{XY}}
\big[\|\mathcal{A}(T_\rho(y))-x\|_2^2\big],
\qquad \rho\in[0,1].
\end{align}

\begin{theorem}[Informal: MMSE jumps imply stable algorithm failure]\label{thm:informal_main}
Suppose that for some $\rho\in[0,1]$ and $\alpha>0$, the MMSE is $(\rho,\alpha)$-unstable in the sense,
\[
\mathrm{MMSE}_\rho-\mathrm{MMSE}
\;\ge\;
\alpha\cdot
\E_{(x,y)\sim\PP_{XY}}\big[\|x\|_2^2\big].
\]
Then any $(\rho,o(\alpha^2))$-stable algorithm $\mathcal{A}$ is $\Omega(\alpha)$-suboptimal:
\[
\E_{(x,y)\sim\PP_{XY}}\big[\|\mathcal{A}(y)-x\|_2^2\big]
\;\ge\;
\mathrm{MMSE}
+\Omega(\alpha)\cdot
\E_{(x,y)\sim\PP_{XY}}\big[\|x\|_2^2\big].
\]
\end{theorem}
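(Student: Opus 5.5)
The idea is to compare $\mathcal{A}(y)$ with $\mathcal{A}(T_\rho(y))$. The latter is a function of the noisy observation $T_\rho(y)$, so its error is at least $\mathrm{MMSE}_\rho$. Stability says the two outputs are close. Hence the error of $\mathcal{A}(y)$ cannot be much below $\mathrm{MMSE}_\rho$, which by instability exceeds $\mathrm{MMSE}$ by $\alpha\,\E\|x\|_2^2$.

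Throughout, normalize $\E\|x\|_2^2=1$ and write $e:=\E\|\mathcal{A}(y)-x\|_2^2$ and $\delta:=\E\|\mathcal{A}(y)-\mathcal{A}(T_\rho(y))\|_2^2$. We will assume $T_\rho(y)$ is generated jointly with $(x,y)$, possibly using extra independent randomness. Since $\mathcal{A}\circ T_\rho$ uses only $T_\rho(y)$ (plus, at most, independent randomness, which cannot beat the infimum), definition \eqref{eq:MMSE_noisy} gives $\E\|\mathcal{A}(T_\rho(y))-x\|_2^2\ge \mathrm{MMSE}_\rho$.

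\textbf{Step 1 (triangle inequality).} In $L^2$,
\[
\sqrt{\mathrm{MMSE}_\rho}\le \sqrt{\E\|\mathcal{A}(T_\rho(y))-x\|_2^2}\le \sqrt{e}+\sqrt{\delta}.
\]

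\textbf{Step 2 (apply stability).} By $(\rho,\eta)$-stability, $\delta\le \eta\,\E\|\mathcal{A}(y)\|_2^2$. We may assume $e\le \mathrm{MMSE}+\alpha\le 2$; otherwise the conclusion already holds. Then $\E\|\mathcal{A}(y)\|_2^2\le (\sqrt{e}+1)^2\le 6$, so $\delta\le 6\eta$.

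\textbf{Step 3 (combine).} Squaring Step 1 gives
\[
\mathrm{MMSE}_\rho\le e+2\sqrt{e\delta}+\delta\le e+O(\sqrt{\eta}).
\]
Together with $\mathrm{MMSE}_\rho\ge \mathrm{MMSE}+\alpha$ this yields
\[
e\ge \mathrm{MMSE}+\alpha-O(\sqrt{\eta}).
\]
If $\eta=o(\alpha^2)$, then $\sqrt{\eta}=o(\alpha)$, and so $e\ge \mathrm{MMSE}+\Omega(\alpha)$. The same computation shows $\eta\le c\,\alpha^2$ suffices for a small enough constant $c$. This square-root loss from the cross term is exactly why the hypothesis is $o(\alpha^2)$ rather than $o(\alpha)$.

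\textbf{Main obstacle.} The delicate points are measure-theoretic rather than analytic. First, we must make sure $\mathcal{A}\circ T_\rho$ is a legitimate competitor in the infimum defining $\mathrm{MMSE}_\rho$; this holds if $T_\rho$'s randomness is independent of $(x,y)$, since for each fixed realization of that randomness the estimator is deterministic in $T_\rho(y)$. Second, the stability bound is stated relative to $\E\|\mathcal{A}(y)\|_2^2$ rather than $\E\|x\|_2^2$, so the reduction to $e\le 2$ in Step 2 is needed to convert it into an absolute bound. Everything else is routine.
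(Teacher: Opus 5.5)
Your proposal is correct and is essentially the paper's argument: compare $\mathcal{A}(y)$ with $\mathcal{A}(T_\rho(y))$, which is itself a competitor in $\mathrm{MMSE}_\rho$, after a case split that reduces to $\E\|\mathcal{A}(y)\|_2^2=O(\E\|x\|_2^2)$ so that stability gives an absolute bound. The only difference is cosmetic: you use the $L^2$ triangle inequality, whereas the paper expands the difference of squared errors as an inner product and applies Cauchy--Schwarz, which additionally requires bounding $\E\|\mathcal{A}(T_\rho(y))\|_2^2$. Both routes give the same $O(\sqrt{\eta})$ loss, and hence the $o(\alpha^2)$ requirement.
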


\begin{figure}
    \centering

\tikzset{every picture/.style={line width=0.75pt}} 

\begin{tikzpicture}[x=0.75pt,y=0.75pt,yscale=-1,xscale=1]

\draw    (53.2,217.5) -- (330.2,217.5) ;
\draw [shift={(332.2,217.5)}, rotate = 180] [color={rgb, 255:red, 0; green, 0; blue, 0 }  ][line width=0.75]    (10.93,-3.29) .. controls (6.95,-1.4) and (3.31,-0.3) .. (0,0) .. controls (3.31,0.3) and (6.95,1.4) .. (10.93,3.29)   ;
\draw    (111.2,271.5) -- (111.2,39.5) ;
\draw [shift={(111.2,37.5)}, rotate = 90] [color={rgb, 255:red, 0; green, 0; blue, 0 }  ][line width=0.75]    (10.93,-3.29) .. controls (6.95,-1.4) and (3.31,-0.3) .. (0,0) .. controls (3.31,0.3) and (6.95,1.4) .. (10.93,3.29)   ;
\draw  [dash pattern={on 4.5pt off 4.5pt}]  (112,70) -- (305.2,70) ;
\draw    (111.33,216.67) .. controls (113.6,198.53) and (114.57,185.49) .. (115.6,153.87) .. controls (116.63,122.24) and (119.54,85.81) .. (121.84,80.56) .. controls (124.14,75.31) and (127.37,74.38) .. (132.64,72.96) .. controls (137.91,71.54) and (146.64,71.36) .. (150.64,71.76) .. controls (154.64,72.16) and (227.6,70.53) .. (298.27,71.2) ;
\draw    (112.67,222.33) -- (122.24,222.33) ;
\draw [shift={(122.24,222.33)}, rotate = 180] [color={rgb, 255:red, 0; green, 0; blue, 0 }  ][line width=0.75]    (0,5.59) -- (0,-5.59)   ;
\draw [shift={(112.67,222.33)}, rotate = 180] [color={rgb, 255:red, 0; green, 0; blue, 0 }  ][line width=0.75]    (0,5.59) -- (0,-5.59)   ;
\draw  [dash pattern={on 0.84pt off 2.51pt}]  (122.24,80.56) -- (122.24,222.33) ;
\draw  [dash pattern={on 0.84pt off 2.51pt}]  (111.84,80.56) -- (122.24,80.56) ;
\draw    (103.04,80.56) -- (103.04,215.53) ;
\draw [shift={(103.04,215.53)}, rotate = 270] [color={rgb, 255:red, 0; green, 0; blue, 0 }  ][line width=0.75]    (0,5.59) -- (0,-5.59)   ;
\draw [shift={(103.04,80.56)}, rotate = 270] [color={rgb, 255:red, 0; green, 0; blue, 0 }  ][line width=0.75]    (0,5.59) -- (0,-5.59)   ;

\draw (91,227.4) node [anchor=north west][inner sep=0.75pt]    {$0$};
\draw (302,48.6) node [anchor=north west][inner sep=0.75pt]   [align=left] {trivial};
\draw (38,45) node [anchor=north west][inner sep=0.75pt]  [font=\small]  {$MMSE_{\rho }$};
\draw (112.4,229) node [anchor=north west][inner sep=0.75pt]  [font=\footnotesize]  {$\rho $};
\draw (83.6,135) node [anchor=north west][inner sep=0.75pt]  [font=\footnotesize]  {$\alpha $};
\draw (310.8,229.4) node [anchor=north west][inner sep=0.75pt]  [font=\small] [align=left] {Noise};

\end{tikzpicture}
    \caption{A pictorial representation of a sharp MMSE jump under small noise, which leads to stable algorithm failure per Theorem \ref{thm:informal_main}.}
    \label{fig:MMSE-jump}
\end{figure}
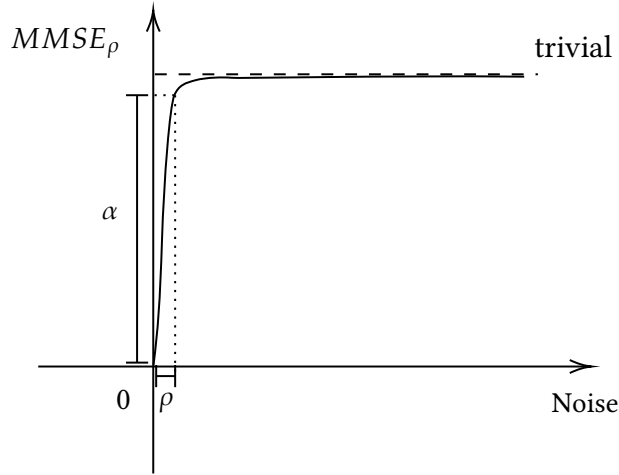

This reduces lower bounds for stable algorithms in estimation to understanding the threshold behavior of the MMSE under small noise $\rho$: the sharper the transition of the MMSE (i.e., the larger the $\alpha$), the stronger the resulting failure of stable methods (see Figure \ref{fig:MMSE-jump}). This naturally raises two questions. First, can the MMSE exhibit a genuine ``jump'' under small noise? Second, when such a jump occurs, can it be sufficiently sharp to exclude algorithmic classes of real interest?

To formalize this discussion, consider the normalized MMSE (NMMSE),
\[
\mathrm{NMMSE}_\rho
\;=\;
\frac{\mathrm{MMSE}_\rho}{\E_{(x,y)\sim\PP_{XY}}[\|x\|_2^2]},
\qquad \rho\in[0,1].
\]
While many canonical models—such as spiked matrix models with i.i.d.\ Gaussian priors—exhibit a continuous limiting NMMSE curve (see e.g., \cite{lelarge2017fundamental}), it is also well documented that the asymptotic NMMSE can be discontinuous, a phenomenon known in statistical physics as a \emph{first-order phase transition}, see e.g., the discussions in \cite{dia2016mutual}. An extreme but widely observed instance is the \emph{all-or-nothing} (AoN) phenomenon, first observed in \cite{reeves2019all}, where the NMMSE jumps abruptly from nearly zero at $\rho=0$ to nearly one upon the injection of an arbitrarily small constant noise $\rho>0$. Many inference models are now known to exhibit such behavior, including Gaussian additive models \cite{niles2020all}, generalized linear models \cite{luneau2022information}, planted subgraph models \cite{wu2022settling, mossel2023sharp}, and a large family of discrete channels, such as Bernoulli group testing \cite{niles2023all}.

At first glance, one might therefore conclude that any model exhibiting AoN must necessarily imply the failure of all stable algorithms. While this is true to some extent in principle, the implication must be interpreted carefully. Theorem~\ref{thm:informal_main} implies that AoN yields $\Omega(1)$-suboptimality only for algorithms that are, say, $(\rho,0.9)$-stable at a constant noise level $\rho$. However, many powerful stable methods—most notably low-degree polynomials—do not often satisfy such strong stability. In fact, under canonical product measures (e.g.\ Gaussian or Bernoulli), a degree-$D$ polynomial is typically $(\rho,O(D\rho))$-stable, a property that often follows from standard noise-operator arguments. While extending such stability guarantees beyond null models is highly nontrivial, we show in this work that analogous bounds hold for several planted distributions of interest. Consequently, AoN alone typically rules out only constant-degree polynomials, and excluding higher-degree methods requires a finer analysis of the width of the MMSE transition window.

In the application sections, we develop tools to tightly control these transition windows, allowing us to prove stability results for low-degree polynomials at the relevant noise scales and to derive the desired separations.

Finally, we note that the perspective that MMSE discontinuities have algorithmic implications has deep roots in the literature. In statistical physics, first-order phase transitions are routinely associated with computational hardness \cite{dia2016mutual,zdeborova2016statistical}. In mathematical works, we only know one relevant result, \cite{gamarnik2023sharp} showing that sharp probabilistic thresholds in random Boolean problems imply the failure of constant-depth circuits; however, this connection relies on classical threshold combinatorial notions and techniques such as H{\aa}stad's switching lemma, and is not expected to extend to MMSE-based phenomena or beyond Bernoulli measures. We do highlight that our approach for proving stable algorithm failure holds for all parametric estimation tasks.

\textbf{Applications and separations.}
As discussed in the introduction, the main known separations between stable or low-degree algorithms and polynomial-time methods arise from a small number of canonical settings\footnote{We exclude from this list the very interesting recent counterexample \cite{buhai2025quasi} as it seperate quasipolynomial-time methods from low-degree polynomials}: random linear systems (in both optimization and detection), the shortest path problem (in optimization), and the Gaussian subset sum problem (in detection).

We show that our MMSE-instability framework applies uniformly to the \emph{estimation} variants of all three settings, yielding separations between stable algorithms—and, as a corollary we prove, also low-degree polynomials—from polynomial-time methods (see Figure \ref{fig:venn}). Notably, our results reveal a common structural feature underlying these previously disparate examples: all three problems exhibit MMSE instability, which in turn forces the suboptimality of stable methods. This perspective clarifies what the shortest path problem shares with the “noiseless” linear system and subset-sum settings, and hopefully will help the community finalize the right class of tasks an appropriate low-degree/stable algorithm optimality conjecture should apply. We now provide more details on the specific separations.

\medskip
\noindent\textbf{Planted Shortest Path.}
Consider the planted shortest path problem on an Erd\H{o}s--R\'enyi graph $G(n,q=\Theta(\log n/n))$, where a path of length $L=C\log n/\log\log n$ is planted uniformly at random. For $C<1$, the planted path is the unique shortest path with high probability and can be recovered by Dijkstra's algorithm.
We show that the MMSE is $(O(1/L),\Omega(1))$-unstable, implying that all $(O(1/L),o(1))$-stable algorithms fail. Moreover, we also prove all degree-$D=o(L)$ polynomials are stable, yielding a low-degree MMSE lower bound and a separation from polynomial-time methods.

\medskip
\noindent\textbf{Random Linear Code.}
Finally, consider the random linear code problem where one observes $y=Ax$ with $A\in\{0,1\}^{m\times n}$ having i.i.d.\ uniform entries and $x\in\{0,1\}^n$ uniform. For $m\ge n+\omega(1)$, Gaussian elimination recovers $x$ in polynomial time with high probability.
We prove that the MMSE is $(O((m-n)/n),\Omega(1))$-unstable. Consequently, all $(O((m-n)/n),o(1))$-stable algorithms fail. We further show that all degree-$D=O(n/(m-n))$ polynomials are stable and hence suboptimal, yielding another separation to the polynomial-time class.

\medskip
\noindent\textbf{Gaussian Subset Sum.}
Let $X_i\sim\mathcal{N}(0,1)$ i.i.d.\ for $i=1,\ldots,N$. The goal is to recover a hidden subset $S\subset[N]$ of size $k=N^{\alpha+o(1)}$, $\alpha\in(0,1)$, from the observation $\sum_{i\in S}X_i$. A lattice-based method using the Lenstra--Lenstra--Lov\'asz algorithm succeeds with high probability.
Using AoN results, we show that the MMSE is $(\exp(-\Theta(k\log(N/k))),\Omega(1))$-unstable. By Theorem~\ref{thm:informal_main}, all $(\exp(-\Theta(k\log(N/k))),o(1))$-stable algorithms fail. We further prove that all degree-$D=o(\min\{k^{1/4},(n/k)^{1/5})$ polynomials are stable at this noise level, yielding a low-degree MMSE lower bound and a separation between low-degree polynomials and polynomial-time algorithms.

\section{Main Result: MMSE discontinuities imply Stable Algorithm Failure}

In this section, we formally present our main connection. We work under the assumption of an arbitrary parametric estimation setting as described in Section \ref{sec:contr} and an arbitrary noise operator $T_{\rho}: \R^N \rightarrow \R^n, \rho \in [0,1]$.

Our main result in the following.
\begin{theorem} \label{thm:noisy-MMSE-stable-barrier}
    Let $\rho, \eta \in [0,1].$ Recall the definition of the noisy MMSE, $\mathrm{MMSE}_{\rho},$ defined in \eqref{eq:MMSE_noisy}. 

   Then any $\mathcal{A}$ which is $(\rho,\eta)$-stable satisfies
        \begin{align*}
            \E_{(x,y) \sim \PP_{XY}}[\|\mathcal{A}(y) - x\|_2^2] \ge \text{MMSE}_{\rho} - 2\sqrt{2(7+4\eta)\eta} \cdot \E_{(x,y) \sim \PP_{XY}}[\|x\|_2^2]. 
        \end{align*}

In particular, if for some $\epsilon>0,$ $ \E_{(x,y) \sim \PP_{XY}}[\|\mathcal{A}(y) - x\|_2^2] \leq \text{MMSE}+\epsilon \EE_{XY}[\|x\|_2^2],$ then 
    \begin{align*}
         \text{MMSE}_{\rho} - \text{MMSE} \leq (2\sqrt{2(7+4\eta)\eta}+\epsilon) \cdot \E_{(x,y) \sim \EE_{XY}}[\|x\|_2^2]. 
    \end{align*}

\end{theorem}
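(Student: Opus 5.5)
The plan is to turn a stable $\mathcal{A}$ into an estimator that only sees the noisy observation $T_\rho(y)$. Concretely, I would use $\mathcal{A}\circ T_\rho$. By definition \eqref{eq:MMSE_noisy} of the noisy MMSE this estimator is admissible, so
\[
\E\big[\|\mathcal{A}(T_\rho(y))-x\|_2^2\big]\ge \mathrm{MMSE}_\rho .
\]
It therefore suffices to show that $\E\|\mathcal{A}(T_\rho(y))-x\|_2^2$ exceeds $\E\|\mathcal{A}(y)-x\|_2^2$ by at most $O(\sqrt{\eta})\,\E\|x\|_2^2$. The second ("in particular") statement then follows immediately by substituting the assumed upper bound $\mathrm{MMSE}+\epsilon\,\E\|x\|_2^2$ and rearranging.

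Write $S=\E\|x\|_2^2$, $a=\sqrt{\E\|\mathcal{A}(y)-x\|_2^2}$ and $b=\sqrt{\E\|\mathcal{A}(T_\rho(y))-x\|_2^2}$. First I reduce to the case $a^2\le \mathrm{MMSE}_\rho$, since otherwise the claim is trivial. In this case we also have $a^2\le S$, because the constant estimator $0$ shows $\mathrm{MMSE}_\rho\le S$. The reason for this reduction is that the stability assumption is normalized by $\E\|\mathcal{A}(y)\|_2^2$, not by $S$, so I need an a priori bound on the output norm. The triangle inequality in $L^2$ gives
\[
\sqrt{\E\|\mathcal{A}(y)\|_2^2}\le a+\sqrt{S}\le 2\sqrt S ,
\]
and hence $\E\|\mathcal{A}(y)\|_2^2\le 4S$. (Using $\|u+v\|^2\le 2\|u\|^2+2\|v\|^2$ instead gives the same order of bound.)

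Next I compare $b$ with $a$ via the $L^2$ triangle inequality together with $(\rho,\eta)$-stability:
\[
b\le a+\sqrt{\E\|\mathcal{A}(T_\rho(y))-\mathcal{A}(y)\|_2^2}\le a+\sqrt{\eta\,\E\|\mathcal{A}(y)\|_2^2}\le a+2\sqrt{\eta S}.
\]
Then $b^2-a^2=(b-a)(b+a)\le 2\sqrt{\eta S}\,(2a+2\sqrt{\eta S})$. Using $a\le\sqrt S$ and $\eta\le1$, this is $O(\sqrt\eta)\,S$. Combining with $b^2\ge\mathrm{MMSE}_\rho$ gives $a^2\ge \mathrm{MMSE}_\rho-O(\sqrt\eta)S$.

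Expanding the square directly is an equivalent alternative. It gives $b^2=a^2+\E\|\Delta\|^2+2\E\langle\Delta,\mathcal{A}(y)-x\rangle$ with $\Delta=\mathcal{A}(T_\rho(y))-\mathcal{A}(y)$, and Cauchy–Schwarz then yields the bound $a^2+4\eta S+4\sqrt{\eta}\,S$.

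There is no deep obstacle here. The only delicate step is the bookkeeping that converts the stability normalization $\E\|\mathcal{A}(y)\|^2$ into $S$, together with choosing the constants. To match the exact constant $2\sqrt{2(7+4\eta)\eta}$, I would run the argument with the quadratic bound $\E\|\mathcal{A}(y)\|^2\le 2a^2+2S$. I would then bound the cross term by Cauchy–Schwarz as a single square root of a product of the form $\eta\,(\text{const}+\text{const}\cdot\eta)S^2$, and collect terms so that they combine into one square root.
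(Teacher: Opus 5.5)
Your proof is correct and shares the paper's skeleton. Both arguments dispose of the case $\E\|\mathcal{A}(y)-x\|_2^2>\mathrm{MMSE}_\rho$, use $\mathrm{MMSE}_\rho\le S$ to get $\E\|\mathcal{A}(y)\|_2^2\le 4S$, compare $\mathcal{A}\circ T_\rho$ with $\mathcal{A}$, and invoke the definition of $\mathrm{MMSE}_\rho$. The difference is in the comparison step. The paper writes the gap as $\E\langle \mathcal{A}(T_\rho(y))-\mathcal{A}(y),\,\mathcal{A}(T_\rho(y))+\mathcal{A}(y)-2x\rangle$, which forces it to also bound $\E\|\mathcal{A}(T_\rho(y))\|_2^2\le 8(1+\eta)S$. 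You instead expand around $\mathcal{A}(y)-x$ (or use Minkowski), which needs no control of $\mathcal{A}(T_\rho(y))$ and gives $b^2\le a^2+(4\sqrt{\eta}+4\eta)S$.

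You can drop the plan in your last paragraph to reproduce the paper's constant. The inequality $(4\sqrt{\eta}+4\eta)^2\le 56\eta+32\eta^2=\bigl(2\sqrt{2(7+4\eta)\eta}\bigr)^2$ is equivalent to $4\sqrt{\eta}\le 5+2\eta$, which always holds. So your bound already implies the theorem as stated.

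In fact, mimicking the paper would not honestly give that constant. Its Cauchy--Schwarz step bounds $\E\|\mathcal{A}(T_\rho(y))+\mathcal{A}(y)-2x\|_2^2$ by $\E[\|\mathcal{A}(T_\rho(y))\|_2^2+\|\mathcal{A}(y)\|_2^2+2\|x\|_2^2]$. That bound drops the cross terms and undercounts the $4\|x\|_2^2$ term. Bounding the three pieces separately but correctly yields a larger constant. Your route is the cleaner one and rigorously delivers a constant at least as good as the one claimed.
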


Theorem \ref{thm:noisy-MMSE-stable-barrier} has the following immediate corollary.

\begin{corollary}\label{cor:main}
    Suppose for some $\rho \in [0,1],\alpha>0$ that \[\text{MMSE}_{\rho} - \text{MMSE}  \geq \alpha \E_{(x,y) \sim \PP_{XY}}[\|x\|_2^2].\]Then, for any $\eta \leq \min \{\alpha^2/400,1\}$, any $\mathcal{A}$ algorithm which is $(\rho, \eta)$-stable must be $\alpha/2$-suboptimal, in the sense 
    \begin{align}\label{eq:MMSE_sub}
    \E_{(x,y) \sim \PP_{XY}}[\|\mathcal{A}(y) - x\|_2^2] \geq \text{MMSE}+\alpha \EE_{XY}[\|x\|_2^2]/2.
    \end{align}
\end{corollary}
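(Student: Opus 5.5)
The corollary follows directly from \cref{thm:noisy-MMSE-stable-barrier} by contradiction, using the second ("in particular") conclusion with $\epsilon = \alpha/2$. Write $M := \E_{(x,y)\sim\PP_{XY}}[\|x\|_2^2]$. Suppose a $(\rho,\eta)$-stable algorithm $\mathcal{A}$ violated \eqref{eq:MMSE_sub}, i.e.
\[
\E_{(x,y)\sim\PP_{XY}}\big[\|\mathcal{A}(y)-x\|_2^2\big] < \mathrm{MMSE} + \tfrac{\alpha}{2} M .
\]
Then the hypothesis of the "in particular" part holds with $\epsilon=\alpha/2$. It yields
\[
\mathrm{MMSE}_\rho - \mathrm{MMSE} \le \Bigl(2\sqrt{2(7+4\eta)\eta} + \tfrac{\alpha}{2}\Bigr) M .
\]
One could equally start from the first displayed inequality of \cref{thm:noisy-MMSE-stable-barrier} and subtract $\mathrm{MMSE}$ directly; the arithmetic is identical.

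The only real step is to check that the stability error term is strictly smaller than $\alpha/2$ under the stated constraint on $\eta$. Since $\eta\le 1$, we have $7+4\eta\le 11$, so
\[
2\sqrt{2(7+4\eta)\eta}\le 2\sqrt{22\,\eta}.
\]
Using $\eta\le \alpha^2/400$, this gives
\[
2\sqrt{22\,\eta}\le 2\sqrt{22}\,\frac{\alpha}{20}=\frac{\sqrt{22}}{10}\,\alpha\approx 0.469\,\alpha<\frac{\alpha}{2},
\]
and the inequality is strict because $\alpha>0$. Combining the two bounds gives $\mathrm{MMSE}_\rho-\mathrm{MMSE} < \alpha M$, which contradicts the instability hypothesis $\mathrm{MMSE}_\rho-\mathrm{MMSE}\ge \alpha M$. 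Hence \eqref{eq:MMSE_sub} holds.

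A degenerate case needs separate handling. If $M=0$, then $x=0$ almost surely. In that case $\mathrm{MMSE}_\rho=\mathrm{MMSE}=0$, so the hypothesis cannot hold with $\alpha>0$ and the statement is vacuous.

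There is no substantive obstacle. The only care needed is the numerical constant: one must confirm that $\sqrt{22}/10<1/2$, i.e. $22<25$. This is exactly why the constant $400$ and the cap $\eta\le1$ appear in the statement.
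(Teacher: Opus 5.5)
Your argument is correct and is essentially the paper's proof: apply the last inequality of Theorem~\ref{thm:noisy-MMSE-stable-barrier} and verify $2\sqrt{2(7+4\eta)\eta}\le 2\sqrt{22\eta}\le \tfrac{\sqrt{22}}{10}\alpha<\alpha/2$; the direct, non-contradiction variant you mention is exactly how the paper phrases it. One small slip in your degenerate case: if $\E[\|x\|_2^2]=0$, the hypothesis does hold, since it reads $0\ge 0$. So the statement is not vacuous but trivially true, because the conclusion reduces to $\E[\|\mathcal{A}(y)-x\|_2^2]\ge 0$.
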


\begin{proof}
    The proof follows from the last displayed inequality of Theorem \ref{thm:noisy-MMSE-stable-barrier} using that for the range of $\eta$'s of interest it holds $2\sqrt{2(7+4\eta)\eta} \leq \alpha/2.$
\end{proof}
In words, the Corollary suggests that to prove the failure of stable algorithms for an estimation task, we may study how quickly the MMSE jumps if noise is injected to the input. This is exactly our approach in our applications in the next section

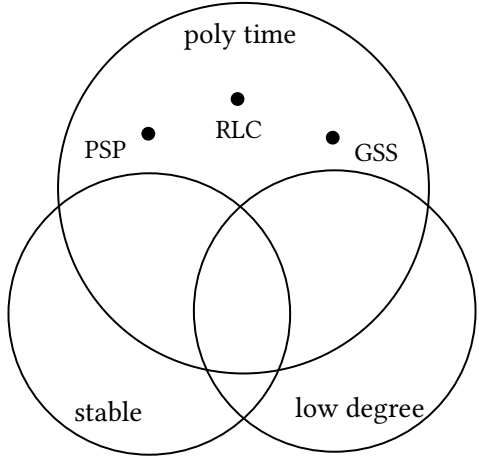
\begin{figure}
    \centering

\tikzset{every picture/.style={line width=0.75pt}} 

\begin{tikzpicture}[x=0.75pt,y=0.75pt,yscale=-1,xscale=1]

\draw  [color={rgb, 255:red, 0; green, 0; blue, 0 }  ,draw opacity=1 ] (132.45,123.52) .. controls (132.45,71.87) and (174.32,30) .. (225.98,30) .. controls (277.63,30) and (319.5,71.87) .. (319.5,123.52) .. controls (319.5,175.18) and (277.63,217.05) .. (225.98,217.05) .. controls (174.32,217.05) and (132.45,175.18) .. (132.45,123.52) -- cycle ;
\draw   (200.95,185.52) .. controls (200.95,146.3) and (232.75,114.5) .. (271.98,114.5) .. controls (311.2,114.5) and (343,146.3) .. (343,185.52) .. controls (343,224.75) and (311.2,256.55) .. (271.98,256.55) .. controls (232.75,256.55) and (200.95,224.75) .. (200.95,185.52) -- cycle ;
\draw   (107.45,187.02) .. controls (107.45,147.8) and (139.25,116) .. (178.48,116) .. controls (217.7,116) and (249.5,147.8) .. (249.5,187.02) .. controls (249.5,226.25) and (217.7,258.05) .. (178.48,258.05) .. controls (139.25,258.05) and (107.45,226.25) .. (107.45,187.02) -- cycle ;
\draw  [fill={rgb, 255:red, 0; green, 0; blue, 0 }  ,fill opacity=1 ] (175,96.02) .. controls (175,94.38) and (176.33,93.05) .. (177.98,93.05) .. controls (179.62,93.05) and (180.95,94.38) .. (180.95,96.02) .. controls (180.95,97.67) and (179.62,99) .. (177.98,99) .. controls (176.33,99) and (175,97.67) .. (175,96.02) -- cycle ;
\draw  [fill={rgb, 255:red, 0; green, 0; blue, 0 }  ,fill opacity=1 ] (268,98.02) .. controls (268,96.38) and (269.33,95.05) .. (270.98,95.05) .. controls (272.62,95.05) and (273.95,96.38) .. (273.95,98.02) .. controls (273.95,99.67) and (272.62,101) .. (270.98,101) .. controls (269.33,101) and (268,99.67) .. (268,98.02) -- cycle ;
\draw  [fill={rgb, 255:red, 0; green, 0; blue, 0 }  ,fill opacity=1 ] (220,78.52) .. controls (220,76.88) and (221.33,75.55) .. (222.98,75.55) .. controls (224.62,75.55) and (225.95,76.88) .. (225.95,78.52) .. controls (225.95,80.17) and (224.62,81.5) .. (222.98,81.5) .. controls (221.33,81.5) and (220,80.17) .. (220,78.52) -- cycle ;

\draw (194.5,38.5) node [anchor=north west][inner sep=0.75pt]  [font=\small] [align=left] {poly time};
\draw (250.5,227.5) node [anchor=north west][inner sep=0.75pt]  [font=\small] [align=left] {low degree};
\draw (139,229.5) node [anchor=north west][inner sep=0.75pt]  [font=\small] [align=left] {stable};
\draw (280.5,99.5) node [anchor=north west][inner sep=0.75pt]  [font=\footnotesize] [align=left] {GSS};
\draw (144.5,98) node [anchor=north west][inner sep=0.75pt]  [font=\footnotesize] [align=left] {PSP};
\draw (210.5,88) node [anchor=north west][inner sep=0.75pt]  [font=\footnotesize] [align=left] {RLC};

\end{tikzpicture}
    \caption{Pictorial representation of our separation results between the  class polynomial-time algorithms can solve, and the classes stable algorithms/low-degree polynomials can solve for parametric estimation.}
    \label{fig:venn}
\end{figure}

\section{Applications}
\subsection{The Planted Shortest Path Model}
We start with defining the first parametric estimation problem of interest.
\begin{definition}
    For $c,C>0$ and $n \in \N$. In the planted shortest path (PSP) problem defined on $n$-vertex undirected graphs, a path $H$ of length $L := (C+o(1))\frac{\log n}{\log\log n}$ is sampled uniformly at random between vertex $1$ and vertex $2$. The observed graph $G$ is the union of the sampled path $H$ and an independent instance of the Erd\H{o}s-R\'{e}nyi random graph $G(n, q)$ where $q := c\frac{\log n}{n}$. 
    
    The estimation goal of the statistician is to estimate the planted path $H$ from the observed graph $G$. 
    
\end{definition}
A few basic remarks are in order.
\begin{remark}
   We will often, equivalently, parametrize the planted shortest path problem by $L$ and $q$, which are implicitly parametrized by the constants $C$ and $c$.  
\end{remark}
\begin{remark}
    We will often use the adjacency matrix to represent a graph $G$. The indicator of an edge between vertices $i$ and $j$ is given by the matrix entry $G_{ij} \in \{0,1\}$.
\end{remark}

Now, we make the following important remark on the ``easiness" of the task.
\begin{remark}[``Polynomial-time solvability of PSP"]
    The reason that the above model is called the planted shortest path problem is that, for any constant $c > 0$, once $C < 1$, the planted path $H$ will with high probability (as $n$ grows) be the shortest path between vertex $1$ and $2$ in the observed graph $G$. In particular, in that regime Dijsktra's algorithm can exactly output $H$ from $G$ in polynomial-time, with high probability.
\end{remark}

We are interested to understand the power of stable algorithms for this task, and for this reason we define the following natural noise operator.
\begin{definition}[Noise operator for PSP]
    We consider the following natural noise operator $T_{\rho}$ for the planted shortest path problem. For $\rho \in [0,1]$, the noise operator $T_{\rho}$ maps the observed graph $G$ to a noisy version $\hat{G}$ of it, where for every pair of vertices $i,j$, independently with probability $\rho$, the edge connection $\hat{G}_{ij}$ is resampled from $\text{Bern}(q)$, and with the remaining probability, the edge connection is unchanged and $\hat{G}_{ij} = G_{ij}$.
\end{definition}

\begin{remark}
    Note that if $\rho = 0$, $T_{\rho}(G) = G$, and if $\rho = 1$, $T_{\rho}(G)$ is distributed as a fresh instance of an Erd\H{o}s-R\'{e}nyi random graph $G(n,q)$.
\end{remark}

Our first result is on the MMSE instability of the PSP task. As a corollary, using Theorem \ref{thm:noisy-MMSE-stable-barrier}, we conclude the failure of stable algorithms for this polynomial-time solvable task.
\begin{theorem}\label{thm:noisy-MMSE-PSP}
    For any constants $c > 0$ and $C \in (0,1)$, if $\rho \in (0, 1/2]$ satisfies that $\rho = \omega\left(1/L\right)$, then the noisy MMSE of the PSP problem is at least
    \begin{align*}
        \text{MMSE}_{\rho} \ge (1 - o(1))L.
    \end{align*}
    In particular, for large enough $n$, all $(\rho,0.01)$-stable algorithms are $0.2$-suboptimal, in the sense of \eqref{eq:MMSE_sub}.
\end{theorem}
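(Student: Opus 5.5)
The plan is to reduce everything to the first display of \cref{thm:noisy-MMSE-stable-barrier}. Encode $H$ as $x\in\{0,1\}^{\binom n2}$, the indicator vector of its edges, so that $\E\|x\|_2^2=L$. Since $C<1$, Dijkstra's algorithm recovers $H$ exactly with high probability. The squared error on the failure event is at most $(2L)^2\cdot o(1)$ only if the failure probability is $o(1/L)$; to avoid this, I would simply cap the output, returning the empty estimate on failure, which gives error at most $L$ there. Either way $\mathrm{MMSE}=o(L)$. Assume $\mathrm{MMSE}_\rho\ge(1-o(1))L$ for the moment. For a $(\rho,0.01)$-stable $\mathcal A$, \cref{thm:noisy-MMSE-stable-barrier} gives
\[\E\|\mathcal A(G)-x\|_2^2\ge(1-o(1))L-2\sqrt{2(7.04)(0.01)}\,L\ge(1-o(1))L-0.751L\ge \mathrm{MMSE}+0.2L\]
for large $n$. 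This is the "in particular" claim. Note that \cref{cor:main} would not suffice here, since $\alpha^2/400<0.01$; we use the theorem directly.

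For the main bound, I would use $\mathrm{MMSE}_\rho=\E\|x\|^2-\E\|\E[x\mid\hat G]\|^2$. By the Nishimori identity, $\E\|\E[x\mid \hat G]\|^2=\E|H\cap H'|$, where $H'$ is an independent draw from the posterior of $H$ given $\hat G=T_\rho(G)$. So the goal is to show that a posterior sample shares only $o(L)$ edges with the truth.

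The posterior has the explicit form
\[\PP(H\mid\hat G)\propto\prod_{e\in H}w(\hat G_e),\qquad w(1)=\frac{1-\rho+\rho q}{q},\quad w(0)=\rho.\]
Under the planted law, the edges of $H$ are independently "broken" (absent in $\hat G$) with probability about $\rho$. Because $\rho L\to\infty$, the true path falls apart into about $\rho L$ present segments, typically of length about $1/\rho=o(L)$, separated by gaps.

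The key step is to show that the posterior cannot anchor these segments. A segment of length $k$ starting near a gap is, from the posterior's point of view, competing with all present paths of length $k$ in the Erd\H{o}s--R\'enyi part that connect compatibly. There are about $n(c\log n)^k$ such paths, against energy weight $w(1)^k\approx q^{-k}$ multiplied by a density $q^k$. Meanwhile a gap costs only a factor $\rho$ but frees an endpoint to jump to about $n$ vertices. In the regime $L=C\log n/\log\log n$ with $C<1$, this entropy should beat the energy advantage of the true segments. I would make this precise with a planted-versus-alternative first and second moment computation of the partition function restricted to paths $H'$ with $|H\cap H'|\ge\delta L$. Concretely, I would show that
\[\E\Big[\sum_{H':|H'\cap H|\ge\delta L}\prod_{e\in H'}w(\hat G_e)\Big]\]
is negligible compared with a high-probability lower bound on the full partition function $Z$. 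The lower bound on $Z$ would come from counting the many paths built by re-routing around gaps, via a second-moment (Paley--Zygmund) estimate. Taking $\delta\to0$ slowly then gives $\E|H\cap H'|=o(L)$.

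The main obstacle is this last comparison. The first-moment bound on high-overlap paths requires a careful enumeration of how $H'$ can share $j$ edges with $H$, grouped into runs, with the attendant gap penalties $\rho$ and boundary entropy. The lower bound on $Z$ must hold with high probability rather than only in expectation. I would first attempt to lower bound $Z$ using only local re-routings near the $\approx\rho L$ gaps, since this already provides a multiplicative entropy of $(\rho^2 c\log n)^{\Omega(\rho L)}$. I would then check whether this suffices against the overlap-restricted first moment. If it does not, the fallback is to exploit the full Erd\H{o}s--R\'enyi path entropy on the scale of whole segments.
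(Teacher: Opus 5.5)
Your handling of the ``in particular'' clause is fine. Applying the first display of Theorem~\ref{thm:noisy-MMSE-stable-barrier} directly with $\eta=0.01$ is correct, since Corollary~\ref{cor:main} is too weak here, and Dijkstra gives $\mathrm{MMSE}=o(L)$. Your $(2L)^2$ worry is moot: for indicator vectors $\|\hat x-x\|_2^2=|\hat H\triangle H|\le 2L$.

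The gap is in the main bound. You never carry out the decisive comparison (you flag it as the main obstacle), and in the form you state it, it cannot work. The unconditional planted first moment of the overlap-restricted partition function is dominated by rare events. The term $H'=H$ alone gives $\E\prod_{e\in H}w(\hat G_e)\ge\big((1-\rho)^2/q\big)^L$. There are $M\approx n^{L-1}$ paths and $(nq)^L=n^{C+o(1)}$, so this is $M\,n^{1-C-o(1)}(1-\rho)^{2L}\ge M\,n^{1-C-o(1)}$ for $\rho\le 1/2$. This contribution comes from the event that all $L$ path edges survive, which has probability at least $2^{-L}=n^{-o(1)}$. On the complementary event that some path edge is destroyed (probability $1-o(1)$ since $\rho L\to\infty$), the conditional mean of $Z=\sum_{H'}\prod_{e\in H'}w(\hat G_e)$ given the path edges is at most $(1+o(1))M$. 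So $Z/M$ is tight under the planted law, and no high-probability lower bound on $Z$ can dominate that first moment, whether from re-routing or otherwise.

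Two devices from the paper are missing. First, reveal the number $\eps L\sim\mathrm{Bin}(L,\rho)$ of deleted path edges. This only decreases the MMSE, so lower bounds transfer, and $1\le\eps L\le L-1$ with probability $1-o(1)$. Given $\eps L$, the observation is $G(n,q)$ plus a uniformly random $(1-\eps)L$-subset of a uniform path, and the rare-event blowup disappears.

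Second, use the planting trick. In this conditional model the normalized count $Z_Y(G)$ of $(1-\eps)$-approximate paths is exactly $d\PP/d\QQ$, so $\PP(Z_Y\le\delta)\le\delta$ for free, with no Paley--Zygmund argument under the planted law. The planted expectation of the part of $Z_Y$ coming from paths that meet the truth equals $\E_{\QQ}[N^{(2)}_{L,\eps}]/\E_{\QQ}[N_{L,\eps}]^2$, a purely null count of overlapping pairs of approximate paths. A Li--Schramm-type enumeration bounds this by $O\big(1/(n^{\eps L-1}nq)+\sqrt L/(nq)\big)=o(1)$. Hence a posterior sample is edge-disjoint from $H$ with probability $1-o(1)$, and the Nishimori identity finishes.

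A single deleted edge already suffices: it raises the number of competing approximate paths to $n^{C+o(1)}$. The mechanism is this global entropy, not local re-routing around the roughly $\rho L$ gaps, and your $\delta L$ overlap threshold is unnecessary.
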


The MMSE instability result for PSP says that even if we disconnect on average any growing number of edges from the planted path, the recovery problem becomes suddenly impossible. The proof of this sharp MMSE jump is deferred to Section \ref{sec:PSP}. We highlight that it is an interesting application of the so-called planting trick from the literature of random constraint-satisfaction problems \cite{achlioptas2008algorithmic}, and especially how it has been recently used in the AoN/threshold literature \cite{coja2022statistical,mossel2025bayesian}, alongside a careful second moment method.

Our next result establishes the failure of low-degree polynomials for the PSP task, via Theorem \ref{thm:noisy-MMSE-PSP}. We do this in two steps. First, we prove all symmetric low-degree polynomials are stable for PSP and conclude their failure from Theorem \ref{thm:noisy-MMSE-PSP}. Then we prove that there exists an optimal polynomial for PSP task among the degree-$D$ polynomials which is symmetric to conclude the lower bound.

\begin{definition}[Symmetric Polynomials]\label{def:sym-poly}
    A polynomial $f: \{0,1\}^{\binom{[n]}{2}} \to \R$ is said to be symmetric for PSP if for any permutation $\pi: [n] \to [n]$ that fixes vertices $1$ and $2$, the polynomial is invariant under permuting the indices of the variables $G_{i,j}$ according to the permutation $\pi$. We will use $G_{\pi}$ to denote the graph obtained from $G$ by permuting vertices according to $\pi$.
\end{definition}

The stability of symmetric low-degree polynomials is then as follows.
\begin{theorem}[Stability of Symmetric Low-Degree Polynomials]\label{PSP:stab}
    Let $f: \{0,1\}^{\binom{n}{2}} \to \R$ be a symmetric polynomial of degree at most $D$. Suppose $\frac{D(1-q)}{nq} = o(1)$, $2D < L$, $\frac{L^2D}{n} = o(1)$, and $q \le 1/2$. Then, $f$ is $\left(\rho, 2\left(1  - (1 - \rho)^D + O\left(\sqrt{\frac{D(1-q)}{nq}}\right)\right)\right)$-stable for the PSP task.
\end{theorem}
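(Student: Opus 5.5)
Since $f$ is real-valued, stability asks for $\E\bigl[(f(G)-f(\hat G))^2\bigr] \le \eta\,\E[f(G)^2]$ with $\hat G = T_\rho(G)$ and $G$ drawn from the planted distribution. The natural tool is a Fourier/Efron--Stein analysis in the $q$-biased basis of the \emph{null} Erd\H{o}s--R\'enyi measure. There, $T_\rho$ is exactly the noise operator: it multiplies each character $\chi_S$ by $(1-\rho)^{|S|}$. The planted measure is not a product measure, but it is a mixture over paths $H$ of product measures. On each of these, the $L$ path edges are forced to $1$ and all other coordinates are $\mathrm{Bern}(q)$.

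\textbf{Step 1: conditional noise-stability decomposition.} Write $\mathbb{Q}$ for the null measure and expand $f=\sum_{|S|\le D}\hat f(S)\chi_S$, where $\chi_S=\prod_{e\in S}\frac{G_e-q}{\sqrt{q(1-q)}}$. Condition on $H$. For an edge $e\in H$ we have $G_e=1$, and $\hat G_e$ equals $1$ with probability $1-\rho+\rho q$. For $e\notin H$ the coordinates are i.i.d.\ $\mathrm{Bern}(q)$ and the resampling acts as the usual noise operator. Expanding $\E[(f(G)-f(\hat G))^2]=\E[f(G)^2]+\E[f(\hat G)^2]-2\E[f(G)f(\hat G)]$ gives two contributions:
\begin{itemize}
\item characters $S$ whose edges avoid $H$, which contribute $\sum (1-(1-\rho)^{|S|})\cdot(\text{weights})\le (1-(1-\rho)^D)\cdot(\cdot)$;
\item characters meeting $H$.
\end{itemize}
I will bound $\E[(f(G)-f(\hat G))^2]\le 2\bigl(\E[(f(G)-g(G))^2] + \dots\bigr)$, where $g$ is the comparison described next.

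\textbf{Step 2: comparing planted and null second moments using symmetry.} Symmetry under permutations fixing $1,2$ means $\hat f(S)$ depends only on the isomorphism type of $S$ rooted at $\{1,2\}$. Following the standard low-degree computation, the planted correlation $\E_{\mathbb P}[\chi_S]$ is nonzero only if $S\subseteq H$ for some path, i.e.\ $S$ is a union of subpaths. Since $2D<L$, such an $S$ cannot be a full $1$--$2$ path, so $S$ is a forest of paths with at most two components touching $1$ or $2$. Its planted moment is then $\Pr[S\subseteq H]\cdot\bigl(\sqrt{(1-q)/q}\bigr)^{|S|}$, where $\Pr[S\subseteq H]\approx (L/n)^{|S|}n^{\#\text{comp}}$ up to $L$-factors. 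The plan is to show that $\E_{\mathbb P}[f^2]$ and $\E_{\mathbb P}[f(G)f(\hat G)]$ equal their null counterparts $\sum\hat f(S)^2$ and $\sum(1-\rho)^{|S|}\hat f(S)^2$ up to a multiplicative error of order $\sqrt{D(1-q)/(nq)}$. To do this, group the characters into orbits and use Cauchy--Schwarz over orbits: the cross terms $\E_{\mathbb P}[\chi_S\chi_T]$ with $S\ne T$ are nonzero only when $S\triangle T$ lies in a path. Each added path edge costs a factor $\frac{L}{n}\cdot\frac{1-q}{q}$, relative to orbit-size ratios of order $n$, which produces ratios like $\frac{D(1-q)}{nq}$. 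The conditions $L^2D/n=o(1)$ handle collisions of path vertices with the vertices of $S$. The conclusion is then
\[
\E[(f-T_\rho\text{-version})^2]\le \bigl(2(1-(1-\rho)^D)+O(\sqrt{D(1-q)/(nq)})\bigr)\E_{\mathbb P}[f^2],
\]
with the factor $2$ coming from $\E f^2+\E\hat f^2-2\E f\hat f$ after replacing the planted moments by null ones.

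\textbf{Main obstacle.} The hard part is Step 2: controlling the planted-versus-null discrepancy of quadratic forms uniformly over all symmetric $f$. Orbits of rooted graph shapes can have very different sizes, so a naive bound loses a factor of $n$. I would first try a weighted Cauchy--Schwarz, normalizing each orbit by its size $\approx n^{|V(S)|-2}$, and check that the resulting matrix of cross-terms has operator norm $1+O(\sqrt{D(1-q)/(nq)})$. This would be done by a Schur-test row-sum bound summing over the number of extra path edges in $S\triangle T$.
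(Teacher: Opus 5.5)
Your skeleton matches the paper's: expand $f$ in $q$-biased characters grouped into orbits of rooted shapes, observe that $\E_{\PP}[\chi_S\chi_T]$ vanishes unless $S\triangle T$ lies on the planted path, and control the cross terms by a weighted Cauchy--Schwarz. But the proposal stops exactly where the proof begins. The bound $1+O(\sqrt{D(1-q)/(nq)})$ on the cross-term matrix is asserted, not derived, and the accounting you offer for it is off.

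For a linear forest $S$, $\Pr[S\subseteq\bH]$ is of order $L^{c}n^{-(|S|+c)}$, where $c$ is the number of components avoiding $1,2$. Your $(L/n)^{|S|}n^{\#\mathrm{comp}}$ already equals $L$ for a single edge. In the paper's weighted bound, each edge of $(l_1(\alpha_1)\cup l_2(\alpha_2))\cap H$ costs $\sqrt{(2D/n)(1-q)/q}$. The factor $L$ enters once per path component not attached to $1$ or $2$, as $L\sqrt{2D/n}$; this is exactly what $L^2D/n=o(1)$ pays for.

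The paper makes the estimate rigorous in three steps.
\begin{enumerate}
\item It orders each pair of distinct shapes so that $|\hat f_{\alpha_2}|(n/2D)^{v(\alpha_2)/2}|\mathrm{Aut}(\alpha_2)|\le|\hat f_{\alpha_1}|(n/2D)^{v(\alpha_1)/2}|\mathrm{Aut}(\alpha_1)|$. This turns $|\hat f_{\alpha_1}\hat f_{\alpha_2}|\,|\mathrm{Aut}(\alpha_1)||\mathrm{Aut}(\alpha_2)|$ into $\hat f_{\alpha_1}^2|\mathrm{Aut}(\alpha_1)|^2(n/2D)^{(v(\alpha_1)-v(\alpha_2))/2}$.
\item For fixed $\alpha_1$, it enumerates $(\alpha_2,l_1,l_2)$ via $U_H=(l_1(\alpha_1)\cup l_2(\alpha_2))\cap H$ and the parameters $s,t,v_s,v_t,v_{s,1},v_{s,2},c_s$. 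It uses that $l_1(\alpha_1)$ and $l_2(\alpha_2)\cap H$ together determine $l_2(\alpha_2)$.
\item It uses the relations $v_s=s+c_s$, $t\le v_t$ and $v_s+v_t\le v_{s,1}+v_{s,2}$. The first two need $2D<L$, so that no component of $U_H$ joins $1$ to $2$. These collapse the count into geometric series in $18\sqrt{(2D/n)(1-q)/q}$ and $3L\sqrt{2D/n}$.
\end{enumerate}
The $2D$ in the weight is what balances the up to $(2D)^{v_{s,1}}$ choices of which vertices of $l_1(\alpha_1)$ lie on $H$ against $n$ choices per new vertex. It produces the symmetric factor $(2D/n)^{(v_{s,1}+v_{s,2})/2}$. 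Normalizing by orbit size alone, as you propose, does not say how this imbalance is handled.

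Two further pieces are missing.

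First, $\hat G=T_\rho(G)$ does not have law $\PP$. Comparing $\E_{\PP}[f^2]$ and $\E_{\PP}[f(G)f(\hat G)]$ with null quantities therefore does not control $\E[f(\hat G)^2]$; your route would need a second norm equivalence under the noisy planted law. Also, for the cross term the achievable statement is an additive error $O(\sqrt{D(1-q)/(nq)})\,\E_{\PP}[f^2]$ obtained by polarization. A multiplicative error relative to $\sum(1-\rho)^{|S|}\hat f(S)^2$, as you claim, is not achievable in general.

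The paper avoids every planted-versus-null comparison. It measures against the same-shape sum
$\Sigma=\sum_\alpha\hat f_\alpha^2\sum_{l_1,l_2\in\LL_\alpha}\E[\mathbf{1}\{l_1(\alpha)\triangle l_2(\alpha)\subseteq\bH\}(\tfrac{1-q}{q})^{(|l_1(\alpha)\cap\bH|+|l_2(\alpha)\cap\bH|)/2}]$,
all of whose terms are nonnegative.
\begin{itemize}
\item The exact identity $\E[\chi_S(G)\chi_T(T_\rho G)]=(1-\rho)^{|T|}\E[\chi_S(G)\chi_T(G)]$ gives $\E[f(G)f(\hat G)]\ge(1-\rho)^D\Sigma$ minus the off-diagonal error.
\item The termwise inequality $\rho+(1-\rho)\tfrac{1-q}{q}\le\tfrac{1-q}{q}$, which is where $q\le 1/2$ is used, gives $\E[f(\hat G)^2]\le\Sigma$ plus the same error.
\end{itemize}
So same-shape pairs with $l_1(\alpha)\ne l_2(\alpha)$ never have to be estimated.

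Second, Step~1 is not usable as written. The function $g$ is never defined. Moreover, splitting characters into those avoiding or meeting $H$ does not diagonalize the conditional quadratic form. Pairs $S\subsetneq T$ with $S\cap H=\emptyset$ and $T\setminus S\subseteq H$ have nonzero correlation and couple the two groups.
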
The proof of this theorem is also deferred to Section \ref{sec:PSP}. We highlight that while in nature direct, it follows from careful counting arguments and is significantly more involved than any noise stability calculation under the null product measure.

Now, while our stability result is stated for symmetric polynomials, our stable algorithm lower bound actually applies more generally to arbitrary low-degree polynomials. This is due to the following convexity argument.

\begin{proposition}\label{PSP:sym}
    Let $g: \{0,1\}^{\binom{[n]}{2}} \to \R$ be a polynomial of degree at most $D$. Then, there exists a symmetric polynomial $f$ of degree at most $D$, defined as \begin{align*}f(G) = \frac{1}{(n-2)!}\sum_{\pi \text{ that fixes } 1,2} g(G_{\pi})\end{align*}
    that satisfies
    \begin{align*}
        \E_{\PP}\left[\left(f(G) - \allone\{\{i,j\} \in \bH\}\right)^2\right] \le \E_{\PP}\left[\left(g(G) - \allone\{\{i,j\} \in \bH\}\right)^2\right].
    \end{align*}
\end{proposition}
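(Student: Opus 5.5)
The plan is a convexity (Jensen) argument combined with the invariance of the planted distribution $\PP$ under vertex relabelings that fix $1$ and $2$. First, $f$ has degree at most $D$: each $G \mapsto g(G_\pi)$ only renames the variables $G_{ij} \mapsto G_{\pi(i)\pi(j)}$, which preserves degree, and an average of degree-$\le D$ polynomials has degree $\le D$. Second, $f$ is symmetric in the sense of Definition~\ref{def:sym-poly}. For any $\sigma$ fixing $1,2$ we have $f(G_\sigma)=\frac{1}{(n-2)!}\sum_\pi g(G_{\sigma\pi})$ (with the appropriate composition convention). Since $\pi\mapsto\sigma\pi$ is a bijection of the stabilizer of $\{1,2\}$, this average equals $f(G)$.

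The core step is distributional invariance. If $\pi$ fixes $1$ and $2$, then $(G_\pi,\bH_\pi)$ has the same joint law as $(G,\bH)$. This holds because the planted path is uniform among length-$L$ paths from $1$ to $2$, a set that $\pi$ maps bijectively to itself. The Erd\H{o}s--R\'enyi part $G(n,q)$ is exchangeable, and the union operation commutes with relabeling.

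Next apply Jensen pointwise. For a fixed target $t$,
\[
\bigl(f(G)-t\bigr)^2 \le \frac{1}{(n-2)!}\sum_\pi \bigl(g(G_\pi)-t\bigr)^2 .
\]
Taking expectations and using invariance gives
\[
\E\bigl[(g(G_\pi)-\allone\{\{i,j\}\in\bH\})^2\bigr]
=\E\bigl[(g(G)-\allone\{\{\pi^{-1}(i),\pi^{-1}(j)\}\in\bH\})^2\bigr].
\]
This is where I expect the only real obstacle: the target edge moves under $\pi$.

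I would resolve this as follows. If $\{i,j\}\subseteq\{1,2\}$, or more generally if we only average over permutations that also fix $i$ and $j$ (the version actually needed coordinatewise), the target is invariant and every summand equals $\E[(g-\allone\{\{i,j\}\in\bH\})^2]$, which finishes the proof. For the full estimator of the vector $(\allone\{e\in\bH\})_e$, I would instead symmetrize equivariantly. Set
\[
f_{e}(G)=\frac{1}{(n-2)!}\sum_\pi g_{\pi(e)}(G_\pi),
\]
and sum the coordinatewise Jensen bounds over all edges $e$. Since $e\mapsto\pi(e)$ permutes the edges, the total error of $f$ is at most that of $g$. In this form the desired inequality follows, and it is the form in which the proposition is combined with Theorem~\ref{PSP:stab} to conclude the low-degree lower bound.
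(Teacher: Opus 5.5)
You have found exactly the step where the paper's own one-line proof breaks. The paper applies Jensen and then asserts $\E[(g(G_\pi)-\allone\{\{i,j\}\in\bH\})^2]=\E[(g(G)-\allone\{\{i,j\}\in\bH\})^2]$ because $G_\pi$ and $G$ are equidistributed. That is only marginal invariance of $G$. The target depends on $\bH$, and the correct joint statement $(G_\pi,\bH_\pi)\stackrel{d}{=}(G,\bH)$ moves the target edge, as you observed.

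This gap cannot be filled, because the proposition as stated is false. Take $L\ge 3$, $\{i,j\}=\{3,4\}$, $p_0:=\PP(\{3,4\}\in\bH)=(L-2)/\binom{n-2}{2}$, and the degree-one polynomial $g(G)=c\,G_{34}$ with $c:=p_0/(p_0+(1-p_0)q)$. A direct computation gives $\E[(g(G)-\allone\{\{3,4\}\in\bH\})^2]=p_0(1-c)$.

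Now let $f$ be invariant under all $\pi$ fixing $1,2$. Joint invariance shows that $\E[f(G)\allone\{e\in\bH\}]$ is the same for all $\binom{n-2}{2}$ pairs $e$ avoiding $\{1,2\}$. Since $\bH$ always contains exactly $L-2$ such pairs, $\E[f(G)\allone\{\{3,4\}\in\bH\}]=p_0\,\E[f(G)]$, and therefore
\[
\E\bigl[(f(G)-\allone\{\{3,4\}\in\bH\})^2\bigr]\;\ge\;(\E[f(G)]-p_0)^2+p_0(1-p_0)\;\ge\;p_0(1-p_0)\;>\;p_0(1-c),
\]
because $c>p_0$ whenever $q<1$. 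The same argument shows that every $f$ symmetric in the sense of Definition~\ref{def:sym-poly} has only the trivial error $\PP(e\in\bH)(1-\PP(e\in\bH))$ on every coordinate $e$. So no argument, yours or the paper's, can prove the statement for the stated $f$ and a general pair $\{i,j\}$.

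Your repairs are correct inequalities, but about different objects, and your final sentence overclaims.
\begin{itemize}
\item The case $\{i,j\}\subseteq\{1,2\}$ is vacuous, since $\{1,2\}\notin\bH$ for $L\ge 2$.
\item Averaging only over permutations that also fix $i,j$ gives a valid coordinatewise Jensen bound.
\item Symmetrizing equivariantly via $f_e(G)=\frac{1}{(n-2)!}\sum_\pi g_{\pi(e)}(G_\pi)$ gives a valid bound on the total error summed over $e$.
\end{itemize}
However, the resulting polynomial (respectively each coordinate $f_e$) is in general not invariant under permutations fixing $1,2$ that move $e$. So it is not symmetric in the sense of Definition~\ref{def:sym-poly}, and Theorem~\ref{PSP:stab} does not apply to it. That proof expands $f$ in orbit sums $\sum_{l\in\LL_\alpha}\chi_{l(\alpha)}$ over partially labeled graphs with label set $K=\{1,2\}$, and uses full symmetry to replace the random path $\bH$ by a fixed path $H$.

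To actually close the low-degree lower bound, you would need one of the following:
\begin{itemize}
\item a stability theorem for polynomials invariant only under the stabilizer of $\{1,2\}$ and $e$, that is, label set $K=\{1,2,i,j\}$, with the counting tracking whether $i$, $j$ and $e$ lie on $\bH$;
\item a stability bound proved directly for equivariant vector-valued polynomials.
\end{itemize}
Neither your proposal nor the paper supplies this.
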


\begin{proof}
This fact is a simple corollary of Jensen's inequality:
\begin{align*}
    \E_{\PP}\left[\left(f(G) - \allone\{\{i,j\} \in \bH\}\right)^2\right] &= \E_{\PP}\left[\left(\underset{\pi \text{ that fixes } 1,2}{\E} g(G_{\pi}) - \allone\{\{i,j\} \in \bH\}\right)^2\right]\\
    &\le \E_{\PP}\left[\underset{\pi \text{ that fixes } 1,2}{\E}\left(g(G_{\pi}) - \allone\{\{i,j\} \in \bH\}\right)^2\right]\\
    &= \E_{\PP}\left[\left(g(G) - \allone\{\{i,j\} \in \bH\}\right)^2\right],
\end{align*}
where the last equality holds because for any $\pi$, $G_{\pi}$ and $G$ are equidistributed since $\PP$ is invariant under such permutation.
\end{proof}

Combining Theorem~\ref{thm:noisy-MMSE-stable-barrier}, Theorem~\ref{PSP:stab}, and Proposition~\ref{PSP:sym}, we obtain the following low-degree MMSE lower bound for PSP. In particular, there exists a separation between $D = O(\log n/\log\log n)$-degree polynomials and polynomial-time algorithms for PSP.
\begin{theorem}[Hardness for Polynomials]\label{PSP:LD}
    Let $f: \{0,1\}^{\binom{[n]}{2}} \to \R^{\binom{n}{2}}$ be a polynomial of degree at most $D$. Suppose $D < \frac{L}{2} \le \frac{C}{2} \frac{\log n}{\log\log n}$ and $\rho = \omega\left(1/L\right) = \omega\left(\frac{\log \log n}{\log n}\right)$, then the mean squared error of $f$ for PSP task is at least
    \begin{align*}
        \E\left[\|f(G) - H\|_2^2\right] &\ge \left(1 - O\left(\sqrt{1 - (1-\rho)^D + \sqrt{D/\log n}}\right) - o(1)\right)L\\
        &\ge \left(1 - O\left(\sqrt{\rho D + \sqrt{D/\log n}}\right) - o(1)\right)L.
    \end{align*}
\end{theorem}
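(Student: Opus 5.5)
The plan is to combine Proposition~\ref{PSP:sym}, Theorem~\ref{PSP:stab}, Theorem~\ref{thm:noisy-MMSE-PSP} and Theorem~\ref{thm:noisy-MMSE-stable-barrier}, in that order.

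\textbf{Step 1: symmetrize.} Write $f=(f_{ij})_{\{i,j\}}$, so that $\E\|f(G)-H\|_2^2=\sum_{\{i,j\}}\E(f_{ij}(G)-\allone\{\{i,j\}\in\bH\})^2$. Averaging each coordinate over permutations fixing $1,2$ does not directly give a symmetric scalar polynomial, because permuting vertices also moves the coordinate index. I would therefore use the equivariant version of Proposition~\ref{PSP:sym}: define
\[
\tilde f(G)=\frac{1}{(n-2)!}\sum_{\pi} \pi^{-1}\cdot f(G_\pi),
\]
which permutes the output coordinates back. By the same Jensen argument, and since $\PP$ is invariant under $\pi$ acting jointly on $(G,H)$, this does not increase the MSE and keeps degree $\le D$. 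For stability I need symmetric scalar polynomials in the sense of Definition~\ref{def:sym-poly}. The coordinates of $\tilde f$ are only invariant under the stabilizer of the pair $\{i,j\}$, so I would either reprove Theorem~\ref{PSP:stab} for this stabilizer, or apply it coordinate-class-wise. Concretely, the pairs split into orbits ($\{1,2\}$, $\{1,k\}$, $\{2,k\}$, $\{k,l\}$ with $k,l\ge3$). I expect the counting argument in Theorem~\ref{PSP:stab} to apply to equivariant vector-valued polynomials, since $\|\tilde f\|_2^2$ and $\|\tilde f(G)-\tilde f(\hat G)\|_2^2$ are themselves symmetric functions. This reduction is the main obstacle, and I would handle it by rerunning the proof of Theorem~\ref{PSP:stab} on $\sum_{ij}(\tilde f_{ij}(G)-\tilde f_{ij}(\hat G))^2$.

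\textbf{Step 2: stability parameter.} With $q=c\log n/n$ we have $\frac{D(1-q)}{nq}=O(D/\log n)=o(1)$ since $D<L$. Also $L^2D/n=o(1)$, $2D<L$ and $q\le1/2$ hold trivially. So $\tilde f$ is $(\rho,\eta)$-stable with
\[
\eta=2\bigl(1-(1-\rho)^D+O(\sqrt{D/\log n})\bigr).
\]
If $\eta\ge1$ the claimed bound is vacuous, since its right-hand side is then nonpositive for a suitable constant in the $O(\cdot)$. So assume $\eta\le1$.

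\textbf{Step 3: conclude.} Theorem~\ref{thm:noisy-MMSE-stable-barrier} gives
\[
\E\|\tilde f(G)-H\|^2\ge \mathrm{MMSE}_\rho-2\sqrt{2(7+4\eta)\eta}\,\E\|H\|_2^2 .
\]
Here $\E\|H\|_2^2=L$ (the number of path edges) and $2\sqrt{2(7+4\eta)\eta}=O(\sqrt\eta)$ for $\eta\le1$. Theorem~\ref{thm:noisy-MMSE-PSP} gives $\mathrm{MMSE}_\rho\ge(1-o(1))L$, using $\rho=\omega(1/L)$ and, after monotonicity or truncation, $\rho\le1/2$. Combining these yields the first inequality. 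The second follows from $1-(1-\rho)^D\le\rho D$ by Bernoulli's inequality.
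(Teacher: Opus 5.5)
You follow the paper's route exactly: symmetrize, invoke Theorem~\ref{PSP:stab}, then combine Theorem~\ref{thm:noisy-MMSE-stable-barrier} with Theorem~\ref{thm:noisy-MMSE-PSP}. Your Steps 2--3 are fine (the parameter check, the vacuous case $\eta\ge1$, monotonicity of $\mathrm{MMSE}_\rho$ in $\rho$, Bernoulli's inequality). You are also right not to apply Proposition~\ref{PSP:sym} coordinatewise. For $e=\{1,i\}$ with $i\ge3$ the target $\allone\{\{1,i\}\in\bH\}$ is not invariant, so the last equality in its Jensen argument fails. Concretely, any $F$ invariant under permutations fixing $1,2$ satisfies $\E[F\,\allone\{\{1,i\}\in\bH\}]=p\,\E F$ with $p=1/(n-2)$, so its error on that coordinate is at least $p-p^2$. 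The degree-one polynomial $aG_{1i}$ with $a=p/(p+(1-p)q)$ has error $p-p^2/(p+(1-p)q)<p-p^2$. So your equivariant averaging is genuinely needed, and the MSE half of Step 1 is correct. The paper's one-line combination has this same hole.

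The gap is the stability of the resulting coordinates, which you flag but do not close. Each $\tilde f_e$ is invariant only under permutations fixing $1,2$ and $e$ setwise, and Theorem~\ref{PSP:stab} says nothing about such polynomials. Applying it ``coordinate-class-wise'' is therefore not available. Nor does it help that $\|\tilde f(G)\|_2^2$ and $\|\tilde f(G)-\tilde f(\hat G)\|_2^2$ are symmetric: the theorem controls $(F(G)-F(\hat G))^2$ for a single symmetric degree-$D$ polynomial $F$. Each $\tilde f_e$ still expands in orbit sums in which the endpoints of $e$ are extra labeled vertices.

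Rerunning the proof with $K=\{1,2\}\cup e$ is real work. The step that replaces $\E_{\bH}$ by a fixed path $H$ uses that permutations fixing $1,2$ act transitively on $1$--$2$ paths, which fails for the smaller group. You must instead condition on whether, and where, the endpoints of $e$ lie on $\bH$. When they are off $\bH$ (probability $1-O(L/n)$), the paper's counting goes through essentially verbatim. When one lies on $\bH$, the identities $v_s=s+c_s$ and $t\le v_t$ break, because a component of $U_H$ or $I_H$ can now contain two labeled vertices. You then have to show that the $O(L/n)$ probability of these configurations pays for the lost factors of $\sqrt{2D/n}$. Heuristically $(L/n)\sqrt{n/(2D)}=o(1)$, so this should work, but it is the substantive step and the proposal does not carry it out.
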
We last highlight that our hardness of low-degree polynomial for the PSP task is almost tight, in the sense that there exists a degree-$L$ polynomial estimator that achieves exact recovery with high probability, whereas the theorem above shows that any degree-$o(L)$ polynomial estimator has trivial mean squared error. 

\subsection{Random Linear Code}

Our second separation result is for the Random Linear Code (RLC) setting.
\begin{definition}
    In the RLC problem, parametrized by $n,m \in \NN$, a Boolean matrix $A \in \{0,1\}^{m \times n}$ is sampled uniformly at random, and a message $x \in \{0,1\}^n$ is sampled independently uniformly at random. Then the codeword $y=Ax \in \{0,1\}^m$ is observed where the matrix-vector multiplication is computed in the field $\mathbb{F}_2$. The goal is to estimate the message $x$ from $(y,A)$.
\end{definition}

\begin{remark}[``Polynomial-time solvability of RLC"]
    We note that in the noiseless case where $y = Ax$ for RLC, once $A$ has full column rank, one may recover $x$ exactly from $A$ and $y$ and in polynomial time by performing Gaussian elimination. It is also easy to check that the random Boolean matrix $A \in \{0,1\}^{m \times n}$ has full column rank with probability at least $1 - 2^{n-m}$. Thus, as long as $m - n = \omega(1)$, there exists a polynomial-time algorithm that can achieve exact recovery of $x$ with high probability.
\end{remark}

Turning to stable algorithms, we define the noise operator.
\begin{definition}[Noise operator for RLC]
     For $\rho \in [0,1]$, the noise operator $T_{\rho}$ maps the encoded message $y = Ax$ to a noisy version $\hat{y}$ of it, where every coordinate of $y$ is resampled from $\text{Bern}(1/2)$ independently with probability $\rho$.
\end{definition}

\begin{remark}
    Note that the noise operator $T_{\rho}$ acts only on the encoded message $y$ and leaves $A$ unchanged. If $\rho = 0$, $T_{\rho}$ is the identity map, and if $\rho = 1$, $T_{\rho}$ maps any $y$ to a uniform random vector distributed as $\text{Bern}(1/2)^{\otimes m}$.
\end{remark}

\begin{theorem} \label{thm:linear-code-noisy-MMSE}
    For any function $f(n) = \omega(1)$ and $m = n + f(n)$, if $\rho \in (0,1)$ satisfies $\frac{1}{\left(1 - \frac{3}{2}\rho + \frac{3}{4}\rho^2\right)^m} = \omega \left(2^{2(m-n)}\right)$, then the noisy MMSE of the RLC problem is at least
    \begin{align*}
        \text{MMSE}_{\rho} \ge \left(\frac{1}{4} - o(1)\right)n.
    \end{align*}
    In particular, for large enough $n$, all $(\rho,0.01)$-stable algorithms are $0.2$-suboptimal, in the sense of \eqref{eq:MMSE_sub}.
\end{theorem}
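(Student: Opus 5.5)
The plan is to show that after noising, the posterior of each bit $x_i$ given $(A,\hat y)$ is essentially uniform. Then $\E[x_i\mid A,\hat y]\approx 1/2$ in $L^2$, and
\[
\mathrm{MMSE}_\rho=\sum_i \E\bigl[\Var(x_i\mid A,\hat y)\bigr]\ge \Bigl(\tfrac14-o(1)\Bigr)n .
\]
I will control the deviation of the posterior mean by a second-moment (chi-square) computation. The comparison is between the planted law $\PP(x,A,\hat y)$ and the null law $\QQ$ in which $\hat y\sim\mathrm{Bern}(1/2)^{\otimes m}$ is independent of $(x,A)$.

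\textbf{Step 1 (from chi-square to variance).} I will use the standard inequality
\[
\E_{\PP}\bigl[(\E[x_i\mid A,\hat y]-\tfrac12)^2\bigr]\le \tfrac14\,\chi^2\bigl(\PP_{(A,\hat y)\mid x_i=1}\,\|\,\PP_{(A,\hat y)}\bigr),
\]
or a cruder bound by $\chi^2(\PP\|\QQ)$ on the full joint law. It follows from Cauchy--Schwarz, writing the posterior mean as an expectation under $\QQ$ of $x_i$ times the likelihood ratio $L=d\PP/d\QQ$. It therefore suffices to show $\E_\QQ[L^2]=1+o(1)$, possibly with one coordinate of $x$ fixed.

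\textbf{Step 2 (second moment).} Since $\hat y_j=(Ax)_j$ with probability $1-\rho/2$, we have
\[
L(x)=\prod_j 2\bigl[(1-\tfrac\rho2)\one\{\hat y_j=(Ax)_j\}+\tfrac\rho2\one\{\hat y_j\ne(Ax)_j\}\bigr].
\]
Hence $\E_\QQ[L^2]=\E_{A,x,x'}\E_{\hat y}[L(x)L(x')]$ for two independent replicas $x,x'$. Each coordinate $j$ contributes the factor $2-2\rho+\rho^2$ if $(Ax)_j=(Ax')_j$, and the factor $2\rho-\rho^2$ otherwise. If $x\neq x'$, the vector $A(x-x')$ is uniform on $\mathbb F_2^m$ with independent coordinates, so each factor averages to exactly $1$. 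If $x=x'$, which has probability $2^{-n}$ (or $2^{-(n-1)}$ after fixing $x_i$), the product equals $(2-2\rho+\rho^2)^m$. Hence
\[
\E_\QQ[L^2]-1\le 2\cdot 2^{m-n}\bigl(1-\rho+\tfrac{\rho^2}{2}\bigr)^m .
\]

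\textbf{Step 3 (matching the hypothesis).} The condition of Theorem~\ref{thm:linear-code-noisy-MMSE} gives $(1-\tfrac32\rho+\tfrac34\rho^2)^{m/2}=o(2^{-(m-n)})$. Since $1-\rho+\rho^2/2\le(1-\tfrac32\rho+\tfrac34\rho^2)^{1/2}$ on $[0,1]$ (check by squaring), this implies $2^{m-n}(1-\rho+\rho^2/2)^m=o(1)$, and hence $\mathrm{MMSE}_\rho\ge(1/4-o(1))n$. The author's constant $1-\tfrac32\rho+\tfrac34\rho^2$ suggests they do something more wasteful, such as conditioning on $A$ having full rank or splitting off the fixed coordinate. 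My bound appears to need only a weaker hypothesis, so either route suffices.

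\textbf{Step 4 (the stable-algorithm consequence).} Gaussian elimination succeeds with probability at least $1-2^{n-m}=1-o(1)$, so $\mathrm{MMSE}\le o(n)$, while $\E\|x\|_2^2=n/2$. This gives an MMSE gap of order $\alpha\approx 1/2$ in the normalization of Theorem~\ref{thm:noisy-MMSE-stable-barrier}, and I would then plug in $\eta=0.01$. I expect this to be the main obstacle. With the constants stated, $2\sqrt{2(7+4\eta)\eta}\approx 0.75$, which is too large to certify $0.2$-suboptimality directly. I would therefore try to justify the claim by working with centered signals $x-\tfrac12\allone$, or by redoing the proof of Theorem~\ref{thm:noisy-MMSE-stable-barrier} with sharper constants in this special case. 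The information-theoretic part, Steps 1--3, is routine by comparison.
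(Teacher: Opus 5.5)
Your Steps 1--3 are correct, but they take a different route from the paper.

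\textbf{Comparison with the paper.} The paper never introduces a null model. It writes $\Pr(x_1=1\mid A,\hat y)=L_1/(L_0+L_1)$ with $L_b=\sum_{x:x_1=b}(\rho/(2-\rho))^{w(Ax-\hat y)}$. It then computes $\E[L_0]$ and $\E[L_0^2]$ under the \emph{planted} law, averaging over $A,x^*,z$, and uses Chebyshev to get $L_0,L_1=(1\pm o(1))\E[L_0]$ with high probability. Because that second moment is taken under the planted law, the diagonal term $x=x'=x^*$ contributes the cubic quantity $C=\bigl((1-\tfrac\rho2)^3+(\tfrac\rho2)^3\bigr)^m$. This is exactly where the hypothesis $2^{2(m-n)}C=o(1)$ comes from.

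Your two-replica computation under $\QQ$ only produces $B=(1-\rho+\rho^2/2)^m$, so it needs only $2^{m-n}B=o(1)$. The inequality $B^2\le C$ you use to get this from the stated hypothesis is the same one the paper uses to obtain $B/A=o(1)$. So your argument is shorter and works under a strictly weaker hypothesis. The paper's argument avoids an auxiliary null model but pays for it with the cubic term.

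\textbf{Two repairs in Step 1.} First, your Step 1 display is in fact an identity. Since $\chi^2(p_1\|p)$ has the planted marginal $p$ in the denominator, you need one more line to pass to $\QQ$. Let $p_b$ be the law of $(A,\hat y)$ given $x_i=b$, let $q$ be its law under $\QQ$, and let $\mu=\E[x_i\mid A,\hat y]$. Using $|\mu-\tfrac12|\le\tfrac12$,
\[
\E\bigl(\mu-\tfrac12\bigr)^2\le\tfrac12\,\E\bigl|\mu-\tfrac12\bigr|=\tfrac14\,\mathrm{TV}(p_0,p_1)\le\tfrac18\sum_{b\in\{0,1\}}\sqrt{\chi^2(p_b\,\|\,q)} .
\]
Second, drop the ``cruder bound by $\chi^2$ on the full joint law''. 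With $x$ included, that divergence is $(2-2\rho+\rho^2)^m-1$, which is exponentially large. What your Step 2 actually computes is the divergence of the $(A,\hat y)$-marginal, and that is what you need.

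\textbf{Step 4.} Your diagnosis is right, and the paper does not resolve this either. Its proof of this theorem establishes only the MMSE bound. Neither Theorem~\ref{thm:noisy-MMSE-stable-barrier} (constant $\approx 0.75$ at $\eta=0.01$) nor Corollary~\ref{cor:main} (which here needs $\eta\le\alpha^2/400\approx 1/1600$) delivers the clause.

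Recentering to $x-\tfrac12\allone$ will not fix this on its own. $(\rho,\eta)$-stability is normalized by $\E\|\mathcal{A}(y)\|_2^2$, which changes when the output is shifted, so the recentred algorithm can have a much worse relative stability parameter.

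The sharper-constant route does close it. Let $\mathrm{MSE}=\E\|\mathcal{A}(y)-x\|_2^2$. By Minkowski in $L^2$,
\[
\sqrt{\mathrm{MMSE}_\rho}\le\sqrt{\mathrm{MSE}}+\sqrt{\eta\,\E\|\mathcal{A}(y)\|_2^2},\qquad \sqrt{\E\|\mathcal{A}(y)\|_2^2}\le\sqrt{\mathrm{MSE}}+\sqrt{\E\|x\|_2^2}.
\]
Hence $\sqrt{\mathrm{MSE}}\ge\bigl(\sqrt{\mathrm{MMSE}_\rho}-\sqrt{\eta\,\E\|x\|_2^2}\bigr)/(1+\sqrt{\eta})$. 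Plug in $\mathrm{MMSE}_\rho\ge(\tfrac14-o(1))n$, $\E\|x\|_2^2=n/2$ and $\eta=0.01$. This gives $\mathrm{MSE}\ge(0.152-o(1))n$, which exceeds $\mathrm{MMSE}+0.2\cdot\tfrac n2=0.1n+o(n)$. So every $(\rho,0.01)$-stable algorithm is indeed $0.2$-suboptimal.
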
We prove this MMSE instability theorem now by a direct analysis of the posterior mean and the proof can be found in Section \ref{sec:RLC}. 

Some remarks are in order.
\begin{remark}
    We note that $\frac{1}{4}n$ is the mean squared error achieved by trivially outputting the constant vector $(1/2, 1/2, \dots, 1/2)^\top \in \R^n$.
\end{remark}

\begin{remark} \label{rem:sufficent-condition-linear-code-noisy-MMSE}
    Let us elaborate further on the condition $\frac{1}{\left(1 - \frac{3}{2}\rho + \frac{3}{4}\rho^2\right)^m} \gg 2^{2(m-n)}$ in Theorem~\ref{thm:linear-code-noisy-MMSE} above. For $1 \ll f(n) \ll n$, it essentially says that $\rho$ needs to be at least on the order of $\frac{m-n}{n}$. More precisely, if there exists a constant $\eps > 0$ such that $\frac{1}{2}\rho - \frac{1}{4} \rho^2 \ge \left(\frac{2\log 2}{3} + \eps\right)\cdot \frac{m-n}{m}$, then the condition $\frac{1}{\left(1 - \frac{3}{2}\rho + \frac{3}{4}\rho^2\right)^m} \gg 2^{2(m-n)}$ is satisfied. We could verify the claim by checking
    \begin{align*}
        \frac{1}{\left(1 - \frac{3}{2}\rho + \frac{3}{4}\rho^2\right)^m} = e^{-m\log\left(1 - \frac{3}{2}\rho + \frac{3}{4}\rho^2\right)} \ge e^{3m\left(\frac{1}{2}\rho - \frac{1}{4}\rho^2\right)} \ge e^{3\left(\frac{2\log 2}{3} + \eps\right)(m-n)} \gg 2^{2(m-n)}.
    \end{align*}Hence our MMSE instability result, Theorem \ref{thm:linear-code-noisy-MMSE} proves that as long as we randomize roughly $m-n$ of the $m$ entries of $y$ the recover task becomes impossible. 
\end{remark}

Our next result is the $(\rho, O(\rho D))$-stability of any $D$-degree polynomial for the RLC task.
\begin{theorem}[Stability of Low-Degree Polynomials for RLC]\label{thm:low-deg-stability-rlc}
    Let $f: \{0,1\}^{m \times n} \times \{0,1\}^m \to \R$ be a polynomial of degree at most $D$. Suppose $2D \le n$. Then, $f$ is $(\rho, 2(1 - (1-\rho)^D))$-stable for RLC.
\end{theorem}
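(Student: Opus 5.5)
The plan is to bound $\E\|f(y)-f(T_\rho y)\|_2^2$ through the correlation $\E[f(y)f(T_\rho y)]$ and a Fourier expansion in the $y$-variables, leaving $A$ as coefficients. By linearity it is enough to treat a scalar polynomial $f$ of degree at most $D$. The starting identity is
\[
\E\bigl[(f(y)-f(T_\rho y))^2\bigr]=\E[f(y)^2]+\E[f(T_\rho y)^2]-2\E[f(y)f(T_\rho y)].
\]
Since $y=Ax$ with $A$ and $x$ uniform, every row satisfies $\langle a_i,x\rangle\sim\mathrm{Bern}(1/2)$. The operator $T_\rho$ resamples coordinates from $\mathrm{Bern}(1/2)$, so the key preliminary check is that $(A,T_\rho y)$ has the same joint law as $(A,y)$, giving $\E f(T_\rho y)^2=\E f^2$. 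Granting this, it suffices to prove the correlation bound
\[
\E[f(y)f(T_\rho y)]\ \ge\ (1-\rho)^D\,\E[f(y)^2].
\]
I expect the law-equality to hold only approximately, since $\langle a_i,x\rangle$ is degenerate when $x=0$ and the rows of $y$ are jointly constrained. If so, I would restrict to the low-degree moments that actually enter the computation, which is where the hypothesis $2D\le n$ will be used.

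Step 1 is the expansion. Write $\chi_S(y)=\prod_{i\in S}(-1)^{y_i}$ and
\[
f(A,y)=\sum_{S\subseteq[m],\,|S|\le D}c_S(A)\chi_S(y),
\]
where $c_S$ has $A$-degree at most $D-|S|$. Conditionally on $(A,y)$, the noise satisfies $\E[\chi_S(T_\rho y)\mid A,y]=(1-\rho)^{|S|}\chi_S(y)$. Hence
\[
\E[f(y)f(T_\rho y)]=\sum_{S,S'}(1-\rho)^{|S'|}M_{S,S'},\qquad M_{S,S'}=\E\bigl[c_S(A)c_{S'}(A)\,\one\{\textstyle\sum_{i\in S\triangle S'}a_i=0\}\bigr],
\]
where I averaged $\chi_{S\triangle S'}(Ax)$ over the uniform $x$. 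Similarly $\E f^2=\sum_{S,S'}M_{S,S'}$. The matrix $M$ is PSD, being an average of Gram matrices.

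Step 2, which I expect to be the main obstacle, is to control the off-diagonal entries $M_{S,S'}$ with $S\ne S'$. The diagonal part alone would immediately give the factor $(1-\rho)^D$. For the off-diagonal part I would exploit the randomness of the columns of $A$. The product $c_Sc_{S'}$ touches at most $2D-|S|-|S'|$ entries of $A$, hence at most that many columns. Every untouched column $j$ contributes an independent factor $\Pr[\sum_{i\in U}A_{ij}=0]=1/2$, with $U=S\triangle S'\neq\emptyset$. Using $2D\le n$, I would try to show that these off-diagonal terms combine into a contribution that is nonnegative after reweighting by $(1-\rho)^{|S'|}$.

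The concrete attempt is to symmetrize, writing $\sum_{S,S'}M_{S,S'}\lambda_{S'}=\tfrac12\sum_{S,S'}M_{S,S'}(\lambda_S+\lambda_{S'})$ with $\lambda_S=(1-\rho)^{|S|}$. Then I would split $\lambda_S=(1-\rho)^D+(\lambda_S-(1-\rho)^D)$. The first piece gives exactly $(1-\rho)^D\E f^2$. For the remaining piece, I would aim to show that $M$ is block-diagonal in the level $|S|$: cross-level terms should vanish because a monomial of total degree at most $2D<n$ in $(A,y)$ has the same moment as under the null, in which $y$ is independent of $A$. On each block the residual weight is a nonnegative constant, so each block contributes $v^\top M_{\text{block}}v\ge 0$.

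Step 3 assembles the bound. Combining the steps gives
\[
\E\bigl[(f(y)-f(T_\rho y))^2\bigr]\le 2\bigl(1-(1-\rho)^D\bigr)\E[f^2],
\]
which is the claimed $(\rho,2(1-(1-\rho)^D))$-stability. If exact moment matching fails because of the event $x=0$ or because $\mathrm{supp}(x)$ lies inside the touched columns, the fallback is to carry an error of order $2^{-(n-2D)}$ and absorb it. The stated clean constant suggests that the authors have an exact argument, most likely the null-moment-matching fact at degree $2D\le n$.
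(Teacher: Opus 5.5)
\paragraph{Review.} Your route is the paper's route. You expand in Walsh characters, use $\E[\chi_T(T_\rho y)\mid A,y]=(1-\rho)^{|T|}\chi_T(y)$, and reduce everything to one claim: planted moments of $(A,y)$ of degree at most $2D\le n$ coincide with null moments, where $y$ is uniform and independent of $A$. The paper asserts exactly this in its ``important fact'' that $(A^S,y^T)$ is uniform unless $S$ contains a full row indexed by $T$. Your Step 2 (block-diagonality of $M$ in $|S|$) and your identity $\E f(A,T_\rho y)^2=\E f(A,y)^2$ both rest on this claim.

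Your suspicion is correct: the claim fails already at degree one. Since $y=0$ when $x=0$, and $y$ is uniform given $x\neq 0$, one has $\Pr(y_1=1)=(1-2^{-n})/2$. Hence $\E[2y_1-1]=-2^{-n}$, and $M_{\{1\},\emptyset}=\Pr(a_1=0)=2^{-n}\neq 0$ when $c_{\{1\}}=c_\emptyset=1$. In general, for $T\neq\emptyset$,
\[
\E\Bigl[(-1)^{\sum_{(i,j)\in R}A_{ij}+\sum_{k\in T}y_k}\Bigr]=2^{-n}\,\one\{R=T\times J\ \text{for some}\ J\subseteq[n]\}.
\]
The hypothesis $2D\le n$ only excludes $J=[n]$; it does not exclude, e.g., $J=\emptyset$.

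This gap cannot be closed, because the inequality with the exact constant is false. Take $D=1$, any $n\ge 2$, and $f(A,y)=2y_1-1+2^{-n}$. Then $\E f^2=\Var(2y_1)=1-4^{-n}$. With $\hat y=T_\rho y$ we have $f(A,y)-f(A,\hat y)=2(y_1-\hat y_1)$ and $\Pr(y_1\neq\hat y_1)=\rho/2$, so
\[
\E\bigl[(f(A,y)-f(A,T_\rho y))^2\bigr]=2\rho>2\rho\,(1-4^{-n})=2\bigl(1-(1-\rho)^D\bigr)\E f^2 .
\]
So the clean constant does not signal an exact argument, and the paper's proof has the same hole.

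Your fallback is the right kind of repair, but not with the estimate you give. The entrywise bound $2^{-(n-2D)}$ is of order one when $2D=n$. Entrywise bounds must also be summed over many pairs: the constant character alone is correlated with every $\chi_T(y)$, $1\le |T|\le D$.

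Use the display above instead. The planted Gram matrix of the degree-$\le D$ Walsh basis is $I+E$, where the entries of $E$ lie in $\{0,\pm 2^{-n}\}$ and its absolute row sums are at most
\[
\epsilon:=2^{-n}\sum_{j\le D}\binom{m}{j}\sum_{j\le 2D}\binom{n}{j}.
\]
For $\epsilon<1$, the same three-term expansion gives stability parameter $\bigl(2(1-(1-\rho)^D)+2\epsilon\bigr)/(1-\epsilon)$. This is $2(1-(1-\rho)^D)+o(1)$, e.g.\ when $D\log(mn)\le cn$ for a small constant $c>0$. That is what is actually provable, and it suffices wherever the result is used with $o(1)$ slack, but it is not the statement as written.
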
The $\mathbb{F}_2$-structure of the RLC setting is, in fact, leading to ``almost" independence between $y$ and $A$, which allows for a much simpler proof of the low-degree stability as opposed to other models. The proof is deferred to Section \ref{sec:RLC}.

As a corollary, we get the following low-degree MMSE lower bound for RLC. In particular, we conclude a separation between $D=O(m/(m-n))$-degree polynomials and polynomial-time methods for RLC. 
\begin{corollary}
    For any function $f(n) = \omega(1)$ and $m = n + f(n)$, if for some constant $\eps > 0$, we have $\frac{1}{2}\rho - \frac{1}{4}\rho^2 \ge \left(\frac{2\log 2}{3} + \eps\right)\cdot \frac{m-n}{m}$, then any degree-$D$ polynomial $g: \{0,1\}^{m \times n} \times \{0,1\}^n \to \R^n$ has a mean squared error at least
    \begin{align*}
        \E\left[\|g(A,y) - x\|_2^2\right] \ge \left(\frac{1}{4} - O\left(\sqrt{1 - (1-\rho)^D}\right) - o(1)\right)n.
    \end{align*}
    In particular, if $D \le \frac{1}{\rho}$, we have
    \begin{align*}
        \E\left[\|g(A,y) - x\|_2^2\right] \ge \left(\frac{1}{4} - O\left(\sqrt{\rho D}\right) - o(1)\right)n.
    \end{align*}
\end{corollary}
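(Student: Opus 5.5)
The corollary follows by combining Theorem~\ref{thm:noisy-MMSE-stable-barrier}, Theorem~\ref{thm:linear-code-noisy-MMSE} and Theorem~\ref{thm:low-deg-stability-rlc}.

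\textbf{Step 1: MMSE instability.} By Remark~\ref{rem:sufficent-condition-linear-code-noisy-MMSE}, the hypothesis $\frac12\rho-\frac14\rho^2\ge(\frac{2\log 2}{3}+\eps)\frac{m-n}{m}$ implies the condition of Theorem~\ref{thm:linear-code-noisy-MMSE}. Hence $\mathrm{MMSE}_\rho\ge(\frac14-o(1))n$.

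\textbf{Step 2: stability of $g$.} Write $g=(g_1,\dots,g_n)$, where each coordinate $g_i$ is a polynomial of degree at most $D$. Theorem~\ref{thm:low-deg-stability-rlc} applies coordinatewise (this needs $2D\le n$; otherwise we argue separately below). It gives
\[
\E[(g_i(A,y)-g_i(A,T_\rho y))^2]\le 2(1-(1-\rho)^D)\,\E[g_i(A,y)^2].
\]
Summing over $i$ shows that $g$ is $(\rho,\eta)$-stable with $\eta=2(1-(1-\rho)^D)\le 2$.

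\textbf{Step 3: apply the barrier theorem.} Since $\E\|x\|_2^2=n/2$, Theorem~\ref{thm:noisy-MMSE-stable-barrier} yields
\[
\E\|g(A,y)-x\|_2^2\ge \mathrm{MMSE}_\rho-2\sqrt{2(7+4\eta)\eta}\cdot\frac n2.
\]
Using $\eta\le 2$, we have $2\sqrt{2(7+4\eta)\eta}=O(\sqrt{\eta})=O(\sqrt{1-(1-\rho)^D})$, which gives the first display. Theorem~\ref{thm:noisy-MMSE-stable-barrier} takes $\eta\in[0,1]$ in its statement, but its bound is monotone in $\eta$. When $\eta>1$ the claimed bound is negative for a suitably large implied constant and hence trivial. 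For the second display, Bernoulli's inequality gives $1-(1-\rho)^D\le\rho D$.

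\textbf{Remaining technicalities.} When $2D>n$ we have $\rho D\ge\rho n/2$, and this is bounded below by a constant because $\rho=\Omega((m-n)/m)$ and $m-n=\omega(1)$. So this case is also trivial once the $O(\cdot)$ constant is chosen large enough; in fact the first display is then trivial too, since $1-(1-\rho)^D$ is bounded below by a constant. The domain typo $\{0,1\}^n$ versus $\{0,1\}^m$ for $y$ is harmless.

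The only real obstacle is bookkeeping: checking that Theorem~\ref{thm:low-deg-stability-rlc} applies to vector-valued $g$ coordinatewise, and that the noise operator there matches the one used in $\mathrm{MMSE}_\rho$. Both hold by definition, since $T_\rho$ acts only on $y$.
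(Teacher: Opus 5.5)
Your proposal is correct and is exactly the argument the paper intends. The paper states this corollary as an immediate combination of Remark~\ref{rem:sufficent-condition-linear-code-noisy-MMSE} with Theorem~\ref{thm:linear-code-noisy-MMSE}, the coordinatewise stability of Theorem~\ref{thm:low-deg-stability-rlc}, and the barrier Theorem~\ref{thm:noisy-MMSE-stable-barrier} with $\E\|x\|_2^2=n/2$; your treatment of the cases $\eta>1$ and $2D>n$, where the bound is vacuous, just fills in bookkeeping the paper leaves implicit.
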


\subsection{Results for Gaussian Subset Sum (GSS)}
The final model we establish our separation is the Gaussian Subset Sum (GSS) model.

\begin{definition}
    Let $k, N \in \NN$ with $k \leq N$. In the Gaussian subset sum (GSS) problem, for some unknown $k$-subset $S \subseteq [N]$ we observe $Y=\sum_{i \in S} X_i$ where $(X_i)_{i \in [N]}$ are i.i.d. $N(0,1).$ The goal of the statistician is to estimate $1_S \in \{0,1\}^N$, the indicator of the set $S,$ from $Y$ and $X_i, i=1,2,\ldots,N$. 
\end{definition}

\begin{remark}[``Polynomial-time solvability of GSS"]
It is an easy calculation that for any $k,N \in \NN$ as long as $\sigma=0$ one can recover $S$ with probability 1. Now, differently, from PSP or RLC where the corresponding worst-case task is solvable even in the worst-case (shortest path and linear systems are in $\mathcal{P}$), subset sum is not expected be in $\mathcal{P}$. Surprisingly, though, there is a polynomial-time algorithm that can recover exactly $1_S$ in GSS with high probability \cite{frieze1986lagarias,zadik2018high}. Notably, the algorithm is based on Lenstra-Lenstra-Lovasz lattice-basis reduction scheme.
\end{remark}

We define the noise operator for the task.
\begin{definition}[Ornstein–Uhlenbeck (OU) operator for GSS]
     For $\rho \in [0,1]$, the OU noise operator $T_{\rho}$ maps the subset sum $y$ to a noisy version $\hat{y}=\sqrt{1-\rho^2}y+\rho Z$ where $Z$ is an independent $\mathcal{N}(0,1).$
\end{definition}
To prove this, we first prove the following proposition, which follows as a direct corollary of the main result of AoN results in \cite{reeves2019all}. The proof is deferred to Section \ref{sec:GSS_proofs}.

\begin{proposition}\label{prop:aon_GSS}
    Suppose $k \leq N^{0.49}.$ Then for some $\rho=\exp(-\Theta(k \log (N/k)))$ it holds  \[\text{MMSE}_{\rho} \geq (1-o(1))k.\]In particular, for large enough $n$, all $(\rho,0.01)$-stable must be $0.4$-suboptimal, in the sense of \eqref{eq:MMSE_sub}.
\end{proposition}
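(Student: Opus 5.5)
The plan is to reduce the noisy GSS problem to the Gaussian sparse linear regression AoN setting of \cite{reeves2019all}, and then invoke Corollary~\ref{cor:main}.

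\textbf{Reduction.} Since $T_\rho$ only acts on $Y$, the statistician observes $\hat Y=\sqrt{1-\rho^2}\,Y+\rho Z$ together with $X=(X_i)_{i\in[N]}$. Dividing by $\sqrt{1-\rho^2}$ gives the equivalent observation
\[
\tilde Y=\langle X,1_S\rangle+\sigma W, \qquad \sigma=\frac{\rho}{\sqrt{1-\rho^2}},\quad W\sim\mathcal N(0,1).
\]
This is one sample of sparse linear regression with a uniformly random $k$-sparse binary signal and Gaussian design. It can be embedded into the regime of \cite{reeves2019all}, where $n$ samples are observed with noise $\sigma$, by adding independent uninformative auxiliary samples. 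Alternatively, one uses the single-sample version of their result together with the fact that the MMSE is monotone in the noise level.

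\textbf{Applying the AoN result.} The statement I would use is that for $k\le N^{0.49}$, once the noise exceeds the information-theoretic threshold, $\mathrm{MMSE}\ge(1-o(1))k$. With $n=1$ sample, this threshold is $\tfrac12\log(1+k/\sigma^2)\le(1-\epsilon)\log\binom Nk$. This holds as soon as $\sigma^2\ge k\binom Nk^{-2(1-\epsilon)}$. So we choose $\rho$ with $\rho^2\asymp k\binom Nk^{-2(1-\epsilon)}$, which gives $\rho=\exp(-\Theta(k\log(N/k)))$. Heuristically, the bound is the mutual information argument: $I(1_S;\tilde Y\mid X)\le\tfrac12\E\log(1+\|X_S\|^2/\sigma^2)$ is much smaller than $H(1_S)=\log\binom Nk$. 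Fano-type reasoning then gives that the posterior is spread over exponentially many $k$-sets. The AoN theorem, whose proof combines an I-MMSE argument with overlap concentration, upgrades this to near-trivial MMSE.

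\textbf{Conclusion.} The trivial MMSE here is $k-k^2/N=(1-o(1))k$, since the best constant estimator is $k/N$. Noiselessly, $\mathrm{MMSE}=0$, because $S$ is identifiable almost surely from $Y$ and $X$. So $\mathrm{MMSE}_\rho-\mathrm{MMSE}\ge(1-o(1))\E\|1_S\|^2$, which gives $\alpha=1-o(1)$. Corollary~\ref{cor:main} requires $\eta\le\alpha^2/400\approx 1/400$, and this does not cover $\eta=0.01$. Instead, I would use Theorem~\ref{thm:noisy-MMSE-stable-barrier} directly: at $\eta=0.01$ we have $2\sqrt{2(7.04)(0.01)}\approx0.75$. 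This yields error at least $(1-o(1)-0.75)k$, roughly a $0.25$-suboptimality. Matching the stated $0.4$ may require the sharper variant of the bound or a smaller $\eta$, and I would double-check the constant.

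\textbf{Main obstacle.} The main difficulty is verifying that the hypotheses of \cite{reeves2019all} hold in the single-sample regime with $k\le N^{0.49}$. Their result is stated asymptotically with $n\asymp k\log(N/k)$ samples. Transferring it requires either monotonicity in the noise level or rerunning their second-moment/I-MMSE argument with $n=1$ and tiny $\sigma$.
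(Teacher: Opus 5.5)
Your reduction and parameter choice are the paper's argument. The paper also rescales to one-sample sparse regression with $\sigma=\rho/\sqrt{1-\rho^2}$ and applies \cite[Theorem 3]{reeves2019all} with $n=1\le 0.99\,n^*$, where $n^*=2k\log(N/k)/\log(1+k/\sigma^2)$. It chooses $\rho=\exp(-\Theta(k\log(N/k)))$ so that $n^*\ge 30$ and $k/\sigma^2\ge C$; your condition $\tfrac12\log(1+k/\sigma^2)\le(1-\epsilon)\log\binom Nk$ is the same requirement.

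The ``main obstacle'' you name is not treated as one there. The theorem's hypotheses are phrased through $n/n^*$ and $k/\sigma^2$, and $n^*$ is $\Theta(1)$ (not $\asymp k\log(N/k)$) when $\sigma$ is exponentially small, so the paper applies the theorem at $n=1$ directly. If you want extra margin, note that ``adding uninformative samples'' does not produce their model. The right comparison is that one sample is obtained from $m\le 0.99\,n^*$ samples at the same $\sigma$ by discarding rows, so its MMSE is at least theirs.

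The one piece you leave open is the constant in the last sentence, and your suspicion is correct. Theorem~\ref{thm:noisy-MMSE-stable-barrier} at $\eta=0.01$ only gives error at least about $0.25k$, and the paper's proof does not justify the $0.4$ at all. You can close this with the $L^2$ triangle inequality instead of the Cauchy--Schwarz expansion. Write $\|Z\|_{L^2}:=(\E\|Z\|_2^2)^{1/2}$, and use stability together with $\|\mathcal A(y)\|_{L^2}\le\|\mathcal A(y)-x\|_{L^2}+\|x\|_{L^2}$:
\[
\sqrt{\mathrm{MMSE}_\rho}\le\|\mathcal A(T_\rho(y))-x\|_{L^2}\le\|\mathcal A(y)-x\|_{L^2}+\sqrt\eta\,\|\mathcal A(y)\|_{L^2}\le(1+\sqrt\eta)\,\|\mathcal A(y)-x\|_{L^2}+\sqrt\eta\,\|x\|_{L^2}.
\]
Take $\eta=0.01$, $\|x\|_{L^2}^2=k$ and $\mathrm{MMSE}_\rho\ge(1-o(1))k$. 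Then the error is at least $(0.9/1.1)^2k-o(k)\approx 0.67k$. Since the noiseless MMSE is $0$ (distinct subset sums are a.s.\ distinct), this gives the claimed $0.4$-suboptimality with room to spare.
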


In addition to showing the stable algorithm separation, we also obtain hardness for low-degree polynomials. As usual by now, we first prove the following stability theorem for low-degree polynomials.

\begin{theorem}\label{thm:low-deg-stability-gss}
    Let $f: \R^n \times \R \to \R$ be a polynomial of degree at most $D$. Suppose $k = o(n)$ and $D = o\left(\min\left\{k^{1/4},  \left(n/k\right)^{1/5}\right\}\right)$. Then, $f$ is $(\rho, 2(1 - (\sqrt{1-\rho^2})^D + o(1)))$-stable for the GSS.
\end{theorem}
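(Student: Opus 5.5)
We reduce stability to a comparison between the planted distribution and a product (null) measure, where Hermite calculus applies. Under the OU operator, $\E[\|f(X,Y)-f(X,T_\rho Y)\|^2] = 2\E[f^2] - 2\E[f(X,Y)f(X,T_\rho Y)]$, because $(X,T_\rho Y)$ has the same law as $(X,Y)$ ($Y$ is centered Gaussian given $X$ and $S$). It therefore suffices to show
\[
\E_{\PP}[f(X,Y)f(X,\hat Y)] \ge \big((\sqrt{1-\rho^2})^D - o(1)\big)\E_{\PP}[f^2].
\]
Set $\lambda=\sqrt{1-\rho^2}$. Conditionally on $(X,S)$, the pair $(Y,\hat Y)$ is coupled only through a deterministic $Y$ and fresh noise. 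I therefore write the planted law as a small perturbation of a null law. Under $\mathbb{Q}$, the variables $X_1,\dots,X_N,Y'$ are i.i.d.\ $\mathcal N(0,1)$ with $Y'=Y/\sqrt k$. Under $\PP$, $Y'=\frac1{\sqrt k}\sum_{i\in S}X_i$ is exactly standard normal and correlated with each $X_i$, $i\in S$, with coefficient $1/\sqrt k$.

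\textbf{Step 1 (expand in a rotated basis).} For fixed $S$, change variables to $X_S^\perp$, the projection of $X_S$ orthogonal to $\mathbf 1_S$, together with $Y'$ and $X_{S^c}$. Then $Y'$ is independent of everything else and only $Y'$ is noised by $T_\rho$. Hence, conditional on $S$, the pair correlation equals $\sum_j \lambda^j \|P_j f\|^2$, where $P_j$ projects onto the degree-$j$ Hermite chaos in $Y'$, uniformly under $\PP(\cdot\mid S)$. This gives $\E_\PP[f(X,Y)f(X,\hat Y)]=\sum_j\lambda^j\E_S\|P^{(S)}_jf\|^2$ exactly. Since $f$ has degree $\le D$ in all variables jointly, its $Y'$-degree is at most $D$. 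So the correlation is at least $\lambda^D\E_\PP f^2$, with nothing lost. Checking: $\sum_j\lambda^j\|P_jf\|^2\ge\lambda^D\sum_j\|P_jf\|^2=\lambda^D\E[f^2\mid S]$. This gives stability $2(1-\lambda^D)$ exactly, and then the $o(1)$ slack is not even needed. The subtlety is that $Y$ enters $f$ as the raw variable. In the rotated coordinates, $Y'$ is a genuine coordinate and $f$ remains a polynomial of total degree $\le D$ in $(X_S^\perp, X_{S^c}, Y')$, because $X_S = X_S^\perp + \frac{Y'}{\sqrt k}\mathbf 1_S$ is linear. So the $Y'$-degree is at most $D$.

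\textbf{Step 2 (check the noise acts correctly).} $T_\rho$ replaces $Y$ by $\lambda Y+\rho Z$ but leaves $X$ fixed. In the rotated coordinates, $X_S$ is held fixed while $Y'$ moves, so $X_S^\perp$ is \emph{not} held fixed. This is the real obstacle: the noise does not act as an OU semigroup on the single coordinate $Y'$ with the others frozen. Rather, $\hat Y$ is no longer equal to $\sum_S X_i$, and the pair $(X,\hat Y)$ has the law $\PP$ only marginally. The correct approach is to compare to the null. Under $\PP$, the joint law of $(X,Y,\hat Y)$ is Gaussian given $S$. Its Gaussian covariance structure is that of $\mathbb Q$ (OU-correlated $Y,\hat Y$, independent $X$), perturbed by cross-covariances of size $1/\sqrt k$ between $Y,\hat Y$ and $X_S$.

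I would bound $\E_\PP[f\hat f]-\E_{\mathbb Q}[f\hat f]$ and $\E_\PP f^2-\E_{\mathbb Q}f^2$ by expanding $f$ in the Hermite basis of $\mathbb Q$. Each Wick pairing that uses a cross-covariance contributes a factor $k^{-1/2}$, and averaging over the random $S$ controls the combinatorics through the number of indices of $S$ touched, using $k/N$. Under $\mathbb Q$, the null bound $\E_{\mathbb Q} f\hat f\ge\lambda^D\E_{\mathbb Q}f^2$ is standard. The main difficulty is proving the relative comparison $\E_\PP[g]=(1\pm o(1))\E_{\mathbb Q}[g]$, \emph{relative to} $\E f^2$, for these quadratic forms. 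I would attempt it via a low-degree norm equivalence: show that the Gram matrices of degree-$\le D$ Hermite monomials under $\PP$ and $\mathbb Q$ differ in operator norm by $O(D^2/\sqrt k + \mathrm{poly}(D)\,k/N)$. The stated condition $D=o(\min\{k^{1/4},(n/k)^{1/5}\})$ is exactly what makes this $o(1)$.
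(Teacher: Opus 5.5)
Your Step 2 points in the right direction, and it is essentially the paper's strategy. But the proposal has a genuine gap, starting with its first line.

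\textbf{The opening reduction is false.} The identity $\E[(f-\hat f)^2]=2\E[f^2]-2\E[f\hat f]$ relies on $(X,T_\rho Y)$ having the same law as $(X,Y)$. Under $\PP$ this fails: given $(X,S)$, $Y=\sum_{i\in S}X_i$ is deterministic, while $T_\rho Y$ is a non-degenerate Gaussian centred at $\lambda Y$, with $\lambda=\sqrt{1-\rho^2}$. Only the marginal of $\hat Y$ is preserved, as you yourself note in Step 2.

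Here is a concrete degree-$1$ example. Let $y=Y/\sqrt k$, $\hat y=\lambda y+\rho z$, $u=n^{-1/2}\sum_{i=1}^n X_i$, $r=\sqrt{k/n}$, and $f=y-ru$. Since $\E_\PP[yu]=r$ and $\E_\PP[\hat y u]=\lambda r$, you get $\E_\PP f^2=1-k/n$ but $\E_\PP\hat f^2=1-k/n+2(1-\lambda)k/n$. Since $f-\hat f=y-\hat y$, also $\E_\PP(f-\hat f)^2=2(1-\lambda)$. The stability ratio is therefore $2(1-\lambda)/(1-k/n)$, which exceeds $2(1-\lambda^D)$. This has three consequences:
\begin{itemize}
\item exact $2(1-\lambda^D)$-stability, your Step 1 conclusion, is false;
\item the $o(1)$ in the statement is genuinely needed;
\item $\E_\PP[f(X,\hat Y)^2]$ is a third quantity you must bound by $(1+o(1))\E_\PP f^2$, but your Step 2 plan lists only $\E f^2$ and $\E f\hat f$.
\end{itemize}

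\textbf{The Step 2 comparison is only asserted, with an incorrect rate.} In the same example, the degree-$1$ Gram matrices under $\PP$ and $\QQ$ differ by a block with entries $\E_\PP[yX_i]=\sqrt k/n$. This block has operator norm $\sqrt{k/n}$, which is not $O(k^{-1/2}+k/n)$ (take $k=n^{0.6}$). What is missing is the off-diagonal estimate the paper actually carries out:
\begin{itemize}
\item In the diagram formula for $\E_\PP[h_{\alpha_1}(X)h_{\alpha_2}(X)h_{t_1}(y)h_{t_2}(y)]$, a contributing matching splits into $X$--$X$, $y$--$y$ and cross edges.
\item After averaging over $S$, the cross edges cost at most $(k/n)^z k^{-m/2}$, with $z\ge|\alpha_1\triangle\alpha_2|_0$ and $m\ge\max\{|\alpha_1\triangle\alpha_2|,|t_1-t_2|\}$.
\item The number of such matchings is at most $\sqrt{\alpha_1!\alpha_2!t_1!t_2!}\,(e^2D^2/m)^m$.
\item The roughly $n^b$ indices $\alpha_2$ with $b$ new coordinates are absorbed by reordering each pair so that $|\hat f_{\alpha_2,t_2}|n^{b/2}D^{a/2}\le|\hat f_{\alpha_1,t_1}|n^{a/2}D^{b/2}$. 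This is a weighted Schur test.
\end{itemize}
The result is an off-diagonal mass of $O(D^2/\sqrt k+D^{5/2}\sqrt{k/n})\sum\hat f_{\alpha,t}^2$. This is exactly where the conditions $D=o(k^{1/4})$ and $D=o((n/k)^{1/5})$ come from.

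\textbf{A monotonicity you do not use.} The paper avoids three separate Gram comparisons, and thereby also fills your missing $\E_\PP\hat f^2$ bound. Every matching contributes a nonnegative amount, and replacing $y$ by $\hat y$ multiplies each contribution by a power of $\lambda$ lying in $[\lambda^D,1]$. Hence:
\begin{itemize}
\item the noisy and cross off-diagonal terms are dominated by the clean ones;
\item $\E_\PP[h_\alpha(X)^2h_t(y)h_t(\hat y)]\ge\lambda^D\,\E_\PP[h_\alpha(X)^2h_t(y)^2]$;
\item together with $\E_\PP[h_\alpha(X)^2h_t(y)^2]\ge1$, this controls both $\E_\PP\hat f^2$ and $\E_\PP f\hat f$ relative to $\E_\PP f^2$.
\end{itemize}
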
This result is now non-trivial to prove and follows by a series of careful combinatorial bounds, as well as a Wick's formula for the Hermite basis. The proof is deferred to Section \ref{sec:GSS_proofs}. 

As a corollary, we get the following low-degree MMSE lower bound for GSS. In particular,  we conclude a separation between $D = o\left(\min\{k^{1/4}, (n/k)^{1/5}\}\right)$-degree polynomials and polynomial-time methods for GSS.

\begin{corollary}
    Suppose $k = o(n)$, $D = o\left(\min\{k^{1/4}, (n/k)^{1/5}\}\right)$, and $\rho \ge \exp(-k\log(n/k))$. Then, any degree-$D$ polynomial $f: \R^n \times \R \to \R^n$ has a mean squared error at least
    \begin{align*}
        \E\left[\|f(X,Y) - \one_S\|_2^2\right] \ge \left(1 - O\left(\sqrt{1 - \left(\sqrt{1-\rho^2}\right)^D}\right) - o(1)\right)k.
    \end{align*}
    In particular, if $D \le \frac{1}{\rho^2}$, we have
    \begin{align*}
        \E\left[\|f(X,Y) - \one_S\|_2^2\right] \ge \left(1 - O\left(\rho^2 D\right) - o(1)\right)k.
    \end{align*}
\end{corollary}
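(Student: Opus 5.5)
The plan is to combine the three ingredients already available for GSS, in the same way as for PSP and RLC. These are the MMSE jump of Proposition~\ref{prop:aon_GSS}, the low-degree stability of Theorem~\ref{thm:low-deg-stability-gss}, and the general barrier Theorem~\ref{thm:noisy-MMSE-stable-barrier}. Throughout, the signal is $x=\one_S$, so $\E\|x\|_2^2=k$, and the observation is $(X,Y)$, with the OU operator $T_\rho$ acting only on $Y$.

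\emph{Step 1 (noisy MMSE at the given $\rho$).} Proposition~\ref{prop:aon_GSS} gives $\mathrm{MMSE}_{\rho_0}\ge(1-o(1))k$ for some $\rho_0=\exp(-\Theta(k\log(n/k)))$. I would first record that $\rho\mapsto\mathrm{MMSE}_\rho$ is nondecreasing for the OU operator. OU operators compose: $T_{\rho_2}$ applied to an independent copy of $T_{\rho_1}(Y)$ is distributed as $T_{\rho_3}(Y)$ with $\sqrt{1-\rho_3^2}=\sqrt{1-\rho_1^2}\sqrt{1-\rho_2^2}$. Hence for every $\rho\ge\rho_0$, the variable $T_\rho(Y)$ is a garbling of $T_{\rho_0}(Y)$, and by data processing $\mathrm{MMSE}_\rho\ge\mathrm{MMSE}_{\rho_0}$. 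This yields $\mathrm{MMSE}_\rho\ge(1-o(1))k$ for $\rho\ge\exp(-k\log(n/k))$, provided the constant in the exponent of $\rho_0$ from the proof of Proposition~\ref{prop:aon_GSS} is at most $1$. This is the step I expect to be the main obstacle. I would revisit the AoN input from \cite{reeves2019all} to check that the jump already occurs once the effective noise variance $\rho^2$ exceeds roughly $\exp(-2k\log(n/k))$, i.e.\ that the constant can be taken to be $1$; otherwise the hypothesis on $\rho$ would need to be read with that constant.

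\emph{Step 2 (stability of vector-valued polynomials).} For $f=(f_1,\dots,f_n)$ of degree at most $D$, apply Theorem~\ref{thm:low-deg-stability-gss} to each coordinate $f_i$. Writing $\eta:=2\bigl(1-(\sqrt{1-\rho^2})^D+o(1)\bigr)$, this gives $\E(f_i(X,Y)-f_i(X,T_\rho Y))^2\le\eta\,\E f_i(X,Y)^2$ for each $i$. Summing over $i$ shows that $f$ is $(\rho,\eta)$-stable. Uniformity of the $o(1)$ term across coordinates should hold because the bound in Theorem~\ref{thm:low-deg-stability-gss} depends only on $n,k,D$, not on $f$.

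\emph{Step 3 (conclude).} If $\eta>1$, then the claimed bound $(1-O(\sqrt{\eta})-o(1))k$ is nonpositive for a suitable constant in the $O(\cdot)$, so it holds trivially. Otherwise Theorem~\ref{thm:noisy-MMSE-stable-barrier} applies and gives
\[
\E\|f(X,Y)-\one_S\|_2^2\ \ge\ \mathrm{MMSE}_\rho-2\sqrt{2(7+4\eta)\eta}\,k\ \ge\ \bigl(1-O(\sqrt{\eta})-o(1)\bigr)k .
\]
Using $\sqrt{a+o(1)}\le\sqrt a+o(1)$, this equals $\bigl(1-O(\sqrt{1-(\sqrt{1-\rho^2})^D})-o(1)\bigr)k$, which is the first claimed bound.

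For the second bound, Bernoulli's inequality gives $1-(1-\rho^2)^{D/2}\le \rho^2D/2$, so the loss term is at most $O(\rho\sqrt D)$. When $D\le 1/\rho^2$ this is at most a constant, and it is in fact negligible in the regime of interest since $\rho$ is exponentially small. This is weaker than the displayed $O(\rho^2D)$ whenever $\rho^2D<1$. I would either state the second bound with $O(\rho\sqrt D)$, or obtain $O(\rho^2D)$ only by a sharper route: a stability-to-MSE comparison that is linear rather than square-root in $\eta$. That route is not available from Theorem~\ref{thm:noisy-MMSE-stable-barrier} as stated.
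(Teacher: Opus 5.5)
Your decomposition is the one the paper intends. The paper gives no separate proof and just calls this a corollary of Proposition~\ref{prop:aon_GSS}, Theorem~\ref{thm:low-deg-stability-gss} and Theorem~\ref{thm:noisy-MMSE-stable-barrier}. Your Steps 2 and 3 are sound, including the uniformity remark and the OU-composition/data-processing monotonicity. However, the proposal does not prove the corollary as stated, because Step 1 is left conditional and you propose weakening the second display.

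\textbf{Step 1 would fail as planned.} Its direction is reversed: with $\rho_0=e^{-c\,k\log(n/k)}$, reaching every $\rho\ge e^{-k\log(n/k)}$ by monotonicity needs $c\ge 1$, not $c\le 1$. Even $c\ge1$ is unavailable. In the notation of the proof of Proposition~\ref{prop:aon_GSS}, the one-sample threshold $n^*=2k\log(N/k)/\log(1+k/\sigma^2)$ equals $1$ at $\rho=e^{-(1+o(1))k\log(N/k)}$. The trivial-MMSE side of the AoN theorem needs a constant-factor margin, $1\le 0.99\,n^*$, i.e.\ $\rho\ge e^{-(0.99+o(1))k\log(N/k)}$. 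The paper's proof even takes $n^*\ge 30$, so $c\approx 1/30$. Hence $\mathrm{MMSE}_\rho\ge(1-o(1))k$ cannot be certified for $\rho$ between $e^{-k\log(n/k)}$ and $\rho_0$.

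\textbf{The missing observation removes both worries.} You never need the MMSE jump at the given $\rho$: run Steps 2--3 at $\rho'=\max\{\rho,\rho_0\}$.
If $\rho\ge\rho_0$, your argument goes through verbatim.
If $\rho<\rho_0$, apply Theorem~\ref{thm:low-deg-stability-gss} at $\rho_0$. The stability parameter is $\eta_0\le\rho_0^2D+o(1)=o(1)$, since $\rho_0$ is exponentially small and $D=o(k^{1/4})$. Theorem~\ref{thm:noisy-MMSE-stable-barrier} then gives $\E\|f(X,Y)-\one_S\|_2^2\ge(1-o(1))k$. This implies the first display at $\rho$, because its loss term is nonnegative.

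So every degree-$D$ polynomial has mean squared error at least $(1-o(1))k$ outright. The hypothesis $\rho\ge e^{-k\log(n/k)}$ is superfluous, and the second display holds as stated, with $O(\rho^2D)$ or any other nonnegative loss term. You are right that the square-root barrier by itself only converts stability into an $O(\rho\sqrt D)$ loss. The $O(\rho^2D)$ looks like a slip; compare $O(\sqrt{\rho D})$ in the RLC corollary. But that is not an obstruction to the statement.

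\textbf{One hypothesis you must add.} Proposition~\ref{prop:aon_GSS} assumes $k\le n^{0.49}$, which does not follow from $k=o(n)$. Your Step 1, like the paper's implicit argument, relies on it, so state it explicitly.
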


\newpage


\section{Getting started: Preliminaries for the Proofs}
Here, we present some background and discussion for our results that can benefit the reader.

First, recall that the minimum mean squared error (MMSE) is achieved by the posterior mean.

\begin{fact}\label{fact:MMSE-posterior-mean}
    Posterior mean achieves the MMSE:
    \[\inf_{\mathcal{A}: \R^N \to \R^n} \E_{(x,y) \sim \PP_{XY}}[\|\mathcal{A}(y) - x\|_2^2] = \E_{(x,y) \sim \PP_{XY}}[\|\E[x \vert y] - x\|_2^2].\]
\end{fact}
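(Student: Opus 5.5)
The statement is the classical fact that the posterior mean attains the MMSE. I would prove it by an $L^2$ orthogonality (Pythagorean) decomposition.

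Fix any measurable $\mathcal{A}:\R^N\to\R^n$ with $\E\|\mathcal{A}(y)\|_2^2<\infty$; if this fails, either the left side is infinite or, since $\E\|x\|_2^2<\infty$ is implicitly assumed throughout, the inequality is trivial. Write $m(y):=\E[x\mid y]$, which is well defined and square integrable because $\E\|x\|_2^2<\infty$; by Jensen, $\E\|m(y)\|_2^2\le \E\|x\|_2^2$. I then expand
\[
\E\|\mathcal{A}(y)-x\|_2^2=\E\|\mathcal{A}(y)-m(y)\|_2^2+\E\|m(y)-x\|_2^2+2\,\E\big\langle \mathcal{A}(y)-m(y),\,m(y)-x\big\rangle .
\]

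The key step is to show the cross term vanishes. I condition on $y$ using the tower property. The vector $\mathcal{A}(y)-m(y)$ is $\sigma(y)$-measurable, so it can be pulled out of the conditional expectation, and $\E[m(y)-x\mid y]=0$. Integrability of the inner product follows from Cauchy--Schwarz, since both factors lie in $L^2$, which justifies the conditioning.

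The cross term being zero gives $\E\|\mathcal{A}(y)-x\|_2^2\ge \E\|m(y)-x\|_2^2$, with equality when $\mathcal{A}=m$. Since $m$ is itself an admissible algorithm, the infimum is attained by the posterior mean and equals $\E\|\E[x\mid y]-x\|_2^2$.

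The only mildly delicate point is measure-theoretic: the existence of a measurable version of $\E[x\mid y]$ as a function of $y$. This is handled by the Doob--Dynkin lemma. Otherwise the argument is routine.
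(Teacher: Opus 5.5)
Your orthogonality argument is correct and is the standard proof. The paper gives no proof of its own: it states this as a recalled classical fact, implicitly assuming measurable $\mathcal{A}$ and $\E\|x\|_2^2<\infty$ (every bound in the paper is normalized by this quantity), just as you do. One small tidy-up: if $\E\|\mathcal{A}(y)\|_2^2=\infty$, then $\E\|\mathcal{A}(y)-x\|_2^2=\infty$ automatically, since $\|\mathcal{A}(y)\|_2^2\le 2\|\mathcal{A}(y)-x\|_2^2+2\|x\|_2^2$, so there is no genuine ``either/or'' in your reduction step.
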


Now, while we left the notion of noise operator abstractly defined above, it is perhaps instructive to consider some properties a natural noise operator satisfies.
\begin{enumerate}
    \item When $\rho = 0$, the noise operator should be the identity map.
    \item When $\rho = 1$, the noise operator introduces the maximal amount of noise, and observing $T_{\rho}(y)$ provides no extra information about $x$, i.e., for any $x_1 \ne x_2$ in the support of $X$, the law of $T_{\rho}(y) \vert x_1$ is the same as the law of $T_{\rho}(y) \vert x_2$.
    \item For $\rho_1 < \rho_2$, the noise operator $T_{\rho_1}$ injects less noise than $T_{\rho_2}$, and there exists $\rho_3 \in [0,1]$ such that $T_{\rho_2} = T_{\rho_3} \circ T_{\rho_1}$.
\end{enumerate}

Our noise operator roughly acts in the following way on a fixed subsets of coordinates of the observation $y$:
\begin{itemize}
    \item ($q$-Bernoulli Noise) $T_{\rho}$ acts on a fixed subset of $y$ by independently with probability $\rho$ resampling each coordinate within the subset from $\text{Bern}(q)$.
    \item (Gaussian Noise) $T_{\rho}$ acts on a fixed subset of $y$ by independently applying Ornstein–Uhlenbeck operator at each coordinate. In other words, within the subset, the noise operator averages each coordinate of $y$ with an independent standard Gaussian $N(0,1)$ and outputs $\sqrt{1-\rho^2} y_i + \rho z_i$ where $z_i \sim N(0,1)$ for coordinate $y_i$.
\end{itemize}

\section{Proof of the key result: MMSE instability implies stable algorithm failure}
In this section, we include the (easy) proof of Theorem \ref{thm:noisy-MMSE-stable-barrier}.

\begin{proof}
    Let $\mathcal{A}: \R^N \to \R^n$ be a $(\rho, \eta)$-stable algorithm.

    We first claim that either $\E_{(x,y) \sim \PP_{XY}}[\|\mathcal{A}(y) - x\|_2^2] \ge \text{MMSE}_{\rho}$ in which case the desired statement holds, or $\E_{(x,y) \sim \PP_{XY}}[\|\mathcal{A}(y) - x\|_2^2] \leq \text{MMSE}_{\rho} \leq \E_{(x,y) \sim \PP_{XY}}[\|x\|_2^2]$ which implies
    \begin{align}\label{ineq:sec-moment-assumption-1}\E_{(x,y) \sim \PP_{XY}}\left[\|\mathcal{A}(y)\|_2^2\right] \le \E_{(x,y) \sim \PP_{XY}}[\|\mathcal{A}(y) - x\|_2^2]+ 2\E_{(x,y) \sim \PP_{XY}}[\|x\|_2^2] \leq  4\E_{(x,y) \sim \PP_{XY}}[\|x\|_2^2].
    \end{align}

    Moreover, our assumptions and \eqref{ineq:sec-moment-assumption-1} give

    \begin{align}&\E_{(x,y) \sim \PP_{XY}}\left[\|\mathcal{A}(T_{\rho}(y))\|_2^2\right]  \\
    &\leq 2\E_{(x,y) \sim \PP_{XY}}\left[\|\mathcal{A}(y)\|_2^2\right] +2\E_{(x,y) \sim \PP_{XY}}\left[\|\mathcal{A}(T_{\rho}(y))-\mathcal{A}(y)\|_2^2\right] \\
    & \leq 8\E_{(x,y) \sim \PP_{XY}}\left[\|x\|_2^2\right]+2\eta\E_{(x,y) \sim \PP_{XY}}\left[\|\mathcal{A}(y)\|_2^2\right] \\
    & \leq 8(1+\eta)\E_{(x,y) \sim \PP_{XY}}\left[\|x\|_2^2\right]. \label{ineq:sec-moment-assumption-2}
    \end{align}

Now,
    \begin{align*}
        &\quad \E_{(x,y) \sim \PP_{XY}}\left[\|\mathcal{A}(T_{\rho}(y))-x\|_2^2 - \|\mathcal{A}(y)-x\|_2^2\right]\\
        &= \E_{(x,y) \sim \PP_{XY}}\left[\langle \mathcal{A}(T_{\rho}(y))-\mathcal{A}(y) , \mathcal{A}(T_{\rho}(y)) + \mathcal{A}(y) - 2x \rangle\right]\\
        &\le \sqrt{\E_{(x,y) \sim \PP_{XY}}\left[\|\mathcal{A}(T_{\rho}(y))-\mathcal{A}(y)\|_2^2\right]\left(\E_{(x,y) \sim \PP_{XY}}\left[\|\mathcal{A}(T_{\rho}(y))\|_2^2 + \|\mathcal{A}(y)\|_2^2 + 2 \|x\|_2^2\right]\right)}
    \end{align*}
    by Cauchy-Schwarz inequality. Using the $(\rho, \eta)$-stablility of $\mathcal{A}$, \eqref{ineq:sec-moment-assumption-1} and \eqref{ineq:sec-moment-assumption-2}, we have
    \begin{align*}
        &\quad \E_{(x,y) \sim \PP_{XY}}\left[\|\mathcal{A}(T_{\rho}(y))-x\|_2^2 - \|A(y)-x\|_2^2\right]\\
        &\le \sqrt{\E_{(x,y) \sim \PP_{XY}}\left[\|\mathcal{A}(T_{\rho}(y))-\mathcal{A}(y)\|_2^2\right]\left(\E_{(x,y) \sim \PP_{XY}}\left[\|\mathcal{A}(T_{\rho}(y))\|_2^2 + \|\mathcal{A}(y)\|_2^2 + 2 \|x\|_2^2\right]\right)}\\
        &\le \sqrt{\E_{(x,y) \sim \PP_{XY}}\left[\|\mathcal{A}(T_{\rho}(y))-\mathcal{A}(y)\|_2^2\right]\left(\E_{(x,y) \sim \PP_{XY}}\left[8(1 + \eta)\|x\|_2^2 +4 \|x\|_2^2 + 2 \|x\|_2^2\right]\right)}\\
        &\le \sqrt{2(7+4\eta)\eta \cdot \E_{(x,y) \sim \PP_{XY}}\left[ \|\mathcal{A}(y)\|_2^2\right]\E_{(x,y) \sim \PP_{XY}}\left[\|x\|_2^2\right]}\\
        &\le  2\sqrt{2(7+4\eta)\eta \cdot \E_{(x,y) \sim \PP_{XY}}\left[ \|x\|_2^2\right]\E_{(x,y) \sim \PP_{XY}}\left[\|x\|_2^2\right]}\\
        &= 2\sqrt{2(7+4\eta)\eta}\cdot \E_{(x,y) \sim \PP_{XY}}\left[\|x\|_2^2\right].
    \end{align*}
    Rearranging the inequality, we get
    \begin{align*}
        &\quad \E_{(x,y) \sim \PP_{XY}}[\|\mathcal{A}(y) - x\|_2^2]\\
        &\ge \E_{(x,y) \sim \PP_{XY}}[\|\mathcal{A}(T_{\rho}(y)) - x\|_2^2] - 2\sqrt{2(7+4\eta)\eta} \cdot \E_{(x,y) \sim \PP_{XY}}[\|x\|_2^2]\\
        &\ge \text{MMSE}_{\rho} - 2\sqrt{2(7+4\eta)\eta} \cdot \E_{(x,y) \sim \PP_{XY}}[\|x\|_2^2]
    \end{align*}
    as desired.
\end{proof}

\section{Proofs for Planted Shortest Path}\label{sec:PSP}

\subsection{Noisy MMSE lower bound: Proof of Theorem \ref{thm:noisy-MMSE-PSP}}

Our main idea for showing a lower bound for noisy MMSE of the planted shortest path problem is to study a natural object we call an ``approximate path".

\begin{definition}[Approximate Path]
    Let $G$ be a graph, $\ell \in \NN$ and $\epsilon \in (0,1)$ such that $\epsilon \ell \in \NN$. A pair $(\hat{P}, P)$ is a $(1-\eps)$-approximate path of length $\ell$ in $G$ between vertices $1$ and $2$, if $P$ is a path of length $\ell$ between vertices $1$ and $2$ in the complete graph $K_n$, $\hat{P} \subseteq P$, $\hat{P} \subseteq G$, and $|\hat{P}| = (1-\eps) \ell$.
    
    In other words, $(\hat{P}, P)$ is a $(1-\eps)$-approximate path of length $\ell$ in $G$ if $P$ is a path of length $\ell$ in the complete graph $K_n$, $\hat{P}$ contains exactly $(1-\eps)$ fraction of edges of $P$, and $\hat{P}$ is a subgraph of $G$.
\end{definition}

\begin{definition}[Overlap of Approximate Paths]
    Let $(\hat{P}_1, P_1)$ and $(\hat{P}_2, P_2)$ be two $(1-\eps)$-approximate paths of length $\ell$ in a graph $G$. We say $(\hat{P}_1, P_1)$ and $(\hat{P}_2, P_2)$ shares $k$ edge, or have an overlap of size $k$, if $|E(P_1 \cap P_2)| = k$.
\end{definition}

\begin{remark}[Approximate Paths and Noisy Planted Shortest Path] \label{rem:approximate-path-noisy-model}
    The notion of approximate path appears naturally in the \emph{noisy version} of the planted shortest path problem. Recall that in the planted shortest path problem, a graph $G$ is sampled to be the union of an Erd\H{o}s-R\'{e}nyi random graph $G(n,q)$ and a uniform random path $H$ between vertices $1$ and $2$ of length $L$. The noise operator $T_{\rho}$ acts on $G$ by independently resampling each $G_{ij}$ from $\text{Bern}(q)$ with probability $\rho$. Thus, the observed noisy graph $T_{\rho}(G)$ is the union of an Erd\H{o}s-R\'{e}nyi random graph $G(n,q)$ and $\hat{H}$, where $\hat{H}$ is obtained from $H$ by disconnecting each edge of $H$ independently with probability $\rho$. Consequently, $(\hat{H}, H)$ is a $(1 - \eps)$-approximate path of length $L$ between vertices $1$ and $2$ in $\hat{G}$, where $\eps L$ is distributed according to $\text{Bin}(L, \rho)$. Moreover, conditioned on the event that $\eps L$ edges are disconnected in $H$, the distribution of $(\hat{H},H)$ is the uniform distribution over all $(1-\eps)$-approximate path of length $L$ in $G$ between vertices $1$ and $2$.
\end{remark}

Our first step for the proof is to bound the second moment of $(1-\eps)$-approximate paths of a fixed length between vertices $1$ and $2$ in the null model $\Q$ where we observe an Erd\H{o}s-R\'{e}nyi random graph $G(n,q)$. Our proof is adapted from \cite[Lemma 2.1]{li2024some}.

\begin{theorem}\label{thm:2nd_MM_Shortest}
    Let $N_{m, \eps}$ denote the number of $(1-\eps)$-approximate paths of length $m$ between vertices $1$ and $2$ inside an Erd\H{o}s-R\'{e}nyi random graph $G(n,q)$ and $N^{(2)}_{m,\epsilon}$ denote the number of non-disjoint pairs of $(1-\eps)$-approximate paths of length $m$ between vertices $1$ and $2$ inside an Erd\H{o}s-R\'{e}nyi random graph $G(n,q)$, i.e., the number of pairs of $((\hat{P}_1, P_1), (\hat{P}_2, P_2))$ of $(1-\eps)$-approximate paths of length $m$ between vertices $1$ and $2$ in $G(n,q)$ such that $P_1$ and $P_2$ are not edge-disjoint in the complete graph $K_n$. If $m = \omega(1)$, $ 1 \le \eps m \le m-1$, $nq \ge \omega(\sqrt{m})$, $m \ll n^{1/15}$, and $mq \le 1/3$, then \[E_{\mathbb{Q}}[N^{(2)}_{m, \eps}]\le   O\left(\frac{1}{n^{\eps m - 1}\cdot (nq)}+\frac{\sqrt{m}}{nq}\right) \EE_{\mathbb{Q}}[N_{m, \eps}]^2 = o(\EE_{\mathbb{Q}}[N_{m, \eps}]^2),\]
    \[ \E_{\mathbb{Q}}[N_{m, \eps}^2] \le \left(1 + O\left(\frac{1}{n^{\eps m - 1}\cdot (nq)}+\frac{\sqrt{m}}{nq}\right)\right)\E_{\QQ}[N_{m,\eps}]^2 \le (1 + o(1)) \EE_{\mathbb{Q}}[N_{m, \eps}]^2,\]
    and
    \[\E_{\mathbb{Q}}[N_{m, \eps}] \ge \Omega\left(n^{\eps m - 1}\cdot (nq)\right) \ge \omega(1).\]
 
\end{theorem}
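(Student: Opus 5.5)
We would prove Theorem~\ref{thm:2nd_MM_Shortest} by a direct first- and second-moment computation, adapting the path-counting argument of \cite[Lemma 2.1]{li2024some} to account for the $\eps m$ ``missing'' edges.

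\textbf{First moment.} A $(1-\eps)$-approximate path is specified by an ordered choice of $m-1$ distinct internal vertices and a choice of which $\eps m$ edges are omitted. So the number of candidates is $(n-2)_{m-1}\binom{m}{\eps m}$, and each requires exactly $(1-\eps)m$ specified edges to be present. Hence
\[\E_{\QQ}[N_{m,\eps}] = (n-2)_{m-1}\binom{m}{\eps m} q^{(1-\eps)m}.\]
Since $m\ll n^{1/15}$, we have $(n-2)_{m-1}=(1-o(1))n^{m-1}$. Writing $n^{m-1}q^{(1-\eps)m}=n^{\eps m-1}(nq)^{(1-\eps)m}$ and using $\binom{m}{\eps m}\ge 1$ together with $nq\ge 1$ (note $1\le \eps m\le m-1$, so at least one edge is present) gives $\Omega(n^{\eps m-1}\cdot nq)$. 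This is $\omega(1)$ because $nq=\omega(1)$ and $\eps m\ge1$.

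\textbf{Second moment.} We write $\E[N_{m,\eps}^2]$ as the sum over ordered pairs of the probability that both present-edge sets $\hat P_1\cup\hat P_2$ lie in $G$. Edge-disjoint pairs $P_1,P_2$ contribute at most $\E[N]^2$, since their requirements are independent. For overlapping pairs, we fix $P_1$ and classify $P_2$ by its intersection with $P_1$. The intersection is a union of $s\ge1$ common segments with $k$ shared edges in total, and the segment containing vertex $1$ or vertex $2$ may be a boundary segment. Given this structure, $P_2$ has $m-1-(k-(s-\text{boundary corrections}))$ new internal vertices. Choosing them costs a factor of roughly $n^{-(k+s-b)}$ relative to a fresh path, where $b\le2$ counts segments attached to the endpoints. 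The gain in probability is at most $q^{-j}$, where $j\le k$ is the number of shared edges present in both $\hat P_1$ and $\hat P_2$. The number of ways to place the segments is at most $\binom{m}{s}^2 s!\cdot m^{O(s)}$-type combinatorics, and the choice of omitted edges is bounded by the corresponding binomials. The ratio contribution of a configuration is therefore about
\[\frac{m^{2s}}{n^{s}\cdot n^{k-b}}\,q^{-k}\approx \Big(\frac{m^2}{n}\Big)^{s}(nq)^{-k}n^{b},\]
and summing over $k\ge1$ and $s$ leaves two dominant terms. In the first, the overlap is essentially the whole path, touching both endpoints, so $b=2$ and there are few free vertices. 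This is compared against $\E[N]$ directly, giving the $\frac{1}{n^{\eps m-1}nq}=O(1/\E[N])$ term: the diagonal and near-diagonal pairs. In the second, a single short boundary segment of length $1$ attached to an endpoint contributes $O(1/(nq))$. The $\sqrt m$ factor arises when we sum over the placement of the omitted edges of $P_2$ relative to the shared ones; this is a hypergeometric ratio $\binom{m}{\eps m}^{-1}\binom{\cdot}{\cdot}$, which we bound via Stirling by $O(\sqrt m)$. The conditions $mq\le1/3$ and $nq=\omega(\sqrt m)$ ensure that the geometric series in $k$ and $s$ converge.

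\textbf{Main obstacle.} The main obstacle is this combinatorial bookkeeping for overlapping pairs. Unlike exact paths, a shared edge need not be present in both copies, so the probability gain $q^{-j}$ depends on how the omitted sets align. We would handle this by summing over $j$ first, bounding the number of omitted-set pairs with a given $j$ by $\binom{m}{\eps m}^2$ times a ratio. We expect this summation in $j$ to be the step that produces the $\sqrt m$ loss, so controlling it is what makes the overall bound close. Finally, $\E[N^2]\le\E[N]^2+\E[N^{(2)}]$ yields the stated bounds.
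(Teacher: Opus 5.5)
Your skeleton matches the paper's. You compute the first moment $(n-2)_{m-1}\binom{m}{\eps m}q^{(1-\eps)m}$ exactly, then use $\E[N^2]\le \E[N]^2+\E[N^{(2)}]$, organize overlapping pairs by the number $k$ of shared edges, and let the diagonal $k=m$ supply the $1/(n^{\eps m-1}nq)$ term. The paper does not rederive the pair count. It imports $M_{k,m-k,m-k}\le\bigl(\frac{k+1}{n^2}+\frac{Ck^9m^6}{n^3}\bigr)n^{2m-k}$ for $1\le k\le m-1$ from \cite[Claim 2.1]{li2024some}.

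Where you do rederive it, your displayed ratio $(m^2/n)^s(nq)^{-k}n^b$ would not close. It charges $m^2$ placements to every common segment, including those attached to vertex $1$ or $2$. So $s=b=1$ alone gives $\sum_k m^2(nq)^{-k}\approx m^2/(nq)$. This is not controlled by $nq=\omega(\sqrt m)$, and it diverges in the PSP regime ($nq\asymp\log n$, $m\asymp\log n/\log\log n$). The missing observation is that a boundary segment is pinned in both paths: its position is forced by the endpoint. Hence purely boundary overlaps number only $k+1$ and contribute $(k+1)(nq)^{-k}$, which is exactly the $(k+1)/n^2$ term above. Each interior segment trades $\mathrm{poly}(m)$ placements for an extra factor $1/n$, which $m\ll n^{1/15}$ absorbs. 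At $k=m$, which you rightly treat separately, the gain must be capped using $|\hat P_1\cap\hat P_2|\le(1-\eps)m$ rather than $q^{-k}$. Using $q^{-m}$ there gives about $n(nq)^{-m}=n^{1-C+o(1)}$, which also diverges for $C<1$.

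Conversely, the step you flag as the main obstacle is not one. For fixed $(P_1,P_2)$ sharing $k$ edges there are exactly $\binom{m}{\eps m}^2$ pairs $(\hat P_1,\hat P_2)$. Each lies in $G$ with probability $q^{2(1-\eps)m-|\hat P_1\cap\hat P_2|}\le q^{2(1-\eps)m-\min\{k,(1-\eps)m\}}$. These binomials cancel exactly against $\E[N]^2$, giving $\E[N^{(2,k)}]/\E[N]^2\le M_{k,m-k,m-k}M_m^{-2}q^{-\min\{k,(1-\eps)m\}}$ with no sum over alignments. With the path-pair count, this yields $O(1/(nq))$ from $1\le k\le m-1$. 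The $k=m$ term is at most $1/(M_mq^{(1-\eps)m})=\binom{m}{\eps m}/\E[N]\le(1+o(1))\,n^{-(\eps m-1)}(nq)^{-(1-\eps)m}$. That is not $O(1/\E[N])$ as you wrote, but it is $O(1/(n^{\eps m-1}nq))$, which is all the statement needs. So your route, done this way, proves the theorem without the $\sqrt m$ factor, a slightly stronger bound.

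In the paper, the $\sqrt m$ comes from different bookkeeping. It enumerates only the set of edges of $P_1\cup P_2$ missing from $\hat P_1\cup\hat P_2$ and compares $\binom{2m-k}{2\eps m}$ with $\binom{m}{\eps m}^2$ via Stirling. That enumeration is not injective on pairs $(\hat P_1,\hat P_2)$, since a shared edge can be kept by either copy. Your intersection-based accounting is therefore the more robust one to follow.
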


We now prove that Theorem \ref{thm:2nd_MM_Shortest} implies the following MMSE lower bound.

\begin{corollary}\label{cor:conditional-noisy-MMSE}
  Let $x \in \{0,1\}^{\binom{n}{2}},$ be a path from vertex 1 to vertex 2 of length $m$ chosen uniformly at random. Consider the planted model $\PP_{m,\epsilon,q}$ where one plants in an instance $G_0 \sim G(n,q)$ a uniformly at random chosen $(1-\epsilon)m$-subset of $x$, denoted by $x_{\epsilon} \in \{0,1\}^{\binom{n}{2}}.$ Then if $m = \omega(1)$, $1 \le \eps m \le m-1$, $nq \ge \omega(\sqrt{m})$, $m \ll n^{1/15}$, and $mq \le 1/3$, then   
  \[\E[\|\E[x \vert G=G_0 \cup x_{\epsilon}] - x\|_2^2] \geq \E[\|x\|^2_2] \left(1-O\left(\frac{1}{n^{\eps m - 1}\cdot (nq)}+\frac{\sqrt{m}}{nq}\right)\right).\]
  \end{corollary}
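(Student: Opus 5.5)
The plan is to express the posterior of $x$ given $G = G_0\cup x_\epsilon$ through the approximate paths of $G$, and then use the planting trick to control it with the null-model moments of Theorem~\ref{thm:2nd_MM_Shortest}.

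\textbf{Step 1: the posterior.} Under $\PP_{m,\epsilon,q}$, the pair $(x_\epsilon,x)$ is a uniformly random $(1-\epsilon)$-approximate path of length $m$ in $K_n$. Given $(\hat P,P)$, the likelihood of $G$ is $q^{|E(G)|-(1-\epsilon)m}(1-q)^{\binom n2-|E(G)|}\one\{\hat P\subseteq G\}$. The factor in front of the indicator does not depend on $(\hat P,P)$. Hence the posterior of $(x_\epsilon,x)$ given $G$ is \emph{uniform} over the $N_{m,\epsilon}(G)$ approximate paths of $G$. Consequently $\E[x\mid G]=\frac{1}{N_{m,\epsilon}(G)}\sum_{(\hat P,P)} \one_P$.

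Write $x,x'$ for two independent posterior samples. Using $\|x\|_2^2=m$ deterministically,
\[
\E\|\E[x\mid G]-x\|_2^2=\E\|x\|_2^2-\E\|\E[x\mid G]\|_2^2 = m-\E\langle x,x'\rangle .
\]
It therefore suffices to show $\E\langle x,x'\rangle\le m\cdot O(\delta)$, where $\delta := \frac{1}{n^{\epsilon m-1}nq}+\frac{\sqrt m}{nq}$. Since $\langle x,x'\rangle\le m\cdot\one\{P,P' \text{ share an edge}\}$, we get
\[
\E\langle x,x'\rangle\le m\cdot\E_{\PP}\!\left[\frac{N^{(2)}_{m,\epsilon}(G)}{N_{m,\epsilon}(G)^2}\right].
\]

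\textbf{Step 2: planting trick.} The planted law is the size-biased null law, $\frac{d\PP}{d\QQ}(G)=N_{m,\epsilon}(G)/\E_\QQ N_{m,\epsilon}$. This holds because the planted law is $\QQ$ conditioned on the uniformly chosen $\hat P$ lying in $G$; averaging over $(\hat P,P)$ gives the ratio. Therefore
\[
\E_\PP\!\left[\frac{N^{(2)}}{N^2}\right]=\frac{1}{\E_\QQ N}\,\E_\QQ\!\left[\frac{N^{(2)}}{N}\one\{N\ge1\}\right].
\]

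\textbf{Step 3: handling the $1/N$ (main obstacle).} Split on the event $\{N\ge \E_\QQ N/2\}$.

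On this event the contribution is at most $\frac{2\,\E_\QQ N^{(2)}}{(\E_\QQ N)^2}=O(\delta)$, by the first bound of Theorem~\ref{thm:2nd_MM_Shortest}.

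On the complement, use $N^{(2)}\le N^2$, so that $N^{(2)}/N\le N< \E_\QQ N/2$. The contribution is then at most $\tfrac12\QQ(N<\E_\QQ N/2)$. By Chebyshev together with the second-moment bound $\E_\QQ N^2\le(1+O(\delta))(\E_\QQ N)^2$, this probability is $O(\delta)$.

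Summing the two pieces gives $\E\langle x,x'\rangle\le O(\delta)\, m$, which is the claimed bound. The delicate point is exactly this truncation: the first-moment control of $N^{(2)}$ alone does not suffice, because of the random denominator. The variance bound is what tames it, and one must check that both pieces are $O(\delta)$ rather than merely $o(1)$. That check is the one stated above: the Chebyshev probability is bounded by $\Var_\QQ N/(\E_\QQ N)^2\cdot 4=O(\delta)$.
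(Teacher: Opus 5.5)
Your proof is correct and proves the corollary exactly as stated. Its skeleton matches the paper's planting-trick argument:
\begin{itemize}
\item the posterior is uniform over approximate paths;
\item the change of measure is $d\PP/d\QQ = N/\E_\QQ N$;
\item the estimation error is reduced to an overlap probability;
\item that probability is controlled by the first-moment bound on $N^{(2)}$ from Theorem~\ref{thm:2nd_MM_Shortest}.
\end{itemize}
You phrase the overlap through two independent posterior samples, while the paper uses the truth, one posterior sample and the Nishimori identity; these are equivalent.

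The genuine difference is how the random normalization $1/N$ is handled.
\begin{itemize}
\item The paper uses only the generic change-of-measure bound $\PP(Z_Y(G)\le\delta)\le\delta$, with $Z_Y(G)=N/\E_\QQ N$. This gives an overlap probability of at most $\delta+\E_\PP Z_Y(x,G)/\delta$. Optimizing over $\delta$ yields only $O(\sqrt{\delta_0})$, where $\delta_0=\frac{1}{n^{\eps m-1}nq}+\frac{\sqrt m}{nq}$.
\item So the paper's written proof ends with $\E\|x\|_2^2\,(1-O(\sqrt{\delta_0}))$, which is weaker than the displayed statement. It is still enough for Theorem~\ref{thm:noisy-MMSE-PSP}, which only needs $o(1)$.
\item You instead fix the threshold at $1/2$ and control the bad event with Chebyshev under the null: $\PP(N<\E_\QQ N/2)\le\tfrac12\,\QQ(N<\E_\QQ N/2)=O(\delta_0)$. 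This removes the square root.
\end{itemize}

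The paper's device is more general, since it needs only the overlap first moment and no concentration of $N$ under $\QQ$. Here, though, your extra ingredient costs nothing. The bound $\E_\QQ N^2\le(1+O(\delta_0))(\E_\QQ N)^2$ is part of Theorem~\ref{thm:2nd_MM_Shortest}, and it follows anyway from the $N^{(2)}$ bound together with $\E_\QQ N^{(2,0)}\le(\E_\QQ N)^2$.
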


\begin{proof}This proof uses some careful change of measure identities, often called by the name ``planting trick" \cite{achlioptas2008algorithmic,mossel2025bayesian}.
Let $S_{m}$ denote the set of paths of length $m$ between vertices $1$ and $2$ in $K_n$ and $M_{m}=|S_{m}|.$ 
    Let also $S_{m,\epsilon}$ denote the set of $(1-\epsilon)$-approximate paths of length $m$ between vertices $1$ and $2$ in $K_n$ and $M_{m,\epsilon}=|S_{m,\epsilon}|.$ Now denote for any $n$-vertex $G$ and $x \in S_{m}$ \[Z_Y(G)=(\binom{m}{\epsilon m}q^{m(1-\epsilon)}M_{m})^{-1} \sum_{P \in S_{m,\epsilon}, H \in S_m: P \subseteq H} 1(P \subseteq G),  \]and
    \[Z_Y(x,G)=(\binom{m}{\epsilon m}q^{m(1-\epsilon)}M_{m})^{-1} \sum_{P \in S_{m,\epsilon}, H \in S_m: H \cap x =\emptyset, P \subseteq H} 1(P \subseteq G).  \]
    Notice that for any $G$, the following hold. For any value of $x_{\epsilon} \in S_{m,\epsilon},$ \[\frac{\PP_{m,\epsilon,q}(G|x_{\epsilon})}{\QQ_q(G)}=1(x_{\epsilon} \subseteq G)q^{-(1-\epsilon)m}\] and therefore for any value of $x \in S_m,$ \begin{align}\label{eq:change_cond}\frac{\PP_{m,\epsilon,q}(G|x)}{\QQ_q(G)}= \frac{1}{\binom{m}{\epsilon m}q^{(1-\epsilon)m}}\sum_{P \in S_{m,\epsilon}: P \subseteq x} 1(P \subseteq G).
    \end{align} Hence,
    \begin{align}\label{eq:change}\frac{\PP_{m,\epsilon,q}(G)}{\QQ_q(G)}=M_m^{-1}\sum_{x \in S_m}\frac{\PP_{m,\epsilon,q}(G|x)}{\QQ_q(G)}=Z_Y(G).
    \end{align}
    Clearly, for $\QQ_q$ the $G(n,q)$ measure, using the uniformity of the prior in the first step and the change of measure \eqref{eq:change_cond} in the second step we have
    \begin{align*}
        &\EE_{\PP_{m,\epsilon,q}}Z_Y(x,G)\\
         &=(\binom{m}{\epsilon m}q^{m(1-\epsilon)}M_{m})^{-1} M_m^{-1}\sum_{x,H \in S_m: H \cap x =\emptyset} \EE_{\PP_{m,\epsilon,q}|x} \sum_{P \in S_{m,\epsilon}: P \subseteq H}1(P \subseteq G)\\
        &=(\binom{m}{\epsilon m}q^{m(1-\epsilon)}M_{m})^{-2} \sum_{x,H \in S_m: H \cap x =\emptyset} \EE_{\QQ_q} \sum_{P,P' \in S_{m,\epsilon}: P \subseteq H, P' \subseteq x}1(P,P' \subseteq G)\\
         &=(\binom{m}{\epsilon m}q^{m(1-\epsilon)}M_{m})^{-2}\EE_{\QQ_q} \sum_{x,H \in S_m: H \cap x =\emptyset}  \sum_{P,P' \in S_{m,\epsilon}: P \subseteq H, P' \subseteq x}1(P,P' \subseteq G)\\
    \end{align*}Now let $N_{m,\epsilon}$ the number of $1-\epsilon$-approximate paths. Then clearly $\EE_{\QQ_q} N_{m,\epsilon}=\binom{m}{\epsilon m}q^{m(1-\epsilon)}$ and \[\EE_{\QQ_q} N^{(2)}_{m,\epsilon}=\EE_{\QQ_q} \sum_{x,H \in S_m: H \cap x =\emptyset}  \sum_{P,P' \in S_{m,\epsilon}: P \subseteq H, P' \subseteq x}1(P,P' \subseteq G).\] Combining the above with Theorem \ref{thm:2nd_MM_Shortest} we conclude
\[\EE_{\PP_{m,\epsilon,q}}Z_Y(x,G)=\frac{\E_{\mathbb{Q}_q}[N^{(2)}_{m, \eps}]}{\EE_{\mathbb{Q}_q}[N_{m, \eps}]^2}= O\left(\frac{1}{n^{\eps m - 1}\cdot (nq)}+\frac{\sqrt{m}}{nq}\right).\] and therefore by Theorem \ref{thm:2nd_MM_Shortest}, \[\EE_{\PP_{m,\epsilon,q}}Z_Y(x,G)=O\left(\frac{1}{n^{\eps m - 1}\cdot (nq)}+\frac{\sqrt{m}}{nq}\right).\]

   Moreover, for any $\delta>0,$ by \eqref{eq:change}, \[\PP_{m, \epsilon, q}(Z_Y(G) \leq \delta)=\EE_{\PP_{m, \epsilon, q}}[\frac{\PP_{m,\epsilon,q}(G)}{\QQ_q(G)}1(Z_Y(G) \leq \delta)] \leq \delta.\] Hence, for any $\delta>0,$ 
    \[\EE_{\PP_{m,\epsilon,q}}\frac{Z_Y(x,G)}{Z_Y(G)} \leq \delta+\EE_{\PP_{m,\epsilon,q}}Z_Y(x,G)/\delta\] which optimizing over $\delta$ allows us to conclude 
        \[\EE_{\PP_{m,\epsilon,q}}\frac{Z_Y(x,G)}{Z_Y(G)}=O\left(\sqrt{\EE_{\PP_{m,\epsilon,q}}Z_Y(x,G)} \right)=O\left(\sqrt{\frac{1}{n^{\eps m - 1}\cdot (nq)}+\frac{\sqrt{m}}{nq}}\right).\]

        But in $\PP_{m,\epsilon,q}$ the posterior distribution of $x$ given $G$ satisfies from \eqref{eq:change_cond} for all realizations $H$, \[\PP_{m,\epsilon,q}(H|G) \propto \sum_{P \in S_{m,\epsilon}: P \subseteq H} 1(P \subseteq G).\]Hence, \[\PP_{x, G \sim \PP_{m,\epsilon,q}|x, H \sim \PP(x|G)} [H \cap x \neq \emptyset]=\EE_{\PP_{m,\epsilon,q}}\frac{Z_Y(x,G)}{Z_Y(G)}=O\left(\sqrt{\frac{1}{n^{\eps m - 1}\cdot (nq)}+\frac{\sqrt{m}}{nq}}\right),\] which implies 
        \[\E_{x, G \sim \PP_{m,\epsilon,q}|x, H \sim \PP(x|G)} [|H \cap x|]=O\left(m\sqrt{\frac{1}{n^{\eps m - 1}\cdot (nq)}+\frac{\sqrt{m}}{nq}}\right),\] and therefore by the Nishimori identity (see e.g., \cite[Lemma 2]{niles2023all})
         \begin{align*}
    \E[\|\E[x \vert G=G_0 \cup x_{\epsilon}] - x\|_2^2] &=\E[\|x\|^2_2]-\E_{x, G \sim \PP_{m,\epsilon,q}|x, H \sim \PP(x|G)} [|H \cap x|]\\
    &=\E[\|x\|^2_2]-O\left(m\sqrt{\frac{1}{n^{\eps m - 1}\cdot (nq)}+\frac{\sqrt{m}}{nq}}\right)\\
    &=\E[\|x\|^2_2](1-O\left(\sqrt{\frac{1}{n^{\eps m - 1}\cdot (nq)}+\frac{\sqrt{m}}{nq}}\right)). 
      \end{align*}
    
\end{proof}

\begin{proof}[Theorem~\ref{thm:noisy-MMSE-PSP}]
    Now, given Corollary~\ref{cor:conditional-noisy-MMSE}, we are ready to prove the stated lower bound for the noisy MMSE stated in Theorem~\ref{thm:noisy-MMSE-PSP}. In the planted shortest path problem, for some constants $c, C > 0$, $L = (C + o(1))\frac{\log n}{\log log n}$ and $q = c\frac{\log n}{n}$. We verify that $L = \omega(1), nq = \omega(\sqrt{L}), L \ll n^{1/15}$ and $Lq \le 1/3$.

    Recall that in the planted shortest path problem, we observe a graph $G$ which is the union of $G_0 \sim G(n,q)$ and a uniformly random path $H$ between vertice $1$ and $2$ of length $L$. As explained in Remark~\ref{rem:approximate-path-noisy-model}, in the noisy version of the planted shortest path problem, one observes $T_{\rho}(G)$ which is the union of an Erd\H{o}s-R\'{e}nyi random graph $\hat{G}_0 \sim G(n,q)$ and $\hat{H}$, with $(\hat{H}, H)$ being a $(1-\eps)$-approximate path of length $L$ between vertices $1$ and $2$ and $\eps L \sim \text{Bin}(L, \rho)$. Since $\hat{H}$ is obtained from $H$ by disconnecting each edge of $H$ independently with probability $\rho$, we see that conditioned on $\eps L$, $\hat{H}$ is a uniformly random $(1-\eps)L$-subset of $H$. Then, if we use $\PP_{L, q}$ to denote the distribution of $(G_0, H)$ in the planted shortest path problem and $\PP_{L, \eps, q}$ to denote the distribution of $(\hat{G}_0, (\hat{H}, H))$ in which $\hat{G}_0 \sim G(n,q)$, $H$ is a uniformly random path of length $L$ between vertices $1$ and $2$, and $\hat{H}$ is a uniformly random $(1-\eps)L$-subset of $H$, we have
    \begin{align*}
        \text{MMSE}_{\rho} &= \E_{(G_0, H) \sim \PP_{L,q}}\left[\left\|\E\left[H \vert T_{\rho}(G_0 \cup H)\right] - H\right\|_2^2\right]\\
        &= \underset{\eps L \sim \text{Bin}(L, \rho)}{\E} \E_{(\hat{G}_0, (\hat{H}, H)) \sim \PP_{L, \eps, q}} \left[\left\|\E\left[H \vert \hat{G}_0 \cup \hat{H}\right] - H\right\|_2^2\right].
    \end{align*}
    Since $L = \omega(1)$, $nq = \omega(\sqrt{L})$, $L \ll n^{1/15}$ and $Lq \le 1/3$, if $\eps L$ satisfies $1 \le \eps L \le m - 1$, by Corollary~\ref{cor:conditional-noisy-MMSE}, we have
    \begin{align*}
        \E_{(\hat{G}_0, (\hat{H}, H)) \sim \PP_{L, \eps, q}} \left[\left\|\E\left[H \vert \hat{G}_0 \cup \hat{H}\right] - H\right\|_2^2\right] \ge \E\left[\|H\|_2^2\right] \cdot \left(1-O\left(\frac{1}{n^{\eps L - 1}\cdot (nq)}+\frac{\sqrt{L}}{nq}\right)\right).
    \end{align*}
    
    Now, since $\rho L = \omega(1)$, we know that $\eps L \sim \text{Bin}(L,\rho)$ satisfies $\eps L \ge 1$ with probability $1 - (1 - \rho)^L \ge 1 - \exp(- \rho L) \ge 1 - o(1)$. Similarly, if $\rho \le 1/2$, we have $\eps L \le L - 1$ with probability at least $1 - o(1)$. Therefore,
    \begin{align*}
        &\quad \text{MMSE}_{\rho}\\
        &= \underset{\eps L \sim \text{Bin}(L, \rho)}{\E} \E_{(\hat{G}_0, (\hat{H}, H)) \sim \PP_{L, \eps, q}} \left[\left\|\E\left[H \vert \hat{G}_0 \cup \hat{H}\right] - H\right\|_2^2\right]\\
        &\ge \Pr(1 \le \eps L \le L-1) \cdot \underset{\eps L \sim \text{Bin}(L, \rho)}{\E} \left[ \E_{(\hat{G}_0, (\hat{H}, H)) \sim \PP_{L, \eps, q}} \left[\left\|\E\left[H \vert \hat{G}_0 \cup \hat{H}\right] - H\right\|_2^2\right] \Bigg\vert 1 \le \eps L \le L - 1 \right]\\
        &\ge (1 - o(1)) \cdot \E\left[\|H\|_2^2\right] \cdot \left(1-O\left(\frac{1}{nq}+\frac{\sqrt{L}}{nq}\right)\right)\\
        &= (1 - o(1))\E\left[\|H\|_2^2\right],
    \end{align*}
    since $nq = \omega(\sqrt{L})$. This concludes the proof.
\end{proof}

\begin{proof}[Theorem~\ref{thm:2nd_MM_Shortest}]
    First, we lower bound the expectation of $N_{m,\eps}$. We have
\begin{align*}
    \E_{\QQ}[N_{m,\eps}] &= (n-2)_{(m-1)} \binom{m}{\eps m} q^{m-\eps m}\\
    &\ge (1 - O(1/n)) n^{m-1} \binom{m}{\eps m}  q^{m-\eps m}\\
    &= (1 - O(1/n))  \frac{1}{n}\binom{m}{\eps m}\left( n q^{1-\eps}\right)^m\\
    &\ge (1 - O(1/n))  \frac{1}{n} \left( (nq)^{1-\eps} n^\eps \right)^m\\
    &\ge (1 - O(1/n))  \frac{n^{\eps m}}{n} (nq)^{(1-\eps)m}.
\end{align*}
In particular, if $1 \le \eps m \le m-1$, $nq \ge \omega(\sqrt{m}) \ge \omega(1)$, and $mq \le 1/3$, we have
\begin{align*}
    \E_{\QQ}[N_{m,\eps}] &\ge (1 - O(1/n))  \frac{n^{\eps m}}{n} (nq)^{(1-\eps)m}\\
    &\ge \Omega\left(n^{\eps m - 1}\cdot (nq)\right)\\
    &\ge \omega(1).
\end{align*}

Next, we upper bound the expectation on $N_{m,\eps}^{(2,k)}$, the number of pairs of $(1-\eps)$-approximate paths of length $m$ between vertices $1$ and $2$ that share $k$ edges in $G(n,q)$. Recall that this is the number of pairs of $((\hat{P}_1, P_1), (\hat{P}_2, P_2))$ of $(1-\eps)$-approximate paths of length $m$ between vertices $1$ and $2$, such that $|E(P_1 \cap P_2)| = k$. We then have
\begin{align}
    \E_{\QQ}[N_{m,\eps}^{(2,k)}] &\le M_{k,m-k,m-k} \sum_{j = 0}^{\min\{2\eps m, m-k + \eps m\}} \binom{2m-k}{j}  q^{2m-k-j}, \label{ineq:k-overlap-expectation}
\end{align}
where $M_{k,m-k,m-k}$ denotes the number of pairs $(P_1, P_2)$ of paths of length $m$ between vertices $1$ and $2$ that share $k$ edges in the complete graph $K_n$. The inner summation enumerates over the missing edges of $P_1 \cup P_2$ from $\hat{P}_1 \cup \hat{P_2}$. Note that at most $\min\{2\eps m, m-k + \eps m\}$ edges of $P_1 \cup P_2$ are not present in $\hat{P}_1 \cup \hat{P}_2$. Here, $2\eps m$ comes from that $\eps m$ edges of $P_i$ are missing from $\hat{P}_i$ for each $i \in \{1,2\}$, and $m - k + \eps m$ comes from that there are exactly $\eps m$ edges of $P_1$ missing from $\hat{P}_1$, and there are at most $m-k$ edges of $P_2 \setminus P_1$  missing from $\hat{P}_2$.

Recall that $N_{m,\eps}^{(2)}$ denote the number of non-disjoint pairs of $(1-\eps)$-approximate paths of length $m$ between vertices $1$ and $2$, i.e., the number of pairs of $((\hat{P}_1, P_1), (\hat{P}_2, P_2))$ of $(1-\eps)$-approximate paths of length $m$ between vertices $1$ and $2$, such that $|E(P_1 \cap P_2)| > 0$. By \eqref{ineq:k-overlap-expectation}, we have
\begin{align*}
    \E_{\QQ}[N_{m,\eps}^{(2)}] &= \sum_{k = 1}^{m} \E_{\QQ}[N_{m,\eps}^{(2,k)}]\\
    &= \sum_{k = 1}^{m}\E_{\QQ}[N_{m,\eps}^{(2,k)}]\\
    &\le \sum_{k=1}^{m} M_{k,m-k,m-k} \sum_{j=0}^{\min\{2\eps m, m-k + \eps m\}}\binom{2m-k}{j}  q^{2m-k - j}. \stepcounter{equation}\tag{\theequation}\label{ineq:overlap-expectation}
\end{align*}

Finally, we upper bound the second moment of $N_{m,\eps}$. Note that $N_{m,\eps}^2 = \sum_{k=0}^m N_{m,\eps}^{(2,k)} = N_{m,\eps}^{(2,0)} + N_{m,\eps}^{(2)}$ by considering all the overlap patterns of a pair of $(1-\eps)$-approximate paths. Moreover, we have $\E_{\QQ}[N_{m,\eps}^{(2,0)}] \le \E_{\QQ}[N_{m,\eps}]^2$, since for a pair disjoint paths, the events that they are $(1-\eps)$-approximate paths in $\Q$ are independent. Therefore, by \eqref{ineq:overlap-expectation}, we have 
\begin{align*}
    &\quad \E_{\QQ}[N_{m,\eps}^2]\\
    &= \E_{\QQ}[N_{m,\eps}^{(2,0)}] + \E_{\QQ}[N_{m,\eps}^{(2)}]\\
    &\le \E_{\QQ}[N_{m,\eps}]^2 + \sum_{k=1}^{m} M_{k,m-k,m-k} \sum_{j=0}^{\min\{2\eps m, m-k + \eps m\}}\binom{2m-k}{j}  q^{2m-k - j}
    \intertext{Since $mq \le 1/3$ and $m = \omega(1)$, the ratio of two consecutive terms in the inner sum over $j$ is $\frac{\binom{2m-k}{j+1}  q^{2m-k-j-1}}{\binom{2m-k}{j}  q^{2m-k-j}} = \frac{2m-k-j}{j+1}\cdot\frac{1}{q} \ge \frac{1}{2m} \cdot \frac{1}{q} \ge \frac{3}{2} $, and thus the inner sum is dominated by the last term up to a constant factor of 3. We then have}
    &\le \E_{\QQ}[N_{m,\eps}]^2  + \sum_{k=1}^{m} M_{k,m-k,m-k} \cdot 3 \binom{2m-k}{t} q^{2m-k-t}
    \intertext{where $t = \min\{2\eps m, m-k + \eps m\}$. Now we invoke \cite[Claim 2.1]{li2024some}, which says that for some constant $C$, we have $M_{k,m-k,m-k} \le \left(\frac{k+1}{n^2} + \frac{Ck^9 m^6}{n^3}\right)n^{2m-k}$ for $m \ll n^{1/3}$ and $1 \le k \le m-1$. When $k = m$, we have $M_{m,0,0} = (n-2)_{(m-1)} \le n^{m-1}$. Thus,}
    &\le \E_{\QQ}[N_{m,\eps}]^2  + 3 n^{m-1}\binom{m}{\eps m} q^{m - \eps m} \\
    &\quad + 3\sum_{k=1}^{m-1} \left(\frac{k+1}{n^2} + \frac{C k^9m^6}{n^3}\right)n^{2m-k} \binom{2m-k}{t} q^{2m-k-t}\\
    &= \E_{\QQ}[N_{m,\eps}]^2 + 3 n^{m-1}\binom{m}{\eps m} q^{m - \eps m} + 3\sum_{k=1}^{m-\eps m} \left(\frac{k+1}{n^2} + \frac{C k^9m^6}{n^3}\right)n^{2m-k} \binom{2m-k}{2\eps m}q^{2(1-\eps)m-k}\\
    &\quad + 3\sum_{k=m - \eps m + 1}^{m-1} \left(\frac{k+1}{n^2} + \frac{C k^9m^6}{n^3}\right)n^{2m-k} \binom{2m-k}{m-k+\eps m} q^{m-\eps m}\\
    &= \E_{\QQ}[N_{m,\eps}]^2+ 3\left(\frac{1}{n} \binom{m}{\eps m}\left(n  q^{1-\eps}\right)^m\right) + 3 \left(\frac{1}{n} \binom{m}{\eps m}\left(n  q^{1-\eps}\right)^m\right)^2 \\
    &\quad \cdot  \Bigg( \sum_{k=1}^{m-\eps m} \frac{1}{\binom{m}{\eps m}^2} \left(k+1 + \frac{C k^9m^6}{n}\right)\cdot \frac{\binom{2m-k}{2\eps m}}{n^k q^{k}}  + \sum_{k=m-\eps m+1}^{m-1} \frac{1}{\binom{m}{\eps m}^2}\left(k+1 + \frac{C k^9m^6}{n}\right)\cdot \frac{\binom{2m-k}{m-k+\eps m}}{n^k q^{(1-\eps)m}  } \Bigg)\\
    &\le \E_{\QQ}[N_{m,\eps}]^2 + 3 \E_{\QQ}[N_{m,\eps}] + 3 \E_{\QQ}[N_{m,\eps}]^2 \cdot  \Bigg( \sum_{k=1}^{m-\eps m} \frac{\binom{2m-k}{2\eps m}}{\binom{m}{\eps m}^2} \left(k+1 + \frac{C k^9m^6}{n}\right)\cdot \left(\frac{1}{nq}\right)^k\\
    &\quad + \sum_{k=m-\eps m+1}^{m-1} \frac{\binom{2m-k}{m - \eps m}}{\binom{m}{\eps m}^2}\left(k+1 + \frac{C k^9m^6}{n}\right)\cdot \left(\frac{1}{nq}\right)^{(1-\eps)m} \left(\frac{1}{n}\right)^{k - (1 - \eps)m} \Bigg)
    \intertext{We may check that $\frac{\binom{2m-k}{2\eps m}}{\binom{m}{\eps m}^2} \le \frac{\binom{2m}{2\eps m}}{\binom{m}{\eps m}^2} \le C_2 \sqrt{m}$ for some absolute constant $C_2$ using Stirling's approximation, and similarly $\frac{\binom{2m-k}{m - \eps m}}{\binom{m}{\eps m}^2} \le \frac{\binom{m + \eps m}{m - \eps m}}{\binom{m}{\eps m}^2} = \frac{\binom{m + \eps m}{2\eps m}}{\binom{m}{\eps m}^2} \le \frac{\binom{2m}{2\eps m}}{\binom{m}{\eps m}^2} \le C_2 \sqrt{m}$ for $k \ge m - \eps m$. Moreover, since $k \le m \ll n^{1/15}$, we have $k+1 + \frac{C k^9m^6}{n} \le k + 2$. Therefore, we have}
    &\le \E_{\QQ}[N_{m,\eps}]^2+ 3 \E_{\QQ}[N_{m,\eps}] + 3 \E_{\QQ}[N_{m,\eps}]^2 \cdot  C_2 \cdot \Bigg( \sum_{k=1}^{m-\eps m} \sqrt{m} \left(k+2\right)\cdot \left(\frac{1}{nq}\right)^k\\
    &\quad + \sum_{k=m-\eps m+1}^{m-1} \sqrt{m}\left(k+2\right)\cdot \left(\frac{1}{nq}\right)^{(1-\eps)m} \left(\frac{1}{n}\right)^{k - (1 - \eps)m} \Bigg)\\
    &\le \E_{\QQ}[N_{m,\eps}]^2 + O\left(\frac{1}{\E_{\QQ}[N_{m,\eps}]} + \frac{\sqrt{m}}{nq}\right) \cdot \E_{\QQ}[N_{m,\eps}]^2\\
    &\le \left(1 + O\left(\frac{1}{n^{\eps m - 1}\cdot (nq)}+ \frac{\sqrt{m}}{nq}\right)\right)\E_{\QQ}[N_{m,\eps}]^2.
\end{align*}
where in the second to last inequality we used that $nq \gg \sqrt{m}$ and that $n^{\eps m - 1}\cdot (nq) = \omega(1)$.

In particular, under the assumptions $nq \gg \sqrt{m}$ and $\eps m \ge 1$, we have $\E_{\QQ}[N_{m,\eps}^2] \le (1 + o(1))\E_{\QQ}[N_{m,\eps}]^2$. 

\end{proof}

\subsection{Stability of Symmetric Low-Degree Polynomials: Proof of Theorem \ref{PSP:stab}}

In this part, we prove the stability parameters for symmetric low-degree polynomials for the planted shortest path problem. Recall that in the context of planted shortest path problem, a polynomial $f$ is symmetric if for any permutation $\pi: [n] \to [n]$ that fixes vertices $1$ and $2$, we have $f(G) = f(G_{\pi})$, where $G_{\pi}$ is the graph obtained from $G$ by permuting vertices according to $\pi$. In other words, if we treat $G$ as the adjacency matrix, then $G_{\pi} = \Pi^\top G \Pi$ where $\Pi$ is the permutation matrix of $\pi$.

Before we move to the proof, we define some notations that will be useful later in the proof.

\begin{definition}[Partially Labeled Graph]
    Let $K \subseteq \N$. We will use $\GG_{\le D}^K$ to denote the collection of partially labeled graphs with at most $D$ edges and without isolated vertices, where each vertex is either unlabeled or is labeled by an element of $K$, and no two different vertices are labeled by the same vertex in $K$.

    For a partially labeled graph $\alpha$, we will use $v(\alpha)$ to denote the number of its unlabeled vertices.
\end{definition}

In our proof, the set $K$ will correspond to the two special vertices $1$ and $2$ between which a random path is planted. We will also need the notion of injective maps from a partially labeled graph to the complete graph $K_n$.

\begin{definition}
    An injective map from a partially labeled graph to the complete graph $K_n$ is an injective map from the vertex set of the partially labeled graph to $[n]$ that maps the labeled vertices to the corresponding elements of $K$, and unlabeled vertices to $[n] - K$. Given a partially labeled graph $\alpha$, we will use $\LL_{\alpha}$ to denote the set of all injective maps from $\alpha$ to $K_n$.
\end{definition}

Let us also recall the definition of symmetric polynomials for the planted shortest path problem, defined in Definition~\ref{def:sym-poly}, restated here for readers' convenience.

\begin{definition}
    A polynomial $f: \{0,1\}^{\binom{[n]}{2}} \to \R$ is said to be symmetric for the planted shortest path problem if for any permutation $\pi: [n] \to [n]$ that fixes vertices $1$ and $2$, we have $f(G) = f(G_{\pi})$ as polynomials, where $G_{\pi}$ is the graph obtained from $G$ by permuting vertices according to $\pi$.
\end{definition}

\begin{proof}
    Let $K = \{1,2\}$ denote the set of special vertices. Let $f: \{0,1\}^{\binom{[n]}{2}} \to \R$ be a symmetric polynomial of degree at most $D$. Since $\sum_{l \in \LL_{\alpha}} \chi_{l(\alpha)}(G)$ for $\alpha \in \GG_{\le D}^{K}$ are linearly independent and span the space of symmetric polynomial, we may write $f$ uniquely as
    \begin{align}
        f(G) = \sum_{\alpha \in \GG_{\le D}^{K}} \hat{f}_{\alpha} \sum_{l \in \LL_{\alpha}} \chi_{l(\alpha)}(G), \label{eq:symmetric-polynomial-expansion}
    \end{align}
    where $\chi_{S}(G)$ denotes
    \[\chi_{S}(G) = \prod_{\{i,j\} \in S} \frac{G_{i,j}-q}{\sqrt{q(1-q)}}.\]
    We use the standard fact that $\chi_{S}(G)$ forms an orthonormal basis for the measure $\Q = G(n,q)$, i.e., $\E_{\QQ}[\chi_{S}(G) \chi_{S'}(G)] = \allone\{S = S'\}$. We will also use $\PP$ to denote the planted distribution as in the planted shortest path problem, where one observes the union of $\bH$ with an Erd\H{o}s-R\'{e}nyi graph drawn from $G(n,q)$ and $\bH$ is a uniformly random path of length $L$ between vertices $1$ and $2$. Under the planted measure $\PP$, we have
    \begin{align*}
        \E_{\PP}[\chi_{S}(G)\chi_{S'}(G)] &= \E_{\PP}\left[\prod_{\{i,j\} \in S \triangle S'} \frac{G_{i,j}-q}{\sqrt{q(1-q)}} \cdot \prod_{\{i,j\} \in S \cap S'} \frac{(G_{i,j}-q)^2}{q(1-q)} \right]\\
        &= \E_{\PP}\left[\allone\{S \triangle S' \subseteq \bH\} \left(\frac{1-q}{q}\right)^{\frac{|S \cap \bH|}{2} + \frac{|S' \cap \bH|}{2}}\right], \stepcounter{equation}\tag{\theequation}\label{eq:correlation-shortest}
    \end{align*}
    where we used that when conditioned on $\bH$, the conditional distribution becomes a product distribution that is a Dirac measure on $1$ for $\{i,j\} \in \bH$, and is $\text{Bern}(q)$ otherwise.
    In what follows, the expectations are taken with respect to $\PP$.

    Now, we have
    \begin{align*}
        \E[\left(f(G) - f(T_{\rho}(G))\right)^2] = \E[f(G)^2] + \E[f(T_{\rho}(G))^2] - 2\E[f(G)f(T_{\rho}(G))].
    \end{align*}
    We will analyze each of the three terms separately.

    First, given the expansion of the symmetric polynomial $f$ in \eqref{eq:symmetric-polynomial-expansion}, we have
    \begin{align*}
        &\quad \E[f(G)^2]\\
        &= \sum_{\alpha_1, \alpha_2 \in \GG_{\le D}^K} \hat{f}_{\alpha_1} \hat{f}_{\alpha_2} \sum_{l_1 \in \LL_{\alpha_1}, l_2 \in \LL_{\alpha_2}} \E[\chi_{l_1(\alpha_1)}(G)\chi_{l_2(\alpha_2)}(G)]\\
        &=\sum_{\alpha_1, \alpha_2 \in \GG_{\le D}^K} \hat{f}_{\alpha_1} \hat{f}_{\alpha_2} \sum_{l_1 \in \LL_{\alpha_1}, l_2 \in \LL_{\alpha_2}} \E\left[\allone\{l_1(\alpha_1) \triangle l_2(\alpha_2) \subseteq \bH\} \left(\frac{1-q}{q}\right)^{\frac{|l_1(\alpha) \cap \bH|}{2} + \frac{|l_2(\alpha) \cap \bH|}{2}}\right]\\
        &= \sum_{\alpha \in \GG_{\le D}^K} \hat{f}_{\alpha}^2 \sum_{l_1, l_2 \in \LL_{\alpha}} \EE\left[\allone\{l_1(\alpha) \triangle l_2(\alpha) \subseteq \bH\} \left(\frac{1-q}{q}\right)^{\frac{|l_1(\alpha) \cap \bH|}{2} + \frac{|l_2(\alpha) \cap \bH|}{2}}\right]\\
        &\quad + \sum_{\substack{\alpha_1, \alpha_2 \in \GG_{\le D}^K:\\ \alpha_1 \ne \alpha_2}} \hat{f}_{\alpha_1}\hat{f}_{\alpha_2}\sum_{l_1 \in \LL_{\alpha_1}, l_2 \in \LL_{\alpha_2}} \EE\left[\allone\{l_1(\alpha_1) \triangle l_2(\alpha_2) \subseteq \bH\} \left(\frac{1-q}{q}\right)^{\frac{|l_1(\alpha) \cap \bH|}{2} + \frac{|l_2(\alpha) \cap \bH|}{2}}\right],
    \end{align*}
    where we used \eqref{eq:correlation-shortest} in the second to last equality.

    Since $q \le 1/2$, we have $\frac{1-q}{q} \ge 1$ and the first sum over pairs of $\alpha_1 = \alpha_2$ is at least
    \begin{align*}
        &\quad \sum_{\alpha \in \GG_{\le D}^K} \hat{f}_{\alpha}^2 \sum_{l_1, l_2 \in \LL_{\alpha}} \EE\left[\allone\{l_1(\alpha) \triangle l_2(\alpha) \subseteq \bH\} \left(\frac{1-q}{q}\right)^{\frac{|l_1(\alpha) \cap \bH|}{2} + \frac{|l_2(\alpha) \cap \bH|}{2}}\right]\\
        &\ge \sum_{\alpha \in \GG_{\le D}^K} \hat{f}_{\alpha}^2 \sum_{\substack{l_1, l_2 \in \LL_{\alpha}\\ l_1(\alpha) = l_2(\alpha)}} \EE\left[\left(\frac{1-q}{q}\right)^{\frac{|l_1(\alpha) \cap \bH|}{2} + \frac{|l_2(\alpha) \cap \bH|}{2}}\right]\\
        &\ge (1 - O(1/n))\sum_{\alpha} \hat{f}_{\alpha}^2 n^{v(\alpha)} |\text{Aut}(\alpha)|,
    \end{align*}
    where $v(\alpha)$ denote the number of unlabelled vertices in $\alpha$.

    The second sum over pairs $\alpha_1, \alpha_2$ of non-isomorphic partially labeled graphs can be bounded by
    \begin{align*}
        &\quad \left|\sum_{\substack{\alpha_1, \alpha_2 \in \GG_{\le D}^K:\\ \alpha_1 \ne \alpha_2}} \hat{f}_{\alpha_1}\hat{f}_{\alpha_2} \sum_{l_1 \in \LL_{\alpha_1}, l_2 \in \LL_{\alpha_2}} \EE\left[\allone\{l_1(\alpha_1) \triangle l_2(\alpha_2) \subseteq \bH\} \left(\frac{1-q}{q}\right)^{\frac{|l_1(\alpha_1) \cap \bH|}{2} + \frac{|l_2(\alpha_2) \cap \bH|}{2}}\right]\right|\\
        &\le \sum_{\substack{\alpha_1, \alpha_2 \in \GG_{\le D}^K:\\ \alpha_1 \ne \alpha_2}} \left|\hat{f}_{\alpha_1}\hat{f}_{\alpha_2}\right| \sum_{l_1 \in \LL_{\alpha_1}, l_2 \in \LL_{\alpha_2}}  \E\left[\allone\{l_1(\alpha_1) \triangle l_2(\alpha_2) \subseteq \bH\} \left(\frac{1-q}{q}\right)^{\frac{|l_1(\alpha_1) \cap \bH|}{2} + \frac{|l_2(\alpha_2) \cap \bH|}{2}}\right]\\
        &= \sum_{\substack{\alpha_1, \alpha_2 \in \GG_{\le D}^K:\\ \alpha_1 \ne \alpha_2}} \left|\hat{f}_{\alpha_1}\hat{f}_{\alpha_2}\right| \sum_{l_1 \in \LL_{\alpha_1}, l_2 \in \LL_{\alpha_2}}  \allone\{l_1(\alpha_1) \triangle l_2(\alpha_2) \subseteq H\} \left(\frac{1-q}{q}\right)^{\frac{|l_1(\alpha_1) \cap H|}{2} + \frac{|l_2(\alpha_2) \cap H|}{2}}
    \end{align*}
    for a fixed image $H$ of an arbitrary injective map of $\bH$, due to the symmetry of $f$. We now proceed to bound the sum by considering how the images $l_1(\alpha_1)$ and $l_2(\alpha_2)$ intersect with the path $H$. In the rest of the proof, we let
    \begin{align*}
        U_H &:= (l_1(\alpha_1) \cup l_2(\alpha_2)) \cap H,
        I_H:= (l_1(\alpha_1) \cap l_2(\alpha_2)) \cap H,\\
        s &:= |U_H|, t := |I_H|, v_s := |V(U_H) - K|,\\
        v_t &:= |V(l_1(\alpha_1) \cap H) \cap V(l_2(\alpha_2) \cap H) - K|,\\
        v_{s,1} &:= |V(l_1(\alpha_1)) \cap V(U_H) - K|, v_{s,2} := |V(l_2(\alpha_2)) \cap V(U_H) - K|,\\
        c_1 &:= \allone\{1 \in V(U_H)\}, c_2 := \allone\{2 \in V(U_H)\}, c_s := cc(U_H) - c_1 - c_2,
    \end{align*}
    where $cc(U_H)$ denotes the number of connected components of $U_H$. Now, for some choice of parameters $W = (s,t,v_s,v_t,v_{s,1},v_{s,2},c_1,c_2,c_s)$, let us use $\LL_{\alpha_1, \alpha_2}(W)$ to denote all pairs of $(l_1, l_2)$ whose images $l_1(\alpha_1)$ and $l_2(\alpha_2)$ satisfy the parameters defined above, and that $l_1(\alpha_1) \triangle l_2(\alpha_2) \subseteq H$. Then, we have
    \begin{align*}
        &\quad \left|\sum_{\substack{\alpha_1, \alpha_2 \in \GG_{\le D}^K:\\ \alpha_1 \ne \alpha_2}} \hat{f}_{\alpha_1}\hat{f}_{\alpha_2} \sum_{l_1 \in \LL_{\alpha_1}, l_2 \in \LL_{\alpha_2}} \EE\left[\allone\{l_1(\alpha_1) \triangle l_2(\alpha_2) \subseteq \bH\} \left(\frac{1-q}{q}\right)^{\frac{|l_1(\alpha_1) \cap \bH|}{2} + \frac{|l_2(\alpha_2) \cap \bH|}{2}}\right]\right|\\
        &\le \sum_{\substack{\alpha_1, \alpha_2 \in \GG_{\le D}^K:\\ \alpha_1 \ne \alpha_2}} \left|\hat{f}_{\alpha_1}\hat{f}_{\alpha_2}\right| \sum_{l_1 \in \LL_{\alpha_1}, l_2 \in \LL_{\alpha_2}}  \allone\{l_1(\alpha_1) \triangle l_2(\alpha_2) \subseteq H\} \left(\frac{1-q}{q}\right)^{\frac{|l_1(\alpha_1) \cap H|}{2} + \frac{|l_2(\alpha_2) \cap H|}{2}}\\
        &= \sum_{\substack{\alpha_1, \alpha_2 \in \GG_{\le D}^K:\\ \alpha_1 \ne \alpha_2}} \left|\hat{f}_{\alpha_1}\hat{f}_{\alpha_2}\right| \sum_{\substack{W = (s,t,v_s,v_t,\\ v_{s,1}, v_{s,2},c_1,c_2,c_s) } } \sum_{(l_1, l_2) \in \LL_{\alpha_1, \alpha_2}(W)} \allone\{l_1(\alpha_1) \triangle l_2(\alpha_2) \subseteq H\} \left(\frac{1-q}{q}\right)^{\frac{|l_1(\alpha_1) \cap H|}{2} + \frac{|l_2(\alpha_2) \cap H|}{2}}\\
        &= \sum_{\substack{\alpha_1, \alpha_2 \in \GG_{\le D}^K:\\ \alpha_1 \ne \alpha_2}} \left|\hat{f}_{\alpha_1}\hat{f}_{\alpha_2}\right| \sum_{\substack{W = (s,t,v_s,v_t,\\ v_{s,1}, v_{s,2},c_1,c_2,c_s) } } \sum_{(l_1, l_2) \in \LL_{\alpha_1, \alpha_2}(W)}  \left(\frac{1-q}{q}\right)^{\frac{|(l_1(\alpha_1) \cup l_2(\alpha_2)) \cap H|}{2} + \frac{|(l_1(\alpha_1) \cap l_2(\alpha_2)) \cap H|}{2}}
        \intertext{Now recall that $U_H := (l_1(\alpha_1) \cup l_2(\alpha_2)) \cap H$, $I_H := (l_1(\alpha_1) \cap l_2(\alpha_2)) \cap H$, and $s := |U_H|$, $t := |I_H|$. Thus,}
        &= \sum_{\substack{\alpha_1, \alpha_2 \in \GG_{\le D}^K:\\ \alpha_1 \ne \alpha_2}} \left|\hat{f}_{\alpha_1}\hat{f}_{\alpha_2}\right| \sum_{\substack{W = (s,t,v_s,v_t,\\ v_{s,1}, v_{s,2},c_1,c_2,c_s) } } \sum_{(l_1, l_2) \in \LL_{\alpha_1, \alpha_2}(W)}  \left(\sqrt{\frac{1-q}{q}}\right)^{s + t}\\
        &\le 2\sum_{\alpha_1 \in \GG_{\le D}^K} \left|\hat{f}_{\alpha_1}\right| \sum_{\substack{W = (s,t,v_s,v_t,\\ v_{s,1}, v_{s,2},c_1,c_2,c_s) } } \sum_{\substack{\alpha_2 \in \GG_{\le D}^K:\\ \alpha_1 \ne \alpha_2,\\ |\hat{f}_{\alpha_2}|\left(\frac{n}{2D}\right)^{\frac{v(\alpha_2)}{2}}|\text{Aut}(\alpha_2)|\\
        \le |\hat{f}_{\alpha_1}|\left(\frac{n}{2D}\right)^{\frac{v(\alpha_1)}{2}}|\text{Aut}(\alpha_1)| } } \left|\hat{f}_{\alpha_2}\right|  \sum_{(l_1, l_2) \in \LL_{\alpha_1, \alpha_2}(W)} \left(\sqrt{\frac{1-q}{q}}\right)^{s + t},
    \end{align*}
    where the last line follows because for any pair of $\alpha_1, \alpha_2$, either $|\hat{f}_{\alpha_2}|\left(\frac{n}{2D}\right)^{\frac{v(\alpha_2)}{2}}|\text{Aut}(\alpha_2)| \le |\hat{f}_{\alpha_1}|\left(\frac{n}{2D}\right)^{\frac{v(\alpha_1)}{2}}|\text{Aut}(\alpha_1)|$ or $|\hat{f}_{\alpha_2}|\left(\frac{n}{2D}\right)^{\frac{v(\alpha_2)}{2}}|\text{Aut}(\alpha_2)| > |\hat{f}_{\alpha_1}|\left(\frac{n}{2D}\right)^{\frac{v(\alpha_1)}{2}}|\text{Aut}(\alpha_1)|$.

    For fixed $\alpha_1 \in \GG_{\le D}^K$ and a fixed choice of $W = (s, t, v_s, v_t, v_{s,1}, v_{s,2}, c_1, c_2, c_s)$, let us consider how many $(\alpha_2, (l_1,l_2) \in \LL_{\alpha_1,\alpha_2}(W))$ there are. We note some relations between these parameters in $W$. Under the assumption $2D < L$ We have
    \begin{align}
        v_s &= s + c_s\\
        v_s + v_t &\le v_{s,1} + v_{s,2}\\
        t &\le s\\
        t &\le v_t\\
        0 &\le c_s \le s\\
        1 &\le s.
    \end{align}
    We may verify the six relations above as follows
    \begin{itemize}
        \item \begin{align*}
            v_s &= |V(U_H) - K|\\
            &= \sum_{\delta \in CC(U_H)} |V(\delta)| - |V(U_H) \cap K|
            \intertext{Since $H$ is a path, each connected component $\delta$ of $U_H = (l_1(\alpha_1) \cup l_2(\alpha_2)) \cap H$ is also a path, and thus $|V(\delta)| = |\delta| + 1$. Consequently,}
            &= \sum_{\delta \in CC(U_H)} (|\delta| + 1) - c_1 - c_2\\
            &= |U_H| + |CC(U_H)| - c_1 - c_2\\
            &= s + c_s,
        \end{align*} where $CC(U_H)$ denotes the collection of connected components of $U_H$.
        \item 
        \begin{align*}
            v_s + v_t &= |V(U_H) - K| + |V(l_1(\alpha_1) \cap H) \cap V(l_2(\alpha_2) \cap H) - K|\\
            &\le |(V(l_1(\alpha_1)) \cup V(l_2(\alpha_2)))\cap V(U_H) - K|\\
            &\quad+ |(V(l_1(\alpha_1)) \cap V(l_2(\alpha_2))) \cap V(U_H) - K|
            \intertext{where the inequality holds because for every $v \in V(U_H) = V((l_1(\alpha_1)\cup l_2(\alpha_2))\ \cap H)$, we have $v \in V(l_1(\alpha_1)) \cup V(l_2(\alpha_2))$, and for $i \in \{1,2\}$ and for every $v \in V(l_i(\alpha_i) \cap H)$, we have $v \in V(l_i(\alpha_i)) \cap V((l_1(\alpha_1) \cup l_2(\alpha_2)) \cap H) = V(l_i(\alpha_i)) \cap V(U_H)$,}
            &= |(V(l_1(\alpha_1)) \cap V(U_H) - K| + |V(l_2(\alpha_2))) \cap V(U_H) - K|\\
            &= v_{s,1} + v_{s,2}.
        \end{align*}
        \item \begin{align*}
            t = |I_H| = |(l_1(\alpha_1) \cap l_2(\alpha_2)) \cap H| \le |(l_1(\alpha_1) \cup l_2(\alpha_2)) \cap H| = |U_H| = s.
        \end{align*}
        \item \begin{align*}
            t &= |I_H|\\
            &= \sum_{\delta \in CC(I_H)} |\delta|\\
            &\le \sum_{\delta \in CC(I_H)} |V(\delta) - K|
            \intertext{since when $2D < L$, we have $|l_1(\alpha_1) \cap l_2(\alpha_2)| < L = |H|$, and thus no components of $I_H = (l_1(\alpha_1) \cap l_2(\alpha_2)) \cap H$ contains both vertex $1$ and $2$, and every component $\delta \in C((l_1(\alpha_1) \cap l_2(\alpha_2)) \cap H)$ satisfies $|\delta| \le |V(\delta) - K|$.}
            &= |V(I_H) - K|\\
            &= |V((l_1(\alpha_1) \cap l_2(\alpha_2)) \cap H) - K|\\
            &\le |V(l_1(\alpha_1) \cap H) \cap V(l_2(\alpha_2) \cap H) - K|\\
            &= v_t
        \end{align*}
        \item $c_s \le s$ is clear since the number of connected components of $U_H$ is at most $|U_H| = s$. On the other hand, $c_s \ge 0$ is also easy to verify, since no components of $U_H = (l_1(\alpha_1) \cup l_2(\alpha_2)) \cap H$ contains both vertex $1$ and $2$ when $2D < L$, and $c_s = cc(U_H) - c_1 - c_2$ is the number of connected components of $U_H$ that does not contain vertex $1$ or $2$.
        \item Since $\alpha_1 \ne \alpha_2$, we know that $l_1(\alpha_1) \triangle l_2(\alpha_2) \ne \emptyset$. Since $l_1(\alpha_1) \triangle l_2(\alpha_2) \subseteq H$, we have $s = |U_H| = |(l_1(\alpha_1) \cup l_2(\alpha_2))\cap H| \ge 1$.
    \end{itemize}

    Since $H$ is a path of length $L$ from $1$ to $2$, we know that $U_H = (l_1(\alpha_1) \cup l_2(\alpha_2)) \cap H$ is a union of vertex-disjoint paths. To enumerate the number of $U_H \subseteq H$ with the parameters in $W$ when $c_s \ge 0$, we may enumerate the start vertex of each connected component of $U_H$ excluding the connected component containing $1$ or $2$, for a total of $c_s$ components, and the length of each of the $c_s$ components together with the length of $c_1 + c_2$ components containing vertex $1$ or $2$. Enumerating the start vertices of each component gives at most $\binom{L}{c_s}$ choices. Enumerating the lengths of the components gives $\binom{s - 1}{c_s + c_1 + c_2 - 1}$ choices by a standard stars-and-bars argument, since the set of lengths of the components of $U_H$ have to be a solution to the following system:
    \begin{align*}
        \sum_{i = 1}^{c_s} x_i + c_1 \cdot y + c_2 \cdot z &= |U_H| = s,\\
        x_i, y, z &\in \ZZ^+,
    \end{align*}
    where $x_i$ are the lengths of components not containing vertices $1$ or $2$, and $y$ is the length of the component containing vertex $1$ if $c_1 = 1$, and $z$ is the length of the component containing vertex $2$ if $c_2 = 1$. We note that $\binom{s-1}{c_s + c_1 + c_2 - 1}$ is well-defined since $s\ge 1$, and $c_s + c_1 + c_2 = cc(U_H) \ge 1$. Combining $\binom{L}{c_s}$ choices for the start vertices of the components and $\binom{s-1}{c_s + c_1 + c_2 - 1}$ choices for the lengths of the components, we conclude that $L^{c_s} 2^{s-1}$ is a valid upper bound on the number of choices of $U_H$ given the parameters in $W$.

    Now for a fixed $U_H$, we proceed to enumerate the number of $(l_1, l_2) \in \LL_{\alpha_1, \alpha_2}(W)$ that satisfies the parameters in $W$ and $(l_1(\alpha_1) \cup l_2(\alpha_2)) \cap H = U_H$:
    \begin{itemize}
        \item We first enumerate the pair of $S_1, S_2$ such that $S_1 = l_1(\alpha_1) \cap H$ and $S_2 = l_2(\alpha_2) \cap H$. Since $S_1 \cup S_2 = U_H$, we know that every edge of $U_H$ has to be part of $S_1$, $S_2$, or both. This gives at most $3^{|U_H|} = 3^s$ choices of $(S_1, S_2)$.
        \item We then enumerate the pair of $V_{S,1}, V_{S,2}$ such that $V_{S,1} = V(l_1(\alpha_1)) \cap V(U_H) - K$ and $V_{S,2} = V(l_2(\alpha_2)) \cap V(U_H) - K$. Since $V_{S,1} \cup V_{S,2} = V( (l_1(\alpha_1) \cup l_2(\alpha_2))\cap H ) - K = V(U_H) - K$, every vertex of $V(U_H) - K$ has to be part of $V_{S,1}$, $V_{S,2}$, or both. This gives at most $3^{|V(U_H) - K|} = 3^{v_s}$ choices of $(V_{S,1}, V_{S,2})$.
        \item Next for fixed $S_1$, $S_2$, $V_{S,1}$ and $V_{S,2}$, we proceed to enumerate the number of $(l_1, l_2) \in \LL_{\alpha_1, \alpha_2}(W)$ that satisfies the parameters in $W$ and $S_1 = l_1(\alpha_1) \cap H$ and $S_2 = l_2(\alpha_2) \cap H$, $V_{S,1} = V(l_1(\alpha_1)) \cap V(U_H) - K$ and $V_{S,2} = V(l_2(\alpha_2)) \cap V(U_H) - K$.
        
        Notice that since $l_1(\alpha_1) \triangle l_2(\alpha_2) \subseteq H$, we have $l_1(\alpha_1) \cup l_2(\alpha_2) - H \subseteq l_1(\alpha_1) \cap l_2(\alpha_2)$ and consequently it must be the case that every vertex in $V(l_1(\alpha_1) \cup l_2(\alpha_2)) - V(U_H) - K = V(l_1(\alpha_1) \cup l_2(\alpha_2)) - V((l_1(\alpha_1) \cup l_2(\alpha_2)) \cap H) - K$ is shared by both $V(l_1(\alpha_1))$ and $V(l_2(\alpha_2))$. Therefore, if we denote $v_o = |V(l_1(\alpha_1) \cup l_2(\alpha_2)) - V(U_H) - K|$, we have
    \begin{align*}
        v(\alpha_1) &= |V(l_1(\alpha_1))- K|\\
        &= |V(l_1(\alpha_1)) - V(U_H) - K|+ |V(l_1(\alpha_1)) \cap V(U_H) - K|\\
        &= |V(l_1(\alpha_1) \cup l_2(\alpha_2)) - V(U_H) - K|+ |V(l_1(\alpha_1)) \cap V(U_H) - K|\\
        &= v_o + v_{s,1}.
    \end{align*}
    Similarly,
    \begin{align*}
        v(\alpha_2) &= v_o + v_{s,2}.
    \end{align*}
    Thus, we have
    \begin{align}
        v_o = \frac{v(\alpha_1) + v(\alpha_2) - v_{s,1} - v_{s,2}}{2}.
    \end{align}
    We could then enumerate the set $V_O \subseteq [n]$ such that $V_O = V(l_1(\alpha_1) \cup l_2(\alpha_2)) - V(U_H) - K$, which gives at most $\binom{n}{v_o} = \binom{n}{\frac{v(\alpha_1) + v(\alpha_2) - v_{s,1}-v_{s,2} }{2}}$ choices.

    \item Recall that $l_1(\alpha_1) \triangle l_2(\alpha_2) \subseteq H$. Therefore, $l_2(\alpha_2) - l_1(\alpha_1) \subseteq (l_1(\alpha_1) \cup l_2(\alpha_2)) \cap H$, and
    \begin{align*}
        &\quad V(l_1(\alpha_1) \cup l_2(\alpha_2)) - V(U_H)\\
        &= V(l_1(\alpha_1) \cup l_2(\alpha_2)) - V((l_1(\alpha_1) \cup l_2(\alpha_2)) \cap H)\\
        &= V(l_1(\alpha_1)) \cup V(l_2(\alpha_2) - l_1(\alpha_1)) - V((l_1(\alpha_1) \cup l_2(\alpha_2)) \cap H)\\
        &= V(l_1(\alpha_1)) -  V((l_1(\alpha_1) \cup l_2(\alpha_2)) \cap H)\\
        &= V(l_1(\alpha_1)) -  V(U_H).
    \end{align*} Now for fixed $S_1, S_2$, $V_{S,1}$, $V_{S,2}$ and $V_O$, we enumerate the number of $l_1(\alpha_1)$ that satisfies
    \begin{align*}
         V_O \sqcup V_{S,1} &= \left[V(l_1(\alpha_1) \cup l_2(\alpha_2)) - V(U_H) - K\right] \sqcup \left[V(l_1(\alpha_1)) \cap V(U_H) - K\right] \\
         &= \left[V(l_1(\alpha_1)) - V(U_H) - K\right] \sqcup \left[V(l_1(\alpha_1)) \cap V(U_H) - K\right]\\
         &= V(l_1(\alpha_1)) - K.
    \end{align*} In other words, when $V_O$ and $V_{S,1}$ are fixed, the vertex set $V(l_1(\alpha_1))$ of the image $l_1(\alpha_1)$ is fixed, and the number of $l_1(\alpha_1)$ is bounded by
    \[\frac{v(\alpha_1)!}{|\text{Aut}(\alpha_1)|} \le \frac{v(\alpha_1)^{v_{s,1}} v_o!}{|\text{Aut}(\alpha_1)|},\]
    since $v(\alpha_1)!$ is the number of injective maps from $V(\alpha_1) - K$ to $V_O \sqcup V_{S,1}$, and taking into account the automorphism group of $\alpha_1$, the number of distinct $l_1(\alpha_1)$ given $V_{S,1}, V_O$ is bounded by $\frac{v(\alpha_1)!}{|\text{Aut}(\alpha_1)|}$. In the inequality, we used that $v(\alpha_1) = v_o + v_{s,1}$, as we have shown above that $V_O \sqcup V_{S,1} = V(l_1(\alpha_1)) - K$.
    \item Furthermore, we note that once $l_1(\alpha_1), S_1, S_2$ are determined, $l_2(\alpha_2)$ is also determined, since from the condition $l_1(\alpha_1) \triangle l_2(\alpha_2) \subseteq H$ and that $S_1 = l_1(\alpha_1) \cap H, S_2 = l_2(\alpha_2) \cap H$, we know $l_1(\alpha_1) - S_1 = l_2(\alpha_2) - S_2 = (l_1(\alpha_1) \cap l_2(\alpha_2)) - H$. As a result, $\alpha_2$ is also determined by $l_1(\alpha_1), S_1, S_2$. If 
    \[|\hat{f}_{\alpha_2}|\left(\frac{n}{2D}\right)^{\frac{v(\alpha_2)}{2}}|\text{Aut}(\alpha_2)| > |\hat{f}_{\alpha_1}|\left(\frac{n}{2D}\right)^{\frac{v(\alpha_1)}{2}}|\text{Aut}(\alpha_1)|,\]
    then this $\alpha_2$ does not participate in the sum and we abort the current enumeration and backtrack. Otherwise, the following holds:
    \[|\hat{f}_{\alpha_2}|\left(\frac{n}{2D}\right)^{\frac{v(\alpha_2)}{2}}|\text{Aut}(\alpha_2)| \le |\hat{f}_{\alpha_1}|\left(\frac{n}{2D}\right)^{\frac{v(\alpha_1)}{2}}|\text{Aut}(\alpha_1)|.\]
    
    \item For fixed $l_1(\alpha_1), l_2(\alpha_2)$, we finish enumerating the number of $(l_1, l_2) \in \LL_{\alpha_1, \alpha_2}(W)$ that achieve the images $l_1(\alpha_1)$ and $l_2(\alpha_2)$. This number is bounded by \[|\text{Aut}(\alpha_1)||\text{Aut}(\alpha_2)| \le \frac{|\hat{f}_{\alpha_1}|}{|\hat{f}_{\alpha_2}|}\left(\frac{n}{2D}\right)^{\frac{v(\alpha_1) - v(\alpha_2)}{2}}|\text{Aut}(\alpha_1)|^2.\]
    Thus, for fixed $l_1(\alpha_1), l_2(\alpha_2)$ and the fixed choice of parameters in $W$, the contribution to the sum is at most
    \begin{align*}
        &\quad 2\left|\hat{f}_{\alpha_1} \right|\left|\hat{f}_{\alpha_2} \right| |\text{Aut}(\alpha_1)||\text{Aut}(\alpha_2)| \left(\sqrt{\frac{1-q}{q}}\right)^{s + t}\\
        &\le 2\left|\hat{f}_{\alpha_1} \right|\left|\hat{f}_{\alpha_2} \right| \frac{|\hat{f}_{\alpha_1}|}{|\hat{f}_{\alpha_2}|}\left(\frac{n}{2D}\right)^{\frac{v(\alpha_1) - v(\alpha_2)}{2}}|\text{Aut}(\alpha_1)|^2 \left(\sqrt{\frac{1-q}{q}}\right)^{s + t}\\
        &= 2 \hat{f}_{\alpha_1}^2 \left(\frac{n}{2D}\right)^{\frac{v(\alpha_1) - v(\alpha_2)}{2}}|\text{Aut}(\alpha_1)|^2 \left(\sqrt{\frac{1-q}{q}}\right)^{s + t}
    \end{align*}
    \end{itemize}

    In total, combining the number of choices for $U_H, S_1, S_2, V_{S,1}, V_{S,2}, V_O, l_1(\alpha_1), l_2(\alpha_2)$, we conclude that the total contribution to the sum from all  $(\alpha_2, (l_1,l_2) \in \LL_{\alpha_1,\alpha_2}(W))$ is at most
    \begin{align*}
        &\quad \underbrace{L^{c_s} 2^{s-1}}_{U_H}  \underbrace{3^s}_{S_1, S_2} \underbrace{3^{v_s}}_{V_{S,1},V_{S,2}} \underbrace{\binom{n}{\frac{v(\alpha_1) + v(\alpha_2) - v_{s,1} - v_{s,2}}{2}}}_{V_O} \underbrace{\frac{v(\alpha_1)^{v_{s,1}} v_o!}{|\text{Aut}(\alpha_1)|}}_{l_1(\alpha_1),l_2(\alpha_2)} \\
        &\quad \cdot \underbrace{2 \hat{f}_{\alpha_1}^2 \left(\frac{n}{2D}\right)^{\frac{v(\alpha_1) - v(\alpha_2)}{2}}|\text{Aut}(\alpha_1)|^2 \left(\sqrt{\frac{1-q}{q}}\right)^{s + t}}_{\substack{\text{contribution given fixed }\\ U_H, S_1, S_2, V_{S,1}, V_{S,2}, V_O, l_1(\alpha_1),l_2(\alpha_2)}}\\
        &\le 2L^{c_s} 2^{s-1} 3^s 3^{v_s} \left(\binom{n}{\frac{v(\alpha_1) + v(\alpha_2) - v_{s,1} - v_{s,2}}{2}} v_o! \right) v(\alpha_1)^{v_{s,1}} \\
        &\quad \cdot \hat{f}_{\alpha_1}^2\left(\frac{n}{2D}\right)^{\frac{v(\alpha_1) - v(\alpha_2)}{2}} |\text{Aut}(\alpha_1)|\left(\sqrt{\frac{1-q}{q}}\right)^{s + t}\\
        &\le L^{c_s} 6^s 3^{v_s} n^{\frac{v(\alpha_1) + v(\alpha_2) - v_{s,1} - v_{s,2}}{2}}v(\alpha_1)^{v_{s,1}}\hat{f}_{\alpha_1}^2\left(\frac{n}{2D}\right)^{\frac{v(\alpha_1) - v(\alpha_2)}{2}}|\text{Aut}(\alpha_1)|\left(\sqrt{\frac{1-q}{q}}\right)^{s + t},
        \intertext{where we used that $v_o = \frac{v(\alpha_1) + v(\alpha_2) - v_{s,1} - v_{s,2}}{2}$,}
        &= L^{c_s} 6^s 3^{v_s} n^{v(\alpha_1) - \frac{v_{s,1} + v_{s,2}}{2}}v(\alpha_1)^{v_{s,1}} (2D)^{\frac{v(\alpha_2) - v(\alpha_1)}{2}}\hat{f}_{\alpha_1}^2|\text{Aut}(\alpha_1)|\left(\sqrt{\frac{1-q}{q}}\right)^{s + t}\\
        &\le \hat{f}_{\alpha_1}^2 n^{v(\alpha_1)} |\text{Aut}(\alpha_1)|\cdot L^{c_s} 6^s 3^{v_s} n^{-\frac{v_{s,1} + v_{s,2}}{2}} (2D)^{v_{s,1}} (2D)^{\frac{v_{s,2} - v_{s,1}}{2}}\left(\sqrt{\frac{1-q}{q}}\right)^{s + t}
        \intertext{since $v(\alpha_1) = |V(l_1(\alpha_1)) - K| \le 2D$ and that $v(\alpha_1) = v_o + v_{s,1}, v(\alpha_2) = v_o + v_{s,2}$,}
        &= \hat{f}_{\alpha_1}^2 n^{v(\alpha_1)} |\text{Aut}(\alpha_1)|\cdot L^{c_s} 6^s 3^{v_s} n^{-\frac{v_{s,1} + v_{s,2}}{2}} (2D)^{\frac{v_{s,1} + v_{s,2}}{2}}\left(\sqrt{\frac{1-q}{q}}\right)^{s + t}\\
        &= \hat{f}_{\alpha_1}^2 n^{v(\alpha_1)} |\text{Aut}(\alpha_1)|\cdot \left[\left(6\sqrt{\frac{1-q}{q}}\right)^s \left(3\sqrt{\frac{2D}{n}}\right)^{v_s} L^{c_s}\right]\cdot \left[\left(\sqrt{\frac{1-q}{q}}\right)^t \left(\sqrt{\frac{2D}{n}}\right)^{v_t}\right]\\
        &\quad \cdot \left[\left(\sqrt{\frac{2D}{n}}\right)^{v_{s,1} + v_{s,2} - v_s - v_t}\right]
        \intertext{where we note that $v_{s,1} + v_{s,2} \ge v_s + v_t$. We further use $v_s = s + c_s, v_t \ge t$ and get}
        &= \hat{f}_{\alpha_1}^2 n^{v(\alpha_1)} |\text{Aut}(\alpha_1)|\cdot \left[\left(18\sqrt{\frac{2D}{n}\cdot\frac{1-q}{q}}\right)^s \left(3L\sqrt{\frac{2D}{n}}\right)^{c_s} \right]\cdot \left[\left(\sqrt{\frac{2D}{n}\cdot\frac{1-q}{q}}\right)^t \left(\sqrt{\frac{2D}{n}}\right)^{v_t - t}\right]\\
        &\quad \cdot \left[\left(\sqrt{\frac{2D}{n}}\right)^{v_{s,1} + v_{s,2} - v_s - v_t}\right].
    \end{align*}

    Finally, for each fixed $\alpha_1$, we sum over all possible choices of parameters and get
    \begin{align*}
        &\quad 2\sum_{\alpha_1 \in \GG_{\le D}^K} \left|\hat{f}_{\alpha_1}\right| \sum_{\substack{W = (s,t,v_s,v_t,\\ v_{s,1}, v_{s,2},c_1,c_2,c_s) } } \sum_{\substack{\alpha_2 \in \GG_{\le D}^K:\\ \alpha_1 \ne \alpha_2,\\ |\hat{f}_{\alpha_2}|\left(\frac{n}{2D}\right)^{\frac{v(\alpha_2)}{2}}|\text{Aut}(\alpha_2)|\\
        \le |\hat{f}_{\alpha_1}|\left(\frac{n}{2D}\right)^{\frac{v(\alpha_1)}{2}}|\text{Aut}(\alpha_1)| } } \left|\hat{f}_{\alpha_2}\right|  \sum_{(l_1, l_2) \in \LL_{\alpha_1, \alpha_2}(W)} \left(\sqrt{\frac{1-q}{q}}\right)^{s + t}\\
        &\le \sum_{\alpha_1 \in \GG_{\le D}^K}\hat{f}_{\alpha_1}^2 n^{v(\alpha_1)} |\text{Aut}(\alpha_1)| \cdot \sum_{\substack{W = (s,t,v_s,v_t,\\ v_{s,1}, v_{s,2},c_1,c_2,c_s) } } \left[\left(18\sqrt{\frac{2D}{n}\cdot\frac{1-q}{q}}\right)^s \left(3L\sqrt{\frac{2D}{n}}\right)^{c_s} \right]\\
        &\quad \cdot \left[\left(\sqrt{\frac{2D}{n}\cdot\frac{1-q}{q}}\right)^t \left(\sqrt{\frac{2D}{n}}\right)^{v_t - t}\right] \cdot \left[\left(\sqrt{\frac{2D}{n}}\right)^{v_{s,1} + v_{s,2} - v_s - v_t}\right]\\
        &\le \sum_{\alpha_1 \in \GG_{\le D}^K}\hat{f}_{\alpha_1}^2 n^{v(\alpha_1)} |\text{Aut}(\alpha_1)| \cdot \sum_{s \ge 1} \left(18\sqrt{\frac{2D}{n}\cdot\frac{1-q}{q}}\right)^s \sum_{c_s = 0}^s \left(3L\sqrt{\frac{2D}{n}}\right)^{c_s} \\
        &\quad \cdot  \sum_{t = 0}^s \left(\sqrt{\frac{2D}{n}\cdot\frac{1-q}{q}}\right)^t \sum_{v_t \ge t} \left(\sqrt{\frac{2D}{n}}\right)^{v_t - t} \sum_{i \ge 0} \left(\sqrt{\frac{2D}{n}}\right)^{i} \\
        &\quad \cdot \#\{\text{feasible } W, \text{ given } v(\alpha_1), s, c_s, t, v_t, \text{ and } v_{s,1} + v_{s,2} - v_s - v_t = i\},
    \end{align*}
    where in the last step we use that $v_{s,1} + v_{s,2} \ge v_s + v_t$ and that $s = |U_H| = |(l_1(\alpha_1) \cup l_2(\alpha_2)) \cap H| \ge |l_1(\alpha_1) \triangle l_2(\alpha_2)| \ge 1$ for $\alpha_1 \ne \alpha_2$. By feasible $W$ given $v(\alpha_1), s, c_s, t, v_t$ and $v_{s,1} + v_{s,2} - v_s - v_t = i$, we mean the choice of parameters in $W = (s,t,v_s,v_t, v_{s,1}, v_{s,2},c_1,c_2,c_s)$ for which there exists $\alpha_2$ and $(l_1, l_2) \in \LL_{\alpha_1, \alpha_2}(W)$. We have
    \begin{align*}
        c_1, c_2 &\in \{0,1\}\\
        v_s &= s + c_s\\
        v_{s,1} + v_{s,2} &= v_s + v_t + i.
    \end{align*}
    Thus, $v_s$ is determined by $s$ and $c_s$; there are at most $4$ choices for $c_1, c_2$; there are at most $v_s + v_t + i + 1$ choices for $v_{s,1}, v_{s,2}$. In total, the number of feasible choices of $W$ given $v(\alpha_1), s, c_s, t, v_t$ and $i$ is at most $4(v_s + v_t + i + 1) = 4(s + c_s + v_t + i + 1)$. We can thus bound

    \begin{align*}
        &\quad 2\sum_{\alpha_1 \in \GG_{\le D}^K} \left|\hat{f}_{\alpha_1}\right| \sum_{\substack{W = (s,t,v_s,v_t,\\ v_{s,1}, v_{s,2},c_1,c_2,c_s) } } \sum_{\substack{\alpha_2 \in \GG_{\le D}^K:\\ \alpha_1 \ne \alpha_2,\\ |\hat{f}_{\alpha_2}|\left(\frac{n}{2D}\right)^{\frac{v(\alpha_2)}{2}}|\text{Aut}(\alpha_2)|\\
        \le |\hat{f}_{\alpha_1}|\left(\frac{n}{2D}\right)^{\frac{v(\alpha_1)}{2}}|\text{Aut}(\alpha_1)| } } \left|\hat{f}_{\alpha_2}\right|  \sum_{(l_1, l_2) \in \LL_{\alpha_1, \alpha_2}(W)} \left(\sqrt{\frac{1-q}{q}}\right)^{s + t}\\
        &\le \sum_{\alpha_1 \in \GG_{\le D}^K}\hat{f}_{\alpha_1}^2 n^{v(\alpha_1)} |\text{Aut}(\alpha_1)| \cdot \sum_{s \ge 1} \left(18\sqrt{\frac{2D}{n}\cdot\frac{1-q}{q}}\right)^s \sum_{c_s = 0}^s \left(3L\sqrt{\frac{2D}{n}}\right)^{c_s} \sum_{t = 0}^s \left(\sqrt{\frac{2D}{n}\cdot\frac{1-q}{q}}\right)^t\\
        &\quad \cdot  \sum_{v_t \ge t} \left(\sqrt{\frac{2D}{n}}\right)^{v_t - t} \sum_{i \ge 0} \left(\sqrt{\frac{2D}{n}}\right)^i \cdot 4(s + c_s + v_t + i + 1),
    \end{align*}
    which is $O\left(\sqrt{\frac{D(1-q)}{nq}}\cdot \sum_{\alpha \in \GG_{\le D}^K}\hat{f}_{\alpha}^2 n^{v(\alpha)} |\text{Aut}(\alpha)|\right)$ as long as $18\sqrt{\frac{2D}{n}\cdot\frac{1-q}{q}} \le 1 - \eps$ and $3L \sqrt{\frac{2D}{n}} \le 1 - \eps$ for some constant $\eps > 0$.

    Thus, under the assumption that $\frac{D(1-q)}{nq} = o(1)$, $2D < L$, and $L^2 \frac{D}{n} = o(1)$, we have
    \begin{align*}
        &\quad \EE[f(G)^2]\\
        &= \sum_{\alpha\in \GG_{\le D}^K} \hat{f}_{\alpha}^2 \sum_{l_1, l_2 \in \LL_{\alpha}} \EE\left[\allone\{l_1(\alpha) \triangle l_2(\alpha) \subseteq \bH\} \left(\frac{1-q}{q}\right)^{\frac{|l_1(\alpha) \cap \bH|}{2} + \frac{|l_2(\alpha) \cap \bH|}{2}}\right]\\
        &\quad + \sum_{\substack{\alpha_1, \alpha_2\in \GG_{\le D}^K:\\ \alpha_1 \ne \alpha_2}} \hat{f}_{\alpha_1}\hat{f}_{\alpha_2} \sum_{l_1 \in \LL_{\alpha_1}, l_2 \in \LL_{\alpha_2}} \EE\left[\allone\{l_1(\alpha_1) \triangle l_2(\alpha_2) \subseteq \bH\} \left(\frac{1-q}{q}\right)^{\frac{|l_1(\alpha) \cap \bH|}{2} + \frac{|l_2(\alpha) \cap \bH|}{2}}\right]\\
        &\le \sum_{\alpha \in \GG_{\le D}^K} \hat{f}_{\alpha}^2 \sum_{l_1, l_2 \in \LL_{\alpha}} \EE\left[\allone\{l_1(\alpha) \triangle l_2(\alpha) \subseteq \bH\} \left(\frac{1-q}{q}\right)^{\frac{|l_1(\alpha) \cap \bH|}{2}+\frac{|l_2(\alpha) \cap \bH|}{2}}\right]\\
        &\quad + O\left(\sqrt{\frac{D(1-q)}{nq}}\right) \cdot \sum_{\alpha \in \GG_{\le D}^K}\hat{f}_{\alpha}^2 n^{v(\alpha)} |\text{Aut}(\alpha)|\\
        &\le \left(1 + O\left(\sqrt{\frac{D(1-q)}{nq}}\right)\right)\cdot \sum_{\alpha\in \GG_{\le D}^K} \hat{f}_{\alpha}^2 \sum_{l_1, l_2 \in \LL_{\alpha}} \EE\left[\allone\{l_1(\alpha) \triangle l_2(\alpha) \subseteq \bH\} \left(\frac{1-q}{q}\right)^{\frac{|l_1(\alpha) \cap \bH|}{2}+\frac{|l_2(\alpha) \cap \bH|}{2}}\right],
    \end{align*}
    and we also have
    \begin{align*}
        &\quad \EE[f(G)^2]\\
        &\ge \left(1 - O\left(\sqrt{\frac{D(1-q)}{nq}}\right)\right)\cdot \sum_{\alpha\in \GG_{\le D}^K} \hat{f}_{\alpha}^2 \sum_{l_1, l_2 \in \LL_{\alpha}} \EE\left[\allone\{l_1(\alpha) \triangle l_2(\alpha) \subseteq \bH\} \left(\frac{1-q}{q}\right)^{\frac{|l_1(\alpha) \cap \bH|}{2}+\frac{|l_2(\alpha) \cap \bH|}{2}}\right]
    \end{align*}

    Similarly, under the same conditions, we may lower bound the expectation of the cross term:
    \begin{align*}
        &\quad \EE[f(G)f(T_{\rho}(G))]\\
        &= \sum_{\alpha_1, \alpha_2 \in \GG_{\le D}^K} \hat{f}_{\alpha_1} \hat{f}_{\alpha_2}(1-\rho)^{|\alpha_2|} \sum_{l_1 \in \LL_{\alpha_1}, l_2 \in \LL_{\alpha_2}} \EE\left[\chi_{l_1(\alpha_1)}(G)\chi_{l_2(\alpha_2)}(T_{\rho}(G))\right]\\
        &= \sum_{\alpha_1, \alpha_2 \in \GG_{\le D}^K} \hat{f}_{\alpha_1} \hat{f}_{\alpha_2}(1-\rho)^{|\alpha_2|} \sum_{l_1 \in \LL_{\alpha_1}, l_2 \in \LL_{\alpha_2}} \EE\left[\allone\{l_1(\alpha_1) \triangle l_2(\alpha_2) \subseteq \bH\} \left(\frac{1-q}{q}\right)^{\frac{|l_1(\alpha_1) \cap \bH|}{2}+\frac{|l_2(\alpha) \cap \bH|}{2}}\right]\\
        &= \sum_{\alpha \in \GG_{\le D}^K} \hat{f}_{\alpha}^2 (1-\rho)^{|\alpha|} \sum_{l_1, l_2 \in \LL_{\alpha}} \EE\left[\allone\{l_1(\alpha) \triangle l_2(\alpha) \subseteq \bH\} \left(\frac{1-q}{q}\right)^{\frac{|l_1(\alpha) \cap \bH|}{2}+\frac{|l_2(\alpha) \cap \bH|}{2}}\right]\\
        &\quad + \sum_{\substack{\alpha_1, \alpha_2 \in \GG_{\le D}^K:\\ \alpha_1 \ne \alpha_2}} \hat{f}_{\alpha_1} \hat{f}_{\alpha_2} (1 - \rho)^{|\alpha_2|} \sum_{l_1, l_2 \in \LL_{\alpha}} \EE\left[\allone\{l_1(\alpha) \triangle l_2(\alpha) \subseteq \bH\} \left(\frac{1-q}{q}\right)^{\frac{|l_1(\alpha) \cap \bH|}{2}+\frac{|l_2(\alpha) \cap \bH|}{2}}\right]\\
        &\ge (1 - \rho)^D \sum_{\alpha \in \GG_{\le D}^K} \hat{f}_{\alpha}^2  \sum_{l_1, l_2 \in \LL_{\alpha}} \EE\left[\allone\{l_1(\alpha) \triangle l_2(\alpha) \subseteq \bH\} \left(\frac{1-q}{q}\right)^{\frac{|l_1(\alpha) \cap \bH|}{2}+\frac{|l_2(\alpha) \cap \bH|}{2}}\right]\\
        &\quad - O\left(\sqrt{\frac{D(1-q)}{nq}}\right) \cdot \sum_{\alpha \in \GG_{\le D}^K} \hat{f}_{\alpha}^2 n^{v(\alpha)} |\text{Aut}(\alpha)|\\
        &\ge \left((1- \rho)^D - O\left(\sqrt{\frac{D(1-q)}{nq}}\right)\right) \sum_{\alpha \in \GG_{\le D}^K} \hat{f}_{\alpha}^2\\
        &\quad \cdot \sum_{l_1, l_2 \in \LL_{\alpha}} \EE\left[\allone\{l_1(\alpha) \triangle l_2(\alpha) \subseteq \bH\} \left(\frac{1-q}{q}\right)^{\frac{|l_1(\alpha) \cap \bH|}{2}+\frac{|l_2(\alpha) \cap \bH|}{2}}\right]\\
        &\ge \left((1 - \rho)^D - O\left(\sqrt{\frac{D(1-q)}{nq}}\right)\right) \EE[f(G)^2]. \stepcounter{equation}\tag{\theequation}\label{ineq:second-moment-cross}
    \end{align*}
    Under the same conditions and that $q \le 1/2$, we may upper bound the second moment under the noise operator as
    \begin{align*}
        &\quad \EE[f(T_{\rho}(G))^2]\\
        &= \sum_{\alpha_1, \alpha_2 \in \GG_{\le D}^K} \hat{f}_{\alpha_1} \hat{f}_{\alpha_2}(1-\rho)^{|\alpha_2|} \sum_{l_1 \in \LL_{\alpha_1}, l_2 \in \LL_{\alpha_2}} \EE\left[\chi_{l_1(\alpha_1)}(T_{\rho}(G))\chi_{l_2(\alpha_2)}(T_{\rho}(G))\right]\\
        &= \sum_{\alpha_1, \alpha_2 \in \GG_{\le D}^K} \hat{f}_{\alpha_1} \hat{f}_{\alpha_2} \sum_{l_1 \in \LL_{\alpha_1}, l_2 \in \LL_{\alpha_2}}(1-\rho)^{|l_1(\alpha_1) \triangle l_2(\alpha_2)|} \\
        &\quad \cdot \EE\left[\allone\{l_1(\alpha_1) \triangle l_2(\alpha_2) \subseteq \bH\} \left(\frac{1-q}{q}\right)^{\frac{|l_1(\alpha_1)  \triangle l_2(\alpha_2)|}{2}}\left(\rho + (1 - \rho)\left( \frac{1-q}{q}\right)\right)^{|l_1(\alpha_1) \cap l_2(\alpha_2) \cap \bH|}\right]\\
        &= \sum_{\alpha\in \GG_{\le D}^K} \hat{f}_{\alpha}^2 \sum_{l_1 \in \LL_{\alpha}, l_2 \in \LL_{\alpha}}(1-\rho)^{|l_1(\alpha) \triangle l_2(\alpha)|} \\
        &\quad \cdot \EE\left[\allone\{l_1(\alpha) \triangle l_2(\alpha) \subseteq \bH\} \left(\frac{1-q}{q}\right)^{\frac{|l_1(\alpha)  \triangle l_2(\alpha)|}{2}}\left(\rho + (1 - \rho)\left( \frac{1-q}{q}\right)\right)^{|l_1(\alpha) \cap l_2(\alpha) \cap \bH|}\right]\\
        &\quad + \sum_{\substack{\alpha_1, \alpha_2\in \GG_{\le D}^K:\\ \alpha_1 \ne \alpha_2}} \hat{f}_{\alpha_1} \hat{f}_{\alpha_2} \sum_{l_1 \in \LL_{\alpha_1}, l_2 \in \LL_{\alpha_2}}(1-\rho)^{|l_1(\alpha_1) \triangle l_2(\alpha_2)|} \\
        &\quad \cdot \EE\left[\allone\{l_1(\alpha_1) \triangle l_2(\alpha_2) \subseteq \bH\} \left(\frac{1-q}{q}\right)^{\frac{|l_1(\alpha_1)  \triangle l_2(\alpha_2)|}{2}}\left(\rho + (1 - \rho)\left( \frac{1-q}{q}\right)\right)^{|l_1(\alpha_1) \cap l_2(\alpha_2) \cap \bH|}\right]\\
        &\le \sum_{\alpha\in \GG_{\le D}^K} \hat{f}_{\alpha}^2 \sum_{l_1 \in \LL_{\alpha}, l_2 \in \LL_{\alpha}}(1-\rho)^{|l_1(\alpha) \triangle l_2(\alpha)|} \\
        &\quad \cdot \EE\left[\allone\{l_1(\alpha) \triangle l_2(\alpha) \subseteq \bH\} \left(\frac{1-q}{q}\right)^{\frac{|l_1(\alpha)\cap \bH|}{2} + \frac{|l_2(\alpha)\cap \bH|}{2}}\right]\\
        &\quad + \sum_{\substack{\alpha_1, \alpha_2 \in \GG_{\le D}^K:\\ \alpha_1 \ne \alpha_2}} |\hat{f}_{\alpha_1} \hat{f}_{\alpha_2}| \sum_{l_1 \in \LL_{\alpha_1}, l_2 \in \LL_{\alpha_2}}(1-\rho)^{|l_1(\alpha_1) \triangle l_2(\alpha_2)|} \\
        &\quad \cdot \EE\left[\allone\{l_1(\alpha_1) \triangle l_2(\alpha_2) \subseteq \bH\} \left(\frac{1-q}{q}\right)^{\frac{|l_1(\alpha_1)\cap \bH|}{2}+\frac{|l_2(\alpha)\cap \bH|}{2}}\right]
        \intertext{where we used that $\rho + (1-\rho)\left(\frac{1-q}{q}\right) \le \frac{1-q}{q}$ if $q \le \frac{1}{2}$,}
        &\le \sum_{\alpha\in \GG_{\le D}^K} \hat{f}_{\alpha}^2 \sum_{l_1, l_2 \in \LL_{\alpha}} \EE\left[\allone\{l_1(\alpha) \triangle l_2(\alpha) \subseteq \bH\} \left(\frac{1-q}{q}\right)^{\frac{|l_1(\alpha) \cap \bH|}{2}+\frac{|l_2(\alpha)\cap \bH|}{2}}\right]\\
        &\quad + O\left(\sqrt{\frac{D(1-q)}{nq}}\right) \cdot \sum_{\alpha\in \GG_{\le D}^K}\hat{f}_{\alpha}^2 n^{v(\alpha)} |\text{Aut}(\alpha)|\\
        &\le \left(1 + O\left(\sqrt{\frac{D(1-q)}{nq}}\right)\right)\sum_{\alpha\in \GG_{\le D}^K} \hat{f}_{\alpha}^2 \sum_{l_1, l_2 \in \LL_{\alpha}} \EE\left[\allone\{l_1(\alpha) \triangle l_2(\alpha) \subseteq \bH\} \left(\frac{1-q}{q}\right)^{\frac{|l_1(\alpha) \cap \bH|}{2}+\frac{|l_2(\alpha)\cap \bH|}{2}}\right]\\
        &\le \left(1 + O\left(\sqrt{\frac{D(1-q)}{nq}}\right)\right)\EE[f(G)^2]. \stepcounter{equation}\tag{\theequation}\label{ineq:second-moment-noisy}
    \end{align*}

    Combining \eqref{ineq:second-moment-cross} and \eqref{ineq:second-moment-noisy}, we get
\begin{align*}
    &\quad \EE[(f(G) - f(T_{\rho}(G)))^2]\\
    &= \EE[f(G)^2] + \EE[f(T_{\rho}(g))^2] - 2\EE[f(G)f(T_{\rho}(G))]\\
    &\le 2\left(1  - (1 - \rho)^D + O\left(\sqrt{\frac{D(1-q)}{nq}}\right)\right) \EE[f(G)^2].
\end{align*}
\end{proof}

\section{Proofs for Random Linear Code}\label{sec:RLC}

\subsection{Noisy MMSE: Proof of Theorem~\ref{thm:linear-code-noisy-MMSE}}

In this section, we prove a lower bound for the noisy MMSE of the random linear code problem.

Recall that in the noiseless setting, $A \in \{0,1\}^{m \times n}$ and $x \in \{0,1\}^n$ are sampled uniformly at random, and $y = Ax \text{ mod } 2$. In the noisy version, $T_{\rho}$ resamples each coordinate of $y$ from $\text{Bern}(1/2)$ independently with probability $\rho$.

We restated Theorem~\ref{thm:linear-code-noisy-MMSE} below for convenience.

\begin{theorem}[Restatement of Theorem~\ref{thm:linear-code-noisy-MMSE}]
    For any function $f(n) = \omega(1)$ and $m = n + f(n)$, if $\frac{1}{\left(1 - \frac{3}{2}\rho + \frac{3}{4}\rho^2\right)^m} = \omega\left( 2^{2(m-n)}\right)$, then any algorithm $\mathcal{A}: \{0,1\}^{m \times n} \times \{0,1\}^m \to \R^n$ for RLC has a mean squared error at least
    \begin{align*}
        \E\left[\|\mathcal{A}(A, T_{\rho}(y)) - x\|_2^2\right] \ge \left(\frac{1}{4} - o(1)\right)n.
    \end{align*}

\end{theorem}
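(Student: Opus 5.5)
The plan is to lower bound the posterior variance coordinate by coordinate, and to control the posterior through a change of measure to a null model in which $(A,\hat y)$ is uniform. By Fact~\ref{fact:MMSE-posterior-mean} it suffices to bound the error of the posterior mean. Write $x_j=\tfrac12(1-\sigma_j)$ with $\sigma_j=(-1)^{x_j}$. Then
\[
\E\bigl[\|\E[x\mid A,\hat y]-x\|_2^2\bigr]=\tfrac14\sum_{j=1}^n\Bigl(1-\E\bigl[\E[\sigma_j\mid A,\hat y]^2\bigr]\Bigr),
\]
so it is enough to show $\E[\E[\sigma_j\mid A,\hat y]^2]=o(1)$ uniformly in $j$.

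First I record the posterior explicitly. The noise operator makes $\hat y_i=(Ax)_i\oplus e_i$ with $e_i\sim\mathrm{Bern}(\rho/2)$ i.i.d. Set $p(b\mid a)=1-\rho/2$ if $b=a$ and $\rho/2$ otherwise, and $W(x)=\prod_{i\le m}p(\hat y_i\mid (Ax)_i)$. Then the posterior is $\PP(x\mid A,\hat y)\propto W(x)$ and $\E[\sigma_j\mid A,\hat y]=N_j/Z$, where $Z=\sum_x W(x)$ and $N_j=\sum_x\sigma_jW(x)$. Let $\QQ$ be the law under which $A$ and $\hat y$ are independent and uniform. Since $\sum_b p(b\mid a)=1$, the likelihood ratio is $\frac{d\PP}{d\QQ}(A,\hat y)=2^{m-n}Z=Z/\E_\QQ Z$. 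Because $|N_j|\le Z$, we get
\[
\E_\PP\bigl[(N_j/Z)^2\bigr]\le\E_\PP\bigl[|N_j|/Z\bigr]=\frac{\E_\QQ|N_j|}{\E_\QQ Z}\le\frac{\sqrt{\E_\QQ[N_j^2]}}{2^{n-m}}.
\]
This is the planting-trick step, mirroring the argument used for Corollary~\ref{cor:conditional-noisy-MMSE}.

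The main work is the second-moment computation $\E_\QQ[N_j^2]=\sum_{x,x'}\sigma_j(x)\sigma_j(x')\,\E_\QQ[W(x)W(x')]$, done row by row.

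\emph{Independent pairs.} If $x,x',x\oplus x'$ are all nonzero, then each row gives independent uniform bits $(Ax)_i,(Ax')_i$, and a direct calculation gives per-row expectation $\tfrac14\bigl[(1-\rho/2)^2+(\rho/2)^2\bigr]+\tfrac12(1-\rho/2)(\rho/2)=\tfrac14$. So every such pair contributes exactly $4^{-m}$. The pairs involving $x=0$ or $x'=0$ also give $4^{-m}$, since one bit is then deterministic and the other is uniform. Summed against the signs $\sigma_j(x)\sigma_j(x')$, the constant $4^{-m}$ over all pairs cancels exactly, because $\sum_x\sigma_j(x)=0$.

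\emph{Diagonal pairs.} The only surviving correction comes from $x=x'$. These give $2^n\bigl((1-\rho+\rho^2/2)/2\bigr)^m$ in place of $2^n4^{-m}$. Hence
\[
\frac{\E_\QQ[N_j^2]}{(\E_\QQ Z)^2}\le 2^{m-n}\bigl(1-\rho+\rho^2/2\bigr)^m+2^{m-n}2^{-m}.
\]
The second term is negligible. I would then check that the hypothesis $(1-\tfrac32\rho+\tfrac34\rho^2)^{-m}=\omega(2^{2(m-n)})$ forces the first term to $o(1)$. If my cleaner constant does not literally follow, I would reproduce the paper's constant, which presumably comes from a cruder bound on the noise kernel, or from treating the resampled-versus-kept coordinates separately rather than through $e_i\sim\mathrm{Bern}(\rho/2)$.

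Combining the steps gives $\E[(N_j/Z)^2]=o(1)$ uniformly in $j$, hence $\mathrm{MMSE}_\rho\ge(\tfrac14-o(1))n$. The stable-algorithm consequence then follows from Corollary~\ref{cor:main}, since $\mathrm{MMSE}=o(n)$ when $m-n=\omega(1)$ (Gaussian elimination) and $\E\|x\|_2^2=n/2$, giving $\alpha\approx 1/2$. I expect the main obstacle to be the bookkeeping in the second moment, namely verifying that the exact cancellation holds for all non-diagonal pairs including the degenerate ones. If the cancellation fails, the fallback is to bound those pairs crudely by their count times the maximal per-row factor.
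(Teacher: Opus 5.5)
Your proof is correct, and it takes a genuinely different route from the paper's. The paper works entirely under the planted law. It computes $\E[L_0]$ and $\E[L_0^2]$, averaging over $A$, $x^*$ and the flip vector $z$, for the partial partition functions $L_0,L_1$ over $\{x_1=0\}$ and $\{x_1=1\}$. It then shows $\Var[L_0]=o(\E[L_0]^2)$, applies Chebyshev to $L_0$ and $L_1$ separately, and concludes that each posterior marginal is $\tfrac12\pm o(1)$ with high probability. Because that second moment is planted, it contains the term $x=x'=x^*$, whose per-row factor is $(1-\tfrac{\rho}{2})^3+(\tfrac{\rho}{2})^3$. This term is exactly the source of the cubic $1-\tfrac32\rho+\tfrac34\rho^2$ in the hypothesis.

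You instead pass to the null model via $d\PP/d\QQ=Z/\E_\QQ Z$, the planting trick the paper uses only for PSP. You then need only the null second moment of the signed sum $N_j$. There every off-diagonal pair contributes exactly $4^{-m}$ and cancels against $\sum_x\sigma_j(x)=0$, and no concentration of $Z$ itself is required.

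The step you left hedged does close. With $a=1-\rho/2$ and $b=\rho/2$,
\[
1-\tfrac32\rho+\tfrac34\rho^2=1-3ab\ge(1-2ab)^2=(1-\rho+\tfrac12\rho^2)^2 ,
\]
which is the paper's own observation $C\ge B^2$. Hence the hypothesis gives
\[
2^{m-n}(1-\rho+\tfrac12\rho^2)^m\le 2^{m-n}(1-\tfrac32\rho+\tfrac34\rho^2)^{m/2}=o(1),
\]
and no fallback is needed.

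In the paper's notation you only need $B/A=o(1)$ rather than $C/A^2=o(1)$. So your argument is shorter, and it proves the bound under a strictly weaker condition on $\rho$. For small $\rho$ this is roughly $\rho m-(m-n)\log 2\to\infty$ instead of $\tfrac32\rho m-2(m-n)\log 2\to\infty$. The paper's route gives per-coordinate concentration of the marginals directly. Your bound $\E[(N_j/Z)^2]=o(1)$ yields the same high-probability statement by Markov's inequality, so nothing is lost.
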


\begin{proof}
    For $x \in \{0,1\}^n$, we will use $w(x)$ to denote the weight of $x$, i.e., the number of $1$'s in $x$. Let us denote $\hat{y} = T_{\rho}(y)$. Note that independently for each coordinate of $y$, we have
    \begin{align*}
        \hat{y}_i = \begin{cases}
            y_i & \quad \text{ with probability } 1 - \frac{\rho}{2},\\
            1 - y_i & \quad \text{ with probability } \frac{\rho}{2}.
        \end{cases}
    \end{align*}
    Since the prior of $x$ is uniform over $\{0,1\}^n$, given $A$ and $\hat{y}$, the posterior distribution over $\{0,1\}^n$ is given by
    \begin{align*}
        \Pr(x \vert A, \hat{y}) \propto \left(\frac{\frac{\rho}{2}}{1-\frac{\rho}{2}}\right)^{w(Ax - \hat{y})} = \left(\frac{\rho}{2-\rho}\right)^{w(Ax - \hat{y})}.
    \end{align*}

    To minimize the mean squared error, the optimal algorithm is to output the posterior mean. To minimize the average Hamming distance, the optimal algorithm is to output $x$ that maximizes the marginal posterior probability at each coordinate.

    Let us first understand the marginal posterior probabilities. Let us WLOG look at coordinate $1$. Define $S_0 := \{x \in \{0,1\}^n: x_1 = 0\}$ and $S_1 := \{x \in \{0,1\}^n: x_1 = 1\}$.

    We have
    \begin{align*}
        \Pr(x_1 = 0 \vert A,\hat{y}) &\propto \sum_{x \in S_0} \left(\frac{\rho}{2-\rho}\right)^{w(Ax-\hat{y})},\\
        \Pr(x_1 = 1 \vert A,\hat{y}) &\propto \sum_{x \in S_1} \left(\frac{\rho}{2-\rho}\right)^{w(Ax-\hat{y})}.
    \end{align*}
    For convenience, let us denote $L_0 := \sum_{x \in S_0} \left(\frac{\rho}{2-\rho}\right)^{w(Ax-\hat{y})}$ and $L_1 := \sum_{x \in S_1} \left(\frac{\rho}{2-\rho}\right)^{w(Ax-\hat{y})}$. Then,
    \begin{align*}
        \Pr(x_1 = 0 \vert A,\hat{y}) &= \frac{L_0}{L_0 + L_1}\\
        \Pr(x_1 = 1 \vert A,\hat{y}) &= \frac{L_1}{L_0 + L_1}.
    \end{align*}

    We will show that these marginal posterior probabilities are concentrated around $\frac{1}{2}$ with high probability. Recall that $\hat{y} = T_{\rho}(y) = T_{\rho}(Ax^*)$, where use $x^*$ to denote the true message. Moreover, the application of $T_{\rho}$ to $y$ is equivalent to adding a vector $z \in \{0,1\}^m$ whose entries are i.i.d.~$\text{Bern}(\rho/2)$. Thus, we have
    \begin{align*}
        &\quad \E[L_0]\\
        &= \sum_{x \in S_0} \E_{A,x^*}\left[\left(\frac{\rho}{2-\rho}\right)^{w(Ax - T_{\rho}(Ax^*))}\right] \\
        &= \sum_{x \in S_0} \underset{\substack{A,x^*,\\z \sim \text{Bern}(\rho/2)^{\otimes m}}}{\E}\left[\left(\frac{\rho}{2-\rho}\right)^{w(A(x - x^*)-z)}\right]
        \intertext{note that when $x - x^* \ne \boldsymbol{0}$, $A(x-x^*)$ is a uniformly random vector in $\mathbb{F}_2^m$ since $A \in \{0,1\}^{m \times n}$ is uniformly random. Moreover, the sum of any random vector with a uniformly random vector in $\mathbb{F}_2^m$ is a uniformly random vector. Thus, we get}
        &= \sum_{x \in S_0} 
        \left(\left(1 - 2^{-n}\right)\cdot\underset{u \sim \text{Unif}(\mathbb{F}_2^m)}{\E}\left[\left(\frac{\rho}{2-\rho}\right)^{w(u)}\right] +  2^{-n}\cdot\underset{z \sim \text{Bern}(\rho/2)^{\otimes m}}{\E}\left[\left(\frac{\rho}{2-\rho}\right)^{w(z)}\right] \right)\\
        &= 2^{n-1} \cdot 
        \left(\left(1 - 2^{-n}\right)\cdot \left[\frac{1}{2}\cdot 1 + \frac{1}{2} \cdot \frac{\rho}{2-\rho}\right]^m +  2^{-n}\cdot \left[\left(1 - \frac{\rho}{2}\right)\cdot 1 + \frac{\rho}{2} \cdot \frac{\rho}{2-\rho}\right]^m \right)\\
        &= \frac{1}{2} \cdot \left[(2^n - 1) \left(\frac{1}{2 - \rho}\right)^m + \left(\frac{(2-\rho)^2 + \rho^2}{2(2-\rho)}\right)^m\right]. 
    \end{align*}
    Similarly,
    \begin{align*}
        \E[L_1] = \E[L_0].
    \end{align*}
    We may also compute
    \begin{align*}
        &\quad \E[L_0^2]\\
        &= \sum_{x, x' \in S_0} \underset{\substack{A,x^*,\\z \sim \text{Bern}(\rho/2)^{\otimes m}}}{\E}\left[\left(\frac{\rho}{2-\rho}\right)^{w(A(x - x^*)-z)} \left(\frac{\rho}{2-\rho}\right)^{w(A(x' - x^*)-z)}\right]\\
        &= \sum_{x\in S_0} \underset{\substack{A,x^*,\\z \sim \text{Bern}(\rho/2)^{\otimes m}}}{\E}\left[\left(\frac{\rho}{2-\rho}\right)^{2\cdot w(A(x - x^*)-z)} \right]\\
        &\quad + \sum_{\substack{x,x' \in S_0:\\ x \ne x'}} \underset{\substack{A,x^*,\\z \sim \text{Bern}(\rho/2)^{\otimes m}}}{\E}\left[\left(\frac{\rho}{2-\rho}\right)^{w(A(x - x^*)-z)} \left(\frac{\rho}{2-\rho}\right)^{w(A(x' - x^*)-z)}\right]
        \intertext{Recall that $A(x - x^*) - z$ follows $\text{Unif}(\mathbb{F}_2)^{\otimes m}$ or $\text{Bern}(\rho/2)^{\otimes m}$ depending on whether $x = x^*$ or not. Moreover, it is easy to see that for $x \ne x'$ and fixed $x^*$, the distribution of $A(x - x^*) - z$ and $A(x' - x^*) - z$ are independent for random $A \sim \text{Unif}\left(\{0,1\}^{m \times n}\right)$ and $z \sim \text{Bern}(\rho/2)^{\otimes m}$. Thus,}
        &= 2^{n-1} \cdot 
        \left(\left(1 - 2^{-n}\right)\cdot \left[\frac{1}{2}\cdot 1 + \frac{1}{2} \cdot \left(\frac{\rho}{2-\rho}\right)^2\right]^m +  2^{-n}\cdot \left[\left(1 - \frac{\rho}{2}\right)\cdot 1 + \frac{\rho}{2} \cdot \left(\frac{\rho}{2-\rho}\right)^2\right]^m \right)\\
        &\quad + 2^{n-1}(2^{n-1} - 1) \cdot \Bigg((1 - 2\cdot 2^{-n}) \cdot \left[\frac{1}{2}\cdot 1 + \frac{1}{2} \cdot \frac{\rho}{2-\rho}\right]^{2m} \\
        &\quad+ 2\cdot 2^{-n} \cdot \left[\frac{1}{2}\cdot 1 + \frac{1}{2} \cdot \frac{\rho}{2-\rho}\right]^m \left[\left(1-\frac{\rho}{2}\right)\cdot 1 + \frac{\rho}{2} \cdot \frac{\rho}{2-\rho}\right]^m\Bigg)\\
        &= \frac{1}{2}\cdot \frac{2^n - 1}{2^m(2-\rho)^{2m}}\cdot \left((2-\rho)^2 + \rho^2\right)^m + \frac{1}{2} \cdot \frac{1}{2^m(2-\rho)^{2m}} \cdot \left((2-\rho)^3 + \rho^3\right)^m\\
        &\quad + \frac{(2^{n-1} - 1)^2}{(2-\rho)^{2m}} + \frac{2^{n-1} - 1}{2^m (2-\rho)^{2m}} \cdot \left((2-\rho)^2 + \rho^2\right)^m\\
        &= \frac{(2^{n-1} - 1)^2}{(2-\rho)^{2m}} + \frac{2^{n} - \frac{3}{2}}{2^m (2-\rho)^{2m}} \cdot \left((2-\rho)^2 + \rho^2\right)^m + \frac{1}{2} \cdot \frac{1}{2^m(2-\rho)^{2m}} \cdot \left((2-\rho)^3 + \rho^3\right)^m.
    \end{align*}

    To show concentration of $L_0$ around $\EE[L_0]$, we will show $\Var[L_0] = o(\EE[L_0]^2)$, or equivalently $\EE[L_0^2] = (1 + o(1))\EE[L_0]^2$. We have
    \begin{align*}
        &\quad \frac{\E[L_0^2]}{\E[L_0]^2}\\
        &= \frac{\frac{(2^{n-1} - 1)^2}{(2-\rho)^{2m}} + \frac{2^{n} - \frac{3}{2}}{2^m (2-\rho)^{2m}} \cdot \left((2-\rho)^2 + \rho^2\right)^m + \frac{1}{2} \cdot \frac{1}{2^m(2-\rho)^{2m}} \cdot \left((2-\rho)^3 + \rho^3\right)^m}{\frac{1}{4} \cdot \left[(2^n - 1) \left(\frac{1}{2 - \rho}\right)^m + \left(\frac{(2-\rho)^2 + \rho^2}{2(2-\rho)}\right)^m\right]^2}\\
        &= \frac{\frac{(2^{n-1} - 1)^2}{2^{2m} } + \frac{2^{n} - \frac{3}{2}}{2^m } \cdot \left(\left(1-\frac{\rho}{2}\right)^2 + \left(\frac{\rho}{2}\right)^2\right)^m + \frac{1}{2} \cdot \left(\left(1-\frac{\rho}{2}\right)^3 + \left(\frac{\rho}{2}\right)^3\right)^m}{\frac{1}{4}\cdot \left(\frac{(2^n - 1)^2}{2^{2m}} + 2\cdot \frac{2^n - 1}{2^m} \cdot \left(\left(1-\frac{\rho}{2}\right)^2 + \left(\frac{\rho}{2}\right)^2\right)^m + \left(\left(1-\frac{\rho}{2}\right)^2 + \left(\frac{\rho}{2}\right)^2\right)^{2m}\right)}\\
        &= \frac{(1 + O(2^{n-2m}))A^2 + (4 + O(2^{-m})) AB + 2C}{(1-O(2^{n-2m}))A^2 + (2 - O(2^{-m}))AB + B^2},
        \intertext{where $A = \frac{2^n}{2^m}$, $B = \left(\left(1-\frac{\rho}{2}\right)^2 + \left(\frac{\rho}{2}\right)^2\right)^m$, and $C = \left(\left(1-\frac{\rho}{2}\right)^3 + \left(\frac{\rho}{2}\right)^3\right)^m$,}
        &= (1 + o(1))\frac{1+ 4\frac{B}{A} + 2\frac{C}{A^2}}{1 + 2\frac{B}{A} + \frac{B^2}{A^2}}.
    \end{align*}
    As long as $\frac{B}{A} = o(1)$ and $\frac{C}{A^2} = o(1)$, then we can conclude that $\Var[L_0] = o(\EE[L_0]^2)$. We further note that for any $\rho \in [0,1]$, $\left(1-\frac{\rho}{2}\right)^3 + \left(\frac{\rho}{2}\right)^3 = 1 - \frac{3}{2}\rho + \frac{3}{4}\rho^2 \ge \left(\left(1-\frac{\rho}{2}\right)^2 + \left(\frac{\rho}{2}\right)^2\right)^2$. Thus, $C \ge B^2$. As long as $\frac{C}{A^2} = o(1)$, we also get $\frac{B}{A} \le \frac{C^{\frac{1}{2}}}{A} = o(1)$.

    Therefore, if $\frac{1}{C} = \frac{1}{\left(1 - \frac{3}{2}\rho + \frac{3}{4}\rho^2\right)^m} \gg \frac{1}{A^2} = 2^{2(m-n)}$, then we have $\frac{B}{A} = o(1), \frac{C}{A^2} = o(1)$ and
    \begin{align*}
        \frac{\Var[L_0]}{\E[L_0]^2} &= \frac{\E[L_0^2]}{\E[L_0]^2} - 1\\
        &= (1+o(1))\frac{1+4\frac{B}{A} + 2\frac{C}{A^2}}{1 + 2\frac{B}{A}+\frac{B^2}{A^2}} - 1\\
        &\le o(1).
    \end{align*}

    By a symmetric argument, we get $\Var[L_1] = o(\EE[L_1]^2)$. Therefore, by Chebyshev's inequality, with probability $1 - o(1)$ over the random choice of $A$ and $x^*$, we have
    \begin{align*}
        L_0 &= (1 \pm o(1))\E[L_0],\\
        L_1 &= (1 \pm o(1))\E[L_1],
    \end{align*}
    Since $\E[L_0] = \E[L_1]$, with probability $1 - o(1)$, we have
    \begin{align*}
        \Pr(x_1 = 0 \vert A,\hat{y}) &= \frac{L_0}{L_0 + L_1}\\
        &=  \frac{1}{2} \pm o(1),\\
        \Pr(x_1 = 1 \vert A,\hat{y}) &= \frac{L_1}{L_0 + L_1}\\
        &=  \frac{1}{2} \pm o(1).
    \end{align*}
    Conditioning on this high probability event $E$, the conditional MMSE is at least 
    \begin{align*}
        &\quad \E[(x_1^* - \E[x_1 \vert A, \hat{y}])^2 \vert E ] \\
        &\ge \E\left[\left(x_1^* - \frac{1}{2} \pm o(1)\right)^2 \vert E \right] \\
        &= \left(\frac{1}{2} - o(1)\right)^2\\
        &= \frac{1}{4} - o(1),
    \end{align*}
    and thus,
    \begin{align*}
        \E[(x_1^* - \E[x_1 \vert A, \hat{y}])^2] &\ge \Pr(E) \cdot \E[(x_1^* - \E[x_1 \vert A, \hat{y}])^2 \vert E ] = \frac{1}{4} - o(1).
    \end{align*}
    Due to the symmetry of the random linear code problem, for any $i \in [n]$, we have
    \begin{align*}
        \E[(x_i^* - \E[x_i \vert A, \hat{y}])^2] = \E[(x_1^* - \E[x_1 \vert A, \hat{y}])^2] &\ge \frac{1}{4} - o(1).
    \end{align*}
    Finally, we conclude that the noisy MMSE is at least 
    \begin{align*}
        &\quad \text{MMSE}_{\rho}\\
        &= \E[\|x^* - \E[x \vert A, T_{\rho}(y)]\|_2^2]\\
        &= \E[\|x^* - \E[x \vert A, \hat{y}]\|_2^2]\\
        &= \sum_{i=1}^n E[(x_i^* - \E[x_i \vert A, \hat{y}])^2]\\
        &\ge  \left(\frac{1}{4} - o(1)\right)n.
    \end{align*}
    
\end{proof}

\subsection{Stability of Low-Degree Polynomials: Proof of Theorem~\ref{thm:low-deg-stability-rlc}}

In this section, we prove the stability parameters of polynomials for the random linear code problem. Recall that to prove Theorem~\ref{thm:low-deg-stability-rlc}, we need to show that for any $D$ such that $2D \le n$, a polynomial $f: \{0,1\}^{m \times n} \times \{0,1\}^m \to \R$ of degree at most $D$ is $(\rho, 2(1 - (1-\rho)^D))$-stable for the random linear code problem.

\begin{proof}[Theorem~\ref{thm:low-deg-stability-rlc}]
    For a polynomial $f: \{0,1\}^{m \times n} \times \{0,1\}^m \to \R$ of degree at most $D$, we may write it uniquely as
    \begin{align*}
        f(A,y) = \sum_{\substack{S \subseteq [m] \times [n],\\ T \subseteq [m]:\\ |S| + |T| \le D}} \hat{f}_{S,T} \chi_{S,T}(A,y),
    \end{align*}
    where $\chi_{S,T}(A,y)$ denotes
    \begin{align*}
        \chi_{S,T}(A,y) = \prod_{\{i,j\} \in S} (2A_{i,j} - 1) \prod_{k \in T} (2y_k - 1).
    \end{align*}
    One important fact that we will use is that for any $S \subseteq [m] \times [n]$ and $T \subseteq [m]$, as long as there does not exist an index $i \in [m]$ such that $\{i\} \times [n] \subseteq S$ and $i \in T$, then the marginal distribution of $(A^S, y^T)$ is uniform $\text{Bern}(1/2)^{\otimes S} \times \text{Bern}(1/2)^{\otimes T}$ for $(A,y)$ sampled in the random linear code problem. This follows from $y = Ax$ where both $A$ and $x$ are drawn uniformly at random, and thus $y^T \vert A^S$ is distributed as $\text{Bern}(1/2)^{\otimes T}$ as long as $A^S$ does not contain an entire row of $A$ corresponding to some entry of $y$.
    
    Now, we have
    \begin{align*}
        \E[(f(A,y) - f(A,T_{\rho}(y)))^2] &= \E[f(A,y)^2] + \E[f(A,T_{\rho}(y))^2] - 2\E[f(A,y)f(A,T_{\rho}(y))]. 
    \end{align*}
    We will analyze each of the three terms separately. 

    First,
    \begin{align*}
        \E[f(A,y)^2] &= \sum_{\substack{S_1, T_1, S_2, T_2:\\ |S_1| + |T_1| \le D,\\
        |S_2| + |T_2| \le D}} \hat{f}_{S_1, T_1} \hat{f}_{S_2, T_2} \E[\chi_{S_1, T_1}(A,y) \chi_{S_2,T_2}(A,y) ]\\
        &= \sum_{\substack{S_1, T_1, S_2, T_2:\\ |S_1| + |T_1| \le D,\\
        |S_2| + |T_2| \le D}} \hat{f}_{S_1, T_1} \hat{f}_{S_2, T_2} \E[\chi_{S_1 \triangle S_2, T_1\triangle T_2 }(A,y) ]
        \intertext{Note that since $2D \le n$, the marginal distribution of $(A^{S_1 \triangle S_2}, y^{T_1 \triangle T_2})$ is uniform, and thus the expectation of $\chi_{S_1 \triangle S_2, T_1\triangle T_2 }$ is $0$ whenever $(S_1, T_1) \ne (S_2, T_2)$.}
        &= \sum_{\substack{S, T:\\ |S| + |T| \le D}} \hat{f}_{S, T}^2. \stepcounter{equation}\tag{\theequation}\label{eq:rlc-low-deg-second-moment}
    \end{align*}

    Second, 
    \begin{align*}
        \E[f(A,T_{\rho}(y))^2] &= \sum_{\substack{S_1, T_1, S_2, T_2:\\ |S_1| + |T_1| \le D,\\
        |S_2| + |T_2| \le D}} \hat{f}_{S_1, T_1} \hat{f}_{S_2, T_2} \E[\chi_{S_1, T_1}(A,T_{\rho}(y)) \chi_{S_2,T_2}(A,T_{\rho}(y)) ]\\
        &= \sum_{\substack{S_1, T_1, S_2, T_2:\\ |S_1| + |T_1| \le D,\\
        |S_2| + |T_2| \le D}} \hat{f}_{S_1, T_1} \hat{f}_{S_2, T_2} \E[\chi_{S_1 \triangle S_2, T_1\triangle T_2 }(A,T_{\rho}(y)) ]
        \intertext{Note that $T_{\rho}$ resamples each coordinate of $y$ from $\text{Bern}(1/2)$ with probability $\rho$, and thus when $2D \le n$, the marginal distribution of $(A^{S_1 \triangle S_2}, T_{\rho}(y)^{T_1 \triangle T_2})$ is uniform, and the expectation of $\chi_{S_1 \triangle S_2, T_1\triangle T_2 }$ is $0$ whenever $(S_1, T_1) \ne (S_2, T_2)$.}
        &= \sum_{\substack{S, T:\\ |S| + |T| \le D}} \hat{f}_{S, T}^2. \stepcounter{equation}\tag{\theequation}\label{eq:rlc-low-deg-noisy-second-moment}
    \end{align*}

    Third, we have
    \begin{align*}
        \E[f(A,y)f(A, T_{\rho}(y))] &= \sum_{\substack{S_1, T_1, S_2, T_2:\\ |S_1| + |T_1| \le D,\\
        |S_2| + |T_2| \le D}} \hat{f}_{S_1, T_1} \hat{f}_{S_2, T_2} \E[\chi_{S_1, T_1}(A,y) \chi_{S_2,T_2}(A,T_{\rho}(y)) ]
        \intertext{Note that whenever some entry of $y^{T_2}$ is resampled from $\text{Bern}(1/2)$ by $T_{\rho}$, then the conditional expectation of $\chi_{S_1, T_1}(A,y) \chi_{S_2,T_2}(A,T_{\rho}(y))$ is $0$. Thus,}
        &= \sum_{\substack{S_1, T_1, S_2, T_2:\\ |S_1| + |T_1| \le D,\\
        |S_2| + |T_2| \le D}} \hat{f}_{S_1, T_1} \hat{f}_{S_2, T_2} \cdot (1 - \rho)^{|T_2|} \cdot \E[\chi_{S_1, T_1}(A,y) \chi_{S_2,T_2}(A,y) ]\\
        &= \sum_{\substack{S, T:\\ |S| + |T| \le D}} \hat{f}_{S, T}^2 (1 - \rho)^{|T|}\\
        &\ge (1 - \rho)^D \sum_{\substack{S, T:\\ |S| + |T| \le D}} \hat{f}_{S, T}^2. \stepcounter{equation}\tag{\theequation}\label{eq:rlc-low-degree-cross}
    \end{align*}

    Combining \eqref{eq:rlc-low-deg-second-moment}, \eqref{eq:rlc-low-deg-noisy-second-moment}, and \eqref{eq:rlc-low-degree-cross}, we have
    \begin{align*}
        \E[(f(A,y) - f(A,T_{\rho}(y)))^2] &= \E[f(A,y)^2] + \E[f(A,T_{\rho}(y))^2] - 2\E[f(A,y)f(A,T_{\rho}(y))]\\
        &\le 2(1 - (1-\rho)^D) \sum_{\substack{S, T:\\ |S| + |T| \le D}} \hat{f}_{S, T}^2.\\
        &= 2(1 - (1-\rho)^D) \cdot \E[f(A,y)^2].
    \end{align*}
    This finishes the proof that if $2D \le n$, then $f$ is $(\rho, 2(1 - (1 - \rho)^D)$-stable.
\end{proof}

\section{Proofs for Gaussian Subset Sum}\label{sec:GSS_proofs}
\subsection{Deferred proof for MMSE instability}

\begin{proof}[\ref{prop:aon_GSS}]
    The GSS setting is identical to the sparse regression setting of \cite{reeves2019all} with one sample and we are in the noiseless case (in their notation) $\sigma=0$. In the noisy case, the OU operator on GSS, corresponds to the sparse regression setting again with $n=1$ and noise $\sigma=\rho/\sqrt{1-\rho^2}.$ 
    
    Now, in the sparse regression setting for noise $\sigma=\rho/\sqrt{1-\rho^2}$  the information theoretic threshold for AoN according to \cite{reeves2019all} is given by the sample size $n^*=2k\log(N/k)/\log(1+k/\sigma^2)=2k\log(N/k)/\log(1+k(1-\rho^2)/\rho^2).$ In particular, if $n \leq 0.99 n^*$ and $k/\sigma^2=k(1-\rho^2)/\rho^2 \geq C$ for some universal constant $C>0,$ we have from \cite[Theorem 3, arXiv version]{reeves2019all} that the MMSE is ``trivial" i.e., $\text{MMSE}_{\rho} \geq (1-o(1))k$. But it is easy to see that one can choose some $\rho=\exp(-\Theta(k \log (N/k)))$,  we can guarantee $n^* \geq 30$, making sure ``trivial" MMSE holds for $n=1$ for this $\rho>0$ for the GSS setting.
\end{proof}
\subsection{Stability of Low-Degree Polynomials}

In this section, we prove the stability parameters of low-degree polynomials for GSS. We will make use of the following theorem regarding the expectation of products of Hermite polynomials, the proof of which is deferred to Section~\ref{sec:deferred-proof-wick-product}.

\begin{theorem}[Diagram Formula for Expectation of Products of Hermite Polynomials]\label{thm:wick-product-formula}
    
    Suppose $x_1, \dots, x_k$ are identically distributed as $N(0,1)$. Let $R_{ij} = \E[x_ix_j]$ denote their correlations. Given $\alpha \in \N^k$, we define the following graph $G(\alpha) = (V, E)$ on $|\alpha|$ vertices. For each $i \in [k]$, create $\alpha_i$ vertices corresponding to $x_i$, and let $j(v) = i$ for each of the $\alpha_i$ vertices $v$ associated with $x_i$. For $u, v \in V$, add an edge $\{u,v\}$ if $j(u) \ne j(v)$.
Then, we have
\begin{align*}
    \E\left[\prod_{i=1}^k h_{\alpha_i}(x_i)\right] = \frac{1}{\sqrt{\alpha!}} \sum_{M \in \MM(\alpha)} \prod_{\{a,b\} \in M} R_{j(a)j(b)},
\end{align*}
where $\MM(\alpha)$ denotes the collection of perfect matchings in the graph $G(\alpha)$ described above.
\end{theorem}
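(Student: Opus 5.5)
We prove the diagram formula through the generating function of the Hermite polynomials, with $h_n$ normalized so that $\E[h_n(x)h_m(x)]=\one\{n=m\}$ for $x\sim N(0,1)$; that is, $h_n = He_n/\sqrt{n!}$, where $He_n$ is the probabilists' Hermite polynomial. The standard identity
\[
\exp\!\left(tx - \tfrac{t^2}{2}\right) = \sum_{n\ge 0} He_n(x)\frac{t^n}{n!}
\]
is the only input we need besides the Gaussianity of the vector $(x_1,\dots,x_k)$. Note that the diagonal entries are $R_{ii}=1$, and we assume the $x_i$ are jointly Gaussian, which is implicit in the statement since only the correlations $R_{ij}$ enter.

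\textbf{Step 1 (generating function).} For formal variables $t_1,\dots,t_k$, compute
\[
\E\left[\prod_i \exp\!\left(t_i x_i - \tfrac{t_i^2}{2}\right)\right].
\]
The sum $\sum_i t_i x_i$ is Gaussian with variance $\sum_{i,j} t_i t_j R_{ij}$. Subtracting the diagonal contributions $\sum_i t_i^2/2$ (using $R_{ii}=1$) gives
\[
\E\left[\prod_i \exp\!\left(t_i x_i - \tfrac{t_i^2}{2}\right)\right] = \exp\!\left(\sum_{i<j} R_{ij}t_it_j\right).
\]
Expanding the left side gives
\[
\sum_{\alpha}\E\left[\prod_i He_{\alpha_i}(x_i)\right]\frac{t^\alpha}{\alpha!}.
\]
Exchanging expectation and sum is justified because everything is a polynomial identity in $t$ with absolutely convergent Gaussian moments. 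Alternatively, one can compare coefficients in a neighborhood of $0$ by analyticity.

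\textbf{Step 2 (coefficient extraction).} It remains to show that the coefficient of $t^\alpha$ in $\exp\!\left(\sum_{i<j}R_{ij}t_it_j\right)$, multiplied by $\alpha!$, equals $\sum_{M\in\MM(\alpha)}\prod_{\{a,b\} \in M} R_{j(a)j(b)}$. This is the main combinatorial step. Write
\[
\exp\!\left(\sum_{i<j}R_{ij}t_it_j\right)=\prod_{i<j}\sum_{m_{ij}\ge 0}\frac{(R_{ij}t_it_j)^{m_{ij}}}{m_{ij}!}.
\]
Then the coefficient of $t^\alpha$ is
\[
\sum_{(m_{ij})}\prod_{i<j}\frac{R_{ij}^{m_{ij}}}{m_{ij}!},
\]
where the sum runs over the nonnegative integer arrays with $\sum_{j\ne i} m_{ij}=\alpha_i$ for every $i$.

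On the other side, group the perfect matchings $M$ of $G(\alpha)$ by their type: $m_{ij}$ records the number of edges of $M$ joining a vertex of group $i$ to a vertex of group $j$. Since $G(\alpha)$ has no edges inside a group, the types that occur are exactly the arrays above. For a fixed type, we count the matchings as follows.
\begin{itemize}
\item For each $i$, partition its $\alpha_i$ labelled vertices into blocks of sizes $(m_{ij})_{j\ne i}$. This gives $\alpha_i!/\prod_{j\ne i} m_{ij}!$ choices.
\item For each pair $i<j$, biject the block of group $i$ destined for $j$ with the block of group $j$ destined for $i$. This gives $m_{ij}!$ choices.
\end{itemize}
Each factor $m_{ij}!$ appears twice in the first count (once from group $i$, once from group $j$), and one copy is cancelled by the bijection count. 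The number of matchings of this type is therefore
\[
\alpha!\prod_{i<j}\frac{1}{m_{ij}!},
\]
and each such matching has weight $\prod_{i<j}R_{ij}^{m_{ij}}$. Summing over types matches the coefficient computed above.

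\textbf{Step 3 (normalization).} Combining the two steps gives
\[
\E\left[\prod_i He_{\alpha_i}(x_i)\right]=\sum_{M\in\MM(\alpha)}\prod_{\{a,b\}\in M} R_{j(a)j(b)}.
\]
Dividing by $\sqrt{\alpha!}=\prod_i\sqrt{\alpha_i!}$ converts $He_{\alpha_i}$ into $h_{\alpha_i}$, which is the claimed formula.

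The only place requiring care is the bookkeeping of factorials in Step 2.
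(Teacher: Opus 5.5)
Your proof is correct and follows the same route as the paper: the Hermite generating function, the Gaussian moment generating function yielding $\exp\bigl(\sum_{i<j}R_{ij}t_it_j\bigr)$, coefficient extraction at $t^\alpha$, and division by $\sqrt{\alpha!}$. Your count of perfect matchings by type $(m_{ij})$, giving $\alpha!\prod_{i<j}1/m_{ij}!$, makes explicit the identification with $\sum_{M\in\MM(\alpha)}\prod R_{j(a)j(b)}$ that the paper only asserts.
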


Next, we prove that if $k = o(n)$ and $D = o(\min\{k^{1/4}, (n/k)^{1/5}\})$, then polynomials of degree at most $D$ are $(\rho, 2(1 - (\sqrt{1-\rho^2})^D + o(1)))$-stable for GSS.

\begin{proof}[Theorem~\ref{thm:low-deg-stability-gss}]
    
Consider a polynomial $f: \R^n \times \R \to \R$ of degree at most $D$. We may write it uniquely as
\begin{align*}
    f(X, Y) &= \sum_{\substack{\alpha \in \N^n, t\in \N:\\ |\alpha| + t \le D}} \hat{f}_{\alpha,t} h_{\alpha}(X)h_t(y),
\end{align*}
where $y = \frac{Y}{\sqrt{k}}$.

Note that since $Y = \sum_{i \in S} X_i$ for $S$ that is a uniform random $k$-subset of $[n]$, we may express $y = \langle u, X\rangle$ where $u := \frac{1}{\sqrt{k}}\one_S$ and $\one_S$ is the indicator vector of $S$. We may compute the second moment of $f$ as
\begin{align*}
    &\quad \E\left[f(X,Y)^2\right]\\
    &= \sum_{\substack{\alpha_1, \alpha_2 \in \N^n, t_1, t_2\in \N:\\ |\alpha_1| + t_1 \le D, |\alpha_2| + t_2 \le D}} \hat{f}_{\alpha_1, t_1} \hat{f}_{\alpha_2, t_2} \E\left[h_{\alpha_1}(X)h_{\alpha_2}(X)h_{t_1}(y)h_{t_2}(y)\right]\\
    &= \sum_{\substack{\alpha_1, \alpha_2 \in \N^n, t_1, t_2\in \N:\\ |\alpha_1| + t_1 \le D, |\alpha_2| + t_2 \le D}} \hat{f}_{\alpha_1, t_1} \hat{f}_{\alpha_2, t_2} \E_S \E_X\left[h_{\alpha_1}(X)h_{\alpha_2}(X)h_{t_1}(y)h_{t_2}(y)\right].
\end{align*}

Now we apply Wick's formula to analyze the inner expectation. We have
\begin{align*}
    &\quad \E_S \E_X\left[h_{\alpha_1}(X)h_{\alpha_2}(X)h_{t_1}(y)h_{t_2}(y)\right]\\
    &= \E_S\left[\frac{1}{\sqrt{\alpha_1!\alpha_2!t_1!t_2!}} \sum_{M \in \MM(\alpha_1, \alpha_2, t_1, t_2)} \prod_{\{a,b\} \in M} R_{j(a)j(b)}\right].
\end{align*}
From this formula, we see that all the inner expectations are nonnegative. We next analyze the perfect matchings in the graph $G(\alpha_1, \alpha_2, t_1, t_2)$. There are 4 types of vertices in $G$, which are $X$-vertices coming from $h_{\alpha_1}$, $X$-vertices coming from $h_{\alpha_2}$, $y$-vertices coming from $h_{t_1}$, and $y$-vertices coming from $h_{t_2}$. Let us denote the $X$-vertices coming from $h_{\alpha_1}$ as $U$, the $X$-vertices coming from $h_{\alpha_2}$ as $V$, $y$-vertices coming from $h_{t_1}$ as $R$, and $y$-vertices coming from $h_{t_2}$ as $T$. Note that for fixed choice of $S$, the covariance between a vertex in $U \sqcup V$ corresponding to $X_i$ and a $y$-vertex is $\frac{1}{\sqrt{k}}\allone\{i \in S\}$. The covariance between vertices in $U \sqcup V$ corresponding to $X_i$ and $X_j$ is $\allone\{i = j\}$. The covariance between vertices in $R \sqcup T$ is $1$.

Thus, taking expectation of $S$, the matchings that contribute to the sum are the ones consisting of a perfect matching between a subset of $U$ and a subset of $V$, a perfect matching between a subset of $R$ and a subset of $T$, and then a perfect matching between the rest of $U \sqcup V$ and the rest of $R \sqcup T$. Moreover, the perfect matching between a subset of $U$ and a subset of $V$ needs to be further a perfect matching when restricted to the vertices corresponding to $X_i$ for any $i \in [n]$, since the fact that $X_i$ and $X_j$ are uncorrelated for $i\ne j$ prevents contributing matchings to have edges connecting vertices corresponding to different $X_i$'s.

For a contributing matching $M \in \MM(\alpha_1, \alpha_2, t_1, t_2)$, let us call the edges $\{a,b\} \in M$ connecting vertices in $U \sqcup V$ as $X$-edges, the edges connecting vertices in $R \sqcup T$ as $y$-edges, and the edges connecting vertices between $U \sqcup V$ and $R \sqcup T$ as cross-edges. We know that each $X$-edge and each $y$-edge contribute $1$ to the product of $M$. The contribution of the cross-edges depends on two quantities: (1) the number of cross-edges, and (2) the number of vertices corresponding to distinct variables $X_i$'s that the cross-edges are incident to. Let $m$ denote the number of cross-edges, and $z$ denote the number of distinct $X_i$'s that the cross-edges are incident to. We note the obvious relation that $m \ge z$. The contribution of the cross-edges, under the expectation of $S$, is equal to 
\begin{align*}
    &\quad \Pr(\text{a fixed }z\text{-subset of }[n] \subseteq S) \cdot \left(\frac{1}{\sqrt{k}}\right)^m\\
    &= (1 + O(1/n))\left(\frac{k}{n}\right)^z \cdot \left(\frac{1}{\sqrt{k}}\right)^m.
\end{align*}
Next, we give an estimate $C(m,z; \alpha_1, \alpha_2, t_1, t_2)$ on the number of contributing perfect matchings with $m$ cross-edges and $z$ distinct $X_i$'s that the cross-edges are incident to. To enumerate such a matching, we may first enumerate $m$ vertices in $U \sqcup V$ and $m$ vertices in $R \sqcup T$, and then a perfect matching between the enumerated subset of $U \sqcup V$ and the enumerated subset of $R \sqcup T$. This gives at most \[\binom{|U|+|V|}{m} \cdot \binom{|R| + |T|}{m} \cdot m! \le \left(\frac{e(|U|+|V|)}{m}\right)^m \left(\frac{e(|R|+|T|)}{m}\right)^m m^m\] choices. Next, we enumerate the $X$-edges, i.e., a perfect matching between the remaining vertices in $U \sqcup V$. This gives at most \[\min\{\alpha_1!, \alpha_2!\} \le \sqrt{\alpha_1!\alpha_2!}\]
choices. Finally, we enumerate the $y$-edges, i.e., a perfect matching between the remaining vertices in $R \sqcup T$. This gives at most
\[\min\{t_1!, t_2!\} \le \sqrt{t_1!t_2!}\]
choices. Consequently, we have
\begin{align*}
    &\quad C(m,z; \alpha_1, \alpha_2, t_1, t_2) \\
    &\le \sqrt{\alpha_1!\alpha_2! t_1!t_2!} \left(\frac{e(|U|+|V|)}{m}\right)^m \left(\frac{e(|R|+|T|)}{m}\right)^m m^m\\
    &= \sqrt{\alpha_1!\alpha_2! t_1!t_2!} \left(\frac{e^2D^2}{m}\right)^m,
\end{align*}
where we use that  $(|U|+|V|)(|R|+|T|) \le D^2$ since $|U|+|V|+|R|+|T| = |\alpha_1| + t_1 + |\alpha_2| + t_2 \le 2D$.

Additionally, we observe that we must have $z \ge |\alpha_1 \triangle \alpha_2|_0$, since any coordinate $i$ on which $\alpha_1$ and $\alpha_2$ differs has some vertex corresponding to $X_i$ incident to cross-edges in a contributing perfect matching. We also have $m \ge |\alpha_1 \triangle \alpha_2|$, since there are at least $|\alpha_1(i) - \alpha_2(i)|$ cross-edges incident to the vertices corresponding to $X_i$. Similarly, we have $m \ge |t_1 - t_2|$ since there are at least $|t_1 - t_2|$ cross-edges incident to the vertices corresponding to $y$.

In total, for fixed $\alpha_1, \alpha_2, t_1, t_2$, the total contribution of the inner expectation is at most
\begin{align*}
    &\quad \E_S \E_X\left[h_{\alpha_1}(X)h_{\alpha_2}(X)h_{t_1}(y)h_{t_2}(y)\right]\\
    &= \E_S\left[\frac{1}{\sqrt{\alpha_1!\alpha_2!t_1!t_2!}} \sum_{M \in \MM(\alpha_1, \alpha_2, t_1, t_2)} \prod_{\{a,b\} \in M} R_{j(a)j(b)}\right]\\
    &\le \frac{1}{\sqrt{\alpha_1!\alpha_2!t_1!t_2!}} \sum_{z \ge |\alpha_1 \triangle \alpha_2|_0} \sum_{m \ge \max\{|\alpha_1 \triangle \alpha_2|, |t_1 - t_2|\}} C(m,z; \alpha_1, \alpha_2, t_1, t_2) \cdot (1 + O(1/n))\left(\frac{k}{n}\right)^z \cdot \left(\frac{1}{\sqrt{k}}\right)^m\\
    &\le (1 + O(1/n)) \sum_{z \ge |\alpha_1 \triangle \alpha_2|_0} \left(\frac{k}{n}\right)^z\sum_{m \ge \max\{|\alpha_1 \triangle \alpha_2|, |t_1 - t_2|\}} \left(\frac{e^2D^2}{m\sqrt{k}}\right)^m. \stepcounter{equation}\tag{\theequation}\label{ineq:inner-expectation}
\end{align*}

We also observe that for the diagonal terms with $\alpha_1 = \alpha_2 = \alpha$ and $t_1 = t_2 = t$, this expectation \eqref{ineq:inner-expectation} is at least
\begin{align*}
    &\quad \E_S \E_X\left[h_{\alpha}(X)^2h_{t}(y)^2\right]\\
    &= \E_S\left[\frac{1}{\sqrt{\alpha!\alpha!t!t!}} \sum_{M \in \MM(\alpha, \alpha, t, t)} \prod_{\{a,b\} \in M} R_{j(a)j(b)}\right]\\
    &\ge \frac{1}{\alpha! t!} \cdot \alpha!t!\\
    &= 1,
\end{align*}
where in the last inequality we take the sum over perfect matchings without cross-edges, giving a total of $\alpha!$ choices for matching between $U$ and $V$, times $t!$ choices for matching between $R$ and $T$.

Now we have
\begin{align*}
    &\quad \E[f(X,Y)^2]\\
    &= \sum_{\substack{\alpha_1, \alpha_2 \in \N^n, t_1, t_2\in \N:\\ |\alpha_1| + t_1 \le D, |\alpha_2| + t_2 \le D}} \hat{f}_{\alpha_1, t_1} \hat{f}_{\alpha_2, t_2} \E_S \E_X\left[h_{\alpha_1}(X)h_{\alpha_2}(X)h_{t_1}(y)h_{t_2}(y)\right]\\
    &= \sum_{\substack{\alpha\in \N^n, t\in \N:\\ |\alpha| + t \le D}} \hat{f}_{\alpha, t}^2 \E_S \E_X\left[h_{\alpha}(X)^2h_{t}(y)^2\right]\\
    &\quad + \sum_{\substack{\alpha_1, \alpha_2 \in \N^n, t_1, t_2\in \N:\\ |\alpha_1| + t_1 \le D, |\alpha_2| + t_2 \le D,\\ (\alpha_1, t_1) \ne (\alpha_2, t_2)}} \hat{f}_{\alpha_1, t_1} \hat{f}_{\alpha_2, t_2} \E_S \E_X\left[h_{\alpha_1}(X)h_{\alpha_2}(X)h_{t_1}(y)h_{t_2}(y)\right].
\end{align*}

We know that the contribution of the diagonal terms is at least
\begin{align*}
    &\quad \sum_{\substack{\alpha\in \N^n, t\in \N:\\ |\alpha| + t \le D}} \hat{f}_{\alpha, t}^2 \E_S \E_X\left[h_{\alpha}(X)^2h_{t}(y)^2\right]\\
    &\ge \sum_{\substack{\alpha\in \N^n, t\in \N:\\ |\alpha| + t \le D}} \hat{f}_{\alpha, t}^2.
\end{align*}

Now let us consider the off-diagonal terms. For fixed $\alpha_1, \alpha_2 \in \N^n$, we will denote
\begin{align}
    \beta &= \alpha_1 \wedge \alpha_2\\
    a &= |\alpha_1 - \beta|_0\\
    b &= |\alpha_2 - \beta|_0\\
    c &= |\alpha_1 - \beta|\\
    d &= |\alpha_2 - \beta|.
\end{align}
For fixed $\alpha_1 \in \N^n$ and fixed $a,b,c,d\in \N$, we will use $W_{\alpha_1}(a,b,c,d)$ to denote the collection of $\alpha_2 \in \N^n$ such that $\alpha_2$ satisfies the parameters above and $\alpha_2 \ne \alpha_1$. Note that if $\alpha_1 \ne \alpha_2$, we have $a+b \ge 1$. Then, the contribution of the off-diagonal terms is at most
\begin{align*}
    &\quad \left|\sum_{\substack{\alpha_1, \alpha_2 \in \N^n, t_1, t_2\in \N:\\ |\alpha_1| + t_1 \le D, |\alpha_2| + t_2 \le D,\\ (\alpha_1, t_1) \ne (\alpha_2, t_2)}} \hat{f}_{\alpha_1, t_1} \hat{f}_{\alpha_2, t_2} \E_S \E_X\left[h_{\alpha_1}(X)h_{\alpha_2}(X)h_{t_1}(y)h_{t_2}(y)\right]\right|\\
    &\le \sum_{\substack{\alpha_1, \alpha_2 \in \N^n, t_1, t_2\in \N:\\ |\alpha_1| + t_1 \le D, |\alpha_2| + t_2 \le D,\\ (\alpha_1, t_1) \ne (\alpha_2, t_2)}} |\hat{f}_{\alpha_1, t_1}| |\hat{f}_{\alpha_2, t_2}| \cdot |\E_S \E_X\left[h_{\alpha_1}(X)h_{\alpha_2}(X)h_{t_1}(y)h_{t_2}(y)\right]|\\
    &= \sum_{\substack{\alpha \in \N^n, t_1, t_2\in \N:\\ |\alpha| + t_1 \le D, |\alpha| + t_2 \le D,\\ t_1 \ne t_2}} |\hat{f}_{\alpha, t_1}| |\hat{f}_{\alpha, t_2}| \cdot |\E_S \E_X\left[h_{\alpha}(X)^2h_{t_1}(y)h_{t_2}(y)\right]|\\
    &\quad + \sum_{\substack{\alpha_1, \alpha_2 \in \N^n, t_1, t_2\in \N:\\ |\alpha_1| + t_1 \le D, |\alpha_2| + t_2 \le D,\\ \alpha_1 \ne \alpha_2}} |\hat{f}_{\alpha_1, t_1}| |\hat{f}_{\alpha_2, t_2}| \cdot |\E_S \E_X\left[h_{\alpha_1}(X)h_{\alpha_2}(X)h_{t_1}(y)h_{t_2}(y)\right]|.
\end{align*}

The first term involving sum over $t_1 \ne t_2$ is at most
\begin{align*}
    &\quad \sum_{\substack{\alpha \in \N^n, t_1, t_2\in \N:\\ |\alpha| + t_1 \le D, |\alpha| + t_2 \le D,\\ t_1 \ne t_2}} |\hat{f}_{\alpha, t_1}| |\hat{f}_{\alpha, t_2}| \cdot |\E_S \E_X\left[h_{\alpha}(X)^2h_{t_1}(y)h_{t_2}(y)\right]|\\
    &\le 2\sum_{\alpha \in \N^n, t_1 \in \N} |\hat{f}_{\alpha, t_1}| \sum_{\substack{t_2 \ne t_1,\\ |\hat{f}_{\alpha, t_2}| \le |\hat{f}_{\alpha, t_1}|}} |\hat{f}_{\alpha, t_2}| \cdot \E_S \E_X\left[h_{\alpha}(X)^2h_{t_1}(y)h_{t_2}(y)\right]
    \intertext{where the inequality holds because every ordered pairs of $(t_1, t_2)$ may be reordered so that $|\hat{f}_{\alpha, t_2}| \le |\hat{f}_{\alpha, t_1}|$. Now, we apply \eqref{ineq:inner-expectation} and get}
    &\le 2(1 + O(1/n))\sum_{\alpha \in \N^n, t_1 \in \N}  \hat{f}_{\alpha, t_1}^2 \sum_{t_2 \ne t_1}   \sum_{z \ge 0} \left(\frac{k}{n}\right)^z\sum_{m \ge |t_1 - t_2|} \left(\frac{e^2D^2}{m\sqrt{k}}\right)^m\\
    &\le 2\left(1 + O(k/n) + O\left(D^2/\sqrt{k}\right)\right)\sum_{\alpha \in \N^n, t_1 \in \N}  \hat{f}_{\alpha, t_1}^2 \sum_{t_2 \ne t_1}   \left(\frac{e^2D^2}{|t_1 - t_2|\sqrt{k}}\right)^{|t_1 - t_2|}\\
    &\le 2\left(1 + O(k/n) + O\left(D^2/\sqrt{k}\right)\right)\sum_{\alpha \in \N^n, t_1 \in \N}  \hat{f}_{\alpha, t_1}^2 \cdot   2\left(\frac{e^2D^2}{\sqrt{k}}\right)\\
    &= o\left(\sum_{\alpha \in \N^n, t \in \N}  \hat{f}_{\alpha, t}^2\right),
\end{align*}
provided that $k = o(n)$ and $D = o(k^{1/4})$.

The second term involving sum over $\alpha_1 \ne \alpha_2$ is at most
\begin{align*}
    &\quad \sum_{\substack{\alpha_1, \alpha_2 \in \N^n, t_1, t_2\in \N:\\ |\alpha_1| + t_1 \le D, |\alpha_2| + t_2 \le D,\\ \alpha_1 \ne \alpha_2}} |\hat{f}_{\alpha_1, t_1}| |\hat{f}_{\alpha_2, t_2}| \cdot |\E_S \E_X\left[h_{\alpha_1}(X)h_{\alpha_2}(X)h_{t_1}(y)h_{t_2}(y)\right]|\\
    &\le 2 \sum_{\substack{\alpha_1 \in \N^n, t_1 \in \N:\\ |\alpha_1| + t_1 \le D}} |\hat{f}_{\alpha_1, t_1}| \sum_{\substack{a,b,c,d \ge 0,\\a+b\ge 1}} \sum_{\substack{\alpha_2 \in \N^n, t_2 \in \N:\\ |\alpha_2| + t_2 \le D,\\
    \alpha_2 \in W_{\alpha_1}(a,b,c,d)\\|\hat{f}_{\alpha_2, t_2}|n^{\frac{b}{2}}D^{\frac{a}{2}}\\ \le|\hat{f}_{\alpha_1, t_1}|n^{\frac{a}{2}}D^{\frac{b}{2}} }} |\hat{f}_{\alpha_2,t_2}| \cdot \E_S \E_X\left[h_{\alpha_1}(X)h_{\alpha_2}(X)h_{t_1}(y)h_{t_2}(y)\right]
    \intertext{where the inequality holds because every ordered pairs of $((\alpha_1,t_1), (\alpha_2,t_2))$ may be reordered so that $|\hat{f}_{\alpha_2, t_2}|n^{\frac{b}{2}}D^{\frac{a}{2}}\le|\hat{f}_{\alpha_1, t_1}|n^{\frac{a}{2}}D^{\frac{b}{2}}$ }
    &\le 2 \sum_{\substack{\alpha_1 \in \N^n, t_1 \in \N:\\ |\alpha_1| + t_1 \le D}} \hat{f}_{\alpha_1, t_1}^2 \sum_{\substack{a,b,c,d \ge 0,\\a+b\ge 1}} n^{\frac{a-b}{2}} D^{\frac{b-a}{2} } \sum_{\substack{\alpha_2 \in \N^n, t_2 \in \N:\\ |\alpha_2| + t_2 \le D,\\
    \alpha_2 \in W_{\alpha_1}(a,b,c,d)\\|\hat{f}_{\alpha_2, t_2}|n^{\frac{b}{2}}D^{\frac{a}{2}}\\ \le|\hat{f}_{\alpha_1, t_1}|n^{\frac{a}{2}}D^{\frac{b}{2}} }}  \E_S \E_X\left[h_{\alpha_1}(X)h_{\alpha_2}(X)h_{t_1}(y)h_{t_2}(y)\right]\\
    &\le 2(1 + O(1/n)) \sum_{\substack{\alpha_1 \in \N^n, t_1 \in \N:\\ |\alpha_1| + t_1 \le D}} \hat{f}_{\alpha_1, t_1}^2 \sum_{\substack{a,b,c,d \ge 0,\\a+b\ge 1}} n^{\frac{a-b}{2}} D^{\frac{b-a}{2} } \\
    &\quad \sum_{\substack{\alpha_2 \in \N^n, t_2 \in \N:\\ |\alpha_2| + t_2 \le D,\\
    \alpha_2 \in W_{\alpha_1}(a,b,c,d)\\|\hat{f}_{\alpha_2, t_2}|n^{\frac{b}{2}}D^{\frac{a}{2}}\\ \le|\hat{f}_{\alpha_1, t_1}|n^{\frac{a}{2}}D^{\frac{b}{2}} }} \sum_{z \ge |\alpha_1 \triangle \alpha_2|_0} \left(\frac{k}{n}\right)^z\sum_{m \ge \max\{|\alpha_1 \triangle \alpha_2|, |t_1 - t_2|\}} \left(\frac{e^2D^2}{m\sqrt{k}}\right)^m\\
    &\le 2(1 + O(1/n)) \sum_{\substack{\alpha_1 \in \N^n, t_1 \in \N:\\ |\alpha_1| + t_1 \le D}} \hat{f}_{\alpha_1, t_1}^2 \sum_{\substack{a,b,c,d \ge 0,\\a+b\ge 1}} n^{\frac{a-b}{2}} D^{\frac{b-a}{2} } \\
    &\quad \sum_{\substack{\alpha_2 \in \N^n, t_2 \in \N:\\ |\alpha_2| + t_2 \le D,\\
    \alpha_2 \in W_{\alpha_1}(a,b,c,d) }} \sum_{z \ge a+b} \left(\frac{k}{n}\right)^z\sum_{m \ge \max\{c+d, |t_1 - t_2|\}} \left(\frac{e^2D^2}{m\sqrt{k}}\right)^m.
\end{align*}
Now, we proceed  to bound the number of $\alpha_2 \in W_{\alpha_1}(a,b,c,d)$. To enumerate such $\alpha_2$, we may first enumerate the $b$ indices such that $\alpha_2(i) > \alpha_1(i)$, giving at most $\binom{n}{b}$ choices. Then, we enumerate the $a$ indices such that $\alpha_1(i) > \alpha_2(i)$, giving at most $\binom{|\alpha_1|_0}{a}$ choices. Next, we enumerate the choices of $\alpha_2(i)$ for the coordinates $\alpha_1(i) > \alpha_2(i)$. Recall that $\beta = \alpha_1 \wedge \alpha_2$. Since \[c = |\alpha_1 - \beta| = \sum_{i: \alpha_1(i) > \alpha_2(i)} (\alpha_1(i) - \alpha_2(i)),\]
by standard stars-and-bars argument, the number of choices of $\alpha_2$ on the coordinates $\alpha_1(i) > \alpha_2(i)$ is at most $\binom{c-1}{a-1} \le \binom{c}{a}$, if $a \ge 1$, and $1 \le \binom{c}{a}$ if $a = 0$. Similarly, the number of choices of $\alpha_2(i)$ for the coordinates $\alpha_1(i) < \alpha_2(i)$ is at most $\binom{d}{b}$. Thus, $\binom{n}{b} \binom{|\alpha_1|_0}{a} \binom{c}{a} \binom{d}{b}$ is an upper bound on the number of $\alpha_2 \in W_{\alpha_1}(a,b,c,d)$. Returning to our bound of the second term, we have

\begin{align*}
    &\quad \sum_{\substack{\alpha_1, \alpha_2 \in \N^n, t_1, t_2\in \N:\\ |\alpha_1| + t_1 \le D, |\alpha_2| + t_2 \le D,\\ \alpha_1 \ne \alpha_2}} |\hat{f}_{\alpha_1, t_1}| |\hat{f}_{\alpha_2, t_2}| \cdot |\E_S \E_X\left[h_{\alpha_1}(X)h_{\alpha_2}(X)h_{t_1}(y)h_{t_2}(y)\right]|\\
    &\le 2(1 + O(1/n)) \sum_{\substack{\alpha_1 \in \N^n, t_1 \in \N:\\ |\alpha_1| + t_1 \le D}} \hat{f}_{\alpha_1, t_1}^2 \sum_{\substack{a,b,c,d \ge 0,\\a+b\ge 1}} n^{\frac{a-b}{2}} D^{\frac{b-a}{2} } \\
    &\quad \sum_{\substack{\alpha_2 \in \N^n, t_2 \in \N:\\ |\alpha_2| + t_2 \le D,\\
    \alpha_2 \in W_{\alpha_1}(a,b,c,d) }} \sum_{z \ge a+b} \left(\frac{k}{n}\right)^z\sum_{m \ge \max\{c+d, |t_1 - t_2|\}} \left(\frac{e^2D^2}{m\sqrt{k}}\right)^m\\
    &\le 2(1 + O(1/n)) \sum_{\substack{\alpha_1 \in \N^n, t_1 \in \N:\\ |\alpha_1| + t_1 \le D}} \hat{f}_{\alpha_1, t_1}^2 \sum_{\substack{a,b,c,d \ge 0,\\a+b\ge 1}} n^{\frac{a-b}{2}} D^{\frac{b-a}{2} } \\
    &\quad \sum_{t_2 \in \N}\binom{n}{b} \binom{|\alpha_1|_0}{a} \binom{c}{a} \binom{d}{b} \sum_{z \ge a+b} \left(\frac{k}{n}\right)^z\sum_{m \ge \max\{c+d, |t_1 - t_2|\}} \left(\frac{e^2D^2}{m\sqrt{k}}\right)^m\\
    &\le 2(1 + O(1/n)) \sum_{\substack{\alpha_1 \in \N^n, t_1 \in \N:\\ |\alpha_1| + t_1 \le D}} \hat{f}_{\alpha_1, t_1}^2 \sum_{\substack{a,b \ge 0,\\a+b\ge 1,\\ c\ge a, d\ge b}} n^{\frac{a-b}{2}} D^{\frac{b-a}{2} } n^b D^a \cdot\frac{c^a}{a!} \frac{d^b}{b!}\\
    &\quad  \sum_{z \ge a+b} \left(\frac{k}{n}\right)^z\sum_{t_2 \in \N} \sum_{m \ge \max\{c+d, |t_1 - t_2|\}} \left(\frac{e^2D^2}{m\sqrt{k}}\right)^m\\
    &\le 2(1 + O(k/n)) \sum_{\substack{\alpha_1 \in \N^n, t_1 \in \N:\\ |\alpha_1| + t_1 \le D}} \hat{f}_{\alpha_1, t_1}^2 \sum_{\substack{a,b \ge 0,\\a+b\ge 1,\\ c\ge a, d\ge b}} n^{\frac{a-b}{2}} D^{\frac{b-a}{2} } n^b D^a \cdot\frac{c^a}{a!} \frac{d^b}{b!}\\
    &\quad  \left(\frac{k}{n}\right)^{a+b}\left[(2(c+d)+1) \sum_{m \ge c+d} \left(\frac{e^2D^2}{m\sqrt{k}}\right)^m + 2 \sum_{i \ge 1} \sum_{m \ge c+d+i} \left(\frac{e^2D^2}{m\sqrt{k}}\right)^m\right]\\
    &\le 2\left(1 + O(k/n) + O\left(D^2/\sqrt{k}\right)\right) \sum_{\substack{\alpha_1 \in \N^n, t_1 \in \N:\\ |\alpha_1| + t_1 \le D}} \hat{f}_{\alpha_1, t_1}^2 \sum_{\substack{a,b \ge 0,\\a+b\ge 1,\\ c\ge a, d\ge b}} n^{\frac{a-b}{2}} D^{\frac{b-a}{2} } n^b D^a \cdot\frac{c^a}{a!} \frac{d^b}{b!}\\
    &\quad  \left(\frac{k}{n}\right)^{a+b}\left[(2(c+d)+1)  \left(\frac{e^2D^2}{(c+d)\sqrt{k}}\right)^{c+d} + 2 \sum_{i \ge 1} \left(\frac{e^2D^2}{(c+d+i)\sqrt{k}}\right)^{c+d+i}\right]\\
    &\le 2\left(1 + O(k/n) + O\left(D^2/\sqrt{k}\right)\right) \sum_{\substack{\alpha_1 \in \N^n, t_1 \in \N:\\ |\alpha_1| + t_1 \le D}} \hat{f}_{\alpha_1, t_1}^2 \sum_{\substack{a,b \ge 0,\\a+b\ge 1}} n^{\frac{a-b}{2}} D^{\frac{b-a}{2} } n^b D^a \left(\frac{k}{n}\right)^{a+b} \\
    &\quad  \sum_{c\ge a, d\ge b}\frac{(2(c+d)+1)c^a d^b}{a!b!(c+d)^{c+d}}  \left(\frac{e^2D^2}{\sqrt{k}}\right)^{c+d}\\
    &\le 3\left(1 + O(k/n) + O\left(D^2/\sqrt{k}\right)\right) \sum_{\substack{\alpha_1 \in \N^n, t_1 \in \N:\\ |\alpha_1| + t_1 \le D}} \hat{f}_{\alpha_1, t_1}^2 \sum_{\substack{a,b \ge 0,\\a+b\ge 1}} \left(\frac{k \sqrt{D}}{\sqrt{n}}\right)^{a+b} \\
    &\quad  \sum_{c\ge a, d\ge b}\frac{2(c+d)c^a d^b}{a!b!\binom{c+d}{c}c^cd^d}  \left(\frac{e^2D^2}{\sqrt{k}}\right)^{c+d}
    \intertext{Now we inspect the term $\frac{2(c+d)c^ad^b}{a!b!\binom{c+d}{c} c^c d^d}$. Since $c \ge a$ and $d \ge b$, if both $c \ge 1$ and $d \ge 1$, we have $\frac{2(c+d)c^ad^b}{a!b!\binom{c+d}{c} c^c d^d} \le 2$. On the other hand, suppose $c = 0$, then we must have $a = 0$ and $b \ge 1$, in which case $\frac{2(c+d)c^ad^b}{a!b!\binom{c+d}{c} c^c d^d} \le \frac{2d \cdot d^b}{b! d^d} \le 2$. By symmetry, we extend the argument to the case when $d = 0$. Thus, we conclude that this term is always upper bounded by $2$, and we get }
    &\le 6\left(1 + O(k/n) + O\left(D^2/\sqrt{k}\right)\right) \sum_{\substack{\alpha_1 \in \N^n, t_1 \in \N:\\ |\alpha_1| + t_1 \le D}} \hat{f}_{\alpha_1, t_1}^2 \sum_{\substack{a,b \ge 0,\\a+b\ge 1}} \left(\frac{k \sqrt{D}}{\sqrt{n}}\right)^{a+b} \sum_{c\ge a, d\ge b} \left(\frac{e^2D^2}{\sqrt{k}}\right)^{c+d}\\
    &\le 6\left(1 + O(k/n) + O\left(D^2/\sqrt{k}\right)\right) \sum_{\substack{\alpha_1 \in \N^n, t_1 \in \N:\\ |\alpha_1| + t_1 \le D}} \hat{f}_{\alpha_1, t_1}^2 \sum_{i \ge 1} (i+1)\left(\frac{k \sqrt{D}}{\sqrt{n}}\right)^{i} \sum_{j \ge i} (j+1)\left(\frac{e^2D^2}{\sqrt{k}}\right)^{j}\\
    &\le 6\left(1 + O(k/n) + O\left(D^2/\sqrt{k}\right)\right) \sum_{\substack{\alpha_1 \in \N^n, t_1 \in \N:\\ |\alpha_1| + t_1 \le D}} \hat{f}_{\alpha_1, t_1}^2 \sum_{i \ge 1} (i+1)\left(\frac{k \sqrt{D}}{\sqrt{n}}\right)^{i} \cdot (i+1)\left(\frac{e^2D^2}{\sqrt{k}}\right)^{i}\\
    &= 6\left(1 + O(k/n) + O\left(D^2/\sqrt{k}\right)\right) \sum_{\substack{\alpha_1 \in \N^n, t_1 \in \N:\\ |\alpha_1| + t_1 \le D}} \hat{f}_{\alpha_1, t_1}^2 \sum_{i \ge 1} (i+1)^2\left(\frac{e^2\sqrt{k} D^{\frac{5}{2}}}{\sqrt{n}}\right)^{i}\\
    &\le 24\left(1 + O(k/n) + O\left(D^2/\sqrt{k}\right) + O\left(D^{\frac{5}{2}}\sqrt{k/n}\right)\right) \sum_{\substack{\alpha_1 \in \N^n, t_1 \in \N:\\ |\alpha_1| + t_1 \le D}} \hat{f}_{\alpha_1, t_1}^2 \cdot \frac{e^2\sqrt{k} D^{\frac{5}{2}}}{\sqrt{n}}\\
    &= o\left(\sum_{\substack{\alpha \in \N^n, t \in \N:\\ |\alpha| + t \le D}} \hat{f}_{\alpha, t}^2\right),
\end{align*}
provided that $k = o(n)$ and $D = o\left(\min\left\{k^{1/4},  \left(n/k\right)^{1/5}\right\}\right)$.

Combining the bound on the first and the second term, the contribution of the off-diagonal terms is at most
\begin{align*}
    &\quad \left|\sum_{\substack{\alpha_1, \alpha_2 \in \N^n, t_1, t_2\in \N:\\ |\alpha_1| + t_1 \le D, |\alpha_2| + t_2 \le D,\\ (\alpha_1, t_1) \ne (\alpha_2, t_2)}} \hat{f}_{\alpha_1, t_1} \hat{f}_{\alpha_2, t_2} \E_S \E_X\left[h_{\alpha_1}(X)h_{\alpha_2}(X)h_{t_1}(y)h_{t_2}(y)\right]\right|\\
    &\le \sum_{\substack{\alpha \in \N^n, t_1, t_2\in \N:\\ |\alpha| + t_1 \le D, |\alpha| + t_2 \le D,\\ t_1 \ne t_2}} |\hat{f}_{\alpha, t_1}| |\hat{f}_{\alpha, t_2}| \cdot |\E_S \E_X\left[h_{\alpha}(X)^2h_{t_1}(y)h_{t_2}(y)\right]|\\
    &\quad + \sum_{\substack{\alpha_1, \alpha_2 \in \N^n, t_1, t_2\in \N:\\ |\alpha_1| + t_1 \le D, |\alpha_2| + t_2 \le D,\\ \alpha_1 \ne \alpha_2}} |\hat{f}_{\alpha_1, t_1}| |\hat{f}_{\alpha_2, t_2}| \cdot |\E_S \E_X\left[h_{\alpha_1}(X)h_{\alpha_2}(X)h_{t_1}(y)h_{t_2}(y)\right]|\\
    &\le o\left(\sum_{\substack{\alpha \in \N^n, t \in \N:\\ |\alpha| + t \le D}} \hat{f}_{\alpha, t}^2\right),
\end{align*}
and thus the second moment of $f$ is
\begin{align*}
    &\quad \E\left[f(X,Y)^2\right]\\
    &= \sum_{\substack{\alpha_1, \alpha_2 \in \N^n, t_1, t_2\in \N:\\ |\alpha_1| + t_1 \le D, |\alpha_2| + t_2 \le D}} \hat{f}_{\alpha_1, t_1} \hat{f}_{\alpha_2, t_2} \E_S \E_X\left[h_{\alpha_1}(X)h_{\alpha_2}(X)h_{t_1}(y)h_{t_2}(y)\right]\\
    &\ge \sum_{\substack{\alpha \in \N^n, t \in \N:\\ |\alpha| + t \le D}} \hat{f}_{\alpha, t}^2 \cdot \E_S \E_X\left[h_{\alpha}(X)^2h_{t}(y)^2\right] + o\left(\sum_{\substack{\alpha \in \N^n, t \in \N:\\ |\alpha| + t \le D}} \hat{f}_{\alpha, t}^2\right). \stepcounter{equation}\tag{\theequation}\label{ineq:gss-low-deg-second-moment}
\end{align*}

We next analyze the effect of the noise operator $T_{\rho}$ on $y$. Recall that $T_{\rho}$ acts on $Y$ by averaging $Y$ with a Gaussian variable of variance $k$. The equivalent operator for $y = \frac{1}{\sqrt{k}} Y$ acts on $y$ by taking the average with a standard Gaussian variable $N(0,1)$. More precisely, $T_{\rho}(y) = \sqrt{1 - \rho^2} y + \rho z$, where $z \sim N(0,1)$ is an independent Gaussian. Our goal is now to analyze the second moment under the noisy model $\E[f(X, T_{\rho}(Y))^2]$ and the correlation $\E[f(X,Y)f(X, T_{\rho}(Y))]$.

To do so, we need to understand how the inner expectations behave under the noise operator. We have
\begin{align*}
    &\quad \E_S \E_X\left[h_{\alpha_1}(X)h_{\alpha_2}(X)h_{t_1}(T_{\rho}(y))h_{t_2}(T_{\rho}(y))\right]\\
    &= \E_S\left[\frac{1}{\sqrt{\alpha_1!\alpha_2!t_1!t_2!}} \sum_{M \in \MM(\alpha_1, \alpha_2, t_1, t_2)} \prod_{\{a,b\} \in M} R_{j(a)j(b)}\right],
\end{align*}
where the correlations $R_{ij}$ are different from before due to the application of $T_{\rho}$. We now have
\begin{align*}
    \E_X\left[T_{\rho}(y)^2\right] &= 1\\
    \E_X\left[X_i T_{\rho}(y)\right] &= \frac{\sqrt{1-\rho^2}}{\sqrt{k}} \allone\{i \in S\}.
\end{align*}
Thus, all the edge contributions for the $X$-edges and $Y$-edges stay the same, whereas the contribution of the cross-edges are scaled by a factor of $\sqrt{1-\rho^2}$. In particular, this implies that
\begin{align*}
    \left(\sqrt{1-\rho^2}\right)^D\le \frac{\E_S \E_X\left[h_{\alpha_1}(X)h_{\alpha_2}(X)h_{t_1}(T_{\rho}(y))h_{t_2}(T_{\rho}(y))\right]}{\E_S \E_X\left[h_{\alpha_1}(X)h_{\alpha_2}(X)h_{t_1}(y)h_{t_2}(y)\right]} \le 1,
\end{align*}
for all $(\alpha_1, t_1), (\alpha_2, t_2) \in \N^n \times \N$ such that $|\alpha_1| + t_1 \le D$ and $|\alpha_2| + t_2 \le D$, and we get
\begin{align*}
    &\quad \E[f(X, T_{\rho}(Y))^2]\\
    &= \sum_{\substack{\alpha_1, \alpha_2 \in \N^n, t_1, t_2\in \N:\\ |\alpha_1| + t_1 \le D, |\alpha_2| + t_2 \le D}} \hat{f}_{\alpha_1, t_1} \hat{f}_{\alpha_2, t_2} \E_S \E_X\left[h_{\alpha_1}(X)h_{\alpha_2}(X)h_{t_1}(T_{\rho}(y))h_{t_2}(T_{\rho}(y))\right]\\
    &\le  \sum_{\substack{\alpha \in \N^n, t \in \N:\\ |\alpha| + t \le D}} \hat{f}_{\alpha, t}^2 \cdot \E_S \E_X\left[h_{\alpha}(X)^2h_{t}(T_{\rho}(y))^2\right]\\
    &\quad + \sum_{\substack{\alpha_1, \alpha_2 \in \N^n, t_1, t_2\in \N:\\ |\alpha_1| + t_1 \le D, |\alpha_2| + t_2 \le D,\\ (\alpha_1, t_1) \ne (\alpha_2, t_2)}} |\hat{f}_{\alpha_1, t_1}| |\hat{f}_{\alpha_2, t_2}| \cdot \E_S \E_X\left[h_{\alpha_1}(X)h_{\alpha_2}(X)h_{t_1}(T_{\rho}(y))h_{t_2}(T_{\rho}(y))\right]\\
    &\le \sum_{\substack{\alpha \in \N^n, t \in \N:\\ |\alpha| + t \le D}} \hat{f}_{\alpha, t}^2 \cdot \E_S \E_X\left[h_{\alpha}(X)^2h_{t}(y)^2\right] + o\left(\sum_{\substack{\alpha \in \N^n, t \in \N:\\ |\alpha| + t \le D}} \hat{f}_{\alpha, t}^2\right)\\
    &\le (1 + o(1))\E\left[f(X,Y)^2\right],\stepcounter{equation}\tag{\theequation}\label{ineq:gss-low-deg-noisy-second-moment}
\end{align*}
where the last inequality follows from $\E_S \E_X\left[h_{\alpha}(X)^2h_{t}(y)^2\right] \ge 1$.

Similarly, we proceed to analyze the correlation term $\E[f(X,Y)f(X,T_{\rho}(Y))]$. We have
\begin{align*}
    &\quad \E_S \E_X\left[h_{\alpha_1}(X)h_{\alpha_2}(X)h_{t_1}(y)h_{t_2}(T_{\rho}(y))\right]\\
    &= \E_S\left[\frac{1}{\sqrt{\alpha_1!\alpha_2!t_1!t_2!}} \sum_{M \in \MM(\alpha_1, \alpha_2, t_1, t_2)} \prod_{\{a,b\} \in M} R_{j(a)j(b)}\right],
\end{align*}
where the correlations $R_{ij}$ are again different. We now have
\begin{align*}
    \E_X\left[yT_{\rho}(y)\right] &= \sqrt{1-\rho^2}\\
    \E_X\left[X_i y\right] &= \frac{1}{\sqrt{k}} \allone\{i \in S\}\\
    \E_X\left[X_i T_{\rho}(y)\right] &= \frac{\sqrt{1-\rho^2}}{\sqrt{k}} \allone\{i \in S\}.
\end{align*}
Thus, all the edge contributions for the $X$-edges and the cross-edges between vertices corresponding to $X_i$ and $y$ stay the same, whereas the contribution of the $Y$-edges and the cross-edges between vertices corresponding to $X_i$ and $T_{\rho}(y)$ are scaled by a factor of $\sqrt{1-\rho^2}$. We again have \begin{align*}
    \left(\sqrt{1-\rho^2}\right)^D\le \frac{\E_S \E_X\left[h_{\alpha_1}(X)h_{\alpha_2}(X)h_{t_1}(y)h_{t_2}(T_{\rho}(y))\right]}{\E_S \E_X\left[h_{\alpha_1}(X)h_{\alpha_2}(X)h_{t_1}(y)h_{t_2}(y)\right]} \le 1,
\end{align*}
for all $(\alpha_1, t_1), (\alpha_2, t_2) \in \N^n \times \N$ such that $|\alpha_1| + t_1 \le D$ and $|\alpha_2| + t_2 \le D$, and therefore, 
\begin{align*}
    &\quad \E\left[f(X,Y)f(X,T_{\rho}(Y))\right]\\
    &= \sum_{\substack{\alpha_1, \alpha_2 \in \N^n, t_1, t_2\in \N:\\ |\alpha_1| + t_1 \le D, |\alpha_2| + t_2 \le D}} \hat{f}_{\alpha_1, t_1} \hat{f}_{\alpha_2, t_2} \E_S \E_X\left[h_{\alpha_1}(X)h_{\alpha_2}(X)h_{t_1}(y)h_{t_2}(T_{\rho}(y))\right]\\
    &\ge  \sum_{\substack{\alpha \in \N^n, t \in \N:\\ |\alpha| + t \le D}} \hat{f}_{\alpha, t}^2 \cdot \E_S \E_X\left[h_{\alpha}(X)^2h_{t}(y)h_{t}(T_{\rho}(y))\right]\\
    &\quad - \sum_{\substack{\alpha_1, \alpha_2 \in \N^n, t_1, t_2\in \N:\\ |\alpha_1| + t_1 \le D, |\alpha_2| + t_2 \le D,\\ (\alpha_1, t_1) \ne (\alpha_2, t_2)}} |\hat{f}_{\alpha_1, t_1}| |\hat{f}_{\alpha_2, t_2}| \cdot \E_S \E_X\left[h_{\alpha_1}(X)h_{\alpha_2}(X)h_{t_1}(y)h_{t_2}(T_{\rho}(y))\right]\\
    &\ge \left(\sqrt{1-\rho^2}\right)^D\sum_{\substack{\alpha \in \N^n, t \in \N:\\ |\alpha| + t \le D}} \hat{f}_{\alpha, t}^2 \cdot \E_S \E_X\left[h_{\alpha}(X)^2h_{t}(y)^2\right] - o\left(\sum_{\substack{\alpha \in \N^n, t \in \N:\\ |\alpha| + t \le D}} \hat{f}_{\alpha, t}^2\right)\\
    &\ge \left(\left(\sqrt{1-\rho^2}\right)^D - o(1)\right)\E\left[f(X,Y)^2\right],\stepcounter{equation}\tag{\theequation}\label{ineq:gss-low-deg-correlation}
\end{align*}
where we again use $ \E_S \E_X\left[h_{\alpha}(X)^2h_{t}(y)^2\right] \ge 1$ in the last inequality.

Combining \eqref{ineq:gss-low-deg-second-moment}, \eqref{ineq:gss-low-deg-noisy-second-moment}, and \eqref{ineq:gss-low-deg-correlation}, we get
\begin{align*}
    &\quad \E\left[\left(f(X,Y) - f(X, T_{\rho}(Y))\right)^2\right]\\
    &= \E\left[f(X,Y)^2\right] + \E\left[f(X,T_{\rho}(Y))^2\right] - 2\E\left[f(X,Y)f(X, T_{\rho}(Y))\right]\\
    &\le 2\left(1 - \left(\sqrt{1-\rho^2}\right)^D + o(1)\right)\E\left[f(X,Y)^2\right].
\end{align*}
This confirms that polynomial $f$ of degree at most $D$ is $\left(\rho, 2\left(1 - \left(\sqrt{1-\rho^2}\right)^D + o(1)\right)\right)$-stable for the GSS, provided that $D = o\left(\min\left\{k^{1/4},  \left(n/k\right)^{1/5}\right\}\right)$.

\end{proof}

\subsection{Deferred Proof for Theorem~\ref{thm:wick-product-formula}} \label{sec:deferred-proof-wick-product}

\begin{proof}
    Clearly, if $|\alpha| = \sum_i \alpha_i$ is odd, then the corresponding product of Hermite polynomials have zero expectation. Therefore, from now on we assume $|\alpha|$ is even. Recall that the generating function of the probabilist's Hermite polynomials is
    \begin{align*}
        \exp\left(tx - \frac{1}{2}t^2\right) = \sum_{n=0}^\infty \text{He}_n(x) \frac{t^n}{n!}.
    \end{align*}

    Let $x_1, \dots, x_k$ be Gaussian variables identically distributed as $N(0,1)$, whose covariances are $R_{ij} = \E[x_ix_j]$. Consider the product of generating functions
    \begin{align*}
        Z(t_1, \dots, t_k) &= \exp\left(\sum_{i=1}^k \left(t_i x_i - \frac{1}{2} t_i^2\right)\right)\\
        &= \exp\left(- \frac{1}{2} \sum_{i=1}^k t_i^2\right) \exp\left(\sum_{i=1}^k t_ix_i\right).
    \end{align*}
    Taking the expectation of $Z$, we get
    \begin{align*}
        &\E\left[Z(t_1, \dots, t_k)\right]\\
        &=\exp\left(- \frac{1}{2} \sum_{i=1}^k t_i^2\right) \E\left[\exp\left(\sum_{i=1}^k t_ix_i\right)\right]\\
        &= \exp\left(- \frac{1}{2} \sum_{i=1}^k t_i^2\right) \exp\left(\frac{1}{2}\E\left[\left(\sum_{i=1}^k t_i x_i\right)^2\right]\right),
    \end{align*}
    using the moment generating function of Gaussians. Expanding the expectation inside the exponential using the covariances $R_{ij}$, we get
    \begin{align*}
        &\E\left[Z(t_1, \dots, t_k)\right]\\
        &= \exp\left(- \frac{1}{2} \sum_{i=1}^k t_i^2\right) \exp\left(\frac{1}{2}\E\left[\left(\sum_{i=1}^k t_i x_i\right)^2\right]\right)\\
        &= \exp\left(- \frac{1}{2} \sum_{i=1}^k t_i^2\right) \exp\left(\frac{1}{2}\sum_{i=1}^k t_i^2 + \sum_{1 \le i < j \le k} t_i t_j R_{ij}\right)\\
        &= \exp\left(\sum_{1 \le i < j \le k} t_i t_j R_{ij}\right)\\
        &= \sum_{n=0}^\infty \frac{1}{n!}\left(\sum_{1 \le i < j \le k} t_i t_j R_{ij}\right)^n.
    \end{align*}

    Differentiating the expectation and evaluating at $0$, we get
    \begin{align*}
        &\frac{\partial^{\alpha_1}}{\partial t_1^{\alpha_1}} \frac{\partial^{\alpha_2}}{\partial t_2^{\alpha_2}} \dots \frac{\partial^{\alpha_k}}{\partial t_k^{\alpha_k}} \E\left[Z(t_1, \dots, t_k)\right] \Bigg\vert_{t_1 = \dots = t_k = 0}\\
        &= \prod_{i=1}^k \alpha_i! \cdot \frac{1}{(|\alpha|/2)!} \sum_{\substack{(a_i, b_i)_{i=1}^{|\alpha|/2}:\\ 1\le a_i < b_i \le k,\\ \forall j \in [n], \alpha_j = \\ \sum_{i=1}^k \allone\{a_i = j\} + \allone\{b_i = j\}}} \prod_{i=1}^k R_{a_i, b_i}\\
        &= \sum_{M \in \MM(\alpha)} \prod_{\{a,b\} \in M} R_{j(a), j(b)},
    \end{align*}
    where $\MM(\alpha)$ is the set of perfect matchings in the constructed graph corresponding to $\alpha$.

    Finally, since we work with the orthonormal Hermite polynomials $h_n(x)$ normalized by $h_n(x) := \frac{1}{\sqrt{n!}}\text{He}_n(x)$, we have that
    \begin{align*}
        &\E\left[\prod_{i=1}^k h_{\alpha_i}(x_i)\right]\\
        &= \frac{1}{\sqrt{\alpha!}} \E\left[\prod_{i=1}^k \text{He}_{\alpha_i}(x_i)\right]\\
        &= \frac{1}{\sqrt{\alpha!}} \E\left[\prod_{i=1}^k \left[\frac{\partial^{\alpha_i}}{\partial t_i^{\alpha_i}} \exp\left(t_i x_i - \frac{1}{2}t_i^2\right)\right] \Big \vert_{t_i = 0}\right]\\
        &= \frac{1}{\sqrt{\alpha!}} \frac{\partial^{\alpha_1}}{\partial t_1^{\alpha_1}}\frac{\partial^{\alpha_2}}{\partial t_2^{\alpha_2}} \dots \frac{\partial^{\alpha_k}}{\partial t_k^{\alpha_k}}\E\left[\prod_{i=1}^k  \exp\left(t_i x_i - \frac{1}{2}t_i^2\right)\right] \Bigg \vert_{t_1 = \dots = t_k = 0}\\
        &= \frac{1}{\sqrt{\alpha!}} \frac{\partial^{\alpha_1}}{\partial t_1^{\alpha_1}}\frac{\partial^{\alpha_2}}{\partial t_2^{\alpha_2}} \dots \frac{\partial^{\alpha_k}}{\partial t_k^{\alpha_k}}\E\left[Z(t_1, \dots, t_k)\right] \Bigg \vert_{t_1 = \dots = t_k = 0}\\
        &= \frac{1}{\sqrt{\alpha!}}\sum_{M \in \MM(\alpha)} \prod_{\{a,b\} \in M} R_{j(a), j(b)},
    \end{align*}
    as desired. This finishes the proof.
\end{proof}
\newpage

\bibliographystyle{alpha}
\bibliography{main}

\newpage
\appendix

\section{Stable Algorithms Failure for Sparse Tensor PCA}

Based on our results so far, it is natural to wonder whether our technique is only applicable for ``counterexamples" problems, where there is a polynomial-time method but stable algorithms are failing in the task. In this section, we show that this is not true, by proving via our technique that stable methods are failing for the \emph{standard sparse tensor PCA setting} in the ``computational-statistical gap" regime where no polynomial-time method is expected to work. On the way to establishing this result, we obtain a tight refinement on the window size of the all-or-nothing phenomenon exhibited by the model, improving upon the prior work \cite{niles2020all} with a different method.

The model is as follows.

\begin{definition}
    Let $k,d, n \in \N$ and $\lambda \in \R$ with $k \le n$. In the $k$-Sparse Tensor PCA problem, we observe $Y = \sqrt{\lambda} x^{\otimes d} + W$, where $x$ is drawn uniformly at random from the set of $k$-sparse vectors in $\{0, 1\sqrt{k}\}^n \subseteq \R^n$ with exactly $k$ nonzero entries, and $W \in (\R^n)^{\otimes d}$ is a tensor with i.i.d.~$N(0,1)$ entries. The goal of the statistician is to estimate $x$ from $Y$.
\end{definition}

The noise operator we consider is again the OU operator.

\begin{remark}[The effect of OU operator for Sparse Tensor PCA]
    It is easy to see that applying the OU noise operator $T_{\rho}$ to Sparse Tensor PCA at $\lambda$ is equivalent to observing a new Sparse Tensor PCA model with the same $k, d, n$, but with a new $\Tilde{\lambda} = \lambda (1 - \rho^2)$.
\end{remark}

While it is known that Sparse Tensor PCA exhibits the AoN phenomenon at the sharp threshold $\lambda_c = 2\log \binom{n-k}{k}$ \cite{niles2020all}, the size of the critical window from prior work is only of size $o(\lambda_c)$, which is too weak for us to apply our framework. Therefore, we first obtain the following result on the size of the critical window for certain Sparse Tensor PCA model, the proof of which is deferred to Section~\ref{sec:critical-window-PCA}.

\begin{proposition}\label{prop:critical-window-PCA}
    Fix $\eps > 0$. Suppose $d \ge 3$ and $k \le \min\left\{n^{\frac{1}{2} - \eps}, n^{\frac{d-2}{d+2} - \eps}\right\}$. Then, for any $\Delta > 0$ that satisfies
    \begin{align*}
        \Delta = \omega\left(\sqrt{\log \binom{n-k}{k}}\right),
    \end{align*}
    it holds in the Sparse Tensor PCA model that
    \begin{itemize}
        \item if $\lambda = 2\log \binom{n-k}{k} + \Delta$, then $\text{MMSE} \to 0$,
        \item if $\lambda = 2\log \binom{n-k}{k} - \Delta$, then $\text{MMSE} \to 1$.
    \end{itemize}
\end{proposition}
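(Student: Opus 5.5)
We will prove the two bullets separately, working throughout with the normalized overlap $\langle x, x'\rangle \in \{0,1/k,\dots,1\}$ between the planted $x$ and an independent posterior sample $x'$. By the Nishimori identity (as used in the proof of Corollary~\ref{cor:conditional-noisy-MMSE}), $\text{MMSE} = 1 - \E\langle x,x'\rangle$. So it suffices to show that the posterior concentrates on overlap $1-o(1)$ above the threshold, and on overlap $o(1)$ below it. Write $N=\binom{n}{k}$, $M=\binom{n-k}{k}$ and $\lambda_c = 2\log M$.

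\textbf{Below the threshold} ($\lambda=\lambda_c-\Delta$). Here we use the planting trick with a truncated second moment, exactly in the style of Corollary~\ref{cor:conditional-noisy-MMSE}. The likelihood ratio is
\[
\frac{\PP(Y)}{\QQ(Y)} = Z(Y) = \frac{1}{N}\sum_{x'} \exp\!\Big(\sqrt{\lambda}\langle Y,x'^{\otimes d}\rangle - \tfrac{\lambda}{2}\Big).
\]
The posterior mass placed on vectors with overlap at least $\delta$ with the truth is $Z_{\ge\delta}(x,Y)/Z(Y)$. As in that corollary, we will bound $\E_\PP[Z_{\ge\delta}/Z] \le \eta + \E_\PP[Z_{\ge\delta}]/\eta$. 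The quantity $\E_\PP Z_{\ge\delta}$ equals $\E_{x,x'}[\exp(\lambda\langle x,x'\rangle^d)\,1(\langle x,x'\rangle\ge\delta)]$. Writing the overlap as $\ell/k$, this is
\[
\sum_{\ell\ge\delta k} \frac{\binom{k}{\ell}\binom{n-k}{k-\ell}}{N}\, e^{\lambda(\ell/k)^d}.
\]
We must show this sum is $o(1)$ for every fixed $\delta$, and in fact for $\delta\to0$ slowly.

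\textbf{The main obstacle} is the $\ell$ near $k$ regime. At $\ell=k$ the term is $e^{\lambda}/N$, while $\lambda_c$ is normalized by $M$ rather than $N$. This is where the truncation matters: we would instead condition on the typical event that $\langle W, x^{\otimes d}\rangle$ is not atypically large, i.e. restrict the planted noise by truncation, so that the $\ell=k$ term effectively becomes $e^{\lambda/2 + O(\sqrt\lambda\,\omega)}/M$-type. This yields a Gaussian tail with window $\sqrt{\lambda}\asymp\sqrt{\log M}$, which is precisely why $\Delta=\omega(\sqrt{\log M})$ is needed. For intermediate $\ell$, the convexity of $(\ell/k)^d$ with $d\ge3$ together with $k\le n^{(d-2)/(d+2)-\eps}$ makes $\log$ of each term concave-dominated by its endpoints. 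For small $\ell$, the condition $k\le n^{1/2-\eps}$ keeps $\E$ of the overlap near $k/n$, so the hypergeometric tail kills those terms. After this, the overlap is $o(1)$ with high posterior probability, which gives $\text{MMSE}\to1$.

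\textbf{Above the threshold} ($\lambda=\lambda_c+\Delta$). We will show that the MLE, or equivalently the posterior, concentrates on vectors with overlap $1-o(1)$ via a union bound. For each competitor $x'$ with overlap $\ell/k\le 1-\delta$, compare $\langle Y,x'^{\otimes d}\rangle$ with $\langle Y,x^{\otimes d}\rangle$. The difference has mean $\sqrt\lambda(1-(\ell/k)^d)$ and variance $2(1-(\ell/k)^d)$. The number of such competitors is $\binom{k}{\ell}\binom{n-k}{k-\ell}$. A Gaussian tail bound combined with posterior weights, i.e. bounding $\E_\PP[Z_{\le 1-\delta}/Z_{x}]$ directly, gives terms of the form $\binom{k}{\ell}\binom{n-k}{k-\ell}\exp(-\tfrac{\lambda}{4}(1-(\ell/k)^d))$. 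For $\ell=0$ this is $M e^{-\lambda/4}$, which is not small, so the first-moment bound must be sharpened. We will instead use the bound conditional on $W$ through the free-energy (Gibbs) comparison of $\log Z$ at its typical value: $\log Z\ge \lambda/2 - O(\sqrt\lambda)$ from the planted term, versus $\log Z_{\le1-\delta}\le \sqrt{2\lambda\log M}-\lambda/2+o(\Delta)$ with high probability by Gaussian concentration of the maximum. With $\lambda=2\log M+\Delta$, the gap is at least of order $\Delta\sqrt{\lambda/\log M}\cdot$ const$\,-O(\sqrt\lambda)\to\infty$. The same convexity and sparsity conditions handle the intermediate overlaps, which finishes $\text{MMSE}\to0$.
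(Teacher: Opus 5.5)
\textbf{Upper edge.} Your argument here is essentially the paper's: you compare the partition function restricted to overlaps at most $1-\delta$ with the planted term, after conditioning on $\langle W,x^{\otimes d}\rangle\ge -b$. One imprecision: a bound such as $\log\sum_{x'}e^{\sqrt{\lambda}\langle W,x'^{\otimes d}\rangle}\le\sqrt{2\lambda\log M}+o(\Delta)$ does not follow from concentration of the maximum alone. Replacing the sum by $M$ times its largest term costs an additive $\log M\approx\lambda/2$. You need the union bound on the maximum combined with a truncated first moment, as in Lemma~\ref{lem:log-sum-exp-tail}.

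\textbf{Lower edge: the gap.} Truncating only the planted direction does not make $\E_{\PP}[Z_{\ge\delta}]$ small. The failure is at overlaps close to, but below, $1$; it is not at $\ell=k$, and not at intermediate overlaps. Write $\rho=\langle x,x'\rangle$, $w=\langle W,x^{\otimes d}\rangle$, and $\langle W,x'^{\otimes d}\rangle=\rho^d w+\sqrt{1-\rho^{2d}}\,G_{x'}$ with $G_{x'}$ independent of $w$. Averaging over $G_{x'}$ produces a factor $e^{\lambda(1-\rho^{2d})/2}$, which cancels the signal penalty $e^{-\lambda(1-\rho^d)}$ to first order. So, conditionally on $w$, the overlap-$\rho$ class contributes $N_\rho\,e^{-\lambda(1-\rho^d)^2/2-\sqrt{\lambda}(1-\rho^d)w}$ times the planted term, where $N_\rho=\binom{k}{\rho k}\binom{n-k}{k-\rho k}$.

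Take $\rho=1-\epsilon$ with a fixed $\epsilon<1/d^2$. Then $\log N_\rho\ge\epsilon\log M-O(k)$, while $\lambda(1-\rho^d)^2/2\le d^2\epsilon^2\log M$. Restrict to the constant-probability event $\{|w|\le 1\}\subseteq\{w\le b\}$; there the planted term is $e^{-(1/2+o(1))\Delta}$. Hence your truncated first moment is at least $c\exp\bigl((\epsilon-d^2\epsilon^2)\log M-O(k)-(\tfrac12+o(1))\Delta\bigr)$. Since $\log M\asymp k\log(n/k)\gg k$, this diverges whenever $\Delta\le c'\log M$, a range the statement must cover. This is an annealed effect: the expectation is carried by rare noise realizations aligned with near-planted $x'$. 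No hypothesis on $k$ repairs it.

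\textbf{What is missing.} You need a high-probability (quenched) bound on each overlap class. By Lemma~\ref{lem:log-sum-exp-tail}, with probability $1-O(k^{-2})$,
$\log\sum_{x':\langle x,x'\rangle=\rho}e^{\sqrt{\lambda}\langle Y,x'^{\otimes d}\rangle}\le\lambda\rho^d+\rho^d\sqrt{\lambda}\,w+\sqrt{2\lambda(1-\rho^{2d})(\log N_\rho+2\log k)}+O(\log\lambda)$.
At $\rho=1-s/k$, the signal loss $\lambda(1-\rho^d)\approx 2ds\log(n/k)$ beats the square-root term $\approx s\sqrt{8d\log(n/k)\log(kn)}$ exactly when $d\log(n/k)>2\log(kn)$, i.e.\ $k\le n^{(d-2)/(d+2)-\eps}$. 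This is where that hypothesis and $d\ge 3$ are actually used; your proposal attributes them to intermediate overlaps instead.

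Once you have this quenched bound, your planting-trick step is a genuine simplification of the paper. The bound $\PP(Z\le\eta)\le\eta$ gives $\log\sum_{x'}e^{\sqrt{\lambda}\langle Y,x'^{\otimes d}\rangle}\ge\lambda+\Delta/2+\log\eta$ with probability at least $1-\eta$. This is the same estimate the paper obtains for the overlap-zero sum through a second-moment count of vectors with $\langle W,x'^{\otimes d}\rangle\ge\sqrt{\lambda}+b$.
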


In addition to showing the stable algorithm separation, we also obtain hardness for low-degree polynomials by showing their stability. 

\begin{theorem}\label{thm:low-deg-stability-PCA}
    Let $f: \R^{[n]^d} \to \R$ be a polynomial of degree at most $D$. Suppose that $D = o\left(\min \left\{\left(\frac{k^d}{\lambda}\right)^{\frac{1}{d+\frac{1}{2}}}, \frac{n}{k^2}\right\}\right)$. Then, $f$ is $\left(\rho, 2\left(1 - \left(\sqrt{1-\rho^2}\right)^D + o(1)\right)\right)$-stable for the Sparse Tensor PCA problem parametrized by $n,k,d$ and $\lambda$.
\end{theorem}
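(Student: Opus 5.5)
We follow the template of the GSS stability proof (Theorem~\ref{thm:low-deg-stability-gss}). Expand $f$ in the orthonormal Hermite basis of the null tensor, $f(Y)=\sum_{|\alpha|\le D}\hat f_\alpha h_\alpha(Y)$ with $\alpha\in\N^{[n]^d}$, and write $Y=\sqrt{\lambda}x^{\otimes d}+W$. Conditioned on $x$, the entries of $Y$ are independent Gaussians $N(\mu_I,1)$ with mean $\mu_I=\sqrt\lambda\, x_I$, where $x_I=\prod_{j}x_{i_j}\in\{0,k^{-d/2}\}$. This shift structure is simpler than the correlated structure in GSS. We use the shifted-Hermite identity $\E_{Z\sim N(\mu,1)}[h_a(Z)h_b(Z)]=\sum_{\ell\le a\wedge b}\sqrt{\binom a\ell\binom b\ell}\,\frac{\mu^{a+b-2\ell}}{\sqrt{(a-\ell)!(b-\ell)!}}$. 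All these terms are nonnegative because $\mu\ge0$. The OU operator replaces $Y_I$ by $\sqrt{1-\rho^2}Y_I+\rho Z_I$, which has mean $\sqrt{1-\rho^2}\mu_I$ and unit variance. We have $\E[h_a(T_\rho Y_I)\mid Y_I]=(1-\rho^2)^{a/2}h_a(Y_I)$ coordinatewise, so the cross term becomes $\E[f(Y)f(T_\rho Y)]=\sum_{\alpha,\beta}\hat f_\alpha\hat f_\beta(1-\rho^2)^{|\beta|/2}\E[h_\alpha h_\beta]$. The noisy second moment is the same quadratic form evaluated at the signal strength $\lambda(1-\rho^2)$.

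The plan then mirrors (\ref{ineq:gss-low-deg-second-moment})--(\ref{ineq:gss-low-deg-correlation}). First, we show that the diagonal entries satisfy $\E[h_\alpha(Y)^2]\ge1$; this comes from the $\ell=a$ term in each coordinate. Second, we show that the total off-diagonal mass is $o(\sum_\alpha\hat f_\alpha^2)$, both at strength $\lambda$ and at strength $\lambda(1-\rho^2)$. Third, we compare each diagonal entry at the reduced strength with the corresponding entry at strength $\lambda$. Every monomial in $\mu$ gets multiplied by $(1-\rho^2)^{(\#\mu\text{ factors})/2}\in[(1-\rho^2)^{D},1]$; more precisely, the factor is at least $(\sqrt{1-\rho^2})^{D}$ once we account for the $|\beta|/2$ weight. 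This gives $\E f(T_\rho Y)^2\le(1+o(1))\E f^2$ and $\E f f(T_\rho Y)\ge((\sqrt{1-\rho^2})^D-o(1))\E f^2$. Expanding the square then yields the claimed $2(1-(\sqrt{1-\rho^2})^D+o(1))$ bound.

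The main obstacle is the off-diagonal estimate. Write $\gamma=\alpha\triangle\beta$ for the set of coordinates where the two multi-indices disagree. Each such coordinate $I$ contributes at least $|\alpha_I-\beta_I|$ factors of $\mu_I$, so the corresponding entry is bounded by
\[
\E_x\Big[\prod_{I\in\mathrm{supp}(\gamma)}\one\{I\subseteq\mathrm{supp}(x)\}\,(\lambda/k^d)^{m/2}\Big]\cdot(\text{combinatorial factors}),
\]
where $m\ge|\alpha\triangle\beta|$. The probability that the vertex set $V$ touched by $\gamma$ lies in the support of $x$ is $\approx(k/n)^{|V|}$. We then run the same weighted AM--GM trick as in the GSS proof. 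Order each pair so that $|\hat f_\beta|\,w(\beta)\le|\hat f_\alpha|\,w(\alpha)$, with weights $w$ chosen of the form $n^{v/2}D^{\cdot}$ depending on the number of new vertices. Then count the $\beta$ obtainable from $\alpha$ given the new vertices $b$, removed coordinates $a$, and degree changes $c,d$. A new hyperedge that introduces $b$ fresh vertices costs $n^{b}$ choices but gains $(k/n)^{b}$ together with a factor $\sqrt{\lambda/k^d}$ per unit of degree. After balancing with $n^{(a-b)/2}$, the per-step ratio should be of order $D^{d+1/2}\sqrt{\lambda/k^d}$, which is $o(1)$ under the hypothesis $D=o((k^d/\lambda)^{1/(d+1/2)})$. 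Hyperedges that reuse vertices of $\alpha$ cost only $D^{d}$ choices but still pay $k/n$ per vertex outside the support of $\alpha$, and vertices shared with $\mathrm{supp}(x)$ are controlled by $k^2/n$ per pair; this is where the condition $D=o(n/k^2)$ enters. We expect the delicate bookkeeping to be the vertex-overlap counting for hyperedges. We would first carry it out with the crude bound $|V(\gamma)|\ge$ (number of distinct hyperedges in $\gamma$)$^{1/d}$ and then refine it if the exponents do not close.
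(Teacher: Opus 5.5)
Your architecture matches the paper's, and those parts are fine:
\begin{itemize}
\item the Hermite expansion conditional on $x$;
\item nonnegativity of every entry $\E[h_\alpha(Y)h_\beta(Y)]$, and diagonal entries at least $1$;
\item the factor $(1-\rho^2)^{|\beta|/2}$ in the cross term (your Ornstein--Uhlenbeck/Mehler derivation is a cleaner route to what the paper extracts from the diagram formula);
\item monotonicity of the Gram matrix in the signal strength, to control $\E[f(T_\rho(Y))^2]$;
\item a weighted AM--GM plus counting bound for the off-diagonal mass.
\end{itemize}
The gap is the final inference of the off-diagonal estimate. The hypothesis $D=o\bigl((k^d/\lambda)^{1/(d+1/2)}\bigr)$ is equivalent to $D^{d+1/2}\lambda/k^d\to 0$, and for $D\ge 1$ it forces $k^d/\lambda\to\infty$. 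Your per-step ratio is $D^{d+1/2}\sqrt{\lambda/k^d}=\bigl(D^{d+1/2}\lambda/k^d\bigr)\sqrt{k^d/\lambda}$. The hypothesis therefore only makes it $o\bigl(\sqrt{k^d/\lambda}\bigr)$, which need not be small. As you have set it up, the geometric series over $m=|\alpha\triangle\beta|$ is $o(1)$ only if $D=o\bigl((k^d/\lambda)^{1/(2d+1)}\bigr)$ (together with $D=o(n/k^2)$). That is strictly stronger than what the statement assumes.

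You also cannot close this by borrowing the paper's per-coordinate estimate. The paper reaches the stated exponent by bounding the dominant $t=\alpha_1(\overline{i})\wedge\alpha_2(\overline{i})$ term with $X_{\overline{i}}^{2|\alpha_1(\overline{i})-\alpha_2(\overline{i})|}$, which gives $(e\sqrt{D}\lambda/k^d)^{|\alpha_1\triangle\alpha_2|}$. But that term equals $\binom{\alpha_1(\overline{i})\vee\alpha_2(\overline{i})}{\alpha_1(\overline{i})\wedge\alpha_2(\overline{i})}(\alpha_1(\overline{i})\wedge\alpha_2(\overline{i}))!\,X_{\overline{i}}^{|\alpha_1(\overline{i})-\alpha_2(\overline{i})|}$, so your $\sqrt{\lambda/k^d}$ per unit is the accurate size. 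For example, with $\alpha=e_I$ and $\beta=0$:
\begin{itemize}
\item $\E[h_\alpha(Y)h_\beta(Y)]=\E[Y_I]=\sqrt{\lambda}\,k^{-d/2}\Pr(V(I)\subseteq S)$;
\item this exceeds $\sqrt{D}(\lambda/k^d)(k/n)^{|V(I)|}$ by a factor of order $\sqrt{k^d/(D\lambda)}\to\infty$.
\end{itemize}

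To reach the range in the statement you would need a count that beats $(2dD)^d$ choices per unit of $|\alpha\triangle\beta|$. One candidate is to keep the $(k/n)^{|V(\gamma)|}$ decay on vertices shared by $\alpha$ and $\beta$, and to enumerate $\gamma$ through its vertex set first. Both your sketch and the paper discard that decay when $\gamma$ introduces no new vertices. Whether this suffices is exactly what your ``refine if the exponents do not close'' step would have to show. Without it, the proposal asserts a closure that does not follow. What it actually proves is the statement with exponent $1/(2d+1)$ in place of $1/(d+1/2)$.
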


The result again follows by a intricate combinatorial argument together with a diagram formula for the expectation of Hermite polynomials. The proof is deferred to Section~\ref{sec:low-deg-stability-PCA}.

As a corollary, we get the following low-degree MMSE lower bound for Sparse Tensor PCA when $\lambda$ is close to the critical window of the model.

\begin{corollary}
    Fix $\eps > 0$. Suppose $d \ge 3$, $k \le \min\left\{n^{\frac{1}{2} - \eps}, n^{\frac{d-2}{d+2} - \eps}\right\}$, and $\Delta > 0$ such that
    \begin{align*}
        \Delta = \omega\left(\sqrt{\log \binom{n-k}{k}}\right).
    \end{align*}
    Set $\lambda = 2\log \binom{n-k}{k} + \Delta$. Note that by Proposition~\ref{prop:critical-window-PCA}, the MMSE goes to $0$ for this $\lambda$.
    If $D = o\left(\min \left\{\left(\frac{k^d}{\lambda}\right)^{\frac{1}{d+\frac{1}{2}}}, \frac{n}{k^2}\right\}\right)$ and $\rho \ge \sqrt{\frac{2\Delta}{\lambda}}$, then any degree-$D$ polynomial $f: \R^{[n]^d} \to \R^n$ has a mean squared error at least
    \begin{align*}
        \EE\left[\|f(Y) - x\|_2^2\right] \ge 1 - O\left(\sqrt{1 - \left(\sqrt{1-\rho^2}\right)^2}\right) - o(1).
    \end{align*}
    In particular, if $D \le \frac{1}{\rho^2}$, we have
    \begin{align*}
        \EE\left[\|f(Y) - x\|_2^2\right] \ge 1 - O\left(\rho^2 D\right) - o(1).
    \end{align*}
\end{corollary}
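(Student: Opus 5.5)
The corollary will follow by chaining three ingredients: Proposition~\ref{prop:critical-window-PCA}, Theorem~\ref{thm:low-deg-stability-PCA}, and Theorem~\ref{thm:noisy-MMSE-stable-barrier}.

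\textbf{Step 1: the noisy MMSE is trivial.} By the remark on the OU operator, observing $T_\rho(Y)$ at signal strength $\lambda$ is the same as observing a Sparse Tensor PCA instance at $\tilde\lambda=\lambda(1-\rho^2)$. Since $\rho\ge\sqrt{2\Delta/\lambda}$, we have $\lambda\rho^2\ge 2\Delta$, so
\[
\tilde\lambda \le \lambda-2\Delta = 2\log\binom{n-k}{k}-\Delta .
\]
The second bullet of Proposition~\ref{prop:critical-window-PCA} applies with the same $\Delta$, giving $\mathrm{MMSE}_\rho\ge 1-o(1)$. Here $\E\|x\|_2^2=1$. One caveat: the proposition is stated at exactly $\lambda_c-\Delta$, whereas we have $\tilde\lambda\le\lambda_c-\Delta$. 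The MMSE is nonincreasing in $\lambda$ (by a degradation argument: more noise cannot help). Alternatively, I can apply the proposition with $\Delta'=\lambda_c-\tilde\lambda\ge\Delta$, which still satisfies the $\omega(\cdot)$ condition.

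\textbf{Step 2: stability of $f$.} Apply Theorem~\ref{thm:low-deg-stability-PCA} to each coordinate $f_i$. Under the hypothesis on $D$, each $f_i$ is $(\rho,\eta)$-stable with
\[
\eta=2\Big(1-\big(\sqrt{1-\rho^2}\big)^D+o(1)\Big).
\]
Summing over $i$, the vector-valued $f$ is $(\rho,\eta)$-stable, because both sides of the stability inequality are additive over coordinates.

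The $o(1)$ must be uniform over coordinates. The cleanest route is to note that the bound in that theorem depends only on $n,k,d,\lambda,D$, not on the particular polynomial. If necessary, I would first symmetrize $f$ by averaging over coordinate permutations, as in Proposition~\ref{PSP:sym}, using Jensen's inequality; the stated constants should suffice even without this.

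\textbf{Step 3: conclude.} Theorem~\ref{thm:noisy-MMSE-stable-barrier} gives
\[
\E\|f(Y)-x\|_2^2\ge \mathrm{MMSE}_\rho-2\sqrt{2(7+4\eta)\eta}.
\]
Since $\eta=O(1)$, the loss term is $O(\sqrt{\eta})=O\big(\sqrt{1-(\sqrt{1-\rho^2})^D}\big)+o(1)$, which yields the first bound. The displayed $\sqrt{1-(\sqrt{1-\rho^2})^2}$ is presumably a typo for exponent $D$.

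For the ``in particular'' part, use $1-(1-\rho^2)^{D/2}\le \rho^2 D/2$. This gives $O(\sqrt{\rho^2 D})$, i.e.\ $O(\rho\sqrt D)$, which is bounded by $O(\rho^2 D)$ only when $\rho^2 D\ge 1$. To obtain the claimed $O(\rho^2 D)$ rate in general, I would redo the first-order comparison directly rather than via Cauchy--Schwarz, so I would state the $O(\rho\sqrt D)$ form as the safe bound.

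\textbf{Main obstacle.} The real work sits inside the two cited results; the corollary itself is bookkeeping. The delicate point here is the monotonicity/window matching in Step 1 together with the uniform-in-coordinates $o(1)$ in Step 2.
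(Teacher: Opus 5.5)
Your chain is exactly how the paper intends this corollary to follow (it gives no separate proof): Proposition~\ref{prop:critical-window-PCA} at the degraded SNR $\lambda(1-\rho^2)\le 2\log\binom{n-k}{k}-\Delta$, then Theorem~\ref{thm:low-deg-stability-PCA} coordinatewise, then Theorem~\ref{thm:noisy-MMSE-stable-barrier}. With the exponent read as $D$, it correctly gives the first display.

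In Step~1, rely on the degradation/monotonicity argument rather than on $\Delta'=\lambda_c-\tilde\lambda$. For non-small $\rho$, $\Delta'$ can be of order $\log\binom{n-k}{k}$, whereas the lower-edge proof of the proposition uses $\lambda=(2-o(1))\log\binom{n-k}{k}$. Your uniformity remark in Step~2 is right: the $o(1)$ depends only on $n,k,d,\lambda,D$, so no symmetrization is needed.

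The real shortfall is the ``in particular'' bound, which you leave unproved. You are right that Theorem~\ref{thm:noisy-MMSE-stable-barrier} loses $O(\sqrt\eta)$, so the chain only yields $O(\rho\sqrt D)$. The paper's implied one-line derivation has the same problem. The stated rate is nevertheless true. The missing idea is that the noisy MMSE here is essentially trivial, and this lets you square the loss.

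Let $\eta=2\big(1-(\sqrt{1-\rho^2})^D+o(1)\big)$ be your stability parameter and $s^2=\E\|f(Y)\|_2^2$. By the tower property,
\[
\E\langle f(Y),x\rangle=\E\big\langle f(T_\rho Y),\E[x\mid T_\rho Y]\big\rangle+\E\langle f(Y)-f(T_\rho Y),x\rangle .
\]
Bound the two terms separately:
\begin{itemize}
\item First term: $\E\|\E[x\mid T_\rho Y]\|_2^2=\E\|x\|_2^2-\mathrm{MMSE}_\rho=o(1)$ by Step~1, and $\E\|f(T_\rho Y)\|_2^2\le 2(1+\eta)s^2$. So Cauchy--Schwarz bounds this term by $o(1)\cdot s$, uniformly in $f$.
\item Second term: by stability and $\E\|x\|_2^2=1$, it is at most $\sqrt\eta\, s$.
\end{itemize}
Hence
\[
\E\|f(Y)-x\|_2^2=s^2-2\,\E\langle f(Y),x\rangle+1\ \ge\ \min_{u\ge 0}\big(u^2-2u(\sqrt\eta+o(1))+1\big)=1-\eta-o(1).
\]
Since $\eta\le 2\rho^2D+o(1)$ for all $D$, this gives $1-O(\rho^2D)-o(1)$, and it does not even need $D\le 1/\rho^2$. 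It also improves the first display to $1-O\big(1-(\sqrt{1-\rho^2})^D\big)-o(1)$, with no square root.

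So the stated bound holds, but through this rescaling argument, which exploits $\mathrm{MMSE}_\rho\approx\E\|x\|_2^2$. It does not follow from Theorem~\ref{thm:noisy-MMSE-stable-barrier} as a black box.
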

\subsection{Proofs}

We will need the following theorem, whose proof is almost identical to Theorem~\ref{thm:wick-product-formula}.

\begin{theorem}[Diagram Formula for Expectation of Products of Hermite Polynomials]\label{thm:wick-product-formula-mean}
    
    Suppose $x_1, \dots, x_k$ are Gaussian variables distributed as $x_i \sim N(\mu_i,1)$. Let $R_{ij} = \E[x_ix_j]$ denote their correlations. Given $\alpha \in \N^k$, we define the following graph $G(\alpha) = (V, E)$ on $|\alpha|$ vertices. For each $i \in [k]$, create $\alpha_i$ vertices corresponding to $x_i$, and let $j(v) = i$ for each of the $\alpha_i$ vertices $v$ associated with $x_i$. For $u, v \in V$, add an edge $\{u,v\}$ if $j(u) \ne j(v)$.
Then, we have
\begin{align*}
    \E\left[\prod_{i=1}^k h_{\alpha_i}(x_i)\right] = \frac{1}{\sqrt{\alpha!}} \sum_{M \in \MM(\alpha)} \prod_{v \not\in V(M)} \mu_{j(v)}\prod_{\{a,b\} \in M} R_{j(a)j(b)},
\end{align*}
where $\MM(\alpha)$ denotes the collection of (partial) matchings in the graph $G(\alpha)$ described above.
\end{theorem}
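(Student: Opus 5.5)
We reduce to the centered case treated in Theorem~\ref{thm:wick-product-formula}, using the same generating-function argument. Note that for the formula to make sense with normalized Hermite polynomials, $R_{ij}$ should be read as the covariance $\E[(x_i-\mu_i)(x_j-\mu_j)]$. This is the quantity that actually appears; with $\mu\equiv 0$ it coincides with $\E[x_ix_j]$.

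Write $x_i=\mu_i+g_i$, where $(g_i)$ is a centered Gaussian vector with unit variances and covariances $R_{ij}$. Set
\[
Z(t_1,\dots,t_k)=\exp\Big(\sum_i (t_ix_i-\tfrac12 t_i^2)\Big)=\sum_{\beta}\prod_i \mathrm{He}_{\beta_i}(x_i)\frac{t_i^{\beta_i}}{\beta_i!}.
\]
Factor out $\exp(\sum_i t_i\mu_i)$ and apply the Gaussian moment generating function to $\sum_i t_ig_i$. The diagonal terms $\tfrac12\sum_i t_i^2$ cancel exactly as before, giving
\[
\E[Z]=\exp\Big(\sum_i t_i\mu_i\Big)\exp\Big(\sum_{i<j}t_it_jR_{ij}\Big).
\]

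Next, extract the coefficient of $\prod_i t_i^{\alpha_i}$, or equivalently apply $\partial^{\alpha}$ at $t=0$. Interchanging differentiation and expectation is justified as before, since $Z$ is entire with Gaussian-integrable derivatives. The left side gives $\E[\prod_i \mathrm{He}_{\alpha_i}(x_i)]$.

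For the right side, expand both exponentials and use the combinatorial model of Theorem~\ref{thm:wick-product-formula}. Take the $|\alpha|$ labelled vertices of $G(\alpha)$, with $\alpha_i$ copies for index $i$. Each term of $\partial^\alpha$ corresponds to choosing a set of vertex-disjoint edges $M$ in $G(\alpha)$, each edge contributing $R_{j(a)j(b)}$, and assigning every remaining vertex $v$ a singleton factor $\mu_{j(v)}$. The only edges allowed are those joining different indices, because the quadratic form only contains $i<j$.

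We should check that multiplicities match. Differentiating $\alpha_i$ times in $t_i$ distributes the $\alpha_i$ labelled copies among the factors. The $1/n!$ of the exponential series cancels against the orderings of identical factors, exactly as in the earlier proof. For the linear exponential, each unmatched copy picks out one factor $t_i\mu_i$, and the $1/r!$ cancels with the $r!$ assignments. So the count is a sum over partial matchings $M\in\MM(\alpha)$ of $\prod_{v\notin V(M)}\mu_{j(v)}\prod_{\{a,b\}\in M}R_{j(a)j(b)}$.

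Finally, divide by $\sqrt{\alpha!}$ to pass to $h_n=\mathrm{He}_n/\sqrt{n!}$.

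The only delicate step is the bookkeeping of combinatorial factors when the two exponential series are multiplied. The cleanest route is to view $\partial^\alpha$ of a product of exponentials of polynomials as a sum over set partitions of the labelled vertex set into blocks of size one (linear term) and size two (quadratic term). The standard exponential formula then gives exactly one term per partial matching, which avoids any explicit factorial counting.
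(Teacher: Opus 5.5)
Your proof is correct and follows the same generating-function argument as the paper, which says its proof is almost identical to that of Theorem~\ref{thm:wick-product-formula}. The extra factor $\exp(\sum_i t_i\mu_i)$ produces the singleton weights $\mu_{j(v)}$, and your set-partition bookkeeping correctly keeps only blocks of size one and two. Your reading of $R_{ij}$ as the covariance $\E[(x_i-\mu_i)(x_j-\mu_j)]$ is necessary rather than cosmetic: for $\alpha=(1,1)$ the left side is $\E[x_1x_2]=\mathrm{Cov}(x_1,x_2)+\mu_1\mu_2$, so taking $R_{12}=\E[x_1x_2]$ literally would count $\mu_1\mu_2$ twice, and the covariance reading is also the one the paper uses in its sparse tensor PCA application.
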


\subsubsection{Stability of Low-degree Polynomails} \label{sec:low-deg-stability-PCA}

\begin{proof}[Proof of Theorem~\ref{thm:low-deg-stability-PCA}]
    Consider a polynomial $f$ of degree at most $D$, expressed uniquely as
    \begin{align*}
        f(Y) = \sum_{\alpha \in \N^{n^d}: |\alpha| \le D} \hat{f}_{\alpha} h_{\alpha}(Y).
    \end{align*}

    We will lower bound $\EE[f(Y)^2]$ and $\EE[f(Y)f(T_{\rho}(Y))]$, and upper bound $\EE[f(T_{\rho}(Y))^2]$.

    First we apply the diagram formula to compute for $\alpha_1, \alpha_2 \in \N^{[n]^d}$:
    \begin{align*}
        &\quad \EE[h_{\alpha_1}(Y)h_{\alpha_2}(Y)]\\
        &= \EE_x \EE[h_{\alpha_1}(Y)h_{\alpha_2}(Y) \big\vert x]\\
        &= \EE_x\left[\frac{1}{\sqrt{\alpha_1!\alpha_2!}} \sum_{M \in \MM(\alpha_1, \alpha_2)} \prod_{v \not\in V(M)} \mu_{j(v)} \prod_{\{a,b\} \in M} R_{j(a)j(b)} \Bigg\vert x\right].
    \end{align*}
    From this formula, we see that all the inner expectations are nonnegative. Next, we analyze the matchings in the graph $G(\alpha_1, \alpha_2)$. Let $S \subset [n]$ denote the support of the random vector $x$. Note that in the sparse PCA, $x$ is uniquely determined by $S$ and vice versa.

    For a fixed $x$, we see that each entry $Y = \sqrt{\lambda} x^{\otimes d} + W$ is independently distributed as $Y_{\overline{i}} \sim N(X_{\overline{i}}, 1)$, where $X_{\overline{i}} = \sqrt{\lambda} (x^{\otimes d})_{\overline{i}}$ for $\overline{i} \in [n]^d$. As a result, a contributing matching $M \in \MM(\alpha_1, \alpha_2)$ can only contain edges connecting one vertex corresponding to $Y_{\overline{i}}$ from $h_{\alpha_1}$ and the other vertex corresponding to the same $Y_{\overline{i}}$ from $h_{\alpha_2}$, since for a fixed $x$, $Y_{\overline{i}}$ and $Y_{\overline{j}}$ are uncorrelated if $\overline{i} \ne \overline{j}$. Thus, the inner expectation above tensorizes into a product over each coordinate:
    \begin{align*}
        &\quad \EE_x\left[\frac{1}{\sqrt{\alpha_1!\alpha_2!}} \sum_{M \in \MM(\alpha_1, \alpha_2)} \prod_{v \not\in V(M)} \mu_{j(v)} \prod_{\{a,b\} \in M} R_{j(a)j(b)} \Bigg\vert x\right]\\
        &= \EE_x \left[\frac{1}{\sqrt{\alpha_1!\alpha_2!}}\prod_{\overline{i} \in [n]^d} \sum_{M \in \MM(\alpha_1(\overline{i}), \alpha_2(\overline{i}))} \prod_{v \not\in V(M)} X_{\overline{i}} \Bigg \vert x\right]\\
        &= \EE_x\left[\frac{1}{\sqrt{\alpha_1!\alpha_2!}}\prod_{\overline{i} \in [n]^d} \sum_{t = 0}^{\alpha_1(\overline{i}) \wedge \alpha_2(\overline{i})} \binom{\alpha_1(\overline{i})}{t} \binom{\alpha_2(\overline{i})}{t}t! \cdot X_{\overline{i}}^{\alpha_1(\overline{i}) + \alpha_2(\overline{i}) - 2t} \Bigg \vert x\right].
    \end{align*}
    
    The diagonal terms with $\alpha_1 = \alpha_2 = \alpha$ is at least
    \begin{align*}
        &\quad \EE[h_{\alpha}(Y)^2]\\
        &= \EE_x\left[\frac{1}{\alpha!} \prod_{\overline{i} \in [n]^d} \sum_{t = 0}^{\alpha(\overline{i})} \binom{\alpha(\overline{i})}{t}^2 t! \cdot X_{\overline{i}}^{2\alpha(\overline{i}) - 2t} \Bigg \vert x\right]\\
        &\ge \EE_x\left[\frac{1}{\alpha!} \prod_{\overline{i} \in [n]^d} \alpha(\overline{i})! \Bigg \vert x\right]\\
        &= 1.
    \end{align*}
    The off-diagonal terms with $\alpha_1 \ne \alpha_2$ can be upper bounded by
    \begin{align*}
        &\quad \EE[h_{\alpha_1}(Y)h_{\alpha_2}(Y)]\\
        &= \EE_x\left[\frac{1}{\sqrt{\alpha_1!\alpha_2!}}\prod_{\overline{i} \in [n]^d} \sum_{t = 0}^{\alpha_1(\overline{i}) \wedge \alpha_2(\overline{i})} \binom{\alpha_1(\overline{i})}{t} \binom{\alpha_2(\overline{i})}{t}t! \cdot X_{\overline{i}}^{\alpha_1(\overline{i}) + \alpha_2(\overline{i}) - 2t} \Bigg \vert x\right]
        \intertext{Now we observe that the ratio between the consecutive terms of the inner sum of $t$ is $(\binom{\alpha_1(\overline{i})}{t+1} \binom{\alpha_2(\overline{i})}{t+1}(t+1)! \cdot X_{\overline{i}}^{\alpha_1(\overline{i}) + \alpha_2(\overline{i}) - 2(t+1)})/(\binom{\alpha_1(\overline{i})}{t} \binom{\alpha_2(\overline{i})}{t}t! \cdot X_{\overline{i}}^{\alpha_1(\overline{i}) + \alpha_2(\overline{i}) - 2t}) =  \frac{(\alpha_1(\overline{i})-t)(\alpha_2(\overline{i})-t)}{X_{\overline{i}}^2 (t+1)} \ge (X_{\overline{i}}^2 (t+1))^{-1}$. Note that $t+1 \le D$ and $X_{\overline{i}}^2 = (1/\sqrt{k})^{2d}= 1/k^d$ when $X_{\overline{i}}$ is nonzero. Therefore, as long as $D = o(k^d)$, the inner sum is dominated by the last term at $t = \alpha_1(\overline{i}) \wedge \alpha_2(\overline{i})$, and we have}
        &\le \EE_x\left[\frac{1}{\sqrt{\alpha_1!\alpha_2!}}\prod_{\overline{i} \in [n]^d} \left(1 + O\left(\frac{k^d}{D}\right)\right) \binom{\alpha_1(\overline{i}) \vee \alpha_2(\overline{i})}{\alpha_1(\overline{i}) \wedge \alpha_2(\overline{i})}(\alpha_1(\overline{i}) \wedge \alpha_2(\overline{i}))! X_{\overline{i}}^{2|\alpha_1(\overline{i}) - \alpha_2(\overline{i})|} \Bigg \vert x\right]\\
        &= \EE_x \left[\left(1 + O\left(\frac{k^d}{D}\right)\right) \frac{1}{\sqrt{\alpha_1!\alpha_2!}}\prod_{\overline{i} \in [n]^d}  \frac{(\alpha_1(\overline{i}) \vee \alpha_2(\overline{i}))!}{|\alpha_1(\overline{i})-\alpha_2(\overline{i})|!} X_{\overline{i}}^{2|\alpha_1(\overline{i}) - \alpha_2(\overline{i})|}\Bigg \vert x\right]\\
        &= \EE_x \left[\left(1 + O\left(\frac{k^d}{D}\right)\right) \prod_{\overline{i} \in [n]^d}  \sqrt{\frac{(\alpha_1(\overline{i}) \vee \alpha_2(\overline{i}))!}{(\alpha_1(\overline{i}) \wedge \alpha_2(\overline{i}))!}}\frac{1}{|\alpha_1(\overline{i})-\alpha_2(\overline{i})|!} X_{\overline{i}}^{2|\alpha_1(\overline{i}) - \alpha_2(\overline{i})|}\Bigg \vert x\right]\\
        &\le \EE_x \left[\left(1 + O\left(\frac{k^d}{D}\right)\right) \prod_{\substack{\overline{i} \in [n]^d:\\ \alpha_1(\overline{i}) \ne \alpha_2(\overline{i})}}  \left(\frac{e\sqrt{\alpha_1(\overline{i}) \wedge \alpha_2(\overline{i})}}{|\alpha_1(\overline{i}) - \alpha_2(\overline{i})|} \cdot X_{\overline{i}}^2\right)^{|\alpha_1(\overline{i}) - \alpha_2(\overline{i})|} \vast \vert x\right]\\
        &\le \EE_x \left[\left(1 + O\left(\frac{k^d}{D}\right)\right) \prod_{\substack{\overline{i} \in [n]^d:\\ \alpha_1(\overline{i}) \ne \alpha_2(\overline{i})}}  \left( e\sqrt{D}X_{\overline{i}}^2\right)^{|\alpha_1(\overline{i}) \triangle \alpha_2(\overline{i})|} \vast \vert x\right]\\
        &\le \left(1 + O\left(\frac{k^d}{D}\right)\right) (e\sqrt{D})^{|\alpha_1 - \alpha_2|} \EE_x \left[\prod_{\substack{\overline{i} \in [n]^d:\\ \alpha_1(\overline{i}) \ne \alpha_2(\overline{i})}}  X_{\overline{i}}^{2|\alpha_1(\overline{i}) - \alpha_2(\overline{i})|} \vast\vert x\right]
        \intertext{Note that $X_{\overline{i}} = \sqrt{\lambda} (x^{\otimes d})_{\overline{i}}$ is nonzero only when $\overline{i} \in S^d$, where $S \subseteq [n]$ is the support of $x$, and whenever $\overline{i} \in S^d$, $X_{\overline{i}}= \sqrt{\lambda}/ k^{d/2}$. Thus, we get}
        &= \left(1 + O\left(\frac{k^d}{D}\right)\right) (e\sqrt{D})^{|\alpha_1 \triangle \alpha_2|} \left(\frac{\lambda}{k^d}\right)^{|\alpha_1 \triangle \alpha_2|} \Pr(V(\alpha_1 \triangle\alpha_2) \subseteq S)\\
        &\le \left(1 + O\left(\frac{k^d}{D}\right)\right) \left(\frac{e\sqrt{D}\lambda}{k^d}\right)^{|\alpha_1 \triangle \alpha_2|} \left(\frac{k}{n}\right)^{v(\alpha_1 \triangle \alpha_2)}.
    \end{align*}

    Now, let us lower bound $\EE[f(Y)^2]$. We have
    \begin{align*}
        &\quad \EE[f(Y)^2]\\
        &= \sum_{\substack{\alpha_1, \alpha_2 \in \N^{[n]^d}:\\ |\alpha_1|, |\alpha_2| \le D}} \hat{f}_{\alpha_1}\hat{f}_{\alpha_2} \EE\left[h_{\alpha_1}(Y)h_{\alpha_2}(Y)\right]\\
        &= \sum_{\substack{\alpha \in \N^{[n]^d}: |\alpha| \le D}} \hat{f}_{\alpha}^2 \EE\left[h_{\alpha}(Y)^2\right] + \sum_{\substack{\alpha_1, \alpha_2 \in \N^{[n]^d}:\\ |\alpha_1|, |\alpha_2| \le D,\\ \alpha_1 \ne \alpha_2}} \hat{f}_{\alpha_1}\hat{f}_{\alpha_2} \EE\left[h_{\alpha_1}(Y)h_{\alpha_2}(Y)\right].
    \end{align*}
    We know that the contribution of the diagonal terms is at least
    \begin{align*}
        &\quad \sum_{\substack{\alpha \in \N^{[n]^d}: |\alpha| \le D}} \hat{f}_{\alpha}^2 \EE\left[h_{\alpha}(Y)^2\right]\\
        &\ge \sum_{\substack{\alpha \in \N^{[n]^d}: |\alpha| \le D}} \hat{f}_{\alpha}^2.
    \end{align*}
    Now let us consider the off-diagonal terms. For fixed $\alpha_1, \alpha_2 \in \N^{[n]^d}$, we denote
    \begin{align}
        v_1 &= |V(\alpha_1 \triangle \alpha_2) - V(\alpha_2)|\\
        v_2 &= |V(\alpha_1 \triangle \alpha_2) - V(\alpha_1)|\\
        m &= |\alpha_1 \triangle \alpha_2|.
    \end{align}
    Clearly, we have $v_1 + v_2 \le |V(\alpha_1 \triangle \alpha_2)|$, and if $\alpha_1 \ne \alpha_2$, then $m \ge 1$. Let $W_{\alpha_1}(v_1, v_2, m)$ denote the collection of $\alpha_2 \in \N^{[n]^d}$ that satisfies the parameter above and $\alpha_2 \ne \alpha_1$. Then, the contribution of the off-diagonal terms is at most
    \begin{align*}
        &\quad \left|\sum_{\substack{\alpha_1, \alpha_2 \in \N^{[n]^d}:\\ |\alpha_1|, |\alpha_2| \le D,\\ \alpha_1 \ne \alpha_2}} \hat{f}_{\alpha_1}\hat{f}_{\alpha_2} \EE\left[h_{\alpha_1}(Y)h_{\alpha_2}(Y)\right]\right|\\
        &\le \sum_{\substack{\alpha_1, \alpha_2 \in \N^{[n]^d}:\\ |\alpha_1|, |\alpha_2| \le D,\\ \alpha_1 \ne \alpha_2}} |\hat{f}_{\alpha_1}||\hat{f}_{\alpha_2}|\cdot \left| \EE\left[h_{\alpha_1}(Y)h_{\alpha_2}(Y)\right]\right|\\
        &\le 2\sum_{\substack{\alpha_1 \in \N^{[n]^d}:\\|\alpha_1| \le D}} |\hat{f}_{\alpha_1}| \sum_{\substack{v_1, v_2 \ge 0,\\m \ge 1}}\sum_{\substack{\alpha_2 \in W_{\alpha_1}(v_1, v_2, m): \\ |\hat{f}_{\alpha_2}|n^{\frac{v_2}{2}}(dD)^{\frac{v_1}{2}}\\ \le |\hat{f}_{\alpha_1}|n^{\frac{v_1}{2}}(dD)^{\frac{v_2}{2}}}} |\hat{f}_{\alpha_2}| \cdot \EE\left[h_{\alpha_1}(Y)h_{\alpha_2}(Y)\right]
        \intertext{where the inequality holds because every ordered pair of $(\alpha_1, \alpha_2)$ may be reordered so that $|\hat{f}_{\alpha_2}|n^{\frac{v_2}{2}}(dD)^{\frac{v_1}{2}} \le |\hat{f}_{\alpha_1}|n^{\frac{v_1}{2}}(dD)^{\frac{v_2}{2}}$,}
        &\le 2\sum_{\substack{\alpha_1 \in \N^{[n]^d}:\\|\alpha_1| \le D}} \hat{f}_{\alpha_1}^2 \sum_{\substack{v_1, v_2 \ge 0,\\m \ge 1}} n^{\frac{v_1 - v_2} {2}}(dD)^{\frac{v_2 - v_1}{2}}\sum_{\substack{\alpha_2 \in W_{\alpha_1}(v_1, v_2, m): \\ |\hat{f}_{\alpha_2}|n^{\frac{v_2}{2}}(dD)^{\frac{v_1}{2}}\\ \le |\hat{f}_{\alpha_1}|n^{\frac{v_1}{2}}(dD)^{\frac{v_2}{2}}}} \left(1 + O\left(\frac{k^d}{D}\right)\right) \left(\frac{e\sqrt{D}\lambda}{k^d}\right)^{|\alpha_1 \triangle \alpha_2|} \left(\frac{k}{n}\right)^{v(\alpha_1 \triangle \alpha_2)}\\
        &\le \left(2 + O\left(\frac{k^d}{D}\right)\right) \sum_{\substack{\alpha_1 \in \N^{[n]^d}:\\|\alpha_1| \le D}} \hat{f}_{\alpha_1}^2 \sum_{\substack{v_1, v_2 \ge 0,\\m \ge 1}} n^{\frac{v_1 - v_2}{2}}(dD)^{\frac{v_2 - v_1}{2}}\sum_{\substack{\alpha_2 \in W_{\alpha_1}(v_1, v_2, m): \\ |\hat{f}_{\alpha_2}|n^{\frac{v_2}{2}}(dD)^{\frac{v_1}{2}}\\ \le |\hat{f}_{\alpha_1}|n^{\frac{v_1}{2}}(dD)^{\frac{v_2}{2}}}}\left(\frac{e\sqrt{D}\lambda}{k^d}\right)^{m} \left(\frac{k}{n}\right)^{v(\alpha_1 \triangle \alpha_2)}.
    \end{align*}
    Next, we bound the number of $\alpha_2 \in W_{\alpha_1}(v_1, v_2, m)$. To enumerate such $\alpha_2$, we first enumerate the $v_2$ indices in $V(\alpha_1 \triangle \alpha_2) - V(\alpha_1)$, which gives at most $n^{v_2}$ choices. Then, we enumerate the $v_1$ indices in $V(\alpha_1 \triangle \alpha_2) - V(\alpha_2)$, which gives at most $(dD)^{v_1}$ choices, since $V(\alpha_1 \triangle \alpha_2) - V(\alpha_2) \subseteq V(\alpha_1)$ and $|V(\alpha_1)| \le d|\alpha_1| \le dD$. Finally, given the previous choices that uniquely determine $V(\alpha_1) \cup V(\alpha_2)$, we enumerate $\alpha_1 \triangle \alpha_2$ on $V(\alpha_1) \cup V(\alpha_2)$. Note that the number of choices of $\alpha_1 \triangle \alpha_2$ on $V(\alpha_1) \cup V(\alpha_2)$ is upper bounded by the number of ways to choose $m = |\alpha_1 \triangle \alpha_2|$ tuples of $\binom{V(\alpha_1) \cup V(\alpha_2)}{d}$ with replacement, giving at most $\binom{|V(\alpha_1) \cup V(\alpha_2)|}{d}^m \le \binom{d(|\alpha_1| + |\alpha_2|)}{d}^m \le \binom{2dD}{d}^m$ choices. Thus, $n^{v_2} (dD)^{v_1} \binom{2dD}{d}^m$ is an upper bound on the number of $\alpha_2 \in W_{\alpha_1}(v_1, v_2, m)$. Returning to the bound on the off-diagonal terms, we get
    \begin{align*}
        &\quad \left(2 + O\left(\frac{k^d}{D}\right)\right) \sum_{\substack{\alpha_1 \in \N^{[n]^d}:\\|\alpha_1| \le D}} \hat{f}_{\alpha_1}^2 \sum_{\substack{v_1, v_2 \ge 0,\\m \ge 1}} n^{\frac{v_1 - v_2}{2}}(dD)^{\frac{v_2 - v_1}{2}}\sum_{\substack{\alpha_2 \in W_{\alpha_1}(v_1, v_2, m): \\ |\hat{f}_{\alpha_2}|n^{\frac{v_2}{2}}(dD)^{\frac{v_1}{2}}\\ \le |\hat{f}_{\alpha_1}|n^{\frac{v_1}{2}}(dD)^{\frac{v_2}{2}}}} \left(\frac{e\sqrt{D}\lambda}{k^d}\right)^{m} \left(\frac{k}{n}\right)^{v(\alpha_1 \triangle \alpha_2)}\\
        &\le \left(2 + O\left(\frac{k^d}{D}\right)\right) \sum_{\substack{\alpha_1 \in \N^{[n]^d}:\\|\alpha_1| \le D}} \hat{f}_{\alpha_1}^2 \sum_{\substack{v_1, v_2 \ge 0,\\m \ge 1}} n^{\frac{v_1 - v_2}{2}}(dD)^{\frac{v_2 - v_1}{2}}n^{v_2} (dD)^{v_1} \binom{2dD}{d}^m \left(\frac{e\sqrt{D}\lambda}{k^d}\right)^{m} \left(\frac{k}{n}\right)^{v_1 + v_2}
        \intertext{where we used that the number of choices of $\alpha_2 \in W_{\alpha_1}(v_1, v_2, m)$ is at most $n^{v_2} (dD)^{v_1} \binom{2dD}{d}^m$, and that $v(\alpha_1 \triangle \alpha_2) \ge v_1 + v_2$.}
        &\le \left(2 + O\left(\frac{k^d}{D}\right)\right) \sum_{\substack{\alpha_1 \in \N^{[n]^d}:\\|\alpha_1| \le D}} \hat{f}_{\alpha_1}^2 \sum_{\substack{v_1, v_2 \ge 0,\\m \ge 1}}  \left(\frac{e\sqrt{D}\lambda}{k^d} \binom{2dD}{d}\right)^{m} \left(\frac{k\sqrt{dD}}{\sqrt{n}}\right)^{v_1 + v_2}\\
        &= \left(2 + O\left(\frac{k^d}{D}\right)\right) \sum_{\substack{\alpha_1 \in \N^{[n]^d}:\\|\alpha_1| \le D}} \hat{f}_{\alpha_1}^2 \sum_{m \ge 1}  \left(\frac{e\sqrt{D}\lambda}{k^d} \binom{2dD}{d}\right)^{m} \sum_{t \ge 0}(t+1)\left(\frac{k\sqrt{dD}}{\sqrt{n}}\right)^{t}\\
        &= \left(2 + O\left(\frac{k^d}{D}\right)\right) \sum_{\substack{\alpha_1 \in \N^{[n]^d}:\\|\alpha_1| \le D}} \hat{f}_{\alpha_1}^2   \cdot O\left(\frac{D^{d+ \frac{1}{2}} \lambda}{k^d}\right) \left(1 + O\left(\frac{D^{d+ \frac{1}{2}} \lambda}{k^d}\right)+O\left(\frac{k\sqrt{dD}}{\sqrt{n}}\right)\right)\\
        &= o\left(\sum_{\substack{\alpha \in \N^{[n]^d}:\\|\alpha| \le D}} \hat{f}_{\alpha}^2\right), \stepcounter{equation}\tag{\theequation}\label{ineq:sparse-PCA-off-diag-second-moment}
    \end{align*}
    provided that $D = o\left(\min\left\{\left(\frac{k^d}{\lambda}\right)^{\frac{1}{d+\frac{1}{2}}}, \frac{n}{k^2} \right\}\right)$.

    Combining the contribution of the diagonal terms and off-diagonal terms, we get
    \begin{align*}
        &\quad \EE[f(Y)^2]\\
        &=\sum_{\substack{\alpha \in \N^{[n]^d}: |\alpha| \le D}} \hat{f}_{\alpha}^2 \EE\left[h_{\alpha}(Y)^2\right] + \sum_{\substack{\alpha_1, \alpha_2 \in \N^{[n]^d}:\\ |\alpha_1|, |\alpha_2| \le D,\\ \alpha_1 \ne \alpha_2}} \hat{f}_{\alpha_1}\hat{f}_{\alpha_2} \EE\left[h_{\alpha_1}(Y)h_{\alpha_2}(Y)\right]\\
        &\ge \sum_{\substack{\alpha \in \N^{[n]^d}:\\|\alpha| \le D}} \hat{f}_{\alpha}^2\EE\left[h_{\alpha}(Y)^2\right] - o\left(\sum_{\substack{\alpha \in \N^{[n]^d}:\\|\alpha| \le D}} \hat{f}_{\alpha}^2\right)\\
        &= (1 - o(1))\sum_{\substack{\alpha \in \N^{[n]^d}:\\|\alpha| \le D}} \hat{f}_{\alpha}^2 \EE\left[h_{\alpha}(Y)^2\right],
    \end{align*}
    where we used that $\EE\left[h_{\alpha}(Y)^2\right] \ge 1$ for any $\alpha \in \N^{[n]^d}$.

    Next, we upper bound $\EE[f(T_{\rho}(Y))^2]$. Note that conditioned on $x$, each entry of $T_{\rho}(Y)$ is independently distributed as $T_{\rho}(Y)_{\overline{i}} \sim N(\sqrt{1-\rho^2}X_{\overline{i}}, 1)$, where $X = \sqrt{\lambda} x^{\otimes d}$. Thus, we can expand $\EE[f(T_{\rho}(Y))^2]$ and analyze the diagonal terms and off-diagonal terms similarly using the diagram formula.
    
    We have
    \begin{align*}
        &\quad \EE[f(T_{\rho}(Y))^2]\\
        &= \sum_{\substack{\alpha \in \N^{[n]^d}: |\alpha| \le D}} \hat{f}_{\alpha}^2 \EE\left[h_{\alpha}(T_{\rho}(Y))^2\right] + \sum_{\substack{\alpha_1, \alpha_2 \in \N^{[n]^d}:\\ |\alpha_1|, |\alpha_2| \le D,\\ \alpha_1 \ne \alpha_2}} \hat{f}_{\alpha_1}\hat{f}_{\alpha_2} \EE\left[h_{\alpha_1}(T_{\rho}(Y))h_{\alpha_2}(T_{\rho}(Y))\right]. \stepcounter{equation}\tag{\theequation}\label{eq:sparse-PCA-noisy-second-moment}
    \end{align*}
    For $\alpha_1, \alpha_2 \in \N^{[n]^d}$, the diagram formula says
    \begin{align*}
        &\quad \EE[h_{\alpha_1}(T_{\rho}(Y))h_{\alpha_2}(T_{\rho}(Y))]\\
        &= \EE_x \EE[h_{\alpha_1}(T_{\rho}(Y))h_{\alpha_2}(T_{\rho}(Y)) \big\vert x]\\
        &= \EE_x\left[\frac{1}{\sqrt{\alpha_1!\alpha_2!}} \sum_{M \in \MM(\alpha_1, \alpha_2)} \prod_{v \not\in V(M)} \mu_{j(v)} \prod_{\{a,b\} \in M} R_{j(a)j(b)} \Bigg\vert x\right]
        \intertext{Here, conditioned on $x$, the covariance structure of $T_{\rho}(Y)$ is unchanged compared to $Y$. The mean of $T_{\rho}(Y)$ is scaled by $\sqrt{1-\rho^2}$ compared to $Y$. Hence, using the same argument as before, we get}
        &= \EE_x \left[\frac{1}{\sqrt{\alpha_1!\alpha_2!}}\prod_{\overline{i} \in [n]^d} \sum_{M \in \MM(\alpha_1(\overline{i}), \alpha_2(\overline{i}))} \prod_{v \not\in V(M)} \sqrt{1-\rho^2}X_{\overline{i}} \Bigg \vert x\right]\\
        &= \EE_x\left[\frac{1}{\sqrt{\alpha_1!\alpha_2!}}\prod_{\overline{i} \in [n]^d} \sum_{t = 0}^{\alpha_1(\overline{i}) \wedge \alpha_2(\overline{i})} \binom{\alpha_1(\overline{i})}{t} \binom{\alpha_2(\overline{i})}{t}t! \cdot \left(\sqrt{1-\rho^2} X_{\overline{i}}\right)^{\alpha_1(\overline{i}) + \alpha_2(\overline{i}) - 2t} \Bigg \vert x\right],
    \end{align*}
    where $X = \sqrt{\lambda} x^{\otimes d}$. We note that every summand of the above conditional expectation is always nonnegative. Therefore, since the each inner summand is scaled by a nonnegative power of $\sqrt{1-\rho^2}$ compared to the formula of $\EE[h_{\alpha_1}(Y)h_{\alpha_2}(Y)]$, we have
    \begin{align*}
        \EE[h_{\alpha_1}(T_{\rho}(Y))h_{\alpha_2}(T_{\rho}(Y))] \le \EE[h_{\alpha_1}(Y)h_{\alpha_2}(Y)].
    \end{align*}
    Applying this inequality in \eqref{eq:sparse-PCA-noisy-second-moment}, we get
    \begin{align*}
        &\quad \EE[f(T_{\rho}(Y))^2]\\
        &= \sum_{\substack{\alpha \in \N^{[n]^d}: |\alpha| \le D}} \hat{f}_{\alpha}^2 \EE\left[h_{\alpha}(T_{\rho}(Y))^2\right] + \sum_{\substack{\alpha_1, \alpha_2 \in \N^{[n]^d}:\\ |\alpha_1|, |\alpha_2| \le D,\\ \alpha_1 \ne \alpha_2}} \hat{f}_{\alpha_1}\hat{f}_{\alpha_2} \EE\left[h_{\alpha_1}(T_{\rho}(Y))h_{\alpha_2}(T_{\rho}(Y))\right]\\
        &\le \sum_{\substack{\alpha \in \N^{[n]^d}: |\alpha| \le D}} \hat{f}_{\alpha}^2 \EE\left[h_{\alpha}(Y)^2\right] + \sum_{\substack{\alpha_1, \alpha_2 \in \N^{[n]^d}:\\ |\alpha_1|, |\alpha_2| \le D,\\ \alpha_1 \ne \alpha_2}} |\hat{f}_{\alpha_1}||\hat{f}_{\alpha_2}| \EE\left[h_{\alpha_1}(Y)h_{\alpha_2}(Y)\right]\\
        &\le (1+o(1))\sum_{\substack{\alpha \in \N^{[n]^d}: |\alpha| \le D}} \hat{f}_{\alpha}^2 \EE\left[h_{\alpha}(Y)^2\right]
        \intertext{where we used the bound \eqref{ineq:sparse-PCA-off-diag-second-moment}.}
        &\le (1+o(1))\EE[f(Y)^2].\stepcounter{equation}\tag{\theequation}\label{ineq:sparse-PCA-noisy-second-moment}
    \end{align*}
    Finally, we lower bound $\EE[f(Y)f(T_{\rho}(Y))]$.
    \begin{align*}
        &\quad \EE[f(Y)f(T_{\rho}(Y))]\\
        &= \sum_{\substack{\alpha \in \N^{[n]^d}: |\alpha| \le D}} \hat{f}_{\alpha}^2 \EE\left[h_{\alpha}(Y)h_{\alpha}(T_{\rho}(Y))\right] + \sum_{\substack{\alpha_1, \alpha_2 \in \N^{[n]^d}:\\ |\alpha_1|, |\alpha_2| \le D,\\ \alpha_1 \ne \alpha_2}} \hat{f}_{\alpha_1}\hat{f}_{\alpha_2} \EE\left[h_{\alpha_1}(Y)h_{\alpha_2}(T_{\rho}(Y))\right]. \stepcounter{equation}\tag{\theequation}\label{eq:sparse-PCA-correlated-second-moment}
    \end{align*}

    For $\alpha_1, \alpha_2 \in \N^{[n]^d}$, the diagram formula gives
    \begin{align*}
        &\quad \EE[h_{\alpha_1}(Y)h_{\alpha_2}(T_{\rho}(Y))]\\
        &= \EE_x \EE[h_{\alpha_1}(Y)h_{\alpha_2}(T_{\rho}(Y)) \big\vert x]\\
        &= \EE_x\left[\frac{1}{\sqrt{\alpha_1!\alpha_2!}} \sum_{M \in \MM(\alpha_1, \alpha_2)} \prod_{v \not\in V(M)} \mu_{j(v)} \prod_{\{a,b\} \in M} R_{j(a)j(b)} \Bigg\vert x\right]
        \intertext{Conditioned on $x$, $Y_{\overline{i}}$ and $T_{\rho}(Y)_{\overline{i}}$ has covariance $\sqrt{1-\rho^2}$, and $Y_{\overline{i}}$ and $T_{\rho}(Y)_{\overline{j}}$ are uncorrelated for $\overline{i} \ne \overline{j}$. The mean of $T_{\rho}(Y)$ is scaled by $\sqrt{1-\rho^2}$ compared to $Y$. Hence, the inner conditional expectation again tensorizes into a product over each coordinate:}
        &= \EE_x \left[\frac{1}{\sqrt{\alpha_1!\alpha_2!}}\prod_{\overline{i} \in [n]^d} \sum_{M \in \MM(\alpha_1(\overline{i}), \alpha_2(\overline{i}))} \prod_{v \not\in V(M)} \mu_{j(v)} \prod_{\{a,b\}\in M }\sqrt{1-\rho^2} \Bigg \vert x\right]\\
        &= \EE_x\left[\frac{1}{\sqrt{\alpha_1!\alpha_2!}}\prod_{\overline{i} \in [n]^d} \sum_{t = 0}^{\alpha_1(\overline{i}) \wedge \alpha_2(\overline{i})} \binom{\alpha_1(\overline{i})}{t} \binom{\alpha_2(\overline{i})}{t}t! \cdot \left(\sqrt{1-\rho^2}\right)^{\alpha_2(\overline{i})}X_{\overline{i}}^{\alpha_1(\overline{i}) + \alpha_2(\overline{i}) - 2t} \Bigg \vert x\right],
    \end{align*}
    where $X = \sqrt{\lambda} x^{\otimes d}$, and we used that $\sqrt{1-\rho^2}$ is raised to an exponent equal to the number of edges in the matching $M \in \MM(\alpha_1(\overline{i}), \alpha_2(\overline{i}))$ plus the number of unmatched vertices among the $\alpha_2(\overline{i})$ vertices corresponding to $h_{\alpha_2(\overline{i})}$, and this exponent is exactly $\alpha_2(\overline{i})$. We note that every summand of the above conditional expectation is always nonnegative, and is scaled by a factor $\prod_{\overline{i}\in [n]^d}\left(\sqrt{1-\rho^2}\right)^{\alpha_2(i)} = \left(\sqrt{1-\rho^2}\right)^{|\alpha_2|}$ compared to the that of $\EE[h_{\alpha_1}(Y)h_{\alpha_2}(Y)]$, which is between $\left(\sqrt{1-\rho^2}\right)^{D}$ and $1$. Thus, we have
    \begin{align}
        \left(\sqrt{1-\rho^2}\right)^{D} \le \frac{\EE\left[h_{\alpha_1}(Y)h_{\alpha_2}(T_{\rho}(Y))\right]}{\EE\left[h_{\alpha_1}(Y)h_{\alpha_2}(Y)\right]} \le 1. \label{ineq:correlated-expectation-bound}
    \end{align}
    Applying the lower bound of \eqref{ineq:correlated-expectation-bound} to the diagonal terms in \eqref{eq:sparse-PCA-correlated-second-moment} and the upper bound \eqref{ineq:correlated-expectation-bound} to the off-diagonal terms in \eqref{eq:sparse-PCA-correlated-second-moment}, we get
    \begin{align*}
        &\quad \EE[f(Y)f(T_{\rho}(Y))]\\
        &= \sum_{\substack{\alpha \in \N^{[n]^d}: |\alpha| \le D}} \hat{f}_{\alpha}^2 \EE\left[h_{\alpha}(Y)h_{\alpha}(T_{\rho}(Y))\right] + \sum_{\substack{\alpha_1, \alpha_2 \in \N^{[n]^d}:\\ |\alpha_1|, |\alpha_2| \le D,\\ \alpha_1 \ne \alpha_2}} \hat{f}_{\alpha_1}\hat{f}_{\alpha_2} \EE\left[h_{\alpha_1}(Y)h_{\alpha_2}(T_{\rho}(Y))\right]\\
        &\ge \sum_{\substack{\alpha \in \N^{[n]^d}: |\alpha| \le D}} \hat{f}_{\alpha}^2 \EE\left[h_{\alpha}(Y)h_{\alpha}(T_{\rho}(Y))\right] - \sum_{\substack{\alpha_1, \alpha_2 \in \N^{[n]^d}:\\ |\alpha_1|, |\alpha_2| \le D,\\ \alpha_1 \ne \alpha_2}} |\hat{f}_{\alpha_1}||\hat{f}_{\alpha_2}|\EE\left[h_{\alpha_1}(Y)h_{\alpha_2}(T_{\rho}(Y))\right]\\
        &\ge \left(\sqrt{1-\rho^2}\right)^D\sum_{\substack{\alpha \in \N^{[n]^d}: |\alpha| \le D}} \hat{f}_{\alpha}^2 \EE\left[h_{\alpha}(Y)^2\right] - \sum_{\substack{\alpha_1, \alpha_2 \in \N^{[n]^d}:\\ |\alpha_1|, |\alpha_2| \le D,\\ \alpha_1 \ne \alpha_2}} |\hat{f}_{\alpha_1}||\hat{f}_{\alpha_2}|\EE\left[h_{\alpha_1}(Y)h_{\alpha_2}(Y)\right]\\
        &\ge \left(\sqrt{1-\rho^2}\right)^D\sum_{\substack{\alpha \in \N^{[n]^d}: |\alpha| \le D}} \hat{f}_{\alpha}^2 \EE\left[h_{\alpha}(Y)^2\right] - o\left(\sum_{\substack{\alpha \in \N^{[n]^d}: |\alpha| \le D}} \hat{f}_{\alpha}^2 \right)
        \intertext{where we used the bound \eqref{ineq:sparse-PCA-off-diag-second-moment}}
        &\ge \left(\left(\sqrt{1-\rho^2}\right)^D - o(1)\right)\EE\left[f(Y)^2\right].\stepcounter{equation}\tag{\theequation}\label{ineq:sparse-PCA-correlated-second-moment}
    \end{align*}
    Combining \eqref{ineq:sparse-PCA-noisy-second-moment} and \eqref{ineq:sparse-PCA-correlated-second-moment}, we get
    \begin{align*}
        &\quad \EE\left[\left(f(Y)-f(T_{\rho}(Y))\right)^2\right]\\
        &= \EE[f(Y)^2] + \EE[f(T_{\rho}(Y))^2] - 2\EE\left[f(Y)f(T_{\rho}(Y))\right]\\
        &\le 2\left(1 - \left(\sqrt{1-\rho^2}\right)^D + o(1)\right)\EE[f(Y)^2].
    \end{align*}
    Thus, degree-$D$ polynomials are $\left(\rho, 2\left(1 - \left(\sqrt{1-\rho^2}\right)^D + o(1)\right)\right)$-stable for the $k$-sparse PCA provided that $D = o\left(\min\left\{\left(\frac{k^d}{\lambda}\right)^{\frac{1}{d+\frac{1}{2}}}, \frac{n}{k^2} \right\}\right)$.
    
\end{proof}

\subsubsection{Deferred Proof for Critical Window/MMSE Instability of Sparse Tensor PCA} \label{sec:critical-window-PCA}

    Consider the $k$-Sparse Tensor PCA model at SNR $\lambda$. Its MMSE is
    \begin{align*}
        &\quad \EE\left[\|\EE[x \vert Y] - x\|^2\right]\\
        &= \EE[\|x\|^2] - \EE\left[\langle x, \EE[x\vert Y]\rangle\right].
    \end{align*}
    Let $(x,Y) \sim \PP$ denote the distribution of $k$-sparse tensor PCA, and $x \sim \PP(\cdot \vert Y)$ denote the posterior distribution. Then, we have
    \begin{align*}
        &\quad \EE\left[\|\EE[x \vert Y] - x\|^2\right]\\
        &= \EE[\|x\|^2] - \EE_{(x,Y) \sim \PP} \EE_{x' \sim \PP(\cdot \vert Y)}\left[\langle x, x'\rangle\right].
    \end{align*}
    Therefore, it suffices to understand the expected overlap between the prior $x$ and the posterior $x'$.

    Let $p_i \coloneqq \Pr(\langle x,x'\rangle = i/k)$ for $0 \le i \le k$. Then, we have
    \begin{align*}
        &\quad \EE\left[\|\EE[x \vert Y] - x\|^2\right]\\
        &= \EE[\|x\|^2] - \EE_{(x,Y) \sim \PP} \EE_{x' \sim \PP(\cdot \vert Y)}\left[\langle x, x'\rangle\right]\\
        &= 1 - \EE\left[\sum_{i=0}^k p_i\cdot \frac{i}{k}\right].
    \end{align*}

    Fix $Y = \sqrt{\lambda} x^{\otimes d} + W$. Let $\Omega_{n,k}$ denote set of $k$-sparse vectors in $\{0, 1/\sqrt{k}\}^n$. The posterior distribution satisfies that for $x' \in \Omega_{n,k}$,
    \begin{align*}
        \PP(x' \vert Y) &\propto \PP(Y \vert x') \PP(x')\\
        &\propto \PP(Y \vert x')\\
        &= \exp\left(- \|Y - \sqrt{\lambda} x'^{\otimes d}\|^2/2\right)\\
        &\propto \exp\left( \sqrt{\lambda}\langle Y, x'^{\otimes d}\rangle - \lambda \|x'^{\otimes d}\|^2/2\right)\\
        &= \exp\left( \sqrt{\lambda}\langle \sqrt{\lambda} x^{\otimes d} + W, x'^{\otimes d}\rangle - \lambda/2\right)\\
        &\propto \exp\left(\lambda \langle x, x' \rangle^d + \sqrt{\lambda} \langle W, x'^{\otimes d} \rangle\right). 
    \end{align*}
    Therefore, if we define 
    \begin{align*}
        S_i &\coloneqq \sum_{\substack{x'\in \Omega_{n,k}:\\ \langle x, x' \rangle = i/k}} \exp\left(\lambda \langle x, x' \rangle^d + \sqrt{\lambda} \langle W, x'^{\otimes d} \rangle\right)\\
        &= \sum_{\substack{x'\in \Omega_{n,k}:\\ \langle x, x' \rangle = i/k}} \exp\left(\lambda \left(\frac{i}{k}\right)^d + \sqrt{\lambda} \langle W, x'^{\otimes d} \rangle\right),
    \end{align*}
    then we have $p_i = \frac{S_i}{\sum_{j=0}^k S_j}$. Before we start analyzing the critical window of sparse tensor PCA, let us state a few useful lemmas.

    \begin{lemma}\label{lem:class-size-bound}
        For $0 \le i \le k$, let $N_i \coloneqq \binom{k}{i}\binom{n-k}{k-i}$. Then,
        \begin{align*}
            \frac{N_i}{N_0} \le \frac{1}{i!}\left(\frac{k^2}{n-2k}\right)^i.
        \end{align*}
    \end{lemma}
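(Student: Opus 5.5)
The bound comes from a direct ratio computation. Write
\[
\frac{N_i}{N_0}=\binom{k}{i}\cdot\frac{\binom{n-k}{k-i}}{\binom{n-k}{k}}
\]
and bound each factor separately.

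For the first factor, use the crude bound $\binom{k}{i}\le k^i/i!$, which holds because $k(k-1)\cdots(k-i+1)\le k^i$.

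For the second factor, the standard identity for lowering the lower index gives
\[
\frac{\binom{n-k}{k-i}}{\binom{n-k}{k}}=\prod_{j=0}^{i-1}\frac{k-j}{\,n-2k+i-j\,}.
\]
To verify it, note that $\binom{m}{r-1}/\binom{m}{r}=r/(m-r+1)$ with $m=n-k$. Applying this successively for $r=k,k-1,\dots,k-i+1$ gives factors $\frac{r}{n-2k+1+(k-r)}$. Reindexing with $j=k-r$ yields $\frac{k-j}{n-2k+1+j}$ for $j=0,\dots,i-1$.

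Each such factor is at most $k/(n-2k)$, since the numerator is at most $k$ and the denominator $n-2k+1+j$ is at least $n-2k$. This holds in the regime $n>2k$, where the right-hand side of the lemma is meaningful. The second factor is therefore at most $(k/(n-2k))^i$.

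Multiplying the two bounds gives
\[
\frac{N_i}{N_0}\le \frac{k^i}{i!}\left(\frac{k}{n-2k}\right)^i=\frac{1}{i!}\left(\frac{k^2}{n-2k}\right)^i .
\]

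The case $i=0$ is trivial, and if $k-i>n-k$ then $N_i=0$ and there is nothing to prove. The only point needing care is indexing the product correctly so that every denominator is at least $n-2k$; this is a bookkeeping check rather than a real obstacle.
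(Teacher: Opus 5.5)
Your proof is correct and follows the paper's argument exactly: write $N_i/N_0=\binom{k}{i}\cdot\binom{n-k}{k-i}/\binom{n-k}{k}$, bound the first factor by $k^i/i!$ and the second by $(k/(n-2k))^i$, with your telescoping product supplying the justification the paper leaves implicit. (Your displayed product pairs $k-j$ with $n-2k+i-j$, while the reindexed version pairs it with $n-2k+1+j$; these are the same product, and every denominator is at least $n-2k+1$ in either pairing, so the bound is unaffected.)
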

    \begin{proof}[Proof of Lemma~\ref{lem:class-size-bound}]
        \begin{align*}
            \frac{N_i}{N_0} &= \binom{k}{i} \frac{\binom{n-k}{k-i}}{\binom{n-k}{k}}\\
            &\le \frac{k^i}{i!}\cdot\frac{k^i}{(n-2k)^i}\\
            &= \frac{1}{i!}\left(\frac{k^2}{n-2k}\right)^i.
        \end{align*}
    \end{proof}

    \begin{lemma}\label{lem:log-sum-exp-tail}
        Let $Z_1, \dots, Z_m$ be Gaussians, each distributed as $N(0, \sigma^2)$. Then, for any $t \ge 0$ such that \[\sqrt{2\sigma^2 (t+\log m)} \le \sigma^2,\]
        we have
        \begin{align*}
            \Pr\left(\log\left(\sum_{i=1}^m \exp(Z_i)\right) \le \sqrt{2\sigma^2 (t+ \log m)} +\log\left(1 +\sqrt{2\sigma^2 (t+ \log m)}\right)\right) \ge 1 - 2\exp(-t).
        \end{align*}
    \end{lemma}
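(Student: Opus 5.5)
The natural route is a Gaussian maximum bound followed by the elementary inequality $\log\sum_i e^{z_i}\le \max_i z_i+\log m$. Note that the $Z_i$ are not assumed independent, so we cannot use any argument that needs independence. The second term in the statement is $\log(1+\sqrt{2\sigma^2(t+\log m)})$ rather than $\log m$, so the crude bound is not enough. Instead I will bound the sum directly, via the exponential moment or a peeling argument, and use the hypothesis $\sqrt{2\sigma^2(t+\log m)}\le\sigma^2$ to control the sub-maximal terms.

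First, set $u:=\sqrt{2\sigma^2(t+\log m)}$. A union bound with the Gaussian tail $\Pr(Z_i>u)\le e^{-u^2/(2\sigma^2)}=e^{-t}/m$ gives $\Pr(\max_i Z_i>u)\le e^{-t}$. This holds without independence.

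Second, control how many of the $Z_i$ lie near the top. Decompose by level sets:
\[
\sum_{i=1}^m e^{Z_i}\le \sum_{i} e^{Z_i}\mathbf 1\{Z_i\le u\}.
\]
On the event $\max Z_i\le u$, each term is at most $e^{u}$, which yields a $\log m$ loss. To beat this, peel into unit-width shells $Z_i\in(u-j-1,u-j]$ for $j\ge 0$. Since shell $j$ contributes at most $e^{u-j}$ per element, it suffices to show that with probability at least $1-e^{-t}$ the shell counts $N_j$ satisfy $\sum_j N_j e^{-j}\le 1+u$. By Markov's inequality applied to $\sum_j N_j e^{-j}$, it is enough to bound its expectation. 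For any single $i$, compute
\[
\E\bigl[e^{Z_i-u}\mathbf 1\{Z_i\le u\}\bigr]=e^{\sigma^2/2-u}\,\Pr\bigl(N(\sigma^2,\sigma^2)\le u\bigr).
\]
Because $u\le\sigma^2$, this probability is Gaussian-lower-tail small, namely at most $e^{-(\sigma^2-u)^2/(2\sigma^2)}$. Combining the exponents gives $e^{-u^2/(2\sigma^2)}=e^{-t}/m$, up to the polynomial prefactor coming from the Mills ratio. Summing over $i$ and applying Markov then gives $\sum_i e^{Z_i-u}\mathbf 1\{Z_i\le u\}\le (1+u)$ with probability at least $1-e^{-t}$.

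Third, taking logs gives $\log\sum e^{Z_i}\le u+\log(1+u)$ on the intersection of the two events, which has probability at least $1-2e^{-t}$.

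The main obstacle is the second step: making the truncated-exponential-moment computation produce exactly the factor $(1+u)$ with Markov's inequality at level $e^{-t}$. I expect this to require an integration-by-parts form of the Gaussian tail, $\E[e^{Z-u};Z\le u]=\int_{-\infty}^{0}e^{s}\,\Pr(Z-u\in ds)$, bounded by $\int_0^\infty e^{-s}\Pr(Z>u-s)\,ds$. The density of $Z$ near $u$ is about $\frac{u}{\sigma^2}\cdot\frac{e^{-t}}{m}\le \frac{e^{-t}}{m}$ thanks to $u\le\sigma^2$, and the window of width about $u$ over which the Gaussian density grows sub-exponentially is what produces the $(1+u)$ factor. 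If the constants do not close cleanly, I would weaken to $\log(C(1+u))$ and absorb the constant.
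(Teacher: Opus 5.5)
Your proof is correct. Its skeleton matches the paper's: a union bound giving $\max_i Z_i\le u$, a first-moment bound on the truncated sum, Markov, and then combining the two failure events. The key estimate, however, is different and sharper.

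\textbf{How the paper bounds the truncated sum.} It writes $\sum_i e^{Z_i}=m+\sum_i(e^{Z_i}-1)$ and absorbs $m\le e^{u}$ using $t+\log m=u^2/(2\sigma^2)\le u/2$. It then bounds $\E[(e^{Z_i}-1)_+\mathbf 1\{Z_i\le u\}]\le\int_0^u e^{s}e^{-s^2/(2\sigma^2)}\,ds\le u\,e^{u}e^{-t}/m$; here $u\le\sigma^2$ makes the integrand increasing on $[0,u]$. The term $m\le e^u$ together with this length-times-maximum bound on the integral is exactly what produces the factor $1+u$.

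\textbf{How your route differs.} Your exponential tilt $\E[e^{Z_i}\mathbf 1\{Z_i\le u\}]=e^{\sigma^2/2}\Pr(N(\sigma^2,\sigma^2)\le u)$ is combined with the Chernoff lower-tail bound. That bound applies because $u\le\sigma^2$ places $u$ below the tilted mean. The exponents cancel exactly, giving $\E[\sum_i e^{Z_i-u}\mathbf 1\{Z_i\le u\}]\le e^{-t}$ with no loss. Markov at level $1$ therefore already gives $\log\sum_i e^{Z_i}\le u$ with probability at least $1-2e^{-t}$. This is strictly stronger than the lemma: the $\log(1+u)$ slack is not needed.

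\textbf{Unnecessary hedging.} The Chernoff bound $\Pr(N(0,1)\le -x)\le e^{-x^2/2}$ for $x\ge 0$ carries no Mills-ratio prefactor, so your computation closes exactly. The fallback to $\log(C(1+u))$ is never needed, and it would not prove the statement as written.

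\textbf{Clean-up.}
\begin{itemize}
\item Drop the shell decomposition. You computed the expectation of $\sum_i e^{Z_i-u}\mathbf 1\{Z_i\le u\}$, which is \emph{smaller} than $\sum_j N_j e^{-j}$. Markov must be applied to the former, as you do at the end of that step. Applying it to the shell sum would cost an extra factor of $e$.
\item The displayed inequality $\sum_i e^{Z_i}\le\sum_i e^{Z_i}\mathbf 1\{Z_i\le u\}$ holds only on the event $\{\max_i Z_i\le u\}$, and should be stated that way.
\end{itemize}
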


    \begin{proof}[Proof of Lemma~\ref{lem:log-sum-exp-tail}]
        By union bound, we have
        \begin{align*}
            \Pr(\max_{i} Z_i \ge s) &\le m \Pr(Z_1 \ge s)\\
            &\le m \exp\left(-\frac{s^2}{2\sigma^2}\right).
        \end{align*}
        Thus,
        \begin{align*}
            \Pr(\max_{i} Z_i \ge \sqrt{2\sigma^2 (t+\log m)}) &\le m \Pr(Z_1 \ge \sqrt{2\sigma^2 (t+\log m)})\\
            &\le m \exp\left(-\frac{2\sigma^2 (t+\log m)}{2\sigma^2}\right)\\
            &= \exp(-t).
        \end{align*}

        Denote $E = \{\max_{i} Z_i \le \sqrt{2\sigma^2 (t+\log m)}\}$, and this event takes place with probability at least $1 - \exp(-t)$. We have
        \begin{align*}
            \EE\left[\sum_{i=1}^m \left(\exp(Z_i)-1\right) \cdot \allone_E\right] &= \sum_{i=1}^m \EE\left[\left(\exp(Z_i)-1\right) \cdot \allone_E\right]\\
            &\le \sum_{i=1}^m \EE\left[\left(\exp(Z_i)-1\right) \cdot \allone\{Z_i \le \sqrt{2\sigma^2 (t+\log m)}\}\right]\\
            &\le \sum_{i=1}^m \int_{0}^{\sqrt{2\sigma^2 (t+\log m)}} \exp(s)\Pr\left(Z_i \ge s\right)ds\\
            &\le \sum_{i=1}^m \int_{0}^{\sqrt{2\sigma^2 (t+\log m)}} \exp(s)\exp\left(-\frac{s^2}{2\sigma^2}\right)ds\\
            &\le m \sqrt{2\sigma^2 (t+\log m)} \exp\left(\sqrt{2\sigma^2 (t+\log m)}\right) \exp\left(-\left(t+\log m\right)\right)
            \intertext{where we used that $\sqrt{2\sigma^2 (t+\log m)} \le \sigma^2$, so that $\exp(s)\exp\left(-\frac{s^2}{2\sigma^2}\right)$ is increasing for $s \in [0, \sqrt{2\sigma^2 (t+\log m)}]$.}
            &\le \sqrt{2\sigma^2 (t+\log m)} \exp\left(\sqrt{2\sigma^2 (t+\log m)} - t\right).
        \end{align*}
        By Markov's inequality,
        \begin{align*}
            &\quad \Pr\left(\sum_{i=1}^m \left(\exp(Z_i)-1\right) \cdot \allone_E \ge \sqrt{2\sigma^2 (t+\log m)} \exp\left(\sqrt{2\sigma^2 (t+\log m)}\right)\right)\\
            &\le \frac{\EE\left[\sum_{i=1}^m \left(\exp(Z_i)-1\right) \cdot \allone_E\right]}{\sqrt{2\sigma^2 (t+\log m)} \exp\left(\sqrt{2\sigma^2 (t+\log m)}\right)}\\
            &\le \exp\left(-t\right).
        \end{align*}
        Note that since $\sqrt{2\sigma^2 (t+\log m)}\le \sigma^2$, we have
        \begin{align*}
            t + \log m &= \frac{\left(\sqrt{2\sigma^2 (t+ \log m)}\right)^2}{2\sigma^2}\\
            &\le \frac{1}{2}\sqrt{2\sigma^2 (t+ \log m)}.\\
            m &\le \exp\left(\sqrt{2\sigma^2 (t+ \log m)}\right)
        \end{align*}
        Thus, condition on the event $E$ and that $\sum_{i=1}^m \left(\exp(Z_i)-1\right) \cdot \allone_E \le \sqrt{2\sigma^2 (t+\log m)} \exp\left(\sqrt{2\sigma^2 (t+\log m)}\right)$, which takes place with probability at least $1 - 2\exp\left(-t\right)$, we have
        \begin{align*}
            \log\left(\sum_{i=1}^m \exp(Z_i)\right) &\le \log \left(m + \sqrt{2\sigma^2 (t+\log m)} \exp\left(\sqrt{2\sigma^2 (t+\log m)}\right)\right)\\
            &\le \log \left(\exp\left(\sqrt{2\sigma^2 (t+ \log m)}\right) + \sqrt{2\sigma^2 (t+\log m)} \exp\left(\sqrt{2\sigma^2 (t+\log m)}\right)\right)\\
            &= \sqrt{2\sigma^2 (t+ \log m)} +\log\left(1 +\sqrt{2\sigma^2 (t+ \log m)}\right),
        \end{align*}
        and this finishes the proof.
    \end{proof}

    \begin{lemma}\label{lem:correlated-gaussian-bound}
        Let $X,Y$ be centered Gaussain variables with variance $1$ and covariance $\rho$. Then, there exists an absolute constant $C > 0$ such that for all $u \ge 0$,
        \begin{align*}
            \Pr(X \ge u, Y \ge u) \le \frac{C}{\sqrt{1 - \rho^2}} \exp\left(-\frac{u^2}{1 + \rho}\right).
        \end{align*}
    \end{lemma}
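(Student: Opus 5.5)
The plan is to reduce to a one-dimensional Gaussian integral by rotating the pair $(X,Y)$ onto the sum direction. Since both variables are at least $u$ on the event, their sum is at least $2u$. The sum $X+Y$ is centered Gaussian with variance $2(1+\rho)$. The naive bound is therefore
\[
\Pr(X\ge u, Y\ge u)\le \Pr(X+Y\ge 2u)\le \exp\left(-\frac{4u^2}{4(1+\rho)}\right)=\exp\left(-\frac{u^2}{1+\rho}\right).
\]
This already gives the claim with $C=1$ whenever $\rho\ge 0$, because $1/\sqrt{1-\rho^2}\ge 1$. The same chain is valid for any $\rho\in(-1,1)$. In fact, since $1/\sqrt{1-\rho^2}\ge1$ always, the inequality $\Pr(X\ge u,Y\ge u)\le \Pr(X+Y\ge 2u)$ together with the standard Gaussian tail bound $\Pr(N(0,\sigma^2)\ge s)\le e^{-s^2/(2\sigma^2)}$ gives the lemma with $C=1$ for all $\rho\in(-1,1)$.

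If a sharper, density-based bound is wanted (which I suspect is why the factor $(1-\rho^2)^{-1/2}$ appears), I would write the joint density explicitly:
\[
\frac{1}{2\pi\sqrt{1-\rho^2}}\exp\left(-\frac{x^2-2\rho xy+y^2}{2(1-\rho^2)}\right).
\]
Next, change variables to $s=(x+y)/2$ and $d=(x-y)/2$. The quadratic form becomes
\[
\frac{s^2}{1+\rho}+\frac{d^2}{1-\rho}.
\]
The region $\{x\ge u,\ y\ge u\}$ is contained in $\{s\ge u\}$. Integrating $d$ over $\mathbb{R}$ gives a factor $\sqrt{2\pi(1-\rho)}$. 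The remaining $s$-integral, $\int_u^\infty e^{-s^2/(1+\rho)}\,ds$, is bounded by $C\sqrt{1+\rho}\,e^{-u^2/(1+\rho)}$. Multiplying these together with the Jacobian $2$ and the prefactor $\frac{1}{2\pi\sqrt{1-\rho^2}}$ yields the stated form, with $C$ absolute.

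There is essentially no obstacle in either route. The only point to check is the degenerate case $\rho=\pm1$, which is excluded because the right-hand side is then infinite or undefined. The one-line sum argument is what I would actually present.
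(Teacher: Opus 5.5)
Your proof is correct, and it takes a different route from the paper.

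\textbf{The two arguments.} You use the inclusion $\{X\ge u,\,Y\ge u\}\subseteq\{X+Y\ge 2u\}$ and the Chernoff bound for $X+Y\sim N(0,2(1+\rho))$. This gives $\Pr(X\ge u,Y\ge u)\le e^{-u^2/(1+\rho)}$ for every $u\ge 0$ and $\rho\in(-1,1)$. That is stronger than the statement: $C=1$ works, and the factor $(1-\rho^2)^{-1/2}$ is not needed at all.

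The paper substitutes $x=u+t$, $y=u+s$ in the joint density. It then discards the nonnegative form $s^2+t^2-2\rho st$ from the exponent and integrates the remaining exponential over the quadrant. This yields $\frac{(1+\rho)^2}{2\pi u^2\sqrt{1-\rho^2}}e^{-u^2/(1+\rho)}$, and that is where the $(1-\rho^2)^{-1/2}$ in the statement comes from.

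\textbf{What each route buys.} Your bound holds uniformly down to $u=0$. The paper's displayed bound blows up as $u\to 0$, so taken literally it only gives the lemma for $u$ bounded below (e.g.\ $u\ge 1$ with $C=2/\pi$).

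On the other hand, the paper's computation keeps the polynomial prefactor $u^{-2}$, and the later second-moment estimate for $M$ silently relies on it. There the ratio is formed using $\Pr(Z\ge x)^2\ge \frac{1}{8\pi x^2}e^{-x^2}$, and the $u^{-2}$ cancels that $x^2$. With only the Chernoff bound you would carry an extra factor of order $x^2\approx\lambda$. That would spoil the small-overlap step when $k$ is near $n^{1/2}$: the $r=1$ term becomes of order $k^3\log(n/k)/n$ instead of $k^2/n$.

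\textbf{Your density-based route.} It is not sharper than your first route, and it does not explain the $(1-\rho^2)^{-1/2}$.
\begin{itemize}
\item Integrating $d$ over $\mathbb{R}$ gives $\sqrt{\pi(1-\rho)}$, not $\sqrt{2\pi(1-\rho)}$.
\item The $s$-integral is at most $\tfrac12\sqrt{\pi(1+\rho)}\,e^{-u^2/(1+\rho)}$.
\item With the Jacobian $2$ and the prefactor, the $\sqrt{1-\rho^2}$ cancels exactly, leaving $\tfrac12 e^{-u^2/(1+\rho)}$.
\end{itemize}
So this is just $\Pr\bigl((X+Y)/2\ge u\bigr)$ computed exactly. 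If you instead integrate $d$ only over the true range $|d|\le s-u$ and bound that inner integral by $2(s-u)$, you recover exactly the paper's bound, including both the $u^{-2}$ and the $(1-\rho^2)^{-1/2}$.
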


    \begin{proof}[Proof of Lemma~\ref{lem:correlated-gaussian-bound}]
        The density of $(X,Y) = (x,y)$ is 
        \[\frac{1}{2\pi\sqrt{1-\rho^2}} \exp\left(-\frac{x^2 - 2\rho xy + y^2}{2(1-\rho^2)}\right).\]
        Integrating along $x \ge u$ and $y \ge u$, we get
        \begin{align*}
            &\quad \int_{u}^{\infty} \int_{u}^{\infty} \frac{1}{2\pi\sqrt{1-\rho^2}} \exp\left(-\frac{x^2 - 2\rho xy + y^2}{2(1-\rho^2)}\right) dy dx\\
            &= \int_{0}^{\infty} \int_{0}^{\infty} \frac{1}{2\pi\sqrt{1-\rho^2}} \exp\left(-\frac{(u+t)^2 - 2\rho (u+t)(u+s) + (u+s)^2}{2(1-\rho^2)}\right) ds dt\\
            &= \int_{0}^{\infty} \int_{0}^{\infty} \frac{1}{2\pi\sqrt{1-\rho^2}} \exp\left(-\frac{2u^2(1-\rho) + 2u(1-\rho)(s+t) + s^2 + t^2 - 2\rho st}{2(1-\rho^2)}\right) ds dt \\
            &\le \int_{0}^{\infty} \int_{0}^{\infty} \frac{1}{2\pi\sqrt{1-\rho^2}} \exp\left(-\frac{2u^2(1-\rho) + 2u(1-\rho)(s+t)}{2(1-\rho^2)}\right) ds dt \\
            &= \frac{1}{2\pi\sqrt{1-\rho^2}} \exp\left(-\frac{u^2}{1 + \rho}\right)\int_{0}^{\infty} \int_{0}^{\infty} \exp\left(-\frac{u(s+t)}{1+\rho}\right) ds dt\\
            &= \frac{(1+\rho)^2}{2\pi u^2\sqrt{1-\rho^2}} \exp\left(-\frac{u^2}{1 + \rho}\right).
        \end{align*}
    \end{proof}

    First, we prove the upper edge of the critical window of the Sparse Tensor PCA.

\begin{proof}[Proof of the upper edge of Proposition~\ref{prop:critical-window-PCA}]
    Suppose $d \ge 3$, $k \le \min\left\{n^{\frac{1}{2} - \eps}, n^{\frac{d-2}{d+2} - \eps}\right\}$ for some constant $\eps > 0$, and $\Delta > 0$ satisfies
    \begin{align*}
        \Delta = \omega\left(\sqrt{\log \binom{n-k}{k}}\right).
    \end{align*}
    Suppose $\lambda = 2\log \binom{n-k}{k} + \Delta$.

    Fix an arbitrary $\delta > 0$. We will show that $\sum_{i=0}^{(1-\delta)k}p_i \to 0$. For convenience, let $N_i \coloneqq \binom{k}{i}\binom{n-k}{k-i}$. Note that 
    \[N_i = \left|\{x'\in \Omega_{n,k}: \langle x, x' \rangle = i/k\}\right|.\]
    We will break the analysis into two parts, one for small $i$ and one for large $i$. Since $k \le n^{\frac{1}{2} - \varepsilon}$ , we know $\log(k^2/(n-2k)) \le - c_{\varepsilon} \log(n/k)$ for some constant $c_{\varepsilon} > 0$. Now choose $\eta > 0$ so that $3 \eta^{d-1} < \frac{c_{\varepsilon}}{2}$.

    \paragraph{\textbf{Small overlap analysis:}} First, consider $i \le \eta k$.

    Recall that
    \begin{align*}
        S_i &= \sum_{\substack{x'\in \Omega_{n,k}:\\ \langle x, x' \rangle = i/k}} \exp\left(\lambda \left(\frac{i}{k}\right)^d + \sqrt{\lambda} \langle W, x'^{\otimes d} \rangle\right)\\
        &= \exp\left(\lambda \left(\frac{i}{k}\right)^d + \left(\frac{i}{k}\right)^d\sqrt{\lambda} \langle W, x^{\otimes d} \rangle\right)\sum_{\substack{x'\in \Omega_{n,k}:\\ \langle x, x' \rangle = i/k}} \exp\left( \sqrt{\lambda} \left\langle W, x'^{\otimes d} - \left(i/k\right)^d x^{\otimes d}  \right\rangle\right),
    \end{align*}
    Note that $\langle W, x^{\otimes d} \rangle$ and $\left\langle W, x'^{\otimes d} - \left(i/k\right)^d x^{\otimes d}  \right\rangle$ are independent, and $\left\langle W, x'^{\otimes d} - \left(i/k\right)^d x^{\otimes d}  \right\rangle$ is distributed as $N(0, 1 - (i/k)^{2d})$. Thus, we have
    \begin{align*}
        &\quad \EE\left[\frac{S_i}{S_k} \bigg\vert \langle W, x^{\otimes d} \rangle\right]\\
        &= \exp\left(\lambda \left(\left(\frac{i}{k}\right)^d - 1\right) + \left(\left(\frac{i}{k}\right)^d - 1\right)\sqrt{\lambda} \langle W, x^{\otimes d} \rangle\right)\\
        &\quad \cdot\EE\left[\sum_{\substack{x'\in \Omega_{n,k}:\\ \langle x, x' \rangle = i/k}} \exp\left( \sqrt{\lambda} \left\langle W, x'^{\otimes d} - \left(i/k\right)^d x^{\otimes d}  \right\rangle\right)\right]\\
        &= \exp\left(\lambda \left(\left(\frac{i}{k}\right)^d - 1\right) + \left(\left(\frac{i}{k}\right)^d - 1\right)\sqrt{\lambda} \langle W, x^{\otimes d} \rangle\right) N_i \exp\left(\frac{\lambda}{2}\left(1 - \left(\frac{i}{k}\right)^{2d}\right)\right)\\
        &= N_i \exp\left(\lambda \left(\left(\frac{i}{k}\right)^d - 1\right) + \left(\left(\frac{i}{k}\right)^d - 1\right)\sqrt{\lambda} \langle W, x^{\otimes d} \rangle + \frac{\lambda}{2}\left(1 - \left(\frac{i}{k}\right)^{2d}\right)\right).
    \end{align*}

    Since $\Delta = \omega\left(\sqrt{\log\binom{n-k}{k}}\right)$, we may choose $b = \sqrt{\frac{\Delta }{\sqrt{\log\binom{n-k}{k}}}}$ so that $b = \omega(1)$ and $b\sqrt{\log\binom{n-k}{k}} = o(\Delta)$. Consider the event $E_1 = \{\langle W, x^{\otimes d} \rangle \ge -b\}$. We have $\Pr(E_1) \ge 1 - \exp\left(-b^2/2\right) = 1 - o(1)$. Thus, condition on the event $E_1$, we have
    \begin{align*}
        &\quad \EE\left[\frac{S_i}{S_k} \bigg\vert E_1\right]\\
        &\le N_i \exp\left(\lambda \left(\left(\frac{i}{k}\right)^d - 1\right) -b \left(\left(\frac{i}{k}\right)^d - 1\right)\sqrt{\lambda}  + \frac{\lambda}{2}\left(1 - \left(\frac{i}{k}\right)^{2d}\right)\right)\\
        &\le \exp\left(\log\binom{n-k}{k} +  i\log\left(\frac{k^2}{n-2k}\right)  - \frac{\lambda}{2} + b\sqrt{\lambda} + \lambda\left(\frac{i}{k}\right)^d\right)
        \intertext{since $N_i \le \binom{n-k}{k} \frac{1}{i!}\left(\frac{k^2}{n-2k}\right)^i$ by Lemma~\ref{lem:class-size-bound},}
        &= \exp\left( i\log\left(\frac{k^2}{n-2k}\right)  - \frac{\Delta}{2} + b\sqrt{2 \log \binom{n-k}{k} + \Delta} + \left(2 \log \binom{n-k}{k} + \Delta\right)\left(\frac{i}{k}\right)^d\right)
        \intertext{by plugging in $\lambda = 2 \log \binom{n-k}{k} + \Delta,$}
        &\le \exp\left( -c_{\varepsilon} i\log\left( n/k\right)  - \frac{\Delta}{2} + o(\Delta) + 3i \log(n/k)\left(\frac{i}{k}\right)^{d-1}\right)
        \intertext{by using $\log(k^2/(n-2k)) \le - c_{\varepsilon} \log(n/k)$, $\Delta \le \log\binom{n-k}{k}$, and $b \sqrt{\log \binom{n-k}{k}} = o(\Delta)$}
        &\le \exp\left(3i \log(n/k)\eta^{d-1} -c_{\varepsilon} i\log\left( n/k\right)  - \frac{\Delta}{2} + o(\Delta)\right)
        \intertext{since $i \le \eta k$. Finally we use that $3\eta^{d-1} \le c_{\varepsilon}$ and get}
        &\le \exp\left(-\frac{\Delta}{4}\right).
    \end{align*}
    Summing over $i \le \eta k$, we get
    \begin{align*}
        \EE\left[\sum_{i=0}^{\eta k} \frac{S_i}{S_k} \bigg\vert E\right] &\le \eta k \exp\left(-\frac{\Delta}{4}\right)\\
        &= o(1)
    \end{align*}
    since $\Delta = \omega\left(\sqrt{\log \binom{n-k}{k}}\right)$.
    Thus, condition on the event $E_1$, by Markov's inequality, we have
    \begin{align*}
        &\quad \sum_{i=0}^{\eta k} p_i\\
        &\le \sum_{i=0}^{\eta k} \frac{S_i}{S_k}\\
        &\to 0.
    \end{align*}
    Since $E_1$ has probability $1 - o(1)$, we conclude that $\sum_{i=0}^{\eta k} p_i \to 0$ when $\lambda = 2\log \binom{n-k}{k} + \Delta$.

    \paragraph{\textbf{Large overlap analysis:}}
    Next, we consider $\eta k \le i \le (1-\delta)k$.

    Recall that
    \begin{align*}
        S_i &= \exp\left(\lambda \left(\frac{i}{k}\right)^d + \left(\frac{i}{k}\right)^d\sqrt{\lambda} \langle W, x^{\otimes d} \rangle\right)\sum_{\substack{x'\in \Omega_{n,k}:\\ \langle x, x' \rangle = i/k}} \exp\left( \sqrt{\lambda} \left\langle W, x'^{\otimes d} - \left(i/k\right)^d x^{\otimes d}  \right\rangle\right).
    \end{align*}
    If we denote
    \[Y_i = \sum_{\substack{x'\in \Omega_{n,k}:\\ \langle x, x' \rangle = i/k}} \exp\left( \sqrt{\lambda} \left\langle W, x'^{\otimes d} - \left(i/k\right)^d x^{\otimes d}  \right\rangle\right),\]
    we have \begin{align*}
        S_i &= \exp\left(\lambda \left(\frac{i}{k}\right)^d + \left(\frac{i}{k}\right)^d\sqrt{\lambda} \langle W, x^{\otimes d} \rangle\right)Y_i.
    \end{align*}
    
    Fix some $i \in [\eta k, (1-\delta)k]$. By Lemma~\ref{lem:log-sum-exp-tail}, with probability at least $1 - 2\exp(-t)$, we have
    \begin{align*}
        \Pr\left(\log(Y_i) \ge B_i + \log(1 + B_i)\right)  \le 2\exp(-t_i),
    \end{align*}
    where $B_i = \sqrt{2\lambda\left(1 - \left(i/k\right)^{2d}\right)\left(t_i + \log N_i\right)}$ for some $t_i \ge 0$, provided that
    \[B_i \le \lambda\left(1 - \left(i/k\right)^{2d}\right).\]
    Now we choose $t_i = 2\log k$. We may verify that for $i \le (1-\delta)k$,
    \begin{align*}
        B_i &= \sqrt{2\lambda\left(1 - \left(i/k\right)^{2d}\right)\left(2\log k + \log N_i\right)}\\
        &= \sqrt{2\lambda\left(1 - \left(i/k\right)^{2d}\right)\left(2\log k + \log \binom{k}{i} + \log \binom{n-k}{k-i}\right)}\\
        &\le \sqrt{2\lambda\left(1 - \left(i/k\right)^{2d}\right)\left(2\log k + k + (k-i)\log (e(n-k)/(k-i)) \right)}
        \intertext{Now we use that $k - i \ge \delta k$ and get}
        &\le \sqrt{2\lambda\left(1 - \left(i/k\right)^{2d}\right) \left((k-i)\log((n-k)/k) + O(k)\right)}\\
        &\le \sqrt{2\lambda\left(1 - \left(i/k\right)^{2d}\right) \left(\left(1 - \frac{i}{k}\right)\log \binom{n-k}{k} + O(k)\right)}\\
        &\le \lambda\left(1 - \left(i/k\right)^{2d}\right) \sqrt{\frac{\left(2\log \binom{n-k}{k}\right)\left(1- (i/k) + o(1)\right)}{\lambda\left(1 - \left(i/k\right)^{2d}\right)}}\\
        &\le \lambda\left(1 - \left(i/k\right)^{2d}\right),
    \end{align*}
    since $\lambda \ge 2\log \binom{n-k}{k}$ and $1 - (i/k) + o(1) \le 1 - (i/k)^{2d}$ for $\eta k \le i \le (1-\delta)k$. Consequently, for any $\eta k\le i \le (1-\delta)k$, with probability at least $ 1- 2\exp(-2\log k) = 1 - \frac{2}{k^2}$, we have
    \begin{align*}
        \log(Y_i) \le B_i + \log(1 + B_i),
    \end{align*}
    where $B_i = \sqrt{2\lambda\left(1 - \left(i/k\right)^{2d}\right)\left(2\log k + \log N_i\right)}$. Taking a union bound over all $i \le (1 - \delta)k$, we have with probability at least $1 - \frac{2}{k}$ that
    \begin{align*}
        &\quad \sum_{i=\eta k}^{(1-\delta) k} S_i\\
        &= \sum_{i=\eta k}^{(1-\delta) k} \exp\left(\lambda \left(\frac{i}{k}\right)^d + \left(\frac{i}{k}\right)^d\sqrt{\lambda} \langle W, x^{\otimes d} \rangle\right)Y_i\\
        &\le \sum_{i=\eta k}^{(1-\delta) k}\exp\left(\lambda \left(\frac{i}{k}\right)^d + \left(\frac{i}{k}\right)^d\sqrt{\lambda} \langle W, x^{\otimes d} \rangle\right)\exp\left(B_i + \log(1+B_i)\right)\\
        &= S_k \sum_{i=\eta k}^{(1-\delta) k}\frac{1}{S_k}\exp\left(\lambda \left(\frac{i}{k}\right)^d + \left(\frac{i}{k}\right)^d\sqrt{\lambda} \langle W, x^{\otimes d} \rangle\right)\exp\left(B_i + \log(1+B_i)\right)
        \intertext{Now we use $S_k = \exp(\lambda + \sqrt{\lambda} \langle W, x^{\otimes d} \rangle)$ and get}
        &= S_k \sum_{i=\eta k}^{(1-\delta) k}\exp\left(\left(\left(\frac{i}{k}\right)^d-1\right)\left[\lambda  + \sqrt{\lambda} \langle W, x^{\otimes d} \rangle\right]\right)\exp\left(B_i + \log(1+B_i)\right)\\
        &= S_k \sum_{i=\eta k}^{(1-\delta) k}\exp\left(\left(\left(\frac{i}{k}\right)^d-1\right)\sqrt{\lambda} \langle W, x^{\otimes d} \rangle\right)\exp\left(\left(\left(\frac{i}{k}\right)^d-1\right)\lambda+ B_i + \log(1+B_i)\right). \stepcounter{equation}\tag{\theequation}\label{ineq:upper-branch-posterior}
    \end{align*}
    Call the above event $E_2$, which takes place with probability at least $1 - \frac{2}{k}$.

    Again, let us condition on the event $E_1 = \{\langle W, x^{\otimes d} \rangle \ge -b\}$, which takes place with probability $1 - o(1)$. Recall here $b = \sqrt{\frac{\Delta }{\sqrt{\log\binom{n-k}{k}}}}$ so that $b = \omega(1)$ and $b\sqrt{\log\binom{n-k}{k}} = o(\Delta)$. Conditioning on $E_1$ and $E_2$, let us analyze the sum in \eqref{ineq:upper-branch-posterior}. We have
    \begin{align*}
        &\quad \sum_{i=\eta k}^{(1-\delta) k}\exp\left(\left(\left(\frac{i}{k}\right)^d-1\right)\sqrt{\lambda} \langle W, x^{\otimes d} \rangle\right)\exp\left(\left(\left(\frac{i}{k}\right)^d-1\right)\lambda+ B_i + \log(1+B_i)\right)\\
        &\le \sum_{i=\eta k}^{(1-\delta) k}\exp\left(-b\left(\left(\frac{i}{k}\right)^d-1\right)\sqrt{\lambda} + \left(\left(\frac{i}{k}\right)^d-1\right)\lambda+ B_i(1+o(1))\right)\\
        &\le \sum_{i=\eta k}^{(1-\delta) k}\exp\Bigg(-b\left(\left(\frac{i}{k}\right)^d-1\right)\sqrt{\lambda} + \left(\left(\frac{i}{k}\right)^d-1\right)\lambda\\
        &\quad + (1+o(1))\sqrt{2\lambda\left(1 - \left(\frac{i}{k}\right)^{2d}\right)\left(2\log k + \log N_i\right)}\Bigg)
        \intertext{Now we reuse the previous calculation that $2\log k + \log N_i \le (1 - (i/k))\log \binom{n-k}{k} + O(k)$ and the bound $b \sqrt{\log \binom{n-k}{k}} = o(\Delta)$ and $\lambda = 2\log \binom{n-k}{k} + \Delta \le 3\binom{n-k}{k}$ to get}
        &\le \sum_{i=\eta k}^{(1-\delta) k}\exp\left(o(\Delta) + \left(\left(\frac{i}{k}\right)^d-1\right)\lambda+ (1+o(1))\sqrt{2\lambda\left(1 - \left(\frac{i}{k}\right)^{2d}\right)\left(1 - \frac{i}{k} + o(1)\right)\log \binom{n-k}{k}}\right)
        \intertext{Now we plug in $\lambda = 2\log \binom{n-k}{k} + \Delta$ and get}
        &\le \sum_{i=\eta k}^{(1-\delta) k}\exp\left(o(\Delta) + 2\log \binom{n-k}{k}\left(\left(\frac{i}{k}\right)^d - 1 + (1 + o(1))\sqrt{\left(1 - \left(\frac{i}{k}\right)^{2d}\right)\left(1 - \frac{i}{k} + o(1)\right)}\right) \right)\\
        &\quad \cdot \exp\left(\Delta\left(\left(\frac{i}{k}\right)^d-1 + \left(\frac{1}{2} + o(1)\right)\sqrt{\left(1 - \left(\frac{i}{k}\right)^{2d}\right)\left(1 - \frac{i}{k} + o(1)\right)}\right)\right)
        \intertext{It is easy to verify that there exists some constant $\gamma_{\eta, \delta} > 0$ such that $f(x) = x^d - 1 + \sqrt{(1-x)^{2d}(1-x)} < -\gamma_{\eta, \delta}$ for all $\eta \le x \le 1 - \delta$. As a result,}
        &\le \sum_{i=\eta k}^{(1-\delta) k}\exp\left(o(\Delta) - 2(\gamma_{\eta, \delta} + o(1))\log \binom{n-k}{k} \right)\\
        &\le \exp\left(\log k + o(\Delta) - 2(\gamma_{\eta, \delta} + o(1))\log \binom{n-k}{k} \right)\\
        &= o(1).
    \end{align*}

    Plugging this bound back to \eqref{ineq:upper-branch-posterior} and conditioning on $E_1$ and $E_2$, we see that with probability at least $1 - o(1)$, we have
    \begin{align*}
        &\quad \sum_{i=\eta k}^{(1-\delta) k} p_i\\
        &\le \sum_{i=\eta k}^{(1-\delta) k} \frac{S_i}{S_k}\\
        &\le \sum_{i=\eta k}^{(1-\delta) k}\exp\left(\left(\left(\frac{i}{k}\right)^d-1\right)\sqrt{\lambda} \langle W, x^{\otimes d} \rangle\right)\exp\left(\left(\left(\frac{i}{k}\right)^d-1\right)\lambda+ B_i + \log(1+B_i)\right)\\
        &= o(1).
    \end{align*}

    Now combining the analysis for small overlap and large overlap, we see that with probability $1 - o(1)$, we have
    \begin{align*}
        &\quad \sum_{i=0}^k p_i\cdot \frac{i}{k}\\
        &\ge \frac{(1-\delta)k}{k} \left(1 - \sum_{i=0}^{(1-\delta)k} p_i\right)\\
        &\ge 1 - \delta - o(1).
    \end{align*}
    Since $\delta > 0$ is arbitrary, we see that for $\lambda = 2\log \binom{n-k}{k} + \Delta$, we have
    \begin{align*}
        \text{MMSE} = 1 - \EE\left[\sum_{i=0}^k p_i\cdot \frac{i}{k}\right] \to 0.
    \end{align*}

\end{proof}

Next, we prove the lower edge of the critical window.
\begin{proof}[Proof of the lower edge of Proposition~\ref{prop:critical-window-PCA}]
    Suppose $d \ge 3$, $k \le \min\left\{n^{\frac{1}{2} - \eps}, n^{\frac{d-2}{d+2} - \eps}\right\}$ for some constant $\eps > 0$, and $\Delta > 0$ satisfies
    \begin{align*}
        \Delta = \omega\left(\sqrt{\log \binom{n-k}{k}}\right).
    \end{align*}
    Suppose $\lambda = 2\log \binom{n-k}{k} - \Delta$.

    Fix an arbitrary $\delta > 0$. We will show that $\sum_{i=\delta k}^k p_i \to 0$. Recall that we set $N_i \coloneqq \binom{k}{i}\binom{n-k}{k-i}$.

    We first lower bound $S_0$. Recall that we previously set $b = \sqrt{\frac{\Delta}{\sqrt{\log \binom{n}{k}}}}$. Let
    \begin{align*}
        M = \sum_{\substack{x' \in \Omega_{n,k}:\\ \langle x, x'\rangle = 0}} \allone\{\langle W, x'^{\otimes d}  \rangle \ge \sqrt{\lambda} + b\}.
    \end{align*}
    Then, we have the deterministic inequality $S_0 \ge M \exp(\lambda + \sqrt{\lambda} b)$.

    We will use the second moment method on $M$ to prove a high probability lower bound for $M$, and thus $S_0$. The expectation of $M$ is
    \begin{align*}
        \EE[M] &= N_0 \cdot \Pr\left(\langle W, x'^{\otimes d}\rangle \ge \sqrt{\lambda} + b\right)\\
        &= \exp\left(\log \binom{n-k}{k} + \log \Phi(-\sqrt{\lambda} - b)\right)
        \intertext{where $\Phi$ denotes the cdf of the standard normal distribution.}
        &= \exp\left(\log \binom{n-k}{k} -\frac{(\sqrt{\lambda} + b)^2}{2} - O(\log(\sqrt{\lambda} + b))\right)
        \intertext{using the asymptotics $\Phi(x) \sim \frac{1}{\sqrt{2\pi}|x|}\exp(-x^2/2)$ for $x \to -\infty$,}
        &= \exp\left(\log \binom{n-k}{k} -\frac{\lambda}{2} - b\sqrt{\lambda} - \frac{b^2}{2} - O(\log(\sqrt{\lambda} + b))\right)\\
        &=\exp\left(\log \binom{n-k}{k} -\frac{\lambda}{2} - o(\Delta) - O(\log(\sqrt{\lambda} + b))\right)
        \intertext{since $b\sqrt{\log \binom{n-k}{k}} = o(\Delta)$ and $b^2 = \frac{\Delta}{\sqrt{\log \binom{n}{k}}} = o(\Delta)$. Now we plug in $\lambda = 2\log \binom{n-k}{k} - \Delta$ and get}
        &= \exp\left(\frac{\Delta}{2} - o(\Delta)\right).
    \end{align*}

    Next, we upper bound the second moment of $M$. We have
    \begin{align*}
        &\quad \frac{\EE[M^2]}{\EE[M]^2}\\
        &= \frac{\sum_{\substack{x',x'' \in \Omega_{n,k}:\\ \langle x, x'\rangle = \langle x, x''\rangle = 0}} \Pr\left(\langle W, x'^{\otimes d} \rangle \ge \sqrt{\lambda} + b, \langle W, x''^{\otimes d} \rangle \ge \sqrt{\lambda} + b\right)}{\sum_{\substack{x', x'' \in \Omega_{n,k}:\\ \langle x, x'\rangle = \langle x, x''\rangle = 0}}\Pr\left(\langle W, x'^{\otimes d} \rangle \ge \sqrt{\lambda} + b\right)\Pr\left(\langle W, x''^{\otimes d} \rangle \ge \sqrt{\lambda} + b\right)}\\
        &= \EE_{r \sim \text{Hypergeo}(n-k,k,k)} \frac{\Pr\left( Z \ge \sqrt{\lambda} + b, Z_r \ge \sqrt{\lambda} + b\right)}{\Pr\left(Z \ge \sqrt{\lambda} + b\right)^2}
        \intertext{here $r$ denotes the size of the overlap of two uniform random draws from $\left\{x': \Omega_{n,k}: \langle x, x'\rangle = 0\right\}$, and $Z$ and $Z_r$ are jointly centered Gaussian with variance $1$ and covariance $\left(\frac{r}{k}\right)^d$.}
        &= \sum_{r=0}^k \Pr(r)\frac{\Pr\left( Z \ge \sqrt{\lambda} + b, Z_r \ge \sqrt{\lambda} + b\right)}{\Pr\left(Z \ge \sqrt{\lambda} + b\right)^2}.
    \end{align*}
    To show that $\EE[M^2] \le (1 + o(1))\EE[M^2]$, it is enough to show that
    \begin{align}
        \sum_{r=1}^k \Pr(r)\frac{\Pr\left( Z \ge \sqrt{\lambda} + b, Z_r \ge \sqrt{\lambda} + b\right)}{\Pr\left(Z \ge \sqrt{\lambda} + b\right)^2} = o(1). \label{ineq:second-moment-condition}
    \end{align}

    To show \eqref{ineq:second-moment-condition}, we will break the analysis into three pieces: small overlap, middle overlap, and large overlap. Fix $\eta > 0$ a small constant so that $3\eta^{d-1} \le \frac{c_{\varepsilon}}{2}$.

    \paragraph{\textbf{Small overlap analysis:}}
    Consider $1 \le r \le \eta k$.

    We have by Lemma~\ref{lem:class-size-bound}
    \begin{align*}
        \Pr(r) = \frac{\binom{k}{r}\binom{n-k}{k-r}}{\binom{n-k}{k}} = \frac{N_r}{N_0} \le \frac{1}{r!}\left(\frac{k^2}{n-2k}\right)^r,
    \end{align*}
    and by Lemma~\ref{lem:correlated-gaussian-bound} and $\Pr(Z \ge x) \ge \frac{1}{2x\sqrt{2\pi}} \exp\left(-x^2/2\right)$ for $x \ge 1$,
    \begin{align*}
        &\quad \frac{\Pr\left( Z \ge \sqrt{\lambda} + b, Z_r \ge \sqrt{\lambda} + b\right)}{\Pr\left(Z \ge \sqrt{\lambda} + b\right)^2}\\
        &\le C \frac{1}{\sqrt{1 - \left(\frac{r}{k}\right)^{2d}}} \exp\left(- \frac{(\sqrt{\lambda}+b)^2}{1 + \left(\frac{r}{k}\right)^{d}} + \left(\sqrt{\lambda} + b\right)^2\right)\\
        &\le 2C \exp\left((\sqrt{\lambda} + b)^2 \frac{\left(\frac{r}{k}\right)^{d}}{1 + \left(\frac{r}{k}\right)^{d}}\right).
    \end{align*}
    Thus,
    \begin{align*}
        &\quad \sum_{r=1}^{\eta k} \Pr(r)\frac{\Pr\left( Z \ge \sqrt{\lambda} + b, Z_r \ge \sqrt{\lambda} + b\right)}{\Pr\left(Z \ge \sqrt{\lambda} + b\right)^2}\\
        &\le \sum_{r=1}^{\eta k}\frac{2C}{r!}\left(\frac{k^2}{n-2k}\right)^r  \exp\left((\sqrt{\lambda} + b)^2 \frac{\left(\frac{r}{k}\right)^{d}}{1 + \left(\frac{r}{k}\right)^{d}}\right)\\
        &= 2C\sum_{r=1}^{\eta k} \exp\left((\sqrt{\lambda} + b)^2 \frac{\left(\frac{r}{k}\right)^{d}}{1 + \left(\frac{r}{k}\right)^{d}} + r\log\left(\frac{k^2}{n-2k}\right) - r\right)
        \intertext{since $\log\left(\frac{k^2}{n-2k}\right) \le -c_{\varepsilon}\log(n/k)$ and $(\sqrt{\lambda} + b)^2 \le 2\log \binom{n-k}{k} + o(\Delta) \le 3k\log (n/k)$, we have}
        &\le 2C\sum_{r=1}^{\eta k} \exp\left(3r\log(n/k) \frac{\left(\frac{r}{k}\right)^{d-1}}{1 + \left(\frac{r}{k}\right)^{d}} - c_{\varepsilon} r\log(n/k) - r\right)\\
        &\le 2C\sum_{r=1}^{\eta k} \exp\left( - \frac{c_{\varepsilon}}{2} r\log(n/k) - r\right)\\
        &= o(1).
    \end{align*}

    \paragraph{\textbf{Middle overlap analysis:}} Consider $\eta k \le r \le (1 -\eta)k$. We have 
    \begin{align*}
        \Pr(r) = \frac{\binom{k}{r}\binom{n-k}{k-r}}{\binom{n-k}{k}} \le 2^k \left(\frac{e(n-k)}{k-r}\right)^{k-r} \left(\frac{k}{n-k}\right)^k \le C_{\eta}^k \left(\frac{k}{n-k}\right)^{r},
    \end{align*}
    for some constant $C_{\eta} > 0$, and the same bound as in the small overlap analysis,
    \begin{align*}
        &\quad \frac{\Pr\left( Z \ge \sqrt{\lambda} + b, Z_r \ge \sqrt{\lambda} + b\right)}{\Pr\left(Z \ge \sqrt{\lambda} + b\right)^2} \le 2C \exp\left((\sqrt{\lambda} + b)^2 \frac{\left(\frac{r}{k}\right)^{d}}{1 + \left(\frac{r}{k}\right)^{d}}\right).
    \end{align*}
    Thus,
    \begin{align*}
        &\quad \sum_{r=\eta k}^{(1-\eta) k} \Pr(r)\frac{\Pr\left( Z \ge \sqrt{\lambda} + b, Z_r \ge \sqrt{\lambda} + b\right)}{\Pr\left(Z \ge \sqrt{\lambda} + b\right)^2}\\
        &\le \sum_{r=\eta k}^{(1-\eta) k}2 C \cdot C_{\eta}^k \left(\frac{k}{n-k}\right)^{r}  \exp\left((\sqrt{\lambda} + b)^2 \frac{\left(\frac{r}{k}\right)^{d}}{1 + \left(\frac{r}{k}\right)^{d}}\right)\\
        &\le 2C \sum_{r=\eta k}^{(1-\eta) k}  \exp\left((\sqrt{\lambda} + b)^2 \frac{\left(\frac{r}{k}\right)^{d}}{1 + \left(\frac{r}{k}\right)^{d}} + O(k) - r\log \left(\frac{n}{k}\right)\right)
        \intertext{Since $b \sqrt{\log \binom{n-k}{k}} = o(\Delta)$ and $b^2 = o(\Delta)$, we now use $(\sqrt{\lambda} + b)^2 \le (\sqrt{2\log \binom{n-k}{k}- \Delta} + b)^2 \le (\sqrt{2\log \binom{n-k}{k}}  + b)^2  \le 2\log \binom{n-k}{k} + o(\Delta)$ to get}
        &\le 2C \sum_{r=\eta k}^{(1-\eta) k}  \exp\left(\left(2\log \binom{n-k}{k} + o(\Delta)\right) \frac{\left(\frac{r}{k}\right)^{d}}{1 + \left(\frac{r}{k}\right)^{d}} + O(k) - r\log \left(\frac{n}{k}\right)\right)\\
        &\le 2C \sum_{r=\eta k}^{(1-\eta) k}  \exp\left(k \log \left(\frac{n}{k}\right)\left(\frac{2\left(\frac{r}{k}\right)^{d}}{1 + \left(\frac{r}{k}\right)^{d}}- \frac{r}{k}+ o(1)\right)\right)
        \intertext{Now observe that since $d \ge 3 \ge 2$, there exists a constant $\gamma_\eta > 0$ such that $f(x) = \frac{2x^d}{1+x^d} - x \le - \gamma_{\eta}$ for all $\eta \le x \le 1-\eta$. Thus,}
        &\le 2C \sum_{r=\eta k}^{(1-\eta) k}  \exp\left(-\frac{\gamma_\eta}{2}k \log \left(\frac{n}{k}\right)\right)\\
        &= o(1).
    \end{align*}

    \paragraph{\textbf{Large overlap analysis:}} Consider $(1-\eta)k \le r \le k$. We have
    \begin{align*}
        \Pr(r) &= \frac{\binom{k}{r}\binom{n-k}{k-r}}{\binom{n-k}{k}}\\
        &=\frac{\binom{k}{k-r}\binom{n-k}{k-r}}{\binom{n-k}{k}}\\
        &\le \frac{1}{((k-r)!)^2} \cdot \frac{k^{k-r}(n-k)^{k-r}}{\binom{n-k}{k}},
    \end{align*}
    and we have the same bound as before
    \begin{align*}
        &\quad \frac{\Pr\left( Z \ge \sqrt{\lambda} + b, Z_r \ge \sqrt{\lambda} + b\right)}{\Pr\left(Z \ge \sqrt{\lambda} + b\right)^2} \le 2C \exp\left((\sqrt{\lambda} + b)^2 \frac{\left(\frac{r}{k}\right)^{d}}{1 + \left(\frac{r}{k}\right)^{d}}\right).
    \end{align*}

    Thus,
    \begin{align*}
        &\quad \sum_{r=(1-\eta) k}^{k} \Pr(r)\frac{\Pr\left( Z \ge \sqrt{\lambda} + b, Z_r \ge \sqrt{\lambda} + b\right)}{\Pr\left(Z \ge \sqrt{\lambda} + b\right)^2}\\
        &\le 2C\sum_{r=(1-\eta) k}^{k} \frac{1}{((k-r)!)^2} \cdot \frac{k^{k-r}(n-k)^{k-r}}{\binom{n-k}{k}} \exp\left((\sqrt{\lambda} + b)^2 \frac{\left(\frac{r}{k}\right)^{d}}{1 + \left(\frac{r}{k}\right)^{d}}\right)\\
        &\le 2C \sum_{r=(1-\eta) k}^{k} \exp\left((\sqrt{\lambda} + b)^2 \frac{\left(\frac{r}{k}\right)^{d}}{1 + \left(\frac{r}{k}\right)^{d}} - \log \binom{n-k}{k} + (k-r)\log(k(n-k)) - O(k-r)\right)
        \intertext{Since $b \sqrt{\log \binom{n-k}{k}} = o(\Delta)$ and $b^2 = o(\Delta)$, we now use $(\sqrt{\lambda} + b)^2 \le (\sqrt{2\log \binom{n-k}{k}- \Delta} + b)^2   \le 2\log \binom{n-k}{k} - \Delta + o(\Delta)$ to get}
        &\le 2C \sum_{r=(1-\eta) k}^{k} \exp\Bigg(\left(2\log \binom{n-k}{k} - \Delta + o(\Delta)\right) \frac{\left(\frac{r}{k}\right)^{d}}{1 + \left(\frac{r}{k}\right)^{d}} - \log \binom{n-k}{k} \\
        &\quad + (k-r)\log(k(n-k)) - O(k-r)\Bigg)
        \intertext{Now let $s = k -r$. We have $0 \le s \le \eta k$. Then, $(r/k)^d = 1 - d\frac{s}{k} + O(s^2/k^2) $ and we have for some constant $c_\eta > 0$ that}
        &\le 2C \sum_{r=(1-\eta) k}^{k} \exp\Bigg(\left(2\log \binom{n-k}{k} - \Delta + o(\Delta)\right) \left(\frac{1}{2} - \left(\frac{1}{2} - c_\eta \right)d\cdot \frac{s}{k}\right) - \log \binom{n-k}{k}\\
        &\quad + s\log(k(n-k)) - O(s)\Bigg)\\
        &\le  2C \sum_{r=(1-\eta) k}^{k} \exp\left(-\frac{\Delta}{4} + o(\Delta) - sd (1 - 2 c_{\eta}) \log \left(\frac{n-k}{k}\right) + s\log(k(n-k)) - O(s)\right)\\
        &= 2C \sum_{r=(1-\eta) k}^{k} \exp\left(-\frac{\Delta}{4} + o(\Delta) - s\left(\log\left(\frac{(n-k)^{d(1-2c_\eta)-1}}{k^{d(1-2c_\eta)+1}}\right)+O(1)\right)\right)\\
        &= o(1),
    \end{align*}
    where we used that $k \le n^{\frac{1}{2} - \varepsilon} \le n^{\frac{d-1}{d+1}-\varepsilon}$.

    Combining the analysis for all overlaps, we conclude that $\EE[M^2] \le (1 + o(1))\EE[M^2]$. Thus, with probability $1 - o(1)$, we have $M \ge (1-o(1))\EE[M]$ and
    \begin{align*}
        S_0 &\ge (1-o(1))\EE[M] \exp(\lambda + \sqrt{\lambda} b)\\
        &\ge (1-o(1))\exp\left(\frac{\Delta}{2} - o(\Delta) + \lambda + \sqrt{\lambda} b\right)\\
        &\ge (1-o(1))\exp\left(\frac{\Delta}{2} - o(\Delta) + \lambda\right).
    \end{align*}

    On the other hand, by Lemma~\ref{lem:log-sum-exp-tail} as discussed in the upper edge analysis, with probability at least $1 - \frac{2}{k}$, we have that for all $\delta k \le i \le k$,
    \begin{align*}
        S_i &\le \exp\left(\lambda \left(\frac{i}{k}\right)^d + \left(\frac{i}{k}\right)^d\sqrt{\lambda} \langle W, x^{\otimes d} \rangle\right)\exp\left(B_i + \log(1+B_i)\right),
    \end{align*}
    where $B_i = \sqrt{2\lambda\left(1 - \left(i/k\right)^{2d}\right)\left(2\log k + \log N_i\right)}$. We need to make sure $B_i$ satisfies $B_i \le \lambda \left(1 - (i/k)^{2d}\right)$ as Lemma~\ref{lem:log-sum-exp-tail} demands. We may easily verify that for $\delta k \le i \le (1 - \delta)k$
    \begin{align*}
        B_i &= \sqrt{2\lambda\left(1 - \left(i/k\right)^{2d}\right)\left(2\log k + \log N_i\right)}\\
        &= \sqrt{2\lambda\left(1 - \left(i/k\right)^{2d}\right)\left(2\log k + \log \binom{k}{i} + \log \binom{n-k}{k-i}\right)}\\
        &\le \sqrt{2\lambda\left(1 - \left(i/k\right)^{2d}\right)\left(2\log k + k + (k-i)\log (e(n-k)/(k-i)) \right)}\\
        &\le \sqrt{2\lambda\left(1 - \left(i/k\right)^{2d}\right) \left((k-i)\log((n-k)/k) + O(k)\right)}\\
        &\le \sqrt{2\lambda\left(1 - \left(i/k\right)^{2d}\right) \left(\left(1 - \frac{i}{k}\right)\log \binom{n-k}{k} + O(k)\right)}\\
        &\le \lambda\left(1 - \left(i/k\right)^{2d}\right) \sqrt{\frac{\left(2\log \binom{n-k}{k}\right)\left(1- (i/k) + o(1)\right)}{\lambda\left(1 - \left(i/k\right)^{2d}\right)}}\\
        &\le \lambda\left(1 - \left(i/k\right)^{2d}\right),
    \end{align*}
    since $\lambda = 2\log \binom{n-k}{k} - \Delta = (2-o(1))\log\binom{n-k}{k}$ and there exists some constant $c_\delta > 0$ such that $\frac{1 - (i/k) + o(1)}{1 - (i/k)^{2d}} \le 1 - c_{\delta}$ for $\delta k \le i \le (1-\delta)k$.

    Now for $(1 - \delta)k \le i \le k-1$, we may check
    \begin{align*}
        B_i &= \sqrt{2\lambda\left(1 - \left(i/k\right)^{2d}\right)\left(2\log k + \log N_i\right)}\\
        &= \sqrt{2\lambda\left(1 - \left(i/k\right)^{2d}\right)\left(2\log k + \log \binom{k}{k-i} + \log \binom{n-k}{k-i}\right)}\\
        &\le \sqrt{2\lambda\left(1 - \left(i/k\right)^{2d}\right)\left(2\log k + (k-i)\log (k(n-k)) + O(k-i) \right)}
        \intertext{Now since $k \le \min\{n^{\frac{d-2}{d+2}-\varepsilon}, n^{\frac{1}{2} - \varepsilon}\}$, we have $\log(k) \le c_{\varepsilon}\log((n-k)/k)$ for some constant $c_{\varepsilon} < \min\{\frac{d-2}{4},1\}$. Thus,}
        &\le \sqrt{2\lambda\left(1 - \left(i/k\right)^{2d}\right) \left((k-i)\log((n-k)/k) + 2(k-i)\log k + O(k-i)\right)}\\
        &\le \sqrt{2\lambda\left(1 - \left(i/k\right)^{2d}\right) \left((k-i)\log((n-k)/k) + 2c_{\varepsilon}(k-i)\log ((n-k)/k) + O(k-i)\right)}\\
        &\le \sqrt{2\lambda\left(1 - \left(i/k\right)^{2d}\right) \left(\left(1 - \frac{i}{k}\right)(1 + 2c_{\varepsilon})\log \binom{n-k}{k} + O(k\log k)\right)}\\
        &\le \lambda\left(1 - \left(i/k\right)^{2d}\right) \sqrt{\frac{\left(2\log \binom{n-k}{k}\right)\left((1- (i/k))(1 + 2c_{\varepsilon}) + o(1)\right)}{\lambda\left(1 - \left(i/k\right)^{2d}\right)}}\\
        &\le \lambda\left(1 - \left(i/k\right)^{2d}\right),
    \end{align*}
    where we used that $\lambda = 2\log \binom{n-k}{k} - \Delta = (2-o(1))\log\binom{n-k}{k}$ and $1 - (i/k)^{2d} = 2d\frac{k-i}{k} + O\left(\frac{(k-i)^2}{k^2}\right) \ge 2\cdot \left( (1- (i/k))(1 + 2c_{\varepsilon}) + o(1)\right)$, since $d \ge 3$ and $c_{\varepsilon} < 1$.

    Finally, when $i = k$, there is nothing to check as $B_k = 0$. Thus, we have shown that with probability at least $1 - \frac{2}{k}$, 
    \begin{align*}
        \sum_{i=\delta k}^k S_i \le \sum_{i=\delta k}^k\exp\left(\lambda \left(\frac{i}{k}\right)^d + \left(\frac{i}{k}\right)^d\sqrt{\lambda} \langle W, x^{\otimes d} \rangle\right)\exp\left(B_i + \log(1+B_i)\right),
    \end{align*}
    where $B_i = \sqrt{2\lambda\left(1 - \left(i/k\right)^{2d}\right)\left(2\log k + \log N_i\right)}$. Now, condition on the event $E_1' = \{\{\langle W, x^{\otimes d} \rangle \le b\}\}$. Recall that $b = \sqrt{\frac{\Delta}{\sqrt{\log \binom{n-k}{k}}}}$ and $E_1'$ takes place with probability $1 - o(1)$. Thus, with probability at least $1 - \frac{k}{2} - o(1) = 1 - o(1)$,
    \begin{align*}
        &\quad \sum_{i=\delta k}^k S_i\\
        &\le \sum_{i=\delta k}^k\exp\left(\lambda \left(\frac{i}{k}\right)^d + \left(\frac{i}{k}\right)^d\sqrt{\lambda} \langle W, x^{\otimes d} \rangle\right)\exp\left(B_i + \log(1+B_i)\right)\\
        &\le \sum_{i=\delta k}^k\exp\left(\lambda \left(\frac{i}{k}\right)^d + \left(\frac{i}{k}\right)^d b\sqrt{\lambda} \right)\exp\left(B_i + \log(1+B_i)\right)
        \intertext{Now we use that $b\sqrt{\lambda} = o(\Delta)$ and that $B_i = \sqrt{2\lambda\left(1 - \left(i/k\right)^{2d}\right)\left(2\log k + \log N_i\right)}$ to get}
        &\le \sum_{i=\delta k}^k\exp\left((\lambda + o(\Delta)) \left(\frac{i}{k}\right)^d + \sqrt{2\lambda\left(1 - \left(i/k\right)^{2d}\right)\left(2\log k + \log N_i\right)} + o(\Delta)\right).
    \end{align*}
    Recall that with probability at least $1 - o(1)$, 
    \[S_0 \ge (1-o(1))\exp\left(\frac{\Delta}{2} - o(\Delta) + \lambda\right).\]
    Thus, with probability $1 - o(1)$, we have
    \begin{align*}
        &\quad \frac{1}{S_0}\sum_{i=\delta k}^k S_i\\
        &\le (1+o(1))\sum_{i=\delta k}^k\exp\left(- \frac{\Delta}{2} - \lambda+(\lambda + o(\Delta)) \left(\frac{i}{k}\right)^d + \sqrt{2\lambda\left(1 - \left(i/k\right)^{2d}\right)\left(2\log k + \log N_i\right)} + o(\Delta)\right)\\
        &\le (1+o(1))\sum_{i=\delta k}^k\exp\left(- \frac{\Delta}{2} - \lambda+\lambda \left(\frac{i}{k}\right)^d + \sqrt{2\lambda\left(1 - \left(i/k\right)^{2d}\right)\left(2\log k + \log N_i\right)} + o(\Delta)\right)\\
        &\le (1+o(1))\sum_{i=\delta k}^{(1-\delta)k}\exp\left(- \frac{\Delta}{2} - \lambda+\lambda \left(\frac{i}{k}\right)^d + \sqrt{2\lambda\left(1 - \left(i/k\right)^{2d}\right)\left(2\log k + \log N_i\right)} + o(\Delta)\right)\\
        &\quad + (1+o(1))\sum_{i=(1-\delta) k}^k\exp\left(- \frac{\Delta}{2} - \lambda+\lambda \left(\frac{i}{k}\right)^d + \sqrt{2\lambda\left(1 - \left(i/k\right)^{2d}\right)\left(2\log k + \log N_i\right)} + o(\Delta)\right)
        \intertext{Now we reuse the previous bounds that $2\log k + \log N_i \le \log \binom{n-k}{k}\left(1- (i/k) + o(1)\right)$ for all $\delta k \le i \le (1-\delta)k$ and that $2\log k + \log N_i \le \log \binom{n-k}{k}\left((1- (i/k))(1 + 2c_{\varepsilon}) + o(1)\right)$ for all $(1-\delta)k \le i \le k$ where $0 < c_{\varepsilon} < \min\{\frac{d-2}{4} ,1\}$ is a constant, and get}
        &\le \exp(o(\Delta))\sum_{i=\delta k}^{(1-\delta)k}\exp\left(- \frac{\Delta}{2} - \lambda+\lambda \left(\frac{i}{k}\right)^d + \sqrt{2\lambda\left(1 - \left(\frac{i}{k}\right)^{2d}\right)\log \binom{n-k}{k}\left(1- \frac{i}{k} + o(1)\right)}\right)\\
        &\quad + \exp(o(\Delta))\sum_{i=(1-\delta) k}^k\exp\Bigg(- \frac{\Delta}{2} - \lambda+\lambda \left(\frac{i}{k}\right)^d\\
        &\quad + \sqrt{2\lambda\left(1 - \left(\frac{i}{k}\right)^{2d}\right)\log \binom{n-k}{k}\left(\left(1- \frac{i}{k}\right)(1 + 2c_{\varepsilon}) + o(1)\right)}\Bigg)
        \intertext{Now plug in $\lambda = 2\log \binom{n-k}{k} - \Delta$. We get}
        &\le \exp(o(\Delta))\sum_{i=\delta k}^{(1-\delta)k}\exp\left(\Delta\left(-\frac{1}{2} + \frac{1}{2}\sqrt{\left(1 - \left(\frac{i}{k}\right)^{2d}\right)\left(1 - \frac{i}{k} + o(1)\right)}\right)\right)\\
        &\quad \cdot \exp\left(\left(2\log \binom{n-k}{k} - \Delta\right) \left(-1 +\left(\frac{i}{k}\right)^d +\sqrt{\left(1 - \left(\frac{i}{k}\right)^{2d}\right)\left(1 - \frac{i}{k} + o(1)\right)}\right) \right)\\
        &\quad + \exp(o(\Delta))\sum_{i=(1-\delta) k}^{k}\exp\left(\Delta\left(-\frac{1}{2} + \frac{1}{2}\sqrt{\left(1 - \left(\frac{i}{k}\right)^{2d}\right)\left(\left(1 - \frac{i}{k}\right)(1 + 2c_{\varepsilon}) + o(1)\right)}\right)\right)\\
        &\quad \cdot \exp\left(\left(2\log \binom{n-k}{k}-\Delta\right) \left(-1 +\left(\frac{i}{k}\right)^d +\sqrt{\left(1 - \left(\frac{i}{k}\right)^{2d}\right)\left(\left(1 - \frac{i}{k}\right)(1+2c_{\varepsilon}) + o(1)\right)}\right) \right)
        \intertext{Now observe that there exists a constant $\gamma_\delta > 0$ such that $f(x) = -1 + x^d + \sqrt{\left(1-x^{2d}\right)(1-x)} \le -\gamma_\delta$ for all $\delta \le x \le 1 - \delta$, and that for sufficiently small $\delta > 0$, there exists $\tau_\gamma > 0$ such that $g(1 - y) = -1 + (1-y)^d + \sqrt{\left(1- (1-y)^{2d}\right)y(1+2c_{\varepsilon})} \le -dy + \sqrt{2d(1+2c_{\varepsilon})y^2} + O(y^2) \le - \tau_\delta y$ for all $0\le y \le \delta$. Here we used that $c_{\varepsilon} \le \frac{d-2}{4}$ so that $\sqrt{2d(1+2c_{\varepsilon})} < d$. Thus,}
        &\le \exp(o(\Delta))\sum_{i=\delta k}^{(1-\delta)k} \exp\left(-\gamma_\delta\left(2\log \binom{n-k}{k} - \Delta\right)  \right)\\
        &\quad + \exp(o(\Delta))\sum_{i=(1-\delta) k}^{k} \exp\left(-\tau_\delta \frac{k-i}{k}\left(2\log \binom{n-k}{k}-\Delta\right)  \right)\\
        &= o(1).
    \end{align*}

    Thus, with probability $1-o(1)$, we have 
    \begin{align*}
        &\quad \sum_{i=\delta k}^k p_i \le \frac{1}{S_0} \sum_{i=\delta k}^k S_i = o(1).
    \end{align*}
    Since $\delta > 0$ is arbitrary, we see that for $\lambda = 2\log \binom{n-k}{k} - \Delta$, we have
    \begin{align*}
        \text{MMSE} = 1 - \EE\left[\sum_{i=0}^k p_i\cdot \frac{i}{k}\right] \ge (1 - \delta) - \EE\left[\sum_{i=\delta k}^k p_i\cdot \frac{i}{k}\right]  \ge 1 - \delta - o(1),
    \end{align*}
    and thus $\text{MMSE} \to 1$.

\end{proof}

}

\end{document}